\renewcommand*{\liminf}{\displaystyle \operatornamewithlimits{liminf}}
\renewcommand*{\limsup}{\displaystyle \operatornamewithlimits{limsup}}
\definecolor{fondpaille}{cmyk}{0,0,0.1,0}
\newcommand*{\independent}{
  \mathbin{
    \mathpalette{\@indep}{}
  }
}
\newcommand*{\@indep}[2]{
  \sbox0{$#1\perp\m@th$}
  \sbox2{$#1=$}
  \sbox4{$#1\vcenter{}$}
  \rlap{\copy0}
  \dimen@=\dimexpr\ht2-\ht4-.2pt\relax
  \kern\dimen@
  {#2}
  \kern\dimen@
  \copy0
} 
\newcommand*{\TV}{\operatorname{TV}}
\newcommand*{\Tr}{\operatorname{Tr}}
\newcommand*{\fm}{\mathfrak{m}}
\newcommand*{\supp}{\mathrm{supp}}
\begin{document}

\begin{frontmatter}

\title{Combinatorial Inference for Graphical Models}
\runtitle{Combinatorial Inference for Graphical Models}

\begin{aug}
 \author{\fnms{Matey} \snm{Neykov}\corref{}\ead[label=e1]{mneykov@stat.cmu.edu}}\thanksref{t1},
 \author{\fnms{Junwei} \snm{Lu}\corref{}\ead[label=e2]{junweil@princeton.edu}}\thanksref{t1} \and
 \author{\fnms{Han} \snm{Liu}\corref{}\ead[label=e3]{hanliu@northwestern.edu}}\thanksref{t2},
   \address[a]{Department of Statistics,\\ Carnegie Mellon University, \\ Pittsburgh, PA 15213, USA  \\ \printead{e1}\\}
  \address[b]{Department of Operations Research\\and Financial Engineering,\\ Princeton University, \\ Princeton, NJ 08540, USA  \\\printead{e2}\\}
  \address[c]{Department of Electrical Engineering \\ and Computer Science\\ Department of Statistics \\ Northwestern University\\ Evanston, IL 60208, USA\\\printead{e3}}
\end{aug}
  \affiliation{Carnegie Mellon University\\ Princeton University\\ Northwestern University}

 \thankstext{t1}{These authors contributed equally to this work.}
 \thankstext{t2}{Research supported by NSF DMS1454377-CAREER; NSF IIS 1546482-BIGDATA; NIH R01MH102339; NSF IIS1408910; NIH R01GM083084.}

\runauthor{Neykov, Lu and Liu}

\begin{abstract}
We propose a new family of combinatorial inference  problems for graphical models. Unlike classical statistical inference where the main interest is point estimation or parameter testing, combinatorial inference aims at testing the global structure of the underlying graph. Examples include testing the graph  connectivity, the presence of a cycle of certain size, or the maximum degree of the graph. To begin with, we study the information-theoretic limits of a large family of combinatorial inference problems. We propose new concepts including structural packing and buffer entropies to characterize how the complexity of combinatorial graph structures impacts the corresponding minimax lower bounds. On the other hand, we propose a family of novel and practical structural testing algorithms to match the lower bounds. We provide numerical results on both synthetic graphical models and brain networks to illustrate the usefulness of these proposed methods.
\end{abstract}
 
\begin{keyword}
\kwd{Graph structural inference}
\kwd{minimax testing}
\kwd{uncertainty assessment}
\kwd{multiple hypothesis testing}
\kwd{post-regularization inference}
\end{keyword}

\end{frontmatter}

\section{Introduction} \label{intro}

Graphical model is a powerful tool for modeling complex relationships among many random variables. A central theme of graphical model research is to infer the structure of the underlying graph based on observational  data. Though significant progress has been made, existing works mainly focus  on estimating the graph \citep{Meinshausen2006High,Liu2009Nonparanormal:,Ravikumar2011High,Cai2011Constrained} or testing the existence of a single edge \citep{jankova2014confidence,ren2015asymptotic, neykov2015aunified, gu2015local}. 

In this paper we consider a new inferential problem: testing the combinatorial structure of the underlying graph. Examples include testing the graph  connectivity, cycle presence, or assessing the maximum degree of the graph. Unlike classical inference which aims at testing a set of Euclidean parameters, combinatorial inference aims to test some global structural properties and requires the development of new methodology. As for methodological development, this paper mainly considers the Gaussian graphical model (though our method is applicable to the more general semiparametric exponential family graphical models and elliptical copula graphical models): Let  $\bX = (X_1,\ldots, X_d)^T \sim N_d\bigl(\bm{0}, (\bTheta^*)^{-1}\bigr)$ be a $d$-dimensional Gaussian random vector with  precision matrix $\bTheta^*=(\Theta^*_{jk})$. Let $G^*= G(\bTheta^*) := (\overline V, E^*)$ be an undirected graph, where  $\overline V = \{1,\ldots, d\}$ and an edge $(j,k) \in E^*$ if and only if $\Theta^*_{jk}\neq 0$. It is well known that $G^*$ has the pairwise Markov property, i.e.,    $(j,k) \not \in E^*$ if and only if $X_j$ and $X_k$ are conditionally independent given the remaining variables. In a combinatorial inference problem, our goal is to test whether $G^*$  has certain global structural properties based on $n$ random samples $\bX_1,\ldots, \bX_n$. Specifically,  let $\cG$ be the set of all graphs over the vertex set $\overline V$ and $(\cG_0, \cG_1)$ be a pair of non-overlapping subsets of $\cG$. We assume all the graphs in $\cG_1$ have a  property (e.g., connectivity) while the graphs in $\cG_0$ do not have this property.  Such a pair $(\cG_0, \cG_1)$ is called a sub-decomposition of $\cG$.  
Our goal is to test the hypothesis $\Hb_0: G^*\in \cG_0$ versus $\Hb_1: G^*\in \cG_1$.
We provide several concrete examples below. \vspace{.2cm}

\noindent{\bf Connectivity.}  A graph is connected if and only if there exists a path connecting each pair of its vertices. To test connectivity, we set $\cG_0= \{G \in \cG ~|~ G \text{ is disconnected} \}$ and $\cG_1= \{G \in \cG ~|~ G \text{ is connected} \}$. Under the Gaussian graphical model, this is equivalent to testing whether the variables can be partitioned into at least two  independent sets. \vspace*{.2 cm}

\noindent{\bf Cycle presence.} Sometimes it is of interest to test whether the underlying graph is a forest. In this example we let $\cG_0 = \{G \in \cG ~|~ G \mbox{  is a forest}\}$ and $\cG_1 = \{G \in \cG ~|~ G \mbox{  contains a cycle}\}$. If a graph is a forest, it can be easily visualized on a two dimensional plane. \vspace{.2 cm}

\noindent{\bf Maximum degree.} Another relevant question is to test whether the maximum degree of the graph is less than or equal to some integer $s_0 \in \NN$ versus the alternative that the maximum degree is at least $s_1 \in \NN$, where $s_0 < s_1$. Define the sub-decomposition $\cG_0 = \{G ~|~ d_{\max}(G) \leq s_0\}$ and $\cG_1 = \{G~|~ d_{\max}(G) \geq s_1\}$ respectively.\vspace{.2 cm}

While our ultimate goal is to test whether $G^* \in \cG_0$ versus $G^* \in \cG_1$, our access to $G^*$ is only through the random samples $\{\bX_i\}_{i = 1}^n$. Under Gaussian models,  we can translate the original problem of testing  graphs to  testing the precision matrix:  
\begin{align}\label{prec:matrix:prob}
\Hb_0: \bTheta^* \in \cS_0 \mbox{ vs } \Hb_1: \bTheta^* \in \cS_1.
\end{align}
In (\ref{prec:matrix:prob}), $\cS_0, \cS_1 \subset \cM(s)$ are two sets of precision matrices such that for all $\bTheta \in \cS_0, \cS_1$ we have $G(\bTheta) \in \cG_0, \cG_1$ respectively, and $\cM(s)$ is defined as
\begin{align}
\cM(s) = \Big\{\bTheta ~\Big |~ \bTheta = \bTheta^T, C^{-1} \leq \bTheta \leq C, \|\bTheta\|_1 \leq L, \max_{j \in [d]}\|\bTheta_{*j}\|_0 \leq s \Big\}\label{MS:def}
\end{align} 
for some constants $1 \leq C \leq L$. The inequalities $C^{-1} \leq \bTheta \leq C$ in (\ref{MS:def}) are meant in a ``positive-semidefinite sense'', i.e., the minimum and maximum eigenvalues of $\bTheta$ are assumed to be bounded by $C^{-1}$ and $C$ from below and above respectively, and $\|\bTheta_{*j}\|_0$ is the cardinality of the non-zero entries of the $j$\textsuperscript{th} column of $\bTheta$ (see Section \ref{sec:notation} for precise notation). The set $\cM(s)$ restricts our attention to well conditioned symmetric matrices $\bTheta$, whose induced graphs $G(\bTheta)$ have maximum degree of at most $s$. Given this setup, we aim to characterize necessary conditions on the pair $\cS_0, \cS_1$ under which the combinatorial inference problem  in (\ref{prec:matrix:prob}) is \textit{testable}. Specifically, recall that a test is any measurable function $\psi: \{\bX_i\}_{i = 1}^n \mapsto \{0,1\}$. Define the minimax risk of testing $\cS_0$ against $\cS_1$ as:
\begin{align}
\gamma(\cS_0, \cS_1) = \inf_{\psi} \Bigl[ \max_{\bTheta \in \cS_0} \PP_{\bTheta}(\psi = 1) + \max_{\bTheta \in \cS_1} \PP_{\bTheta}(\psi = 0) \Bigr]. \label{risk:def}
\end{align}
If $\liminf_{n \rightarrow \infty} \gamma(\cS_0, \cS_1) = 1$, we say that the problem (\ref{prec:matrix:prob}) is \textit{untestable} since any test fails to distinguish between $\cS_0$ and $\cS_1$ in the asymptotic minimax sense. We are specifically interested in an asymptotic setting where the dimension $d$ is a function of the sample size, i.e., $d = d(n)$ so that $d \rightarrow \infty$ as $n \rightarrow \infty$. This setting will be implicitly understood throughout the paper. Due to the close relationship between the sets of precision matrices $(\cS_0, \cS_1)$ and the sub-decomposition $(\cG_0,\cG_1)$ (recall that for all $\bTheta \in \cS_0, \cS_1$ we have $G(\bTheta) \in \cG_0, \cG_1$ resp.), we anticipate that the sub-decomposition can capture the intrinsic challenge of the test in (\ref{prec:matrix:prob}). Indeed, in Sections \ref{sec:generic:lower} and \ref{general:lower:bound} we develop a framework capable of capturing the impact of the combinatorial structures of $\cG_0$ and $\cG_1$ to the lower bound $\gamma(\cS_0, \cS_1)$. Such lower bounds provide necessary conditions for any valid test. We then develop practical procedures that match the obtained lower bounds. To understand how the sub-decomposition $(\cG_0,\cG_1)$ affects the intrinsic difficulty of the problem in (\ref{prec:matrix:prob}), we consider the three examples given before. Our lower bound framework distinguishes between two types of sub-decompositions --- in the first type one can find graphs belonging to $\cG_0$ and $\cG_1$ differing in only one single edge, while in the second type all graphs belonging to $\cG_0$ must differ on multiple edge sets from the graphs belonging to $\cG_1$.

One can check that in the first two examples (connectivity and cycle presence testing) there exist graphs belonging to $\cG_0$ and $\cG_1$ differing in only one single edge. For instance, when testing connectivity, consider a tree  with a single edge removed (thus it becomes a forest) versus a connected tree. Extending this intuition, for a fixed graph $G_0 = (\overline V, E_0) \in \cG_0$, we call the edge set $\cC = \{e_1, \ldots, e_m\}$ a single-edge \textit{null-alternative divider}, or simply a divider for short, if for all edges $e \in \cC$ the graphs $(\overline V, E_0 \cup \{e\}) \in \cG_1$. Intuitively the bigger the cardinality of a divider is, the harder it is to tell the null from the alternatives. In Section \ref{sec:generic:lower} we detail that $\gamma(\cS_0, \cS_1)$ is asymptotically $1$, when the signal strength of separation between $\cS_0$ and $\cS_1$ is low (see   (\ref{generic:S0:def}) and (\ref{generic:S1:def}) for a formal definition) and there exists a divider with sufficiently large packing number. The packing number, formally defined in Definition \ref{structure:entropy:packing}, represents the cardinality of a subset of edges in $\cC$ which are ``far'' apart, where the proximity measure of two edges is a predistance (compared to distance, a predistance does not have to satisfy the triangle inequality) based on the graph $G_0$. Recall that for a graph $G$ and two vertices $u$ and $v$, a graph geodesic distance is defined by:
$$d_G(u, v) := \mbox{ length of the shortest path between } u \mbox{ and } v \mbox{ within } G.$$
Using the notion of geodesic distance, one can define a predistance between two edges, by taking the minimum over the geodesic distances of their corresponding nodes. 

If the  difference between null and alternative is more than one edge, as in the maximum degree testing $\cG_0 = \{G~|~ d_{\max}(G) \leq s_0\}$ vs $\cG_1 = \{G~|~ d_{\max}(G) \geq s_1\}$ for example, the packing number does not always capture the lower bound of the tests. In Section \ref{general:lower:bound}, we develop a novel mechanism to handle this more sophisticated case. We introduce a concept called ``buffer entropy" which can overcome the disadvantages of the  packing number and produce sharper lower bounds. 

On the other hand, to match the lower bound, we propose the alternative witness test as a general algorithm for combinatorial testing. Our algorithm identifies a critical structure and proceeds to test whether this structure indeed belongs to the true graph.  We prove that alternative witness tests can control both the type I and type II errors asymptotically. 

\subsection{Contributions}

There are three major contributions of this paper.

Our first contribution is to develop a novel strategy for obtaining minimax lower bounds on the signal strength required to distinguish combinatorial graph structures which are separable via a single-edge divider. In particular, we relate the information-theoretic lower bounds to the packing number of the divider, which is an intuitive combinatorial quantity. To obtain this connection, we relate the chi-square divergence between two probability measures to the number of ``closed walks" on their corresponding Markov graphs. Our analysis hinges on several technical tools including Le Cam's Lemma, matrix perturbation inequalities and spectral graph theory. The usefulness of the approach is demonstrated by obtaining generic and interpretable lower bounds in numerous examples such as testing connectivity, connected components, self-avoiding paths, and cycles.

Our second contribution is to provide a device for proving lower bounds under the settings where the null and alternative graphs differ in multiple edges. Under such case, the packing number does not always provide tight lower bounds. In order  to overcome this issue we formalize a graph quantity called buffer entropy. The buffer entropy is a complexity measure of the structural tests and provides lower bounds. We apply buffer entropy to derive lower bounds for testing the maximum degree and detecting a sparse clique and cycles. 

Our third contribution is to propose an alternative witness test (\ref{prec:matrix:prob}), which matches the lower bounds on the signal strength. Our algorithm works on sub-decompositions which are stable with respect to addition of edges, i.e., given a graph $G \in \cG_1$ adding edges to $G$ yields graphs which belong to $\cG_1$. The alternative witness test is a two step procedure --- in the first step it identifies a minimal structure ``witnessing'' the alternative hypothesis, and in the second step it attempts to certify the presence of this structure in the graph. The alternative witness test utilizes recent advances in high-dimensional inference and provides honest tests for combinatorial inference problems. It has two advantages compared to the support recovery procedures in \cite{Meinshausen2006High, Ravikumar2011High,Cai2011Constrained}: First, it allows us to control the type I error at any given level; second, it does not require perfect recovery of the underlying graph to conduct valid inference.

\subsection{Related Work}

Graphical model inference is relatively straightforward when $d<n$, but becomes notoriously challenging when $d \gg n$. In high-dimensions, estimation procedures were studied by \cite{Yuan2006Model, friedman2008sparse, Lam2009Sparsistency, Cai2011Constrained} among others, while for variable selection procedures see \cite{Meinshausen2006High, raskutti2008model,Liu2009Nonparanormal:, Ravikumar2011High, Cai2011Constrained} and references therein. Recently, motivated by \cite{zhang2014confidence}, various  inferential methods for high-dimensional graphical models were suggested \citep[e.g.]{liu2013, jankova2014confidence,chen2015asymptotically,ren2015asymptotic, neykov2015aunified, gu2015local}, most of which focus on testing the presence of a single edge (except \cite{liu2013} who took the FDR approach \citep{benjamini1995controlling} to conduct multiple tests and \cite{gu2015local} who developed procedures of edge testing in Gaussian copula models). None of the aforementioned works address the problem of combinatorial structure testing.

In addition to estimation and model selection procedures, efforts have been made to understand the fundamental limits of these problems. Lower bounds on estimation were obtained by \cite{ren2015asymptotic}, where the authors show that the parametric estimation rate $n^{-1/2}$ is unattainable unless $s \log d/\sqrt{n} = o(1)$. Lower bounds on the minimal sample size required for model selection in Ising models were established by \cite{santhanam2012information}, where it is shown that support recovery is unattainable when $n \ll s^2 \log d$. In a follow up work, \cite{wang2010information} studied model selection limits on the sample size in Gaussian graphical models. The latter two works are remotely related to ours, in that both works exploit  graph properties to obtain information-theoretic lower bounds. However, our problem differs significantly from theirs since we focus on developing lower bounds for testing graph structure, which is a fundamentally different problem from estimating the whole graph. 

Our problem is most closely related to those in \cite{addario2010comb, arias2012detection, arias2015detectingpositive, arias2015detecting}, which are inspired by the large body of research on minimax hypothesis testing \citep[e.g.]{ingster1982minimax, Ingster2010Detection, arias2011detection, Arias-Castro2011Global} among many others. \cite{addario2010comb} quantify the signal strength as the mean parameter of a standard Gaussian distribution, while \cite{arias2012detection, arias2015detectingpositive} impose models on the covariance matrix of a multivariate Gaussian distribution. In our setup the parameter spaces of interest are designed to reflect the graphical model structure, and hence the signal strength is naturally imposed on the precision matrix. \cite{Arias-Castro2011Global} provide detection bounds for the linear model. This is related to our work since one can view a linear model with Gaussian design as a Gaussian graphical model. \cite{arias2015detecting} address testing on a lattice based Gaussian Markov random field. For specific problems they establish lower bounds on the signal strength required to test the empty graph versus an alternative hypothesis. This is different from the setting of our problems, where the null hypothesis is usually not the empty graph. 

\subsection{Notation}\label{sec:notation}

The following notation is used throughout the paper. For a vector $\vb = (v_1, \ldots, v_d)^T\in \RR^d$, let $\|\vb\|_q = (\sum_{i = 1}^d v_i^q)^{1/q},  1 \leq q < \infty$, $\|\vb\|_0 = | \supp(\vb)|$, where $\supp(\vb) = \{j~|~ v_j \neq 0\}$, and $|A|$ denotes the cardinality of a set $A$. Furthermore, let $\|\vb\|_{\infty} = \max_{i} |v_i|$ and $\vb^{\otimes 2} = \vb \vb^T$. For a matrix $\Ab$, we  denote  $\Ab_{*j}$ and $\Ab_{j*}$ to be the $j$\textsuperscript{th} column and row of $\Ab$ respectively. For any $n \in \NN$ we use the shorthand notation $[n] = \{1,\ldots, n\}$. For two integer sets $S_1, S_2 \subseteq [d]$, we denote  $\Ab_{S_1S_2}$ to be the sub-matrix of $\Ab$ with elements $\{A_{jk}\}_{j \in S_1, k \in S_2}$. Moreover, we denote $\|\Ab\|_{\max} = \max_{jk} |A_{jk}|$, $\|\Ab\|_p = \max_{\|\vb\|_p = 1} \|\Ab \vb\|_p$ for $p \geq 1$. For a symmetric matrix $\Ab \in \RR^{d\times d}$ and a constants $c,C$, with a slight abuse of notation we write $c \leq \Ab \leq C$ to mean that the matrices $\Ab - c \Ib_d$ and $C \Ib_d - \Ab$ are positive semi-definite, where $\Ib_d$ denotes the $d \times d$ identity matrix.

For a graph $G$ we use $V(G)$, $E(G), d_{\max}(G)$ to refer to the vertex set, edge set and maximum degree of $G$ respectively. We also denote $V(E)$ as the vertex set of the edge set $E$.  We reserve special notation for the complete vertex set $\overline V := [d]$, the complete edge set $\overline E :=  \{e \in 2^{[d]} ~|~ |e| = 2\}$ and the complete graph $\overline G:= (\overline V, \overline E)$. For two integers $j, k \in [d]$ we use unordered pairs $(j, k) = (k, j)$ to denote undirected edges between vertex $k$ and vertex $j$. Any symmetric matrix $\Ab \in \RR^{d\times d}$ naturally induces an undirected graph $G(\Ab)$, with vertices in the set $\overline V$ and edge set $E(G(\Ab)) = \{(j,k ) ~|~ A_{jk} \neq 0, j \neq k\}$. Additionally if $E$ is an arbitrary edge set (i.e., $E \subseteq \overline E$) for $e := (j,k) \in E$ we use the notation $A_e = A_{jk} =A_{kj}$ interchangeably to denote the element $e$ of the matrix $\Ab$.

Given two sequences $\{a_n\}, \{b_n\}$ we write $a_n = O(b_n)$ if there exists a constant $C < \infty$ such that $a_n \leq C b_n$; $a_n = o(b_n)$ if $a_n/b_n \rightarrow 0$, and $a_n \asymp b_n$ if there exists positive constants $c$ and $C$ such that $c < a_n/b_n < C$. 
Finally we use the shorthand notation $\wedge$ and $\vee$ for $\min$ and $\max$ of two numbers respectively.
\subsection{Organization of the Paper}

The paper is structured as follows. A lower bound on single edge dividers along with applications to several examples is presented in Section \ref{sec:generic:lower}. In Section \ref{sparse:graph:models:inf} we outline the alternative witness test, and illustrate how to apply it to the examples considered in Section \ref{sec:generic:lower}. In Section \ref{general:lower:bound} we generalize the lower bounds strategies from the single edge divider to multiple edge divider stetting. A brief discussion is provided in Section \ref{disc:sec}. Full proof of the main result of Section \ref{sec:generic:lower} is presented in Section \ref{proof:of:suff:lower:bound:sec}. Numerical studies, real data analysis and all remaining proofs are deferred to the supplement.

\section{Single-Edge Null-Alternative Dividers} \label{sec:generic:lower}

In this section we derive a novel and generic lower bound strategy, applicable to null and alternative hypotheses which differ in one single edge: i.e., under the Gaussian model, there exist two matrices $\bTheta_0 \in \cS_0$ and $\bTheta_1 \in \cS_1$ whose induced graphs $G_0 := G(\bTheta_0)$ and $G_1 := G(\bTheta_1)$ differ in a single edge. We formalize this concept in the definition below.

\begin{definition}[Single-Edge Null-Alternative Divider]\label{single:edge:null:alt:sep} For a sub-decom-position  $(\cG_0, \cG_1)$ of $\cG$, let $G_0 = (\overline V, E_0) \in \cG_0$ be a graph under the null. We refer to an edge set $
\cC = \{e_1, \ldots, e_m\},$ as a (single-edge) \textit{null-alternative divider} with the null base $G_0$ if for any $e \in \cC$ the graphs $G_e := (\overline V, E_0 \cup \{e\}) \in \cG_1$.
\end{definition}

As remarked in the introduction, if a large divider exists, it is expected that differentiating $G_0$ from an alternative graph $G_e\in\cG_1$ is more challenging. Indeed, our main result of this section confirms this intuition. We proceed to define a predistance for a graph $G$ and two edges $e, e'$ (which need not belong to $E(G)$) which plays a key role in our lower bound result. 

\begin{definition}[Edge Geodesic Predistance]\label{closed:walks:def} Let $G = (\overline V, E)$ and $\{e, e'\}$ be a pair of edges ($e$ and $e'$ may or may not belong to $E$). We define
$$
d_G(e, e') := \min_{u \in e, v \in e'} d_{G}(u, v),
$$
where $d_{G}(u, v)$ denotes the geodesic distance between vertices $u$ and $v$ on the augmented graph $G$. If such a path does not exist $d_G(e,e') = \infty$.
\end{definition}
By definition $d_G(e,e')$ is a predistance, i.e., $d_G(e,e) = 0$ and $d_G(e, e') \geq 0$. Moreover, $d_G(e, e')$ has the same value regardless of whether $e, e' \in E(G)$. See Fig \ref{conn:graph:ex2} for an illustration of  $d_G(e, e')$.
Inspired by the classical concept of packing entropy on metric spaces \citep[e.g.,][]{yang1999information} we propose the structural packing entropy for graphs in an attempt to characterize information-theoretic lower bounds for combinatorial inference.

\begin{definition}[Structural Packing Entropy]\label{structure:entropy:packing}
Let $\cC$ be a non-empty edge set and $G$ be a graph. For any $r \geq 0$ we call the edge set $N_r \subset \cC$ an $r$-packing of $\cC$ if for any $e, e' \in N_r$ we have $d_{G}(e, e') \geq r$. Define the structural $r$-packing entropy as:
\begin{align}
M(\cC, d_{G}, r) := \log \max \big\{ |N_r| \mid {N_r \subset \cC, N_r \mbox{ is a $r$-packing of $\cC$} }\big\}. \label{struc:entropy}
\end{align}
\end{definition}

The packing entropy in Definition \ref{structure:entropy:packing} is an analog to the classical  packing entropy on metric spaces in the sense that it is defined over an edge set $\cC$ equipped with a predistance $d_G(e,e')$ based on the graph $G$. 

To study minimax lower bounds, we only need to focus on the Gaussian graphical model whose structural properties are completely characterized by the precision matrices.  We now formally define the sets of precision matrices $\cS_0$ and $\cS_1$ used in this section. Let:
\begin{align}
\cS_0(\theta, s) &:= \Big \{\bTheta\in  \cM(s) ~|~ G(\bTheta) \in \cG_{0}, \min_{e \in E(G(\bTheta))} |\Theta_e| \geq \theta \Big \} \mbox{ and, }\label{generic:S0:def} \\
\cS_1(\theta, s) &:= \Big \{\bTheta\in  \cM(s)  ~|~ G(\bTheta) \in \cG_{1}, \min_{e \in E(G(\bTheta))} |\Theta_e| \geq \theta \Big \}, \label{generic:S1:def}
\end{align}
where $\cM(s)$ is defined in (\ref{MS:def}). The parameter $\theta$ in the definitions of $\cS_0(\theta, s)$ and $\cS_{1}(\theta, s)$ denotes the \textit{signal strength}, and as we show below, its magnitude plays an important role in determining whether one can distinguish between graphical models  in $\cS_{0}(\theta, s)$ and $\cS_{1}(\theta, s)$. 

\begin{theorem}[Necessary Signal Strength] \label{suff:cond:lower:bound} Let $D$ be a fixed integer. Suppose that 
\begin{align}\label{scaling:conditions}
\theta \leq \max_{\substack{G_0 \in \cG_0~:~ d_{\max}(G_0) \leq D, \\ \cC ~\mbox{\tiny divider with null base} ~ G_0}} \kappa \sqrt{\frac{M(\cC, d_{G_0}, \log |\cC|)}{n}} \wedge \frac{(1 - C^{-1})\wedge e^{-\frac{1}{2}}}{\sqrt{2}(D + 2)},
\end{align}
where $C$ is defined in \eqref{MS:def}.
Then if $M(\cC, d_{G_0}, \log |\cC|) \rightarrow \infty$ as $n \rightarrow \infty$, there exists a sufficiently small constant $\kappa$ in \eqref{scaling:conditions} (depending on $D, C, L$) such that $\liminf_{n \rightarrow \infty} \gamma(\cS_0(\theta, s), \cS_1(\theta, s)) = 1$.
\end{theorem}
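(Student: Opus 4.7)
The strategy is a mixture version of Le Cam's two-point method, taking the prior on $\cS_1$ to be uniform over a large packing of the divider. First, I would pick $G_0 \in \cG_0$ with $d_{\max}(G_0) \leq D$ and a divider $\cC$ with null base $G_0$ attaining the maximum in \eqref{scaling:conditions}, and let $N_r \subset \cC$ be a maximal $r$-packing with $r = \log |\cC|$, so that $|N_r| = \exp(M(\cC, d_{G_0}, \log|\cC|))$. The null precision matrix is $\bTheta_0 = \Ib_d + \theta\, \Ab_{G_0}$ where $\Ab_{G_0}$ is the adjacency matrix of $G_0$, and for each $e = (j,k) \in N_r$ the alternative is $\bTheta_e = \bTheta_0 + \theta\, \Eb_e$, where $\Eb_e \in \RR^{d \times d}$ has a $1$ in positions $(j,k)$ and $(k,j)$ and zeros elsewhere. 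The upper bound on $\theta$ in \eqref{scaling:conditions}, combined with $\|\Ab_{G_0}\| \leq D$ and $\|\Eb_e\| = 1$, will ensure $\bTheta_0, \bTheta_e \in \cM(s)$; the divider property yields $G(\bTheta_e) \in \cG_1$; and the minimum absolute off-diagonal entry of both matrices is exactly $\theta$, so $\bTheta_0 \in \cS_0(\theta,s)$ and $\bTheta_e \in \cS_1(\theta,s)$.

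Next I set $\bar{\PP}_1 := |N_r|^{-1} \sum_{e \in N_r} \PP_{\bTheta_e}^{\otimes n}$. By Le Cam's inequality $\gamma(\cS_0, \cS_1) \geq 1 - \TV(\PP_{\bTheta_0}^{\otimes n}, \bar{\PP}_1)$, and since $\TV^2 \leq \chi^2/4$, it suffices to show $\chi^2(\bar{\PP}_1, \PP_{\bTheta_0}^{\otimes n}) \to 0$. Tensorization plus the standard mixture identity gives
\[
\chi^2(\bar{\PP}_1, \PP_{\bTheta_0}^{\otimes n}) + 1 \;=\; \frac{1}{|N_r|^2}\sum_{e, e' \in N_r}\!\left(\int \frac{p_e(\bx)\,p_{e'}(\bx)}{p_0(\bx)}\,d\bx\right)^{\!n}.
\]
A direct Gaussian computation, using $\bSigma_0 = \bTheta_0^{-1}$ and $\bDelta_e = \theta \Eb_e$, evaluates the inner integral as $\sqrt{\det(\Ib_d + \bSigma_0 \bDelta_e)\det(\Ib_d + \bSigma_0 \bDelta_{e'})/\det(\Ib_d + \bSigma_0(\bDelta_e + \bDelta_{e'}))}$. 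Taking logarithms and expanding each factor via $\log\det(\Ib_d + M) = \sum_{k \geq 1} \frac{(-1)^{k+1}}{k}\Tr(M^k)$, the ``pure'' traces $\Tr((\bSigma_0 \bDelta_e)^k)$ and $\Tr((\bSigma_0 \bDelta_{e'})^k)$ cancel, leaving only ``mixed'' traces $\Tr(\bSigma_0\bDelta_{s_1}\cdots\bSigma_0\bDelta_{s_k})$ with $s \in \{e,e'\}^k$ containing both symbols.

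The main obstacle is bounding these mixed traces by the geodesic distance $r$, and this is where the closed-walks picture enters. The plan is to substitute the Neumann series $\bSigma_0 = \sum_{p \geq 0}(-\theta)^p \Ab_{G_0}^p$ (convergent because $\theta D < 1/\sqrt 2$ by the second term in \eqref{scaling:conditions}) into each mixed trace, rewriting it as a sum of signed closed walks on $G_0$ that visit the marked edges $s_1, \ldots, s_k$ in order, each non-marked step carrying a factor of $\theta$. Because $s$ is mixed, every such walk must make at least two transitions between the disjoint edges $e$ and $e'$, each leg costing $\geq r = d_{G_0}(e,e')$ steps of $G_0$, which forces a multiplicative factor of at least $(\theta D)^{2r}$ once the walk counts $(\Ab_{G_0}^p)_{uv} \leq D^p$ are applied. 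Summing the geometric series in $k$ then yields
\[
\Bigl|\log\!\int \tfrac{p_e p_{e'}}{p_0}\,d\bx\Bigr| \;\lesssim\; \theta^2 (\theta D)^{2r} \;\;(e \neq e'), \qquad \Bigl|\log\!\int \tfrac{p_e^2}{p_0}\,d\bx\Bigr| \;\lesssim\; \theta^2.
\]
Feeding these bounds back with $n \theta^2 \leq \kappa^2 M(\cC, d_{G_0}, \log|\cC|)$, the diagonal contribution is at most $|N_r|^{-1}\exp(Cn\theta^2) \leq \exp(M(C\kappa^2 - 1)) \to 0$ for $\kappa$ chosen small enough (using $M \to \infty$), while each off-diagonal term is $\exp(O(n\theta^2 (\theta D)^{2\log|\cC|})) \to 1$ because the exponent is $O(\kappa^2 M \cdot |\cC|^{-2\log(1/(\theta D))}) \to 0$. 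Combining the two gives $\chi^2 \to 0$, hence $\TV \to 0$, and therefore $\liminf_{n\to\infty} \gamma(\cS_0, \cS_1) = 1$.
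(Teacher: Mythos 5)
Your high-level strategy is identical to the paper's: choose a null base $G_0$ and a divider $\cC$, work with a maximal $\log|\cC|$-packing $N_r$, build $\bTheta_0 = \Ib_d+\theta\Ab_0$ and $\bTheta_e = \bTheta_0+\theta\Eb_e$, apply Le Cam plus a chi-square/second-moment bound on the uniform mixture, and then reduce the second moment to a product of determinant ratios that you expand via $\log\det(\Ib+\Mb)=\sum_{k\geq1}(-1)^{k+1}\Tr(\Mb^k)/k$. The matrix construction step, the chi-square identity, and the Gaussian integral $\int p_e p_{e'}/p_0 = \bigl(\det\bTheta_e\det\bTheta_{e'}/(\det\bTheta_0\det\bTheta_{e,e'})\bigr)^{1/2}$ are all exactly the same computation the paper does in Steps~1--2.

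Where you diverge from the paper is in \emph{which} matrix you feed into the $\log\det$ expansion. You expand in powers of $\bSigma_0\bDelta_e$ and then insert the Neumann series $\bSigma_0=\sum_{p\ge 0}(-\theta)^p\Ab_0^p$, so that the ``mixed'' traces become signed closed walks on $G_0$ with marked $\Eb_{s_i}$-transitions and variable-length $\Ab_0$ legs. The paper instead expands directly in $\theta\Ab_0$, $\theta(\Ab_0+\Ab_e)$ and $\theta\Ab_{e,e'}$, which keeps the walks inside the augmented graph $G(\Ab_{e,e'})$; the cancellation is then the clean statement $T_1^k+T_2^k=0$ for $k<2d_{G_0}(e,e')+2$, and the tail $k\ge 2d_{G_0}(e,e')+2$ is controlled in one shot by Lidskii's inequality applied to $\Ab_0+\Ab_{e'}$ (Lemma~\ref{gen:lidskii} and the observation that $-\Ab_e$ has eigenvalues $1,-1,0^{d-2}$). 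Your route is valid but buries the same cancellation inside a double sum over the expansion index $k$ \emph{and} the Neumann exponents $p_1,\dots,p_k$, and the geometric-series argument you wave at (``summing the geometric series in $k$'') actually needs care: each $\bSigma_0$ entry costs $|(\bSigma_0)_{uv}|\le (\theta D)^{d_{G_0}(u,v)}/(1-\theta D)$, each $k$ has $2^k$ contributing index paths and $2^k-2$ mixed symbol sequences, so you need $4\theta/(1-\theta D)<1$ to close the sum. That does follow from the constant in \eqref{scaling:conditions} (because $\theta(D+2)\le e^{-1/2}/\sqrt 2$ makes $\theta D<1/\sqrt2$ and $\theta$ itself small), but it is not automatic and should be verified explicitly. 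In short: your argument is correct and can be made rigorous, but the paper's direct adjacency-matrix expansion plus Lidskii avoids the $k$-vs-$p$ bookkeeping and gives the bound $T_1^k+T_2^k\le 2(\|\Ab_0\|_2+2)^k$ in one step, which is why the paper's Step~3 is considerably shorter than what your Neumann-series route would take to write out in full. Your Step~4 also quietly inherits the packing restriction from the start rather than proving the intermediate bound \eqref{useful:bound} on all of $\cC$; that is a legitimate shortcut for this theorem, though it loses the more refined per-distance control the paper establishes along the way.
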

Theorem \ref{suff:cond:lower:bound} allows us to quantify the signal strength necessary for combinatorial inference via combinatorial constructions. The radius $\log|\cC|$ of the packing entropy in (\ref{scaling:conditions}) ensures that the pairs of distinct edges are sufficiently far apart. The constant term $\frac{(1 - C^{-1})\wedge e^{-\frac{1}{2}}}{\sqrt{2}(D + 2)}$ in (\ref{scaling:conditions}) ensures that precision matrices with signal strength $\theta$ indeed belong to  $\cM(s)$. 

In Section \ref{edge:removal:NAS:extension} of the supplement we also provide a \textit{deletion-edge} version of Theorem \ref{suff:cond:lower:bound}, which proceeds in the opposite direction, i.e., it starts from an alternative graph $G_1$ and deletes edges from the divider $\cC$ to produce graphs under the null hypothesis. This strategy can yield sharper results than Theorem \ref{suff:cond:lower:bound} in certain situations, and we illustrate this with two examples in Section \ref{edge:removal:NAS:extension}.

\begin{proof}[Proof Sketch] The proof of Theorem \ref{suff:cond:lower:bound} can roughly be divided into four steps. Full details of the proof will be provided in Section \ref{proof:of:suff:lower:bound:sec}. 

\vspace{0.1in}
{\bf Step 1} ({\it Connect the structural parameters to geometric parameters}).  Given the adjacency matrices of the null and alternative graphs $G_0$ and $\{G_e\}_{e \in \cC}$, we construct the corresponding precision matrices and make sure that they belong to $\cS_0(\theta,s)$ and $\cS_1(\theta,s)$.
\vspace{0.1in}

{\bf Step 2} ({\it Construct minimax risk lower bound via Le Cam's method}). The second step uses Le Cam's method to lower bound $\gamma(\cS_0(\theta, s), \cS_1(\theta, s))$. This requires us to evaluate the chi-square divergence between a normal and a mixture normal distribution.  The chi-square divergence can be expressed via ratios of determinants. In particular, we show that the log chi-square divergence can be equivalently re-expressed via an infinite sum of differences among trace operators of adjacency matrix powers.

\vspace{0.1in}

{\bf Step 3} ({\it Represent the lower bound by the number of shortest closed walks in the graph}). 
 In this step we control the deviations of the differences of the trace operators. Since the trace of the power of an adjacency matrix equals the number of closed walks within the corresponding graph, we eliminate the trace powers which are smaller than the shortest closed walks. The traces of the higher powers are handled via matrix perturbation bounds.
 
 \vspace{0.1in}
 {\bf Step 4} ({\it Characterize the smallest magnitude of the geometric parameter using the packing entropy}). 
Lastly, we show that condition (\ref{scaling:conditions}) ensures that the closed walks on the packing of the divider are sufficiently lengthy, which implies that the chi-square divergence vanishes when the signal strength $\theta$ is small. 
\end{proof}

A typical application of Theorem \ref{suff:cond:lower:bound} proceeds by constructing a graph $G_0$ under the null hypothesis, which is one edge apart from the alternative. Next, one builds a divider $\cC$ with as large as possible packing number, so that \textit{adding} any edge from $\cC$ to $G_0$ results in an alternative graph. Clearly choosing the graph $G_0$ is crucial for this strategy to work. Below we give several examples of explicit constructions of $G_0$ and divider. At the end of the section we also provide somewhat general guidance how to select $G_0$.
\subsection{Some Applications} \label{some:applications}

In this section we give several examples of combinatorial testing, which readily fall into the framework developed in Section \ref{sec:generic:lower}. Although some of the examples We show one more additional example on self-avoiding paths in Section \ref{edge:removal:NAS:extension} of the supplement.

\begin{example}[Connectivity Testing]\label{conn:testing} 
Consider the sub-decomposition $\cG_0 = \{G \in \cG~|~ G \mbox{ disconnected} \}$ vs $\cG_1 = \{G \in \cG~|~ G \mbox{ connected}\}$. We construct a base graph $G_0 := (\overline V, E_0)$ where 
$$E_0 := \{(j, j+1)_{j = 1}^{\lfloor d/2\rfloor - 1}, (\lfloor d/2\rfloor,1), (j, j +1)_{j = \lfloor d/2\rfloor + 1}^{d}, (\lfloor d/2\rfloor + 1, d)\},$$
and let $\cC := \{(j, \lfloor d/2\rfloor + j)_{j = 1}^{\lfloor d/2\rfloor}\} \mbox{ (see Fig \ref{conn:graph:ex2}).}$
Clearly adding any edge from $\cC$ to $G_0$ connects the graph, so $\cC$ is a single edge divider with a null base $G_0$. Furthermore, the maximum degree of $G_0$ equals $2$ by construction. To construct a packing set of $\cC$, we collect all edges $(j, \lfloor d/2\rfloor + j)$ satisfying $\lceil \log |\cC|\rceil$ divides $j$ except if $j > \lfloor d/2\rfloor - \lceil \log |\cC|\rceil$. This procedure results in a packing set with radius at least $\lceil \log |\cC|\rceil $ which has cardinality of at least $\Big \lfloor \frac{|\cC|}{\lceil \log |\cC|\rceil}\Big\rfloor -1$. Therefore 
$$M(\cC, d_{G_0}, \log |\cC|) \geq \log \Big[\Big \lfloor \frac{|\cC|}{\lceil \log |\cC|\rceil}\Big\rfloor -1\Big] \asymp \log |\cC| \asymp \log d.$$ 
Theorem \ref{suff:cond:lower:bound} implies that the asymptotic minimax risk is $1$ if $\theta < \kappa \sqrt{\log d/n} \wedge \frac{(1 - C^{-1})\wedge e^{-\frac{1}{2}}}{4\sqrt{2}}$. 
\begin{figure}
\centering
\begin{tikzpicture}[scale=.6]
\SetVertexNormal[Shape  = circle,
                  FillColor  = cyan!50,
                  MinSize = 11pt,
                  InnerSep=0pt,
                  LineWidth = .5pt]
   \SetVertexNoLabel
   \tikzset{EdgeStyle/.style= {thin,
                                double          = red!50,
                                double distance = 1pt}}
                                \begin{scope}[rotate=90]
                                \grEmptyCycle[prefix=a,RA=3]{5}{1}%
                                \begin{scope}\grEmptyCycle[prefix=b,RA=1.5]{5}{1}\end{scope} \end{scope}
       \tikzset{EdgeStyle/.style= {dashed,thin,
                                double          = red!50,
                                double distance = 1pt}}
                                                                       \tikzset{LabelStyle/.style = {below, fill = white, text = black, fill opacity=0, text opacity = 1}}
                                    \Edge[label=$e\protect\vphantom{'}$](a1)(b1)
    \Edge[label=$e'$](a4)(b4)
    \Edge(a2)(b2)
    \Edge(a3)(b3)
    \Edge(a0)(b0)
            \tikzset{EdgeStyle/.style= {thin,
                                double          = red!50,
                                double distance = 1pt}}
     \Edge(b0)(b1)
     \Edge(b1)(b2)
     \Edge(b2)(b3)
     \Edge(b3)(b4)
     \Edge(b0)(b4)
     \Edge(a0)(a1)
     \Edge(a1)(a2)
     \Edge(a2)(a3)
     \Edge(a3)(a4)
     \Edge(a0)(a4)
\end{tikzpicture}
\vspace{-1em}
\caption{The graph $G_0$ with two edges $e,e' \in \cC: d_{G_0}(e, e') = 2$, $d = 10$.}\label{conn:graph:ex2}\vspace{-8pt}
\end{figure}
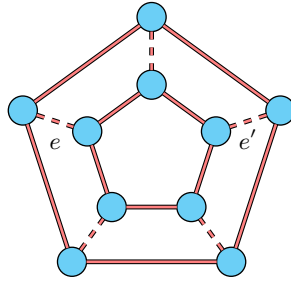
\end{example}

\begin{example}[$\fm+1$ vs $\fm$ Connected Components, $\fm \geq \sqrt{d} $] \label{conn:comp} 
Let $\fm \geq \sqrt{d}$ be an integer.
In this example we are interested in testing whether the graph contains $\fm + 1$ connected components vs $\fm$ connected components. The reason to assume $\fm \geq \sqrt{d}$ is to make sure there are sufficiently many edges for constructing a single edge divider in order to obtain sharp bounds. The case when $\fm < \sqrt{d}$ is treated in Example \ref{conn:comp:new} via a different divider construction (In fact, the case $\fm < \sqrt{d}$  requires deleting edges from the alternative rather than adding edges to the null base. See Section \ref{edge:removal:NAS:extension} of the supplement for more details). Formally we have the sub-decomposition $\cG_0 = \{G \in \cG ~|~ G \mbox{ has } \geq \fm + 1 \mbox{ connected components} \}$ vs $\cG_1 = \{G \in \cG ~|~ G \mbox{ has} \leq \fm \mbox{ connected components}\}$. Construct the null base graph $G_0 = (\overline V, E_0)$, where $E_0 := \{(j, j+1)_{j = 1}^{d - \fm - 1}\},$ and we let $\cC := \{(j, j+1)_{j = d - \fm}^{d - 1}\} \mbox{  (see Fig \ref{path:graph:conn:comp:ex2})}.$
Adding an edge $e \in \cC$ to $G_0$ converts the base graph $G_0$ into a graph with $\fm$ connected components and therefore $\cC$ is a single edge divider with a null base $G_0$. Additionally, the maximum degree of $G_0$ is $2$ by construction. Note that the distance between any two edges in $\cC$ is $0$ if and only if they share a common vertex, and $\infty$ in all other cases. This implies that we can construct a packing set by taking every other edge in the set $\cC$. We conclude that $M(\cC, d_{G_0}, \log |\cC|) \asymp \log (|\cC|/2) \asymp \log d$. Hence, by Theorem \ref{suff:cond:lower:bound} the minimax risk goes to $1$ when $\theta < \kappa \sqrt{\log d/n} \wedge \frac{(1 - C^{-1})\wedge e^{-\frac{1}{2}}}{4\sqrt{2}}$. 
\vspace{-0em}
\begin{figure}
\centering
\begin{tikzpicture}[scale=.7]
\SetVertexNormal[Shape      = circle,
                  FillColor  = cyan!50,
                  MinSize = 11pt,
                  InnerSep=0pt,
                  LineWidth = .5pt]
   \SetVertexNoLabel
   \tikzset{LabelStyle/.style = {below, fill = white, text = black, fill opacity=0, text opacity = 1}}
   \tikzset{EdgeStyle/.style= {thin,
                                double          = red!50,
                                double distance = 1pt}}
    \begin{scope}\grPath[prefix=a,RA=2]{4}\end{scope}
                                        \Edge(a2)(a3)
       \tikzset{EdgeStyle/.style= {dashed,thin,
                                double          = red!50,
                                double distance = 1pt}}
     \begin{scope}[shift={(6,0)}]\grPath[prefix=b,RA=2]{4}\end{scope}
     \Edge[label=$e\protect\vphantom{'}$](a3)(b1)
          \Edge[label=$e'$](b2)(b3)
\end{tikzpicture}
\vspace{-1em}
\caption{Null base graph $G_0$ with $d - \fm - 1$ edges, divider $\cC$ (dashed), $d_{G_0}(e, e') = \infty$, $d$ = 7.} \label{path:graph:conn:comp:ex2}\vspace{-8pt}
\end{figure}
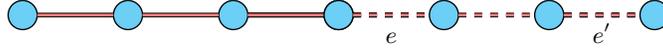
\end{example} 
\begin{example}[Cycle Testing] \label{cycle:example} 
Consider testing whether the graph is a forest vs the graph contains a cycle. Let $\cG_0 = \{G \in \cG ~|~ G \mbox{  is cycle-free}\}$ and $\cG_1 = \{G \in \cG ~|~ G \mbox{  contains a cycle}\}$. Define the null base graph $G_0 = (\overline V, E_0)$, where $E_0 := \{(j,j+1)_{j = 1}^{d-1}\}$. Let the edge set $\cC := \{(j, j + 2)_{j = 1}^d\},$ where the addition is taken modulo $d$ (refer to Fig \ref{ham:cycle:graph} for a visualization). By construction we have $G_0 \in \cG_0$ and $|\cC| = d$. Adding any edge from $\cC$ to $G_0$ results in a graph with a cycle, and hence the edge set $\cC$ is a single edge divider with a null base $G_0$. The maximum degree of $G_0$ equals $2$, and is thus bounded. Moreover, there exists a $(\log |\cC|)$-packing set of $\cC$ of cardinality at least $\frac{|\cC| - 2}{\lceil \log |\cC| \rceil + 2}$ which can be produced by collecting the edges $(j, j + 2)$ for $j = k(\log|\cC| + 2) + 1$ for $k = 0, 1,\ldots$ and $j \leq d - 2$. The last observation implies that $M(\cC, d_{G_0}, \log |\cC|) \asymp \log \frac{|\cC| - 2}{\lceil \log |\cC| \rceil + 2} \asymp \log d$. Hence by Theorem \ref{suff:cond:lower:bound} we conclude that the minimax risk goes to $1$ when $\theta < \kappa \sqrt{\log d/n} \wedge \frac{(1 - C^{-1})\wedge e^{-\frac{1}{2}}}{4\sqrt{2}}$.
\vspace{-1em}

\begin{figure}[H] 
\centering
\begin{tikzpicture}[scale=.7]
\SetVertexNormal[Shape      = circle,
                  FillColor  = cyan!50,
                  MinSize = 11pt,
                  InnerSep=0pt,
                  LineWidth = .5pt]
   \SetVertexNoLabel
   \tikzset{EdgeStyle/.style= {thin,
                                double          = red!50,
                                double distance = 1pt}}
                                                                \begin{scope}[rotate=90]\grEmptyCycle[prefix=a,RA=2]{7}{1}\end{scope}
                                                                    \Edge(a0)(a1)
                                                                    \Edge(a1)(a2)
                                                                    \Edge(a2)(a3)
                                                                    \Edge(a4)(a5)
                                                                    \Edge(a5)(a6)
                                                                    \Edge(a6)(a0)
                                                                    \tikzset{LabelStyle/.style = {right, fill = white, text = black, fill opacity=0, text opacity = 1}}
								\tikzset{EdgeStyle/.append style = {dashed, thin, bend right}}
                                                                    \Edge[label=$e\protect\vphantom{'}$](a3)(a1)
                                                                    \Edge(a1)(a6)
                                                                    								\tikzset{LabelStyle/.style = {left, fill = white, text = black, fill opacity=0, text opacity = 1}}
                                                                    \Edge[label=$e'$](a6)(a4)
                                                                    \Edge(a2)(a0)      
								 \Edge(a0)(a5)         
								 \Edge(a4)(a2)    
								 \Edge(a5)(a3)      
\end{tikzpicture}
\vspace{-1em}
\caption{The graph $G_0$ with two (dashed) edges $e,e' \in \cC$ such that $d_{G_0}(e, e') = 2$, $d = 7$.}\label{ham:cycle:graph}\vspace{-8pt}
\end{figure}
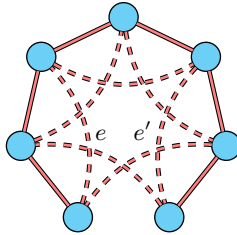

\end{example}

\begin{example}[Tree vs Connected Graph with Cycles]
The construction in Example \ref{cycle:example}  also shows that we have the same signal strength limitation to test for cycles, even if we restrict to the subclass of connected graphs, i.e., the class of graphs under the null hypothesis is the class of all trees $\cG_0 = \{G \in \cG~|~ G \mbox{ is a tree}\}$, and the alternative is the class of all connected graphs which contain a cycle --- $\cG_1 \in \{G \in \cG~|~ G \mbox{ is connected but is not a tree}\}$.
\end{example}

\begin{example}[Triangle-Free Graph] Consider testing whether the graph contains a triangle (i.e., $3$-clique). More formally let the decomposition of $\cG$ be $\cG_0 = \{G \in \cG~|~ G \mbox{ is triangle-free}\}$ and $\cG_1 = \{G \in \cG~|~ \exists \mbox{ $3$-clique in } G\}$. It is clear that in this case we can reuse the set $\cC$ and its null base $\tilde G_0 = (\overline V, E_0 \cup \{(1,d)\})$, where $E_0$ and $\cC$ are taken as in Example \ref{cycle:example}.
\end{example}

\subsection{General remarks on choosing a null base $G_0$} The following result sheds some light on reasonable choices of $G_0$.

\begin{proposition}\label{smart:bound} Let the graph $G_0 = (\overline V, E_0) \in \cG_0$ have bounded maximum degree. Suppose there exist constants $0 < c, \gamma \le1$ so that for each vertex $v \in \overline V$, one can find a set of vertices $W_v$ satisfying $|W_v| \geq c d^\gamma$ and for all $w \in W_v$, we have $(\overline V, E_0 \cup\{(v,w)\}) \in \cG_1$. Then there exists a divider with null base $G_0$ satisfying $M(\cC, d_{G_0}, \log |\cC|) \asymp \log d$.
\end{proposition}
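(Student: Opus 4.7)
The strategy is to build a divider $\cC$ whose edges are already pairwise separated in $G_0$, so that $\cC$ itself serves as a $\log|\cC|$-packing. Let $D$ denote the bounded maximum degree of $G_0$, so that every ball $B_{G_0}(v,R) := \{u : d_{G_0}(v,u) \le R\}$ contains at most $(D+1)^R$ vertices. Fix the positive constant $\beta := \gamma/(4(1+\log(D+1)))$ and set $k := \lfloor d^\beta\rfloor$ together with $R := \lceil \beta\log d\rceil$. These choices simultaneously guarantee $\log k \le R$, $k(D+1)^{R-1} = o(d)$, and $2k(D+1)^{R-1} \le C\, d^{\gamma/4} = o(cd^\gamma)$ for a constant $C$ depending only on $D$, since $\beta(1+\log(D+1)) = \gamma/4$.

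The first step is to construct greedily an $R$-separated vertex set $V^* = \{v_1,\dots,v_k\} \subseteq \overline V$ by adding at each step any vertex outside $\bigcup_{j<i} B_{G_0}(v_j, R-1)$; this is feasible because the accumulated exclusion region has size at most $k(D+1)^{R-1} = o(d)$. Then, processing the $v_i$'s in order, pick $w_i \in W_{v_i}$ outside the enlarged exclusion zone $\bigcup_{j\neq i} B_{G_0}(v_j, R-1) \cup \bigcup_{j<i} B_{G_0}(w_j, R-1)$. This zone has size at most $2k(D+1)^{R-1}$, which is asymptotically dominated by $|W_{v_i}| \ge cd^\gamma$, so a valid $w_i$ always exists; the distance requirement moreover forces $w_i$ to be distinct from every previously chosen endpoint.

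Define $\cC := \{(v_i, w_i) : 1\le i\le k\}$. Since each $w_i \in W_{v_i}$, every $(v_i, w_i)$ satisfies $(\overline V, E_0 \cup \{(v_i,w_i)\}) \in \cG_1$, and Definition~\ref{single:edge:null:alt:sep} certifies $\cC$ as a single-edge divider with null base $G_0$ of cardinality $|\cC| = k$. For any $i \neq j$, all four pairwise geodesic distances between the endpoints of $(v_i,w_i)$ and $(v_j,w_j)$ are at least $R \ge \log|\cC|$ by construction, so $d_{G_0}((v_i,w_i),(v_j,w_j)) \ge \log|\cC|$ and the entire set $\cC$ is its own $\log|\cC|$-packing. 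This yields $M(\cC, d_{G_0}, \log|\cC|) \ge \log k = \beta\log d + O(1)$, while the trivial upper bound $M \le \log|\cC| \le \beta\log d + 1$ closes the loop to $M \asymp \log d$.

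The main technical obstacle is calibrating the exponent $\beta$ to meet three competing constraints at once: (i) an $R$-separated vertex packing of size $\ge d^\beta$ must exist in $G_0$, even though balls can swell as $(D+1)^R \asymp d^{\beta\log(D+1)}$; (ii) the cumulative exclusion zone of size $\asymp d^{\beta(1+\log(D+1))}$ must stay below $cd^\gamma$ so that the greedy pick of $w_i$ always succeeds; and (iii) $R$ must dominate $\log|\cC|$ so that $\cC$ is genuinely a $\log|\cC|$-packing. The choice $\beta = \gamma/(4(1+\log(D+1)))$ forces $\beta(1+\log(D+1)) = \gamma/4 < \gamma$ and $\beta < 1/(1+\log(D+1))$, satisfying all three constraints with a strictly positive exponent that depends only on $\gamma$ and the degree bound.
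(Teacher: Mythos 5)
Your proof is correct and takes essentially the same approach as the paper: both constructions greedily select well-separated edges, bounding the cumulative exclusion zone via the degree bound on ball sizes, and calibrate the separation radius at $\Theta(\log d)$ so the resulting divider is its own $\log|\cC|$-packing. The only cosmetic differences are that you fix $k$ and $R$ upfront and select all the $v_i$'s before the $w_i$'s, whereas the paper's proof alternates, selecting a pair $(v_i,w_i)$ and deleting the corresponding exclusion zone at each step; the underlying bookkeeping (exclusion-zone size $\ll cd^\gamma$, separation $\geq \log|\cC|$) is identical.
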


Of note, for any edge set $\cC$ one has $M(\cC, d_{G_0}, \log |\cC|) \leq \log {d \choose 2} \asymp \log d$, which implies that graphs $G_0 \in \cG_0$ as in Proposition \ref{smart:bound} give scalar optimal bounds. The existence of such graphs is dependent on the sub-decomposition $(\cG_0, \cG_1)$. Notably, all examples in Section \ref{some:applications} fall under the framework of Proposition \ref{smart:bound}. Its proof can be found in the supplement.

\begin{remark}\label{supp:recovery:remark} When $\bTheta \in \cM(s)$, the results in Theorem \ref{suff:cond:lower:bound} (and Theorem \ref{suff:cond:lower:bound:deletion:NAS}) suggest that a signal strength of order $\sqrt{\log d/n}$ is necessary for controlling the minimax risk (\ref{risk:def}). In fact, Theorem 7 of \cite{Cai2011Constrained} shows that under such signal strength condition, support recovery of $\bTheta$ is indeed achievable, which  further implies that controlling the minimax risk (\ref{risk:def}) is possible.  A naive procedure for matching the lower bound is to first perfectly recover the graph structure. Then construct a test based on examining whether the graph has the desired combinatorial structure. Though such an approach is theoretically feasible, it is not practical. First, such an approach is overly conservative and does not allow us to tightly control the type I error at a desired level. Second, such an approach crucially depends on having a suitable thresholding parameter to estimate the graph, which is in general not realistic.
\end{remark}

In the next section, we present a family of testing procedures, which do not require perfect support recovery of the full graph. Compared to the naive approach described in Remark \ref{supp:recovery:remark}, our tests explicitly exploit the combinatorial structure of the targeted hypotheses and can control the type I error at any desired level.

\section{Alternative Witness Test}\label{sparse:graph:models:inf}

We start with a high-level outline of a new combinatorial inference approach for graphical models. For clarity we mainly present using the case of Gaussian graphical models, and comment on extensions to other graphical models in Section \ref{extensions:GM:OTHERS} of the supplement.  

Importantly the algorithms we develop in this section apply to alternative graph classes which are stable under edge addition. Formally, for any $G = (\overline V, E) \in \cG_1$ and any edge $e$, we require that the graph $(\overline V, E \cup \{e\}) \in \cG_1$. Note that due to edge addition stability under the alternative, the full graph $\overline G = (\overline V,\overline E)$ belongs to $\cG_1$. For a graph $G \in \cG_1$, define the following class of edge sets
\begin{align*}
\cW_1(G) = \{E ~|~ \forall E, ~ E \subseteq E(G) \Rightarrow (\overline V, E) \in \cG_1\}.
\end{align*}
The set $\cW_1(G)$ collects all edge sets forming graphs in $\cG_1$, which can be obtained by the graph $G$ via iteratively pruning one edge at a time. We use the shorthand notation
$$\overline \cW_1 = \cW_1(\overline G).$$
Consider the following parameter sets:
\begin{align}
\cS_0(s) &:= \Big \{\bTheta\in  \cM(s) ~|~ G(\bTheta) \in \cG_{0}\Big \}, \mbox{ and }\label{generic:S0:def:new} \\
\cS_1(\theta, s) &:= \Big \{\bTheta\in  \cM(s) ~|~ G(\bTheta) \in \cG_{1}, \max_{E \in \overline \cW_1}\min_{e \in E} |\Theta_e| \geq \theta \Big \}, \label{generic:S1:def:new}
\end{align}

The parameter set (\ref{generic:S0:def:new}) does not impose any assumption on the minimum signal strength, thus is  broader than the one defined in (\ref{generic:S0:def}). In Definition (\ref{generic:S1:def:new}), the signal strength is not imposed on all edges of the alternative graphs. In fact, we only need to impose the signal strength assumption on a subset of edges which can be obtained by pruning the complete graph. Such a condition is much weaker than the usual condition needed for perfect graph recovery. We note that for sub-decompositions $(\cG_0, \cG_1)$ satisfying $\cW_1(G) \subseteq \overline \cW_1$ for all $G \in \cG_1$, the parameter set (\ref{generic:S1:def:new}) is strictly larger than the parameter set (\ref{generic:S1:def}). 

Given $n$ independent samples $\bX_i \sim N(0, (\bTheta^*)^{-1})$ and a sub-decomposition $(\cG_0,\cG_1)$ we formulate a procedure for testing $\Hb_0: \bTheta^* \in \cS_0(s)$ vs $\Hb_1: \bTheta^* \in \cS_1(\theta, s)$. Let $\hat \bSigma := n^{-1}\sum_{i = 1}^n \bX_i^{\otimes 2}$ be the empirical covariance matrix. Let  $\hat \bTheta$ be any estimator of the precision matrix $\bTheta^*$ satisfying for some fixed constant $K > 0$:
\begin{align}
&~~~ ~~~ ~~~ ~~~ ~~~ ~~~ ~~~  \|\hat \bTheta - \bTheta^*\|_{\max} \leq K \sqrt{\log d/n}, \label{maxnorm:prec:ass} \\
& \|\hat \bTheta - \bTheta^*\|_{1}  \leq K s \sqrt{\log d/n}, ~~~ \|\hat  \bSigma\hat \bTheta - \Ib_d \|_{\max}  \leq K\sqrt{\log d/n}, \label{other:norms:prec:ass}
\end{align}
with probability at least $1 - d^{-1}$ uniformly over the parameter space $\cM(s)$ (recall definition (\ref{MS:def})). An example of an estimator of $\bTheta^*$ with this properties is the CLIME procedure introduced by \cite{Cai2011Constrained} (see also (\ref{generalCLIMEopt})). An overview of the \textit{alternative witness test} is sketched below:
\begin{itemize}
\item[i.] In the first step the alternative witness test identifies a \textit{minimal structure witnessing the alternative};
\item[ii.] In the second step the alternative witness test attempts to \textit{certify} that the minimal structure identified by the first step is indeed present in the graph.
\end{itemize}

Split the data $\cD = \{\bX_i\}_{i = 1}^n$ in two approximately equal-sized sets $\cD_1 = \{\bX_i\}_{i = 1}^{\lfloor n/2\rfloor}, \cD_2 = \{\bX_i\}_{i = \lfloor n/2\rfloor + 1}^{n}$ and obtain estimates $\hat \bTheta^{(1)}, \hat \bTheta^{(2)}$ on $\cD_1$ and $\cD_2$ correspondingly. For the first step, we exploit $\hat \bTheta^{(1)}$ to solve the following max-min combinatorial optimization problem
\begin{align}\label{min:struct:cert}
\widehat E :=  \argmax_{E \in \overline \cW_1}\min_{e \in E} |\hat \Theta^{(1)}_e|,
\end{align}
where edge sets with the smallest cardinality are prefered, and further ties are broken arbitrarily. Program (\ref{min:struct:cert}) aims at identifying the smallest edge set in $\overline \cW_1$ whose minimal signal is as large as possible. Given a consistent estimator $\hat \bTheta^{(1)}$ and sufficiently strong signal strength, the solution of (\ref{min:struct:cert}) identifies a minimal substructure of $G(\bTheta^*)$ belonging to $\cG_1$. This strategy is motivated by the definition of the alternative parameter set $\cS_1(\theta,s)$ (\ref{generic:S1:def:new}). We remark that solving  program (\ref{min:struct:cert}) could be computationally challenging for some combinatorial tests. However, for all examples considered in this paper, simple and efficient polynomial time algorithms are available. We refer to the graph $(\overline V, \widehat E)$ as the \textit{minimal structure witnessing the alternative}. Although the minimal structure witness is defined in full generality, for the ease of presentation we justify its validity on a case by case basis.

In the second step, the alternative witness test attempts to certify the witness structure using the estimate $\hat \bTheta^{(2)}$. Formally, we aim to test the hypothesis
\begin{align}\label{NAWT:cert:test}
\Hb_0: \exists ~ e \in \widehat E: \Theta^*_e = 0 \mbox{ vs } \Hb_1: \forall ~ e \in \widehat E: \Theta^*_e \neq 0.
\end{align}
A rejection of the null hypothesis in (\ref{NAWT:cert:test}) certifies the presence of the alternative witness structure. If the test fails to reject, the alternative witness test cannot reject the null structure hypothesis. In Section \ref{min:struct:cert:sec} we give a detailed description on how the second step of the test works.

\subsection{Minimal Structure Certification}\label{min:struct:cert:sec}

In this section we detail an algorithm for testing (\ref{NAWT:cert:test}). In fact, we present a general test for the following multiple testing problem
$$\Hb_0:  \exists ~e \in E \mbox{ s.t. } \Theta^*_{e} = 0 \mbox{   vs   }\Hb_1: \forall ~ e \in E, ~ \Theta^*_{e} \neq 0,$$
using the data $\cX = \{\bX_i\}_{i = 1}^n$, $\bX_i \sim N(0,(\bTheta^*)^{-1})$, where $E$ is a pre-given edge set. Following \cite{neykov2015aunified}, for any $j,k \in [d]$ we define the bias corrected estimate 
\begin{equation}
 \tilde \Theta_{jk} := \hat \Theta_{jk} - \frac{\hat \bTheta^T_{*j} (\hat \bSigma \hat \bTheta_{*k} - \eb_k)}{\hat \bTheta^T_{*j} \hat \bSigma_{*j}},
\end{equation}
where $\eb_k$ is a canonical unit vector with $1$ at its $k$\textsuperscript{th} entry. Under mild regularity conditions, we  show that if $\hat \bTheta$ satisfies (\ref{maxnorm:prec:ass}) and (\ref{other:norms:prec:ass}), $\tilde \Theta_{jk}$ admits the following Bahadur representation:
\begin{align}\label{bahadur:repr}
\sqrt{n}\tilde \Theta_{jk} = n^{-1/2} \sum_{i = 1}^n(\bTheta^{*T}_{*j}\bX_i^{\otimes 2}  \bTheta^*_{*k} - \Theta^*_{jk}) + o_p(1).
\end{align}
This motivates a mutiplier bootstrap scheme for approximating the distribution of the statistic  $\sqrt{n}\tilde \Theta_{jk}$ under the null hypothesis:
$$\hat W_{jk} = n^{-1/2} \sum_{i = 1}^n(\hat \bTheta^{T}_{*j}\bX_i^{\otimes 2}  \hat \bTheta_{*k} - \hat \Theta_{jk})\zeta_i,$$
where $\zeta_i \sim N(0,1), i \in [n]$ are independent and identically distributed. To approximate the null distribution of the statistic $\max_{(j,k) \in S} \sqrt{n}|\tilde \Theta_{jk}|$ over a subset $S \subseteq E$, let $c_{1-\alpha, S}$ denote the $(1-\alpha)$-quantile of the statistic $\max_{(j,k) \in S} |\hat W_{jk}|$ (conditioning on the dataset $\cX$). Formally, we let
\begin{align}\label{quantile:def}
c_{1-\alpha, S} := \inf_{t \in \RR} \Big\{t ~\Big |~ \PP\Big(\max_{(j,k) \in S}  |\hat W_{jk}| \leq t  ~\Big|~ \cX\Big) \geq 1- \alpha \Big\}.
\end{align}
As defined $c_{1-\alpha,S}$ is a population quantity (conditioning on $\cX$). In practice an arbitrarily accurate estimate $\hat c_{1-\alpha, S}$ of $c_{1-\alpha, S}$ can be obtained via Monte Carlo simulations. Below we describe a multiple edge testing procedure returning a subset $\hat{E^{\rm nc}} \subseteq E$ of rejected edges (hypotheses). Our procedure is based on the step-down construction of \cite{Chernozhukov2013Gaussian}, which is inspired by the multiple  testing method of \cite{romano2005exact}. 
\begin{algorithm}[t]
\normalsize
\caption{Multiple Edge Testing}\label{step:down}
\begin{algorithmic}
\STATE Initialize $\widehat{E^{\rm n}} \leftarrow E$;
\REPEAT
\STATE Reject $R \leftarrow \{e \in \widehat{E^{\rm n}}: \sqrt{n}|\tilde \Theta_{e}| \geq c_{1 - \alpha, \widehat{E^{\rm n}}}\}$
\STATE{Update $\widehat{E^{\rm n}} \leftarrow \widehat{E^{\rm n}} \setminus R$}
\UNTIL{$R = \varnothing$ or $\widehat{E^{\rm n}} = \varnothing$}
\RETURN{$\hat{E^{\rm nc}} \leftarrow E \setminus \widehat{E^{\rm n}}$}
\end{algorithmic}
\end{algorithm}
Decompose the edge set $E = E^{\rm n} \cup {E^{\rm nc}}$, where $E^{\rm n} \cap E^{\rm nc} = \varnothing$, $E^{\rm n}$ is the subset of true null edges and $E^{\rm nc}$ is the set of non-null edges. Define the parameter set
\begin{align}\label{def::kappa}
\cM_{E^{\rm n},E^{\rm nc}}(s,\kappa) = \bigg\{\bTheta \in \cM(s) \Big| \max_{e \in E^{\rm n}}|\Theta_{e}| = 0, \min_{e \in E^{\rm nc}}|\Theta_{e}| \geq \kappa \sqrt{\frac{\log d}{n}}\bigg\}.
\end{align} 
For a fixed edge set $E$, we say that an edge set $\widehat{E^{\rm r}}$ has strong control of the family-wise error rate if
\begin{align}
\limsup_{n \rightarrow \infty} \sup_{E^{\rm n} \subset E} \sup_{\bTheta^* \in \cM_{E^{\rm n},E^{\rm nc}}(s,\kappa)} \PP(E^{\rm n} \cap \widehat{E^{\rm r}} \neq \varnothing) \leq \alpha, \label{strong:FWER:control}
\end{align}
for some pre-specified size $\alpha > 0$. Our next result shows that Algorithm \ref{step:down} returns an edge set $\widehat{E^{\rm nc}}$ with strong control of the family-wise error rate.

\begin{proposition}[Strong Family-Wise Error Rate Test] \label{multiple:edge:testing:validity} 
Let $\bTheta^* \in \cM_{E^{\rm n},E^{\rm nc}}(s,\kappa)$ and furthermore:
\begin{align}\label{bootstrap:rates}
s\log(nd)\sqrt{\log d \log(nd)}/\sqrt{n} = o(1), ~~~~ (\log(dn))^6/n = o(1).
\end{align}
Then for any fixed edge set $E$, the output $\widehat{E^{\rm nc}}$ of Algorithm \ref{step:down}  satisfies (\ref{strong:FWER:control}). In addition, if the constant $\kappa$ in \eqref{def::kappa} satisfies $\kappa \geq C' C^4$ for a fixed absolute constant $C' > 0$, we have $
\liminf_{n \rightarrow \infty} \inf_{E^{\rm n} \subseteq E} \inf_{\bTheta^* \in \cM_{E^{\rm n},E^{\rm nc}}(s,\kappa)} \PP(E^{\rm nc} =\widehat{E^{\rm nc}}) = 1.$
\end{proposition}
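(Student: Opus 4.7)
The plan is to combine a uniform Bahadur linearization of the de-biased estimator $\tilde\Theta_e$ with the high-dimensional Gaussian and multiplier bootstrap approximations of Chernozhukov, Chetverikov and Kato (hereafter CCK), and then wrap these around the classical Romano--Wolf step-down argument. \textbf{Step 1 (uniform Bahadur representation).} Using the assumed rates \eqref{maxnorm:prec:ass}--\eqref{other:norms:prec:ass} on $\hat\bTheta$, I would sharpen \eqref{bahadur:repr} to hold uniformly over $E$:
\[
\max_{e=(j,k)\in E}\left|\sqrt{n}(\tilde\Theta_{jk}-\Theta^*_{jk}) - n^{-1/2}\sum_{i=1}^n(\bTheta^{*T}_{*j}\bX_i^{\otimes 2}\bTheta^*_{*k}-\Theta^*_{jk})\right| = o_p(1/\sqrt{\log d}).
\]
The dominant part of the remainder is bounded by $\sqrt{n}\,\|\hat\bTheta-\bTheta^*\|_1 \cdot \|\hat\bSigma\hat\bTheta-\Ib_d\|_{\max} = O_p(s\log d/\sqrt{n})$, and the first rate in \eqref{bootstrap:rates} makes this $o_p(1/\sqrt{\log d})$ after a union bound over the at most $d^2$ edges.

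\textbf{Step 2 (Gaussian approximation and bootstrap validity).} The linearized statistics are i.i.d.\ sub-exponential (quadratic forms in Gaussian vectors with operator norm controlled since $\bTheta^*\in\cM(s)$). CCK's high-dimensional Gaussian approximation yields
\[
\sup_{S\subseteq E,\,t\in\RR}\left|\PP\Bigl(\max_{e\in S}\sqrt{n}|\tilde\Theta_e-\Theta^*_e|\le t\Bigr) - \PP\Bigl(\max_{e\in S}|Z_e|\le t\Bigr)\right|\to 0,
\]
where $(Z_e)$ is the Gaussian vector matching the covariance of the score terms; uniformity in $S$ is free because subset-maximum distribution functions are monotone in $S$. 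The CCK bootstrap comparison inequality, together with $\|\hat\bTheta-\bTheta^*\|_{\max}=O_p(\sqrt{\log d/n})$ and the second rate in \eqref{bootstrap:rates}, validates the multiplier bootstrap:
\[
\sup_{S\subseteq E,\,t\in\RR}\left|\PP\Bigl(\max_{e\in S}|\hat W_e|\le t\,\Big|\,\cX\Bigr) - \PP\Bigl(\max_{e\in S}|Z_e|\le t\Bigr)\right| = o_p(1).
\]
Consequently $\PP(\max_{e\in S}\sqrt{n}|\tilde\Theta_e-\Theta^*_e|>c_{1-\alpha,S}) = \alpha + o(1)$ uniformly in $S\subseteq E$.

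\textbf{Step 3 (step-down FWER and consistency).} With single-step validity in hand, the Romano--Wolf/Chernozhukov et al.\ induction applies: on the event $\{\max_{e\in E^{\rm n}}\sqrt{n}|\tilde\Theta_e|\le c_{1-\alpha,E^{\rm n}}\}$, which has probability $\ge 1-\alpha-o(1)$, the monotonicity $c_{1-\alpha,S}\le c_{1-\alpha,S'}$ for $S\subseteq S'$ gives by induction on iterations that $\widehat{E^{\rm n}}\supseteq E^{\rm n}$ throughout the algorithm, so no true-null statistic ever crosses its threshold --- yielding \eqref{strong:FWER:control}. For consistency, the Gaussian max-quantile satisfies $c_{1-\alpha,\widehat{E^{\rm n}}}=O_p(\sqrt{\log d})$ because $|E|\le d^2$ and coordinate variances are bounded by a fixed power of $C$. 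Under $\kappa \ge C' C^4$ with $C'$ a sufficiently large absolute constant, each $e\in E^{\rm nc}$ satisfies $\sqrt{n}|\tilde\Theta_e|\ge \sqrt{n}|\Theta^*_e|-o_p(\sqrt{\log d})\ge (\kappa/C - o_p(1))\sqrt{\log d}$, which strictly exceeds the bootstrap threshold; all non-null edges are therefore rejected in the first iteration, and combined with Step~3 this gives $\widehat{E^{\rm nc}}=E^{\rm nc}$ with probability tending to $1$.

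The main technical obstacle is Step~1: producing the Bahadur remainder uniformly at the $o_p(1/\sqrt{\log d})$ level over all $|E|\le d^2$ edges, which requires delicate $\ell_\infty$--$\ell_1$ H\"older-type bounds on the cross product $\hat\bTheta^T_{*j}(\hat\bSigma\hat\bTheta_{*k}-\eb_k)$ and is what forces the precise scalings in \eqref{bootstrap:rates}. Steps~2--3 are then essentially deterministic invocations of the CCK theorems and the Romano--Wolf induction.
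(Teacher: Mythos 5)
Your proposal takes essentially the same route as the paper: the paper's proof also proceeds by establishing a uniform Bahadur remainder bound of order $s\log d/\sqrt{n}$ (Lemma \ref{unif:conv}), verifying the moment, variance and approximation-error conditions of Theorem 5.1 of \cite{Chernozhukov2013Gaussian} (which packages the multiplier bootstrap plus Romano--Wolf step-down into a single FWER guarantee), and then showing that $c_{1-\alpha,E}=O_p(\sqrt{\log d})$ so that every non-null edge is rejected in the first pass once $\kappa$ is a large enough multiple of $C^4$. The only ingredient you leave implicit is the consistency of the plug-in score variances $\hat\Delta_{jk}$ (Lemmas \ref{delta:consist} and \ref{lower:bound:var:lemma} in the paper), which the paper verifies explicitly as one of the CCK conditions, but this is a routine check and does not change the structure of the argument.
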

The first condition in (\ref{bootstrap:rates}) ensures the validity of the Bahadur representation in (\ref{bahadur:repr}).  The second condition in (\ref{bootstrap:rates}) is to guarantee validity of the high-dimensional bootstrap, and a similar condition is required by \cite{Chernozhukov2013Gaussian}. While the first condition of (\ref{bootstrap:rates}) is not necessarily sharp, it is nearly optimal by ignoring logarithmic terms of dimension and sample size comparing to the minimax rate established in \cite{ren2015asymptotic}.

Of note, when $\kappa$ is sufficiently large, Algorithm \ref{step:down} achieves exact control of the family-wise error rate, i.e., we have equality in (\ref{strong:FWER:control}). This happens since all edges in $E^{\rm nc}$ will be rejected with overwhelming probability, while the bootstrap comparison is asymptotically exact for the remaining edges $E^{\rm n}$. As a consequence of this result, if the null hypothesis set $\cS_0(s)$ considered in (\ref{generic:S0:def:new}) exhibits signal strength as in definition (\ref{generic:S0:def}), the alternative witness tests are exact. 

\begin{definition} For an edge set $E$ let $\hat{E^{\rm nc}}$ be the output of Algorithm \ref{step:down} on data $\cX$ with level $1-\alpha$. Define the following test function:
$$
\psi^B_{\alpha, E}(\cX) := \mathds{1}(E = \widehat{E^{\rm nc}}).
$$
$\psi^B_{\alpha, E}(\cX)$ tests whether the set $E$ is comprised only of non-null edges.
\end{definition}
\subsection{Examples} \label{struct:test:section}

In this section we describe practical algorithms based on the alternative witness test for testing problems outlined in Section \ref{some:applications}. Our tests can distinguish the null from the alternative hypotheses when the minimum signal strength is sufficiently large. As we shall see, the magnitude of the required signal strength  is precisely of  order $\sqrt{\log d/n}$ and therefore in view of Section \ref{sec:generic:lower} these tests are minimax optimal. Recall that we observe $n$ i.i.d. samples $\{\bX_{i}\}_{i = 1}^n$ from $\bX_{i} \sim N_d(0, (\bTheta^*)^{-1})$. We split the data into $\cD_1 = \{\bX_i\}_{i = 1}^{\lfloor n/2\rfloor}, \cD_2 = \{\bX_i\}_{i = \lfloor n/2\rfloor + 1}^{n}$ and obtain estimates $\hat \bTheta^{(1)}, \hat \bTheta^{(2)}$ on $\cD_1$ and $\cD_2$ correspondingly, for which (\ref{maxnorm:prec:ass}) and (\ref{other:norms:prec:ass}) hold. For space considerations, we present only the connectivity test in full details, and we elaborate on the minimal structures for the remaining tests. Full details can be found in Section \ref{detailed:upperbound:algos} of the supplement.

\subsubsection{Connectivity Testing}\label{conn:testing:sec}

This example proposes a new procedure for honestly testing whether $G(\bTheta^*)$ is a connected graph. Accordingly, the sub-decomposition is $\cG_0 := \{G \in \cG~|~ G \mbox{ disconnected} \}$ and $\cG_1 := \{G \in \cG~|~ G \mbox{ connected}\}$. The pair $(\cG_0, \cG_1)$ determines the parameter sets definitions (\ref{generic:S0:def:new}) and (\ref{generic:S1:def:new}). 

Finding the minimal structure witness (\ref{min:struct:cert}) reduces to finding a maximum spanning tree (MST) $\hat T$ on the full graph with edge weights $|\hat \Theta^{(1)}_e|$. The complexity of finding a MST is $O(d^2\log d)$, where $d$ is the number of vertices. We summarize the procedure below: 
\begin{algorithm}[H]
\normalsize
\caption{Connectivity Test}\label{al:conn}
\begin{algorithmic}
\STATE \textbf{Input:} $\cD=\{\bX_i\}_{i=1}^n$, level $0 < \alpha < 1$.
\STATE Split the data  $\cD_1 = \{\bX_i\}_{i = 1}^{\lfloor n/2\rfloor }, \cD_2 = \{\bX_i\}_{i = \lfloor n/2\rfloor + 1}^{n}$
\STATE Using $\cD_1$ obtain estimate $\hat \bTheta^{(1)}$ satisfying (\ref{maxnorm:prec:ass})
\STATE Find MST $\hat T$ on the full graph with weights $|\hat \Theta_e^{(1)}|$.
\STATE \textbf{Output:} $\psi^B_{\alpha, \widehat T}(\cD_2)$.
\end{algorithmic}
\end{algorithm}
\vspace{-10pt}
The results on connectivity testing are summarized in the following
\begin{corollary} \label{conn:test:prop} Let $\theta = (2K + C' C^4)\sqrt{\log d/\lfloor n/2\rfloor}$ for an absolute constant $C' > 0$ and  assume that (\ref{bootstrap:rates}) holds. Then for any fixed level $\alpha$, the test $\psi^B_{\alpha, \widehat T}(\cD_2)$ from the output of Algorithm \ref{al:conn} satisfies:
\begin{align*}
\limsup_{n \rightarrow \infty} \sup_{\bTheta^* \in \cS_0(s)} \PP(\mbox{reject } \Hb_0) \leq \alpha, ~~~~ \liminf_{n \rightarrow \infty} \inf_{\bTheta^* \in \cS_{1}(\theta, s)} \PP(\mbox{reject } \Hb_0) = 1.
\end{align*}
\end{corollary}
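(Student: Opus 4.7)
The proof has a two-sided structure, handling Type I and Type II errors separately, and both arguments reduce the claim to a conditional application of Proposition \ref{multiple:edge:testing:validity}. The plan is to exploit data splitting: conditional on $\cD_1$, the spanning tree $\widehat T$ is a \emph{fixed} edge set, so the multiple testing output on $\cD_2$ inherits the strong family-wise error rate guarantee of Proposition \ref{multiple:edge:testing:validity} uniformly over all realizations of $\widehat T$. Both bounds then follow by checking that $\widehat T$ has the right structure under each hypothesis.

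For Type I control, I would argue as follows. Under $\Hb_0$, the graph $G(\bTheta^*)$ is disconnected, while $\widehat T$ is always a spanning tree of the complete graph $\overline G$ on $\overline V$; hence $\widehat T$ must contain at least one edge $e$ with $\Theta^*_e=0$ (any spanning tree of a disconnected subgraph fails to be a spanning tree). Consequently $E^{\rm n}\cap \widehat T\neq\varnothing$ almost surely. Since the test rejects $\Hb_0$ only when every edge of $\widehat T$ is declared non-null, rejecting $\Hb_0$ forces every null edge in $\widehat T$ to be included in the rejected set $\widehat{E^{\rm nc}}$. Applying Proposition \ref{multiple:edge:testing:validity} to the fixed edge set $\widehat T$, conditionally on $\cD_1$, bounds the conditional probability of this event by $\alpha+o(1)$; averaging over $\cD_1$ yields the desired $\limsup\le\alpha$.

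For the power statement, I would first use the definition (\ref{generic:S1:def:new}) to produce an edge set $E^\star\in\overline\cW_1$ (hence $(\overline V,E^\star)$ connected) with $\min_{e\in E^\star}|\Theta^*_e|\geq \theta$; trimming if necessary, we may assume $E^\star$ is a spanning tree $T^\star$. The key combinatorial fact I would invoke is the classical bottleneck property of maximum spanning trees: for any weights $\{w_e\}$ and any other spanning tree $T$, a maximum-weight spanning tree $\widehat T$ satisfies $\min_{e\in\widehat T}w_e\geq\min_{e\in T}w_e$ (this follows from a standard cycle-swap matroid argument). Applied with $w_e=|\hat\Theta^{(1)}_e|$ and $T=T^\star$, combined with the max-norm bound $\|\hat\bTheta^{(1)}-\bTheta^*\|_{\max}\leq K\sqrt{\log d/\lfloor n/2\rfloor}$ from (\ref{maxnorm:prec:ass}), two applications of the triangle inequality give $\min_{e\in\widehat T}|\Theta^*_e|\geq \theta - 2K\sqrt{\log d/\lfloor n/2\rfloor}\geq C'C^4\sqrt{\log d/\lfloor n/2\rfloor}$ with probability $1-o(1)$. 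On this event every edge of $\widehat T$ is non-null, so conditionally on $\cD_1$ Proposition \ref{multiple:edge:testing:validity} yields $\PP(\widehat{E^{\rm nc}}=\widehat T\mid\cD_1)\to 1$, which is exactly $\psi^B_{\alpha,\widehat T}(\cD_2)=1$.

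The routine piece is just assembling the bookkeeping; the one point requiring care is the max-min/maximum spanning tree identity, since the test uses a \emph{maximum} spanning tree but the alternative parameter space is defined via a \emph{max-min} condition. I would state and cite this bottleneck property explicitly. A second subtlety is uniformity over $\bTheta^*$: because $\widehat T$ is a data-dependent edge set, the supremum over $\bTheta^*\in\cS_0(s)$ or $\cS_1(\theta,s)$ must be pushed through the conditional FWER bound; this is handled by first conditioning on $\cD_1$, applying Proposition \ref{multiple:edge:testing:validity} uniformly over the (now fixed) edge set $\widehat T$, and then taking expectation over $\cD_1$. Conditions (\ref{bootstrap:rates}) are exactly what the cited proposition requires, so no additional rates need to be verified.
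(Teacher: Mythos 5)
Your proposal is correct and follows the same overall route as the paper: data splitting, reduce to Proposition~\ref{multiple:edge:testing:validity} conditionally on $\cD_1$ with $\widehat T$ treated as a fixed edge set, and verify that $\widehat T$ contains a null edge under $\Hb_0$ and only strong-signal edges under $\Hb_1$. The paper does this through Lemma~\ref{ehat:consistency} (whose claims you reprove inline), so there is no new decomposition or lemma, but two details differ in a way worth noting. For the size bound you observe that $\widehat T$ is a spanning tree of the complete graph while $G(\bTheta^*)$ is disconnected, so $\widehat T$ must \emph{deterministically} use at least one cross-component edge $e$ with $\Theta^*_e=0$; this is cleaner than Lemma~\ref{ehat:consistency}'s high-probability formulation, and it also sidesteps the place where the lemma's proof appeals to a signal-strength guarantee on a spanning tree inside $\cC^*$, something the null class $\cS_0(s)$ in (\ref{generic:S0:def:new}) does not actually impose. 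For the power bound you make explicit the bottleneck property of maximum spanning trees ($\min_{e\in\widehat T}w_e$ is maximal over all spanning trees), which is what the paper's ``the tree will cover only edges with $|\hat\Theta_e|\ge(r-1)\theta/r$'' tacitly relies on; spelling it out is a genuine improvement in rigor even though it does not change the argument. The data-splitting/conditioning step is what justifies applying Proposition~\ref{multiple:edge:testing:validity} to the data-dependent $\widehat T$, and you handle it correctly.
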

\vspace{-10pt}
\subsubsection{Connected Components Testing}

Connected component testing is more general compared to connectivity testing. For $\fm \in [d - 1]$ let $\Hb_0: \mbox{\# connected components} \geq \fm + 1$ vs $\Hb_1: \mbox{\# connected components} \leq \fm$. Testing connectivity is a special case when $\fm = 1$. 

For a fixed $\fm \in [d-1]$ define the sub-decomposition $\cG_0 = \cF_{\fm + 1}$ and $\cG_1 = \cup_{j \leq \fm}\cF_{j}$ 
 where 
$$\cF_{j} := \{G \in \cG~|~ G \mbox{ has exactly $j$ connected components}\} \mbox{ for all } j \in [d].$$
The sub-decomposition $(\cG_0, \cG_1)$ also defines the parameter sets (\ref{generic:S0:def:new}) and (\ref{generic:S1:def:new}). Recall that a sub-graph $F$ of $G$, i.e., $E(F) \subset E(G)$ and $V(F) = V(G) = \overline V$, is called a \textit{spanning forest} of $G$, if $F$ contains no cycles, adding any edge $e \in E(G) \setminus E(F)$ to $E(F)$ creates a cycle, and $|E(F)|$ is maximal. This definition extends naturally to graphs with positive weights on their edges. It is easy to check that the minimal structure witness (\ref{min:struct:cert}) is the maximal spanning forrest with $\fm$ connected components, and can be found efficiently via a greedy algorithm. For more details see Section \ref{detailed:upperbound:algos} of the supplement. 

\subsubsection{Cycle Testing}\label{ham:cycle:det:sec}

In this example we sketch how to test whether the graph is a forest. Recall the sub-decomposition $\cG_0 = \{G \in \cG~|~ G \mbox{ is a forest}\}$ and $\cG_1 = \{G \in \cG~|~ G \mbox{ contains a cycle}\}$. The pair $(\cG_0, \cG_1)$ also defines the parameter sets (\ref{generic:S0:def:new}) and (\ref{generic:S1:def:new}). 
The minimal structure witness (\ref{min:struct:cert}) is a cycle, and can be found via greedily adding edges until a cycle is formed. For more details see Section \ref{detailed:upperbound:algos} of the supplement.

\subsubsection{Triangle-Free Graph Testing}\label{triangle:free:testing:sec}

In this example we discuss an algorithm for testing whether the graph is triangle-free. The corresponding sub-decomposition is $\cG_0 = \{G \in \cG~|~ G \mbox{ is triangle-free}\}$ and $\cG_1 = \{G \in \cG~|~ \exists \mbox{ $3$-clique subgraph of } G\}$. 
The pair $(\cG_0, \cG_1)$ also defines the parameter sets (\ref{generic:S0:def:new}) and (\ref{generic:S1:def:new}). The minimal structure witness is a triangle, and can be greedily identified.

\section{Multi-Edge Dividers}\label{general:lower:bound}

The results of Section \ref{sec:generic:lower} (and Section \ref{edge:removal:NAS:extension} of the supplement) have two major limitations. First, the null base $G_0$ is assumed to be of bounded degree. Second, our results cover only tests for which there exists a singe-edge divider. In this section we relax both of these conditions. The following motivating example illustrates a relevant testing problem which does not fall into the framework of Section \ref{sec:generic:lower}.\vspace{.2cm}

\noindent{\bf Maximum Degree Testing.} Consider testing whether the maximum degree of the graph $d_{\max}$ satisfies $d_{\max} \leq s_0$ vs $d_{\max} \geq s_1$, where $s_0 < s_1 \leq s$ are integers which are allowed to scale with $n$. In this case it is impossible to simultaneously construct a null base graph $G_0$ of bounded degree and a single-edge divider $\cC$. \vspace{.2cm}

To handle multiple edge dividers, we first extend Definitions \ref{single:edge:null:alt:sep} and \ref{closed:walks:def} to allow for the above examples.

\begin{definition}[Null-Alternative Divider]\label{multi:edge:null:alt:sep} Let $G_0 = (\overline V, E_0) \in \cG_0$ be a fixed graph under the null with adjacency matrix $\Ab_0$. We call a collection of edge sets $\bC$ a (multi-edge) divider with null base $G_0$, if for all edge sets $S \in \bC$ we have $S \cap E_0 = \varnothing$ and $(\overline V, E_0 \cup S) \in \cG_1$. 
For any edge set $S \in \bC$, we denote the adjacency matrix of the graph $(\overline V, S)$ with $\Ab_S$. 
\end{definition}

\begin{definition}[Edge Set Geodesic Predistance]\label{S:leftright:Sprime:def} For two edge sets $S$ and $S'$ and a given graph $G$ let $d_{G}(S, S') = \min_{e \in S, e' \in S'} d_G(e,e').$ 
\end{definition}

We provide two generic strategies for obtaining combinatorial inference lower bounds on the signal strength. The first strategy, described in Section \ref{bounded:edges}, assumes that all $S \in \bC$ satisfy $|S| \leq U$ for some fixed constant $U$. The second strategy, presented in Section \ref{scaling:edges}, does not require bounded cardinality of the edge sets $S$, but requires that the null bases and dividers have some special combinatorial properties.

\subsection{Bounded Edge Sets} \label{bounded:edges}

Below we consider an extension of Theorem \ref{suff:cond:lower:bound} for multi-edge dividers, where the number of edges in each set $S \in\bC$ satisfy $|S| \leq U$ for some fixed integer $U \in \NN$. In contrast to Section \ref{sec:generic:lower}, here the graph $G_0$ is allowed to have unbounded degree.

\begin{theorem}\label{suff:cond:lower:bound:many} 
Let $G_0 \in \cG_0$ be a graph under the null, and let $\bC$ be a multi-edge divider with null base $G_0$. Suppose that for some sufficiently small absolute constant $\kappa > 0$:
\begin{align}\label{scaling:conditions:sec4}
\theta \leq \kappa \sqrt{\frac{M(\bC, d_{G_0}, \log |\bC|)}{n U}} \wedge \frac{\kappa}{U(\|\Ab_0\|_2 + 2U)} \wedge \frac{1 - C^{-1}}{4(\|\Ab_0\|_1 + 2U)}.
\end{align}
If $M(\bC, d_{G_0}, \log |\bC|) \rightarrow \infty$ we have ${\displaystyle \liminf_{n \rightarrow \infty}} \gamma(\cS_0(\theta, s), \cS_1(\theta, s)) = 1$.
\end{theorem}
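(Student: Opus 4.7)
The plan is to follow the four-step template of Theorem \ref{suff:cond:lower:bound}'s proof, but with the single edge $e$ replaced throughout by an edge set $S \in \bC$ of cardinality at most $U$, and to allow $G_0$ to have unbounded maximum degree. The three terms in the scaling (\ref{scaling:conditions:sec4}) serve distinct roles: $\sqrt{M(\bC, d_{G_0}, \log|\bC|)/(nU)}$ is the information-theoretic signal-to-noise scale, $\kappa/(U(\|\Ab_0\|_2 + 2U))$ ensures convergence of the chi-square log-determinant expansion, and $(1-C^{-1})/(4(\|\Ab_0\|_1 + 2U))$ keeps the constructed precision matrices inside $\cM(s)$.

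\textbf{Construction and Le Cam.} For each $S \in \bC$ I would set $\bTheta_S = \Ib_d - \theta(\Ab_0 + \Ab_S)$ and $\bTheta_0 = \Ib_d - \theta \Ab_0$. The third term of (\ref{scaling:conditions:sec4}) controls $\|\theta(\Ab_0 + \Ab_S)\|_2 \leq \|\theta(\Ab_0 + \Ab_S)\|_1 \leq (1 - C^{-1})/2$, so the eigenvalues of $\bTheta_S$ lie in $[C^{-1}, 2 - C^{-1}]$, and the signal-strength and sparsity constraints defining $\cS_0(\theta, s)$ and $\cS_1(\theta, s)$ hold by construction (the implicit requirement $G_0 \cup S \in \cG_1 \cap \cM(s)$ restricts $\bC$ to admissible edge sets). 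I then fix a maximal $\log|\bC|$-packing $N \subseteq \bC$ with $\log|N| = M(\bC, d_{G_0}, \log|\bC|)$ and invoke Le Cam's inequality using the mixture alternative $|N|^{-1}\sum_{S \in N} \PP_{\bTheta_S}^{\otimes n}$ against $\PP_{\bTheta_0}^{\otimes n}$.

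\textbf{Chi-square expansion.} The Gaussian chi-square identity yields
\begin{align*}
\chi^2 + 1 = \frac{1}{|N|^2}\sum_{S,S'\in N}\det\!\bigl(\Ib_d - \theta^2\, \Ab_S\, \bTheta_0^{-1}\, \Ab_{S'}\, \bTheta_0^{-1}\bigr)^{-n/2},
\end{align*}
and the cap $\theta \leq \kappa/(U(\|\Ab_0\|_2 + 2U))$ guarantees $\|\theta^2 \Ab_S \bTheta_0^{-1}\Ab_{S'}\bTheta_0^{-1}\|_2 < 1$, so I can expand the logarithm as a Neumann series $(n/2)\sum_{k\geq 1}\theta^{2k}\Tr[(\Ab_S\bTheta_0^{-1}\Ab_{S'}\bTheta_0^{-1})^k]/k$. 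Writing $\bTheta_0^{-1} = \sum_{j\geq 0}\theta^j \Ab_0^j$ and further expanding, the order-$k$ trace becomes a weighted count of closed walks on $\overline V$ alternating between $\Ab_S$-steps, $\Ab_{S'}$-steps and $\Ab_0$-steps. For $S\neq S'$ in the packing, any such closed walk must bridge an endpoint of some edge in $S$ to one in $S'$ using $\Ab_0$-steps of total length at least $d_{G_0}(S, S') \geq \log|\bC|$, so the cross contribution is $1 + o(1)$; the diagonal terms $S = S'$ contribute at most $\exp(C_1 n \theta^2 U)$, where $C_1$ absorbs the spectral factors through the second cap on $\theta$. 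The factor $U$ enters because $\|\Ab_S\|_2 \leq U$ and each of the at most $U$ edges of $S$ can seed a length-two self-walk.

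\textbf{Packing argument and obstacle.} Combining the two contributions gives
\begin{align*}
\log(\chi^2 + 1) \leq -\log|N| + C_1 n \theta^2 U + o(1),
\end{align*}
so the first cap $\theta^2 \leq \kappa^2 M(\bC, d_{G_0}, \log|\bC|)/(nU)$ with $\kappa$ small enough (relative to $C_1$) drives $\chi^2 \to 0$ whenever $M(\bC, d_{G_0}, \log|\bC|) \to \infty$, and Le Cam's inequality then yields $\liminf_n \gamma(\cS_0(\theta, s), \cS_1(\theta, s)) = 1$. The main obstacle, compared to the single-edge Theorem \ref{suff:cond:lower:bound}, is the trace control in Step 3: each $\Ab_S$ is no longer a rank-two single-edge matrix with operator norm exactly $1$, but rather has operator norm up to $U$, and $G_0$ may have unbounded degree so the number of length-$k$ walks in $G_0$ grows like $\|\Ab_0\|_2^k$. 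These walk counts must be absorbed by the $\theta^k$ factors under the second cap and reconciled uniformly over all pairs $(S, S')$ in the packing; this is precisely why both $\|\Ab_0\|_2$ and $\|\Ab_0\|_1$ appear separately in (\ref{scaling:conditions:sec4}), replacing the bounded-degree assumption on $G_0$ used in the single-edge theorem.
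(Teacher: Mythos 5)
Your high-level strategy — constructing $\bTheta_S = \bTheta_0 \pm \theta \Ab_S$, invoking Le Cam with a uniform prior over a $\log|\bC|$-packing, and controlling the chi-square divergence by splitting into diagonal and far-apart cross terms — is the same strategy the paper uses. However, there is a genuine mathematical error in the chi-square step that breaks the rest of the argument.

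The determinant identity you claim,
\begin{align*}
\chi^2 + 1 = \frac{1}{|N|^2}\sum_{S,S'\in N}\det\!\bigl(\Ib_d - \theta^2\, \Ab_S\, \bTheta_0^{-1}\, \Ab_{S'}\, \bTheta_0^{-1}\bigr)^{-n/2},
\end{align*}
is not the chi-square divergence for the precision-matrix perturbation you have constructed. The correct single-pair identity for $\bTheta_S = \bTheta_0 + \theta\Ab_S$, obtained by Gaussian integration, is
\begin{align*}
\EE_{\bTheta_0}\Bigl[\tfrac{d\PP_{\bTheta_S}}{d\PP_{\bTheta_0}}\tfrac{d\PP_{\bTheta_{S'}}}{d\PP_{\bTheta_0}}\Bigr]
= \Bigl(\frac{\det(\bTheta_S)\det(\bTheta_{S'})}{\det(\bTheta_0)\det(\bTheta_{S,S'})}\Bigr)^{n/2},
\end{align*}
which is the form the paper works with, and it does not reduce to the form you wrote. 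A two-dimensional example makes this concrete: take $\bTheta_0 = \Ib_2$ and $\Ab_S = \Ab_{S'} = \bigl(\begin{smallmatrix}0&1\\1&0\end{smallmatrix}\bigr)$; the correct per-sample ratio is $(1-\theta^2)^2/(1-4\theta^2)$ while your formula gives $(1-\theta^2)^{-2}$. Expanding both in $\theta$ shows they agree only through $O(\theta^2)$; at $O(\theta^3)$ your formula omits the $\Tr(\Ab_S^2\Ab_{S'})$ and $\Tr(\Ab_S\Ab_{S'}^2)$ contributions, which are precisely the terms that make the multi-edge setting delicate (they reflect closed walks that re-use edges of $S$ or $S'$). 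The formula you wrote is instead the correct identity for a covariance perturbation $\bSigma_S = \Ib_d \pm \theta\Ab_S$ about $\bSigma_0 = \Ib_d$; but the parameter spaces here are defined through the precision matrix, so that parametrization does not produce graphs in $\cG_0,\cG_1$.

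Because the Neumann-series expansion and the closed-walk count you build downstream are expansions of the wrong determinant, the subsequent estimates do not follow. The paper instead expands $\log$ of the correct determinant ratio as $\sum_{k\ge 1}\tfrac{(-\theta)^k}{k}T^k$ with $T^k := \Tr(\Ab_0^k + \Ab_{S,S'}^k - (\Ab_0+\Ab_S)^k - (\Ab_0+\Ab_{S'})^k)$, identifies $T^k$ as a count of closed walks using at least one edge from each of $S$ and $S'$, and then needs genuine new machinery absent from your sketch: the notion of an \emph{admissible} constant pair $(\cH_{S,S'},\cK_{S,S'})$ satisfying $T^k \le \cH_{S,S'}\cK_{S,S'}^k$ for even $k\ge 4$, a verification of admissibility via the vertex buffer $\cV_{S,S'}$ and a Lidskii-type perturbation bound (Lemma~\ref{gen:lidskii}), and refined bounds $T^2 \le 4|S\cap S'|$ and $T^3 \ge 6\Delta_{S,S'}$ for the low orders. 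These are exactly the ingredients that let the paper absorb the $\|\Ab_0\|_2$, $\|\Ab_0\|_1$, and $U$ dependencies into the three caps of (\ref{scaling:conditions:sec4}). Your sketch correctly identifies \emph{which} quantities matter and \emph{why} $U$ and the two norms of $\Ab_0$ enter, but the mechanism you propose for controlling them does not work because it starts from a false identity.
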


Theorem \ref{suff:cond:lower:bound:many} is an extension of Theorem \ref{suff:cond:lower:bound}. Specifically, Theorem \ref{suff:cond:lower:bound} corresponds the setting  where $U = 1$, and $\|\Ab_0\|_2 \leq \|\Ab_0\|_1 \leq D$ (recall that $D$ is an upper bound of the graph degree). Even though by assumption $U = \max_{S \in \bC} |S|$ is bounded, we  explicitly keep the dependency on $U$  in (\ref{scaling:conditions:sec4}) to reflect how the bound changes if $U$ is allowed to scale. The first term on the right hand side of (\ref{scaling:conditions:sec4}) is the structural packing entropy, while the remaining two terms ensure the parameter $\theta$ is small enough to construct a valid packing set (More details are provided in the proof). 

We illustrate the usefulness of Theorem \ref{suff:cond:lower:bound:many} by an example similar to the ones in Section \ref{some:applications}. Consider testing whether the maximum degree of the graph $G(\bTheta^*)$ is at most $s_0$ vs it is at least $s_1$, where $s_0 < s_1 \leq s$ can increase with $n$  but the null-alternative gap $s_1 - s_0$ remains bounded. Therefore we cannot apply Theorem \ref{suff:cond:lower:bound} but should use Theorem \ref{suff:cond:lower:bound:many} instead. Define the sub-decomposition $\cG_0 = \{G~|~ d_{\max}(G) \leq s_0\}$ and $\cG_1 = \{G~|~ d_{\max}(G) \geq s_1\}$ respectively.

\begin{example}[Maximum Degree Test with Bounded Null-Alternative Gap] \label{max:deg:1:edge} Let $\cS_0(\theta, s)$ and $\cS_1(\theta, s)$ be defined in (\ref{generic:S0:def}) and (\ref{generic:S1:def}). Assume that $s \log d/n = o(1), s \sqrt{\log d/n} = O(1)$ and $s = O(d^{\gamma})$ for some $\gamma < 1$. Then if $\kappa$ is small enough and $\theta < \kappa \sqrt{\log d/n}$, we have $\liminf_{n \rightarrow \infty} \gamma(\cS_0(\theta,s), \cS_1(\theta, s)) = 1$.
\end{example}

The proof of Example \ref{max:deg:1:edge} is deferred to the supplement.

\subsection{Scaling Edge Sets}\label{scaling:edges}

Theorem \ref{suff:cond:lower:bound:many} requires the cardinalities of the edge sets in the divider $\bC$ to be bounded. 
In this section, we consider multi-edge dividers $\bC$ allowing the sizes of $S \in \bC$ to increase with $n$. For this case, the previous notion of packing entropy based on geodesic predistence is no longer effective. Instead, we introduce a new mechanism called buffer entropy to quantify the lower bound under scaling multi-edge dividers.

We first intuitively explain why the structural entropy in Theorem \ref{suff:cond:lower:bound:many} may not be sufficient for handling dividers with scaling edge sets. Recall that Theorem \ref{suff:cond:lower:bound:many} uses the structural
entropy $M(\bC, d_{G_0}, \log |\bC|)$ to characterize the lower bound. In turn, the structural entropy is calculated based on the edge set geodesic predistance $d_{G_0}$ in Definition \ref{S:leftright:Sprime:def}. One difference between fixed and scaling edge sets sizes is that, one can only pack a limited number of edge sets or large size which are sufficiently far apart (and hence do not overlap). A less wasteful strategy would be to allow for the edge sets to overlap. However, in general, different edge sets $S, S' \in \bC$ may have multiple overlapping vertices and the notion of geodesic predistance is no longer precise enough to reflect the closeness between $S$ and $S'$. 
 
Below we introduce a concept called vertex buffer, which helps to measure the closeness between edge sets $S$ and $S'$ more precisely than the geodesic predistance.

\begin{definition}[Vertex Buffer] \label{non:isolated:vertices} Let $G_0 = (\overline V,E_0)$ be a given graph and $S,S'$ be two edge sets. The vertex buffer of $S,S'$ under $G_0$ is defined as 
$$\cV_{S,S'} := \{V(E_0 \cup S) \cap V(S')\} \cup \{V(E_0 \cup S') \cap V(S)\}.\footnotemark$$
\end{definition}
\footnotetext{We suppress the dependence of $\cV_{S,S'}$ on $G_0$ to ease the notation.}
An important property of the set $\cV_{S,S'}$ is that all paths passing through at least one edge in both $S$ and $S'$ must contain at least one vertex in $\cV_{S,S'}$. In that sense, a large buffer size $|\cV_{S,S'}|$ indicates that the edge sets $S$ and $S'$ are close to each other. We visualize an example of a vertex buffer in Fig \ref{fig:buffer-vis}.

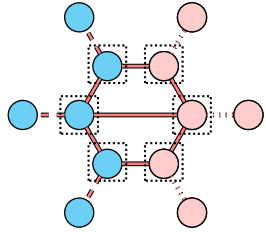
\begin{figure}[h]
\begin{center}
\begin{tikzpicture}[scale=.5]
\SetVertexNormal[Shape      = circle,
                  FillColor  = cyan!50,
                  MinSize = 11pt,
                  InnerSep=0pt,
                  LineWidth = .5pt]
   \SetVertexNoLabel
   \tikzset{EdgeStyle/.style= {thin,
                                double          = red!50,
                                double distance = 1pt}}
                                                                \begin{scope}[rotate=120]\grEmptyCycle[prefix=a,RA=1.5]{6}{1}\end{scope}
                                                                \Edge(a0)(a5)
                                                                \Edge(a1)(a4)
                                                                \Edge(a2)(a3)
                            								\tikzset{EdgeStyle/.append style = { thin}}

                                                                \Edge(a0)(a1)
                                                                \Edge(a1)(a2)
                                                                    \tikzset{LabelStyle/.style = {right, fill = white, text = black, fill opacity=0, text opacity = 1}}

								\tikzset{EdgeStyle/.append style = { thin}}
                                                                    								\tikzset{LabelStyle/.style = {left, fill = white, text = black, fill opacity=0, text opacity = 1}}                                                                                                                                    \Edge(a3)(a4)
\Edge(a4)(a5)
\begin{scope}[rotate=120]\grEmptyCycle[prefix=b,RA=3]{6}{1}\end{scope}
\AddVertexColor{red!20}{a3,a4,a5}
\AddVertexColor{red!20}{b3,b4,b5}
   \tikzset{EdgeStyle/.style= { thin,dashed,
                                double          = red!50,
                                double distance = 1pt,
                                }}
                                \Edge(a0)(b0)                                                                                                                                                                                 			       \Edge(a1)(b1)
			       \Edge(a2)(b2)
			          \tikzset{EdgeStyle/.style= { thin,dotted,
                                double          = red!50,
                                double distance = 1pt}}
                                \Edge(a3)(b3)                                                                                                                                                                                 			       \Edge(a4)(b4)
			       \Edge(a5)(b5)
			       \draw[thick, densely dotted] (-0.5-.75,1-.15) rectangle (.5-.75,2-.15);
			       \draw[thick, densely dotted] (-0.5+.75,1-.15) rectangle (.5+.75,2-.15);
			       \draw[thick, densely dotted] (-0.5-1.5,1-1.5) rectangle (.5-1.5,2-1.5);
			       \draw[thick, densely dotted] (-0.5+1.5,1-1.5) rectangle (.5+1.5,2-1.5);
			       \draw[thick, densely dotted] (-0.5-.75,1-2.75) rectangle (.5-.75,2-2.75);
			       \draw[thick, densely dotted] (-0.5+.75,1-2.75) rectangle (.5+.75,2-2.75);

\end{tikzpicture}
\vspace{-1em}
\caption{Visualization of the vertex buffer in $\cV_{S,S'}$. Here $S$, $S'$ are plotted with dashed and dotted edges respectively and $G_0$ is in solid edges. The vertices in the buffer are marked in the dashed squares.}\vspace{-18pt}
\label{fig:buffer-vis}
\end{center}
\end{figure}

In contrast to the bounded edge sets case, when the edge sets in $\bC$ are allowed to scale in size, it is not effective to build packing sets based on the predistance, since this strategy limits the number of edge sets we can build. One way to increase the cardinality of $\bC$ is to consider a larger number of potentially overlapping structures, and use the buffer size as a more precise closeness measure between these structures. Below we formalize the concept of buffer entropy  which quantifies this intuition.

\begin{definition}[Buffer Entropy]\label{buffer:entropy:def} Let $\bC$ be a multi-edge divider with a base graph $G_0$. The buffer entropy is defined as:
\begin{align}
M_{\text{B}}(\bC, G_0) := \log\Big(\big[\max_{S \in \bC}\EE_{S'} |\cV_{S,S'}| \big]^{-1}\Big), \label{buffer:entropy:def:eq}
\end{align}
where the expectation $\EE_{S'}$ is taken from uniformly sampling $S'$ from $\bC$.
\end{definition}
We want the buffer entropy to be as large as possible to achieve sharp lower bounds. Note the following trivial bound on the size $|\cV_{S,S'}|$
\[
|\cV_{S,S'}| \leq \sum_{v \in V(S)} \mathds{1}(v \in \cV_{S,S'}) + \sum_{v \in V(S')} \mathds{1}(v \in \cV_{S,S'}) .
\]
An important condition allowing us to relate the signal strength lower bounds to buffer entropy requires that the divider is such that the variables $\{ \mathds{1}(v \in \cV_{S,S'})\}_{v \in V(S)}$ are negatively associated.

\begin{definition}[Incoherent Divider] \label{asmp:na} The collection of edge sets $\bC$ is called an incoherent divider with a null base $G_0$, if for any fixed $S \in \bC$, the random variables $\{ \mathds{1}(v \in \cV_{S,S'})\}_{v \in V(S)}$ with respect to a uniformly sampled $S'$ from $\bC$ are negatively associated. In other words, for any pair of disjoint sets $I, J \subseteq V(S)$ and any pair of coordinate-wise nondecreasing functions $f, g$  we have:
$$
\Cov\big(f(\{\mathds{1}(v \in \cV_{S,S'})\}_{v \in I}), g(\{\mathds{1}(v \in \cV_{S,S'})\}_{v \in J})\big) \leq 0.
$$
\end{definition}
We show concrete constructions of incoherent dividers in Examples \ref{unfixed:sparse:star}, \ref{clique:detection:example} and \ref{cycle:detection:example}. As a remark, negative association is satisfied by a variety of classical discrete distributions such as the multinomial and hypergeometric, and even more generally by the class of permutation distributions \citep[e.g.]{joag1983negative, dubhashi1996balls}. It is a standard assumption that has been exploited in other works \citep[e.g.]{addario2010comb} for obtaining lower bounds.

Besides the packing entropy, the lower bound in Theorem \ref{suff:cond:lower:bound:many} involves the maximum degree $\|\Ab_0\|_1$ and the spectral norm $\|\Ab_0\|_2$. We define similar quantities for the scaling edge sets case. For a divider $\bC$ with null base $G_0$ and any two edge sets $S,S' \in \bC$ define the notation:
\begin{align}
\Ab_{S,S'} := \Ab_{0} +  \Ab_{S} +  \Ab_{S'}.\label{matrix:Ass}
\end{align}
As the sizes of $S, S' \in \bC$ are no longer ignorable, we need to consider the matrix $\Ab_{S,S'}$ (\ref{matrix:Ass}) instead. Denote the uniform maximum degree as $\Gamma :=  \max_{S,S' \in \bC} \|\Ab_{S,S'}\|_1$ and  uniform spectral norm as $\Lambda :=  \max_{S,S' \in \bC} \|\Ab_{S,S'}\|_2$. We define
\begin{align*}	
\cR & := \max_{S,S' \in \bC} \frac{|S \cap S'|}{|\cV_{S,S'}|},  ~~~~~~  \cB :=   \Lambda^4 \wedge \max_{S,S' \in \bC}(\Gamma^2 |\cV_{S, S'}|) .
\end{align*}
$\cR$ is an edge-node ratio measuring how dense the edge set $S \cap S'$ is compared to the vertex buffers. The quantity $\cB$ is an auxiliary quantity which assembles maximum degrees, spectral norms and buffer sizes and helps to obtain a compact lower bound formulation.

Below we connect the structural features we defined above to the lower bound. 
Recall definitions (\ref{generic:S0:def}) and (\ref{generic:S1:def}) on $\cS_0(\theta, s)$ and $\cS_1(\theta, s)$. We have the following theorem.
\begin{theorem} \label{comb:NAS} Let $\bC$ be an incoherent divider with a null base $G_0$. Then if $M_{\text{B}}(\bC, G_0) \rightarrow \infty$ and
\begin{align}
\theta \leq \sqrt{ \frac{M_{\text{B}}(\bC, G_0)}{4 n \cR}} \wedge \sqrt{\frac{\cR}{\cB}} \wedge \frac{1 - C^{-1}}{2 \sqrt{2} \Gamma}, \label{signal:strength}
\end{align}
the minimax risk satisfies
$
{\displaystyle \liminf_{n \rightarrow \infty}} \gamma(\cS_0(\theta, s), \cS_1(\theta, s)) = 1.
$
\end{theorem}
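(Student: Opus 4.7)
\textbf{Proof proposal for Theorem \ref{comb:NAS}.} The plan is to mirror the four-step strategy outlined in the proof sketch of Theorem~\ref{suff:cond:lower:bound}, but with three new ingredients: (i) building an alternative mixture indexed by multi-edge sets $S \in \bC$ rather than single edges, (ii) replacing the ``shortest closed walks'' argument with a vertex-buffer bookkeeping that tracks where the perturbations from $\Ab_S$ and $\Ab_{S'}$ can interact, and (iii) invoking the incoherent-divider (negative association) hypothesis to decouple the contribution of the individual buffer vertices inside an exponential moment.

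First I would carry out the matrix construction (Step~1). For the null, take $\bTheta_0 := c\,\Ib_d + \theta\,\Ab_0$ for a suitable $c \in (C^{-1},C)$, and for each $S \in \bC$ define $\bTheta_S := \bTheta_0 + \theta\,\Ab_S$, so that $G(\bTheta_S)=(\overline V, E_0\cup S)\in\cG_1$ by Definition~\ref{multi:edge:null:alt:sep}. The bound $\theta \leq (1-C^{-1})/(2\sqrt 2\,\Gamma)$ together with Weyl's inequality and $\|\Ab_0+\Ab_S+\Ab_{S'}\|_{1}\leq\Gamma$ would guarantee $C^{-1}\,\Ib_d\preceq\bTheta_S\preceq C\,\Ib_d$, placing every $\bTheta_S$ in $\cM(s)$; the minimum signal $\theta$ is matched by construction so $\bTheta_S\in\cS_1(\theta,s)$ and $\bTheta_0\in\cS_0(\theta,s)$.

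Next I would apply Le Cam's two-point lemma against the uniform mixture $\bar P := |\bC|^{-1}\sum_{S\in\bC}P^{\otimes n}_{\bTheta_S}$ (Step~2). The standard reduction requires $\chi^2(\bar P, P^{\otimes n}_{\bTheta_0})\to 0$, which by Fubini equals $|\bC|^{-2}\sum_{S,S'}I(S,S')-1$ with
\[
I(S,S') \;=\; \Bigl[\tfrac{\det(\bTheta_0)}{\det(\bTheta_S)^{1/2}\det(\bTheta_{S'})^{1/2}\det(\bTheta_0^{-1}(\bTheta_S+\bTheta_{S'}-\bTheta_0))^{1/2}}\Bigr]^{-n}.
\]
Taylor-expanding $\log I(S,S')$ in $\theta$ around $\bTheta_0$, the constant and linear terms cancel and the quadratic term is proportional to $n\theta^{2}\Tr\bigl(\bTheta_0^{-1}\Ab_S\bTheta_0^{-1}\Ab_{S'}\bigr)$. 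Step~3 would then bound this trace, along with all higher order corrections from the Neumann expansion $(\Ib+\theta\bTheta_0^{-1}(\Ab_S+\Ab_{S'}))^{-1}$, by a constant multiple of $|S\cap S'| \leq \cR\,|\cV_{S,S'}|$. The middle condition $\theta\leq\sqrt{\cR/\cB}$ is exactly what guarantees that the Neumann series converges and that the quartic/higher remainder (involving $\Gamma^2|\cV_{S,S'}|$ and $\Lambda^4$, both captured by $\cB$) is dominated by the leading quadratic term, so that $\log I(S,S') \leq c\,n\theta^{2}|\cV_{S,S'}|$ for some absolute constant $c$.

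The main obstacle, and the heart of Step~4, is the exponential moment
\[
\EE_{S'}\exp\!\bigl(c\,n\theta^{2}|\cV_{S,S'}|\bigr) \;=\; \EE_{S'}\exp\!\Bigl(c\,n\theta^{2}\!\!\sum_{v\in V(S)\cup V(E_0\cup S')}\!\!\mathds{1}(v\in\cV_{S,S'})\Bigr).
\]
Here the incoherence hypothesis is essential: the indicators $\{\mathds{1}(v\in\cV_{S,S'})\}_{v}$ are negatively associated, so the exponential of their sum factorizes after an upper bound (by the standard Chernoff-type inequality of Dubhashi--Ranjan for negatively associated variables), yielding $\EE_{S'}\exp(c\,n\theta^{2}|\cV_{S,S'}|) \leq \exp\!\bigl((e^{c n\theta^{2}}-1)\,\EE_{S'}|\cV_{S,S'}|\bigr)$. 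Combined with the first bound $\theta^{2}\leq M_{\text{B}}(\bC,G_0)/(4n\cR)$ and the definition \eqref{buffer:entropy:def:eq}, this gives
\[
|\bC|^{-2}\!\!\sum_{S,S'}I(S,S') - 1 \;\leq\; \max_{S}\EE_{S'}\exp\!\bigl(c\,n\theta^{2}|\cV_{S,S'}|\bigr) - 1 \;\longrightarrow\; 0
\]
as $M_{\text{B}}(\bC,G_0)\to\infty$, which closes the Le Cam argument and yields $\liminf_n \gamma=1$. The hardest technical points I expect are (a)~justifying the Neumann expansion uniformly in $S,S'$ under only the spectral budget encoded by $\cB$, and (b)~carefully aligning the scaling of $c$ so that the Chernoff-type bound in the final step actually matches $M_{\text{B}}$ rather than a weaker quantity like $\log\max_S\EE_{S'}|\cV_{S,S'}|^{-1/2}$.
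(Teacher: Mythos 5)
Your proposal captures the key novelty of the argument—reduction to a mixture $\chi^2$ divergence and then a Chernoff-type bound on the moment-generating function of $|\cV_{S,S'}|$ under negative association—and Steps 1, 2 and 4 follow essentially the same route as the paper. Two points deserve attention before you could carry this through.

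First, a minor algebra slip: your $I(S,S')$ has an extraneous $\det(\bTheta_0)^{1/2}$ in the denominator. Computing $\EE_{\bTheta_0}\bigl[(d\PP_{\bTheta_S}/d\PP_{\bTheta_0})(d\PP_{\bTheta_{S'}}/d\PP_{\bTheta_0})\bigr]$ directly for Gaussians gives
\[
I(S,S') \;=\; \Bigl(\tfrac{\det(\bTheta_S)^{1/2}\det(\bTheta_{S'})^{1/2}}{\det(\bTheta_0)^{1/2}\,\det(\bTheta_S+\bTheta_{S'}-\bTheta_0)^{1/2}}\Bigr)^{n},
\]
and it is this form that, after applying $\log\det(\Ib+\theta\Ab)=\sum_{k\geq 1}(-1)^{k+1}\theta^k\Tr(\Ab^k)/k$, produces the linear combination $\Tr(\Ab_{S,S'}^k+\Ab_0^k-(\Ab_0+\Ab_S)^k-(\Ab_0+\Ab_{S'})^k)$ that the paper controls.

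Second, and more substantively, your Step 3 diverges from the paper in a way that would need real work. The paper expands in powers of $\theta$ (not around $\bTheta_0$), precisely because that puts the exponent in terms of traces of \emph{powers} of adjacency matrices, which count closed walks; Proposition~\ref{refined:const:thm} then argues that any walk counted in the difference must pass through a vertex in $\cV_{S,S'}$, giving the $|\cV_{S,S'}|$-linear bound with explicit constants (via Definition~\ref{ass:tr:abstr:ineq}). Your Neumann expansion around $\bTheta_0$ interleaves $\bTheta_0^{-1}$ factors between $\Ab_S$ and $\Ab_{S'}$, which destroys the walk interpretation; you would have to reprove the buffer-counting in that basis, and it is not obvious how to do so. Relatedly, the target bound $\log I(S,S')\le c\,n\theta^2|\cV_{S,S'}|$ with $c$ an "absolute constant'' is not dimensionally right: the constant must carry a factor of $\cR$ (the paper gets $\sim 2\cR\,n\theta^2|\cV_{S,S'|S}|$ once $\theta^2\le\cR/\cB$ absorbs the higher-order remainder), and the $4\cR$ in the denominator of the first condition on $\theta$ is tuned exactly so that $2\cR n\theta^2\le M_{\text{B}}/2 < M_{\text{B}}$, making the final $(e^{2\cR n\theta^2}-1)e^{-M_{\text{B}}}\to 0$ comparison go through. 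You correctly flag both of these as the hard points, but as written Step 3 is where the argument would stall.
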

When the sample size $n$ is sufficiently large, the buffer entropy term on the right hand side of (\ref{signal:strength}) is the smallest term and drives the bound which bares similarity to Theorem \ref{suff:cond:lower:bound:many}. 

To better illustrate the usage of Theorem \ref{comb:NAS} we consider three examples.  First we focus on the problem of testing whether the maximum degree in the graph is $\leq s_0$ vs $\geq s_1$. When $s_0 = 0$, this problem is related to the problem of detecting a set of $s_1$ signals in the normal means model \cite[e.g.]{ingster1982minimax, baraud02non-asymptotic, donoho2004higher, addario2010comb, verzelen2010goodness, Arias-Castro2011Global}. However the two problems are distinct, since we are studying structural testing in the graphical model setting. Given $s_0 < s_1 \leq s$, we let the sub-decomposition be $\cG_0 = \{G~|~ d_{\max}(G) \leq s_0\}$ and $\cG_1 = \{G~|~ d_{\max}(G) \geq s_1\}$.  
We summarize our results in the following

\begin{example}[Maximum Degree Test with Scaling Divider]\label{unfixed:sparse:star} Let $\cS_0(\theta, s)$ and $\cS_1(\theta, s)$ be defined   in (\ref{generic:S0:def}) and (\ref{generic:S1:def}). Assume that $s \sqrt{\log d/n} = O(1)$ and $s = O(d^{\gamma})$ for some $\gamma < 1/2$. Then for a small enough absolute constant $\kappa$ if $\theta < \kappa \sqrt{\log d/n}$ we have $\liminf_{n \rightarrow \infty} \gamma(\cS_0(\theta,s), \cS_1(\theta, s)) = 1.$
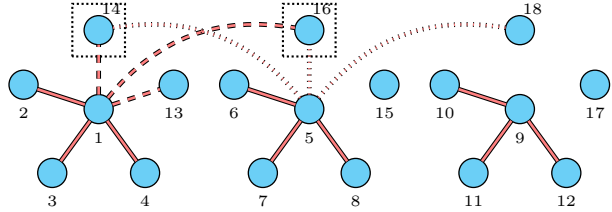
\begin{figure}[t]
\centering
\begin{tikzpicture}[scale=.7]
\tikzset{LabelStyle/.style = {above, fill = white, text = black, fill opacity=0, text opacity = 1}}
\draw[thick, densely dotted] (-0.5,1) rectangle (.5,2);
\draw[thick, densely dotted] (4-0.5,1) rectangle (4+.5,2);
\SetVertexNormal[Shape      = circle,
                  FillColor  = cyan!50,
                  MinSize = 11pt,
                  InnerSep=0pt,
                  LineWidth = .5pt]
   \SetVertexNoLabel
      \tikzset{EdgeStyle/.style= {thin,
                                double          = red!50,
                                double distance = 1pt}}
\begin{scope}[rotate=90]
 \grEmptyStar[prefix=a,RA=1.5]{6}
       \end{scope}
 \Edge(a5)(a1)
  \Edge(a5)(a2)
    \Edge(a5)(a3)
   \begin{scope}[shift={(4cm, 0cm)},rotate=90]
    \grEmptyStar[prefix=b,RA=1.5]{6}
 \Edge(b5)(b1)
  \Edge(b5)(b2)
    \Edge(b5)(b3)
   \end{scope}
               \node[below] at (a5.-90) {\tiny $1$};
        \node[below] at (a1.-90) {\tiny $2$};
    \node[below] at (a2.-90) {\tiny $3$};
        \node[below] at (a3.-90) {\tiny $4$};
       \node[below] at (a4.-90) {\tiny $13$};
        \node[above] at (a0.+30) {\tiny $14$};
      \begin{scope}[shift={(8cm, 0cm)},rotate=90]
    \grEmptyStar[prefix=c,RA=1.5]{6}
     \Edge(c5)(c1)
  \Edge(c5)(c2)
    \Edge(c5)(c3)
   \end{scope}
        \tikzset{EdgeStyle/.style= {thin,dashed,
                                double          = red!50,
                                double distance = 1pt}}
        \Edge(a5)(a0)
        \Edge(a5)(a4)
                \tikzset{EdgeStyle/.style= {thin,dashed, bend left,
                                double          = red!50,
                                double distance = 1pt}}
                        \Edge(a5)(b0)  
          \tikzset{EdgeStyle/.style= {thin,dotted,
                                double          = red!50,
                                double distance = 1pt}}
          \Edge(b5)(b0)
              \tikzset{EdgeStyle/.style= {thin,dotted,bend right,
                                double          = red!50,
                                double distance = 1pt}}
                \Edge(c0)(b5)  
                \Edge(b5)(a0)  
               \node[below] at (b5.-90) {\tiny $5$};
        \node[below] at (b1.-90) {\tiny $6$};
    \node[below] at (b2.-90) {\tiny $7$};
        \node[below] at (b3.-90) {\tiny $8$};
                \node[below] at (b4.-90) {\tiny $15$};
                        \node[above] at (b0.+30) {\tiny $16$};
                                       \node[below] at (c5.-90) {\tiny $9$};
        \node[below] at (c1.-90) {\tiny $10$};
    \node[below] at (c2.-90) {\tiny $11$};
        \node[below] at (c3.-90) {\tiny $12$};
                \node[below] at (c4.-90) {\tiny $17$};
                        \node[above] at (c0.+30) {\tiny $18$};
\end{tikzpicture}
\vspace{-1em}
 \caption{Test for maximum degree  $\cG_0 = \{G~|~ d_{\max}(G) \leq s_0\}$ and $\cG_1 = \{G~|~ d_{\max}(G) \geq s_1\}$ with $s_0 = 3$ and $s_1 = 6$. We split the vertices into  two parts $\{1, \ldots,  \lfloor \sqrt{d} \rfloor\}$ and $\{ \lfloor \sqrt{d} \rfloor+1, \ldots, d\}$.  We use the first part of vertices to construct $s_0$-star graphs as $G_0$ (visualized with solid edges). To construct the divider $\bC$, we select any $s_1-s_0$ vertices (e.g., vertices $13, 14, 16$) from the second vertices part $\{ \lfloor \sqrt{d} \rfloor+1, \ldots, d\}$ and connect them to any center of the $s_0$-star graphs in $G_0$ (e.g., vertex 1). This gives us one of the edge sets $S \in \bC$ (e.g., $S = \{(1,13), (1,14), (1,16)\}$ depicted in dashed edges in the figure). $\bC$ is comprised by all such edge sets. We depicted the vertex buffer $\cV_{S,S'} = \{14, 16\}$ for $S = \{(1,13), (1,14), (1,16)\}$ and $S' = \{(5,14), (5,16), (5,18)\}$ and $S \cap S' = \varnothing$.}\label{sparse:star:prior:multi:edge}\vspace{-18pt}
\end{figure}

\end{example}
Due to space limitations, we show how  this example follows from Theorem \ref{comb:NAS} in Section \ref{supp:bounded:edges} of the supplement. Here, we simply sketch the construction of the divider in Fig \ref{sparse:star:prior:multi:edge}. On an important note, the negative association of the random variables $\{\mathds{1}(v \in \cV_{S,S'})\}_{v \in V(S)}$ can be easily deduced by a result of \cite{joag1983negative}. Our second example further illustrates the usage of Theorem \ref{comb:NAS} with a clique detection problem. Define the null and alternative parameter spaces: $\cS_0 := \{\Ib_d\}$ and
\begin{align*}
 \cS_1(\theta,s) := \{\Ib_d + \theta (\vb \vb^T - \Ib_d) ~|~  \theta \in (0,1), \forall j: v_j \in \{\pm 1,0\}, \|\vb\|_2^2 = s\}.
\end{align*}
This setup is related to that in \cite{berthet2013optimal, johnstone2009consistency}. Our case is different from previous works because we parametrize the precision matrix rather than the covariance matrix, and the parametrization is distinct. Under our parametrization, the graph in the alternative hypothesis consists of a single $s$-clique.
\begin{example}[Sparse Clique Detection]\label{clique:detection:example} Suppose $s = O(d^{\gamma})$ for a $\gamma < 1/2$. For values of $\theta < \frac{1}{4\sqrt{2}s} \wedge \sqrt{\frac{\log (d/s^2)}{4ns}}$ we have $
\liminf_{n\rightarrow \infty} \gamma(\cS_0, \cS_1(\theta,s)) = 1.$
\end{example}

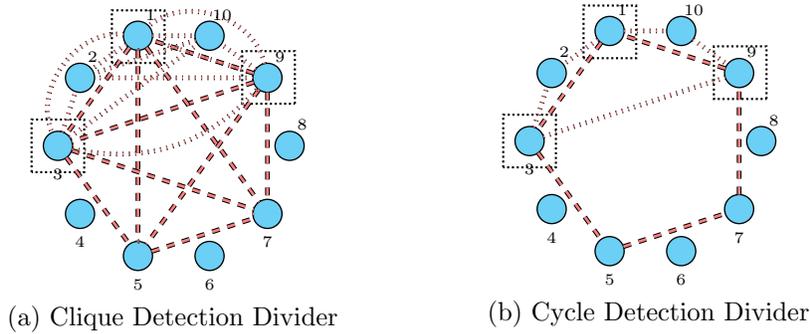
\begin{figure}
\centering
\begin{subfigure}{.5\textwidth}
\centering
\begin{tikzpicture}[scale=.7]
\draw[thick, densely dotted] (-1.175,1.575) rectangle (-0.175,2.575);
\draw[thick, densely dotted] (2.3,0.8) rectangle (1.3,1.8);
\draw[thick, densely dotted] (-2.7,-0.5) rectangle (-1.7,.5);
\SetVertexNormal[Shape      = circle,
                  FillColor  = cyan!50,
                  MinSize = 11pt,
                  InnerSep=0pt,
                  LineWidth = .5pt]
   \SetVertexNoLabel
   \tikzset{LabelStyle/.style = {above, fill = white, text = black, fill opacity=0, text opacity = 1}}
         \tikzset{EdgeStyle/.style= {thin,dotted,
                                double          = red!50,
                                double distance = 1pt}}
  \begin{scope}[shift={(0cm, 0cm)}]
\grEmptyCycle[prefix=c,RA=2.2]{5}%
 \begin{scope}[rotate=36]
         \tikzset{EdgeStyle/.style= {thin,dashed,
                                double          = red!50,
                                double distance = 1pt}}
\grComplete[prefix=d,RA=2.2]{5}%
  \end{scope}
  \Edge(d1)(c1)
    \Edge(d1)(c2)
    \Edge(d2)(c2)
    \Edge(d2)(c1)
    \Edge(c1)(c2)
   \Edge(d0)(c2)
      \Edge(d0)(c1)
            \Edge(d0)(d1)
                     \tikzset{EdgeStyle/.style= {thin,dotted, bend right = 60,
                                double          = red!50,
                                double distance = 1pt}}
    \Edge(d0)(d1)
    \Edge(d1)(d2)
                         \tikzset{EdgeStyle/.style= {thin,dotted, bend right = 30,
                                double          = red!50,
                                double distance = 1pt}}
                                    \Edge(d2)(d0)
        \node[above] at (c1.+30) {\tiny $10$};
    \node[above] at (c2.+30) {\tiny $2$};
        \node[below] at (c3.-90) {\tiny $4$};
       \node[below] at (c4.-90) {\tiny $6$};
        \node[above] at (c0.+30) {\tiny $8$};
        \node[above] at (d1.+30) {\tiny $1$};
    \node[below] at (d2.-90) {\tiny $3$};
        \node[below] at (d3.-90) {\tiny $5$};
       \node[below] at (d4.-90) {\tiny $7$};
        \node[above] at (d0.+30) {\tiny $9$};
\end{scope}
\end{tikzpicture}\vspace{-4pt}\caption{Clique Detection Divider}\label{sparse:clique:subfig}\vspace{-8pt}
\end{subfigure}\begin{subfigure}{.5\textwidth}
\centering
\begin{tikzpicture}[scale=.7]
\draw[thick, densely dotted] (-1.175,1.575) rectangle (-0.175,2.575);
\draw[thick, densely dotted] (2.3,0.8) rectangle (1.3,1.8);
\draw[thick, densely dotted] (-2.7,-0.5) rectangle (-1.7,.5);
\SetVertexNormal[Shape      = circle,
                  FillColor  = cyan!50,
                  MinSize = 11pt,
                  InnerSep=0pt,
                  LineWidth = .5pt]
   \SetVertexNoLabel
   \tikzset{LabelStyle/.style = {above, fill = white, text = black, fill opacity=0, text opacity = 1}}
         \tikzset{EdgeStyle/.style= {thin,dotted,
                                double          = red!50,
                                double distance = 1pt}}
  \begin{scope}[shift={(0cm, 0cm)}]
\grEmptyCycle[prefix=c,RA=2.2]{5}
 \begin{scope}[rotate=36]
         \tikzset{EdgeStyle/.style= {thin,dashed,
                                double          = red!50,
                                double distance = 1pt}}
\grCycle[prefix=d,RA=2.2]{5}
  \end{scope}
  \Edge(d1)(c1)
    \Edge(d1)(c2)
    \Edge(d2)(c2)
    \Edge(d2)(d0)
        \Edge(c1)(d0)
                     \tikzset{EdgeStyle/.style= {thin,dotted, bend right = 60,
                                double          = red!50,
                                double distance = 1pt}}
                         \tikzset{EdgeStyle/.style= {thin,dotted, bend right = 30,
                                double          = red!50,
                                double distance = 1pt}}
        \node[above] at (c1.+30) {\tiny $10$};
    \node[above] at (c2.+30) {\tiny $2$};
        \node[below] at (c3.-90) {\tiny $4$};
       \node[below] at (c4.-90) {\tiny $6$};
        \node[above] at (c0.+30) {\tiny $8$};
        \node[above] at (d1.+30) {\tiny $1$};
    \node[below] at (d2.-90) {\tiny $3$};
        \node[below] at (d3.-90) {\tiny $5$};
       \node[below] at (d4.-90) {\tiny $7$};
        \node[above] at (d0.+30) {\tiny $9$};
\end{scope}
\end{tikzpicture}
\vspace{-4pt}\caption{Cycle Detection Divider}\vspace{-8pt}\label{cycle:detection:subfig}
\end{subfigure}

\caption{Sparse clique and cycle detection with $s=5$. In both cases we set $G_0 = (\overline V, \varnothing)$ and visualize two intersecting $S, S' \in \bC$: in Fig \ref{sparse:clique:subfig} $S$ is the $5$-clique $K_{\{1,3,5,7,9\}}$, $S'$ is the $5$-clique $K_{\{1,2,3,9,10\}}$ and $\cV_{S,S'} = \{1,3,9\}$; in Fig \ref{cycle:detection:subfig} $S$ is the $5$-cycle $C_{\{1,3,5,7,9\}}$, $S'$ is the $5$-cycle $C_{\{1,2,3,9,10\}}$ and $\cV_{S,S'} = \{1,3,9\}$.}\label{joint:cycle:clique} \vspace{-18pt}
\end{figure}

We show how Example \ref{clique:detection:example} follows from Theorem \ref{comb:NAS} in Section \ref{supp:bounded:edges} of the supplement. The divider construction we use is simply drawing $s$ vertices and connecting them to form a $s$-clique. Fig \ref{sparse:clique:subfig} illustrates two sets from the divider along with their vertex buffer.

We conclude this Section by a final example on cycle detection. In this example the sub-decomposition is $\cG_0 = \{(\overline V, \varnothing)\}$, and $\cG_1 = \{(\overline V, C) | C = \{(v_1,v_2), \ldots, (v_{s-1}, v_s), (v_s, v_1)\}, v_i \in \overline V\}$ for an integer $s \in \NN$. We have the following example, whose proof can be found in Section \ref{supp:bounded:edges} of the supplement. We show two sets from the divider on Figure \ref{cycle:detection:subfig}.

\begin{example}[Sparse Cycle Detection]\label{cycle:detection:example} Suppose $s = O(d^{\gamma})$ for a $\gamma < 1/2$. Then for a small enough absolute constant $\kappa$ if $\theta < \kappa \sqrt{\log d/n}$ we have \\
$\liminf_{n \rightarrow \infty} \gamma(\cS_0(\theta,s), \cS_1(\theta, s)) = 1.$
\end{example}

\subsection{Upper Bounds} Algorithms matching the lower bounds developed in Sections \ref{bounded:edges} and \ref{scaling:edges} are discussed in Section \ref{upper:bounds:multi:NAS} of the supplement. 

\section{Discussion}\label{disc:sec}
{ In this manuscript we provide general results for upper and lower bounds of testing graph properties. 
There is still room to improve the proof techniques for lower bounding. Our arguments rely only on ``one-sided'' alternatives, and it is possible to obtain sharper bounds by additional randomization such as in \cite{baraud02non-asymptotic}.  Additionally, we use the Gaussian distribution to quantify the lower bounds. We are further  interested in generalizing our results to other important graphical models, such as the Ising model, in our future studies.}

\section{Proof of Theorem \ref{suff:cond:lower:bound}} \label{proof:of:suff:lower:bound:sec}

In this section we prove the main result of Section \ref{sec:generic:lower}. To begin with, we give a high level picture of our proof. The argument consists of four major steps. 
Our first three steps will show that, there exists a constant $R$ such that if $\theta \leq \frac{1 - C^{-1}}{\sqrt{2}(\|\Ab_0\|_1 + 2)}$ we have:
\begin{align}
\gamma(\cS_0(\theta, s), \cS_1(\theta, s)) \geq 1 - \frac{1}{2}\sqrt{\frac{1}{|\cC|^2} \sum_{e,e' \in \cC} \exp\left(n  \frac{(R\theta)^{2 d_{G_0}(e,e') + 2}}{d_{G_0}(e,e') + 1}\right) - 1}. \label{useful:bound}
\end{align}
To establish this result, in the first step, we select one precision matrix from the null $\cS_0(\theta, s)$ and  a set of precision matrices from the alternative $\cS_1(\theta, s)$.
In the second step, we apply Le Cam's Lemma to the precision matrices constructed above to get a lower bound of $\gamma(\cS_0(\theta, s), \cS_1(\theta, s))$. In the third step, we establish trace perturbation inequalities to further connect the lower bound achieved in the second step to the geometric quantities of the graphs. In the fourth step, we prove the theorem by showing that the right hand side of (\ref{useful:bound}) goes to 1 if \eqref{scaling:conditions} is satisfied.

\vspace{.3cm}
\noindent {\large{\bf Step 1}\textit{(Matrix Construction).}}\\
In this step we construct a class of precision matrices based on the null base graph $G_0$ and the divider set $\cC$ and verify that these matrices indeed belong to the sets $\cS_0(\theta,s)$ and $\cS_1(\theta,s)$. We begin with giving the upper bound of matrix norms of adjacent inequalities.  Let $\Ab_0$ be the adjacency matrix of the graph $G_0$. Observe that since $\Ab_0$ is symmetric, by H\"{o}lder's inequality $\|\Ab_0\|_2 \leq \sqrt{\|\Ab_0\|_1 \|\Ab_0\|_{\infty}} = \|\Ab_0\|_1 \leq D$.

Similarly, denote with $ \Ab_{e}$ the adjacency matrix of the graph $(\overline V, \{e\})$ for $e \in \cC$. Under our assumptions it follows that $\Ab_0 +  \Ab_{e}$ is the adjacency matrix of the graph $G_{e} = (\overline V, E_0\cup \{e\})$. For brevity, for any two edges $e,e' \in \cC$ we define the shorthand notation $\Ab_{e,e'} := \Ab_0 + \Ab_e + \Ab_{e'}$. Take $\bTheta_0 = \Ib + \theta \Ab_0$, $\bTheta_e = \Ib + \theta (\Ab_0 + \Ab_{e})$, $ \bTheta_{e,e'} = \Ib + \theta \Ab_{e,e'}$, for $e,e' \in \cC$, $\theta > 0$. By the triangle inequality for any $e, e' \in \cC$ we have:
\begin{align*}
\max(\|\Ab_0\|_2, \|\Ab_0 + \Ab_{e}\|_2, \|\Ab_{e,e'}\|_2) &\leq \|\Ab_0\|_2 + 2,\\
\max(\|\Ab_0\|_1, \|\Ab_0 + \Ab_{e}\|_1, \|\Ab_{e,e'}\|_1) & \leq \|\Ab_0\|_1 + 2.
\end{align*}
Recall definition (\ref{MS:def}) of the set $\cM(s)$. Next we make sure that the matrices $\bTheta_0$ and $\bTheta_e$ fall into $\cM(s)$ and in addition the matrix $\bTheta_{e,e'} > 0$. For the upper bounds, it suffices to choose $\eta$ satisfying:
\begin{align*}
\max(\|\bTheta_0\|_2, \|\bTheta_e\|_2) & \leq 1 + (\|\Ab_0\|_2 + 2)\theta \leq C,\\
\max(\| \bTheta_0\|_1, \| \bTheta_e\|_1)  & \leq 1 + (\|\Ab_0\|_1 + 2)\theta \leq L.
\end{align*} 

Recall that $\|\Ab_0\|_2 \leq \|\Ab_0\|_1$, and $C \leq L$ hence both inequalities are implied if $1  + (\|\Ab_0\|_1 + 2)\theta \leq C$. This inequality holds since
$$
\theta < \frac{(1 - C^{-1}) \wedge e^{-1/2}}{\sqrt{2}(D + 2)} \leq \frac{C - 1}{(\|\Ab_0\|_1 + 2)},
$$
where the last inequality is true since $D = \|\Ab_0\|_1$, and $C \geq 1$ and therefore $C - 1\geq 1 - C^{-1}$. Furthermore, by Weyl's inequality:
\begin{align}\label{lower:bound:eigen}
\lambda_d( \bTheta_0), \lambda_d( \bTheta_e), \lambda_d(\bTheta_{e,e'}) \geq 1 - \theta(\|\Ab_0\|_2 + 2) \geq 1 - \theta(\|\Ab_0\|_1 + 2),
\end{align}
where $\lambda_d$ denotes the smallest eigenvalue of the corresponding matrix. We want to ensure that the last term is at least $C^{-1}$. Since by assumption $\theta < \frac{1 - C^{-1}}{\sqrt{2}(\|\Ab_0\|_1 + 2)}$ the above inequalities are satisfied. Furthermore, we have $G_0\in \cG_0, G_e \in \cG_1$ for all $e \in \cC$ and hence the induced graphs $G(\bTheta_0) \in \cG_0$ and $G(\bTheta_e) \in \cG_1$ for all $e \in \cC$. This shows that $\bTheta_0 \in \cS_0(\theta,s)$ and $\bTheta_e \in \cS_1(\theta,s)$. We also obtain as a by-product that $\bTheta_{e,e'} \geq 0$. 

\vspace{.3cm}
\noindent {\large{\bf Step 2}\textit{(Minimax Risk Lower Bound).}}\\
In this step we obtain a lower bound on the minimax risk driven by Le Cam's Lemma \citep{lecam1973convergence}. Using a determinant identity we control the chi-square divergence by the traces of adjacency matrices' powers. Put the uniform prior on $\cC$ and consider the models generated by $N(0, (\bTheta_e)^{-1})$ where $e \in \cC$. Define:
$$
\overline \PP_{} = \frac{1}{|\cC|} \sum_{e \in \cC} \PP_{\bTheta_e},
$$
where $\PP_{\bTheta_e}$ we define the probability measure when the data is i.i.d. $\bX_i \sim N(0, (\bTheta_e)^{-1})$, and let $\PP_{\bTheta_0}$ be the probability measure when the data is i.i.d. $\bX_i \sim N(0, (\bTheta_0)^{-1})$.
Next, by Neyman-Pearson's lemma we have:
\begin{align}
\gamma(\cS_0, \cS_1) \geq \inf_{\psi}\Bigl[\PP_{\bTheta_0}(\psi = 1) + \overline \PP_{} (\psi = 0)\Bigr] = 1 - \TV(\overline \PP_{}, \PP_{\bTheta_0}), \label{ave:risk}
\end{align}
where for two probability measures $P,Q \ll \lambda$ on a measurable space $(\Omega, \cA)$, $\TV$ stands for \textit{total variation distance}, and is defined as 
$$
\TV(P, Q) = \sup_{A \in \cA}|P(A) - Q(A)| = \frac{1}{2} \int \Big|\frac{d P}{d \lambda}(\omega) - \frac{d Q}{d \lambda}(\omega) \Big| d \lambda(\omega).
$$ 
By Cauchy-Schwartz one has:
\begin{align}\label{lik:ratio:bound:chi:sq}
1 - \TV(\overline \PP_{\pi}, \PP_{\bTheta_0}) \geq 1 - \frac{1}{2} \sqrt{D_{\chi^2}(\overline \PP_{\pi}, \PP_{\bTheta_0})},
\end{align}
where $D_{\chi^2}(P, Q)$ is the chi-square divergence between the measures $P, Q$ and is defined as:
$$
D_{\chi^2}(P, Q) = \int \left(\frac{d P}{d Q}(\omega) - 1\right)^2 d Q(\omega) = \int \left(\frac{d P}{d Q}(\omega)\right)^2 d Q(\omega)  - 1,
$$
assuming that $P \ll Q$. Observe that $D_{\chi^2}(\overline \PP_{}, \PP_{\bTheta_0})$ can be equivalently expressed as:
\begin{align}
D_{\chi^2}(\overline \PP_{}, \PP_{\bTheta_0}) = \EE_{\bTheta_0} L^2_{\bTheta_0} - 1,\label{lik:ratio:bound}
\end{align}
where $L_{\bTheta_0} = \frac{1}{|\cC|} \sum_{e \in \cC} \frac{d \PP_{\bTheta_e}}{d \PP_{\bTheta_0}}$ is the integrated likelihood ratio, and $\EE_{\bTheta_0}$ denotes the expectation under $\bX_i \sim N(0, (\bTheta_0)^{-1})$. Hence by (\ref{ave:risk}) and (\ref{lik:ratio:bound:chi:sq}), it suffices to obtain upper bounds on the integrated likelihood ratio in order to lower bound the minimax risk (\ref{risk:def}). Writing out the likelihood ratio comparing the normal distribution with precision matrix $ \bTheta_0$ to the uniform mixture of normal distribution with precision matrix $ \bTheta_e$ for $e \in \cC$ we get:
\begin{equation*}
 L_{\bTheta_0} = \frac{1}{|\cC|} \sum_{e \in \cC} \bigg(\frac{\det(\bTheta_e)}{\det(\bTheta_0)}\bigg)^{n/2} \prod_{i = 1}^n \exp(-\bX_i^T \theta \Ab_{e} \bX_i/2).
\end{equation*}
To calculate the chi-square distance in \eqref{lik:ratio:bound}, next we square this expression and take its expectation under $\PP_{\bTheta_0}$ to obtain:
\begin{align}\label{eq:ee1}
\MoveEqLeft \EE_{\bTheta_0} L^2_{\bTheta_0} = \nonumber \\
& = \frac{1}{|\cC|^2} \sum_{e,e' \in \cC} \frac{(\det( \bTheta_e)\det( \bTheta_{e'}))^{n/2}}{(\det( \bTheta_0))^{n}} \EE_{ \bTheta_0}  \exp\Big(-\frac{\sum_{i = 1}^n\bX_i^T \theta (\Ab_{e} + \Ab_{e'}) \bX_i}{2}\Big) \nonumber\\
& = \frac{1}{|\cC|^2} \sum_{e,e' \in \cC} \bigg(\frac{\det(\bTheta_{e})}{\det(\bTheta_0)}\bigg)^{n/2}\bigg(\frac{\det(\bTheta_{e'})}{\det(\bTheta_{e,e'})}\bigg)^{n/2}. 
\end{align}

Next, we will expand the determinants above. 
Recall that we have ensured that $1 - \theta (\|\Ab_0\|_2 + 2) > 0$ (see \ref{lower:bound:eigen}). This implies 
\[
\theta \max(\|\Ab_0\|_2, \|\Ab_0 + \Ab_e\|_2, \|\Ab_0 + \Ab_{e'}\|_2, \|\Ab_{e,e'}\|_2) \leq 1.
\]
 For what follows for a symmetric matrix $\Ab_{d \times d}$ we denote its ordered eigenvalues with $\lambda_1(\Ab) \geq \lambda_2(\Ab) \geq \ldots \geq \lambda_d(\Ab)$. Let $\Ab \in \RR^{d \times d}$ be a symmetric matrix  such that $\|\Ab\|_2 \leq 1$. Then we have:
\begin{align*}
\MoveEqLeft \log \det (\Ib + \Ab)  = \sum_{j = 1}^d \log \lambda_{j}(\Ib + \Ab) =  \sum_{j = 1}^d \log  (1 + \lambda_{j}(\Ab)) \\
&=  \sum_{j = 1}^d \sum_{k = 1}^{\infty} (-1)^{k + 1} \frac{\lambda^k_{j}(\Ab)}{k}  =  \sum_{k = 1}^{\infty} (-1)^{k + 1} \frac{\Tr(\Ab^k)}{k}.
\end{align*}
Using the form $\det (\Ib + \Ab) = \exp( \log \det (\Ib + \Ab))$ and plugging the above equation into \eqref{eq:ee1}, we conclude that:
\begin{align*}
\EE_{\bTheta_0} L^2_{\bTheta_0}& =\frac{1}{|\cC|^2} \sum_{e,e' \in \cC}  \exp\bigg(\frac{n}{2}\sum_{k = 1}^{\infty} \frac{(-\theta)^{k}}{k} \bigg(T^k_1 + T^k_2\bigg) \bigg), \text{ where }
\end{align*}
\begin{align*}
T^k_1 := \Tr[\Ab_0^k -(\Ab_0 + \Ab_{e})^k] ~~~~ T^k_2 := \Tr[(\Ab_{e,e'})^k - (\Ab_0 + \Ab_{e'})^k].
\end{align*}

\vspace{.3cm}
\noindent {\large {\bf Step 3} \textit{(Trace Perturbation Inequalities).}}\\
In this step, we control $T_1^k + T_2^k$ in terms of $k$ and link it with the geometric quantities of the graph. We view $T_1^k$ as the perturbation difference between $ \Tr[\Ab_0^k]$ and $\Tr[(\Ab_0 + \Ab_{e})^k]$ and we treat $T_2^k$ similarly. In the following step, we aim to develop the perturbation inequalities for the trace of matrix powers.

 First we will argue that $T^k_1 + T^k_2 \geq 0$ for all $k \in \NN$. To see this recall that the trace operator of an adjacency matrix $\Mb$ satisfies 
$$
\Tr(\Mb^k) = \mbox{ number of all closed walks of length } k.
$$
First we consider case $e \neq e'$. Notice that all closed walks in $G(\Ab_0 + \Ab_{e})$ that do not belong to $G(\Ab_0)$ have to pass through the edge $e$ at least once. Similarly  all closed walks in $G(\Ab_0 + \Ab_{e'})$ that do not belong to $G(\Ab_0)$ have to pass through the edge $e'$ at least once. Furthermore, all closed walks of length $k$ passing through either $e$ or $e'$ belong to $G(\Ab_{e,e'})$. In addition $G(\Ab_{e,e'})$ might contain extra closed walks passing through both $e$ and $e'$. This shows:
$$
T^k_1 + T^k_2 \geq 0,
$$
for all $k$. This shows that when $k$ is odd we have $(- \theta)^k (T^k_1 + T^k_2) \leq 0$, and thus to control $\EE_{\bTheta_0} L_{\bTheta_0}^2$ it suffices to focus only on even $k$.

Next we prove that for $k <2 d_{G_0}(e,e') + 2$, we have $T^k_1 + T^k_2 \equiv 0$. To see this, first consider the case $e \neq e'$. Notice that the graph $G(\Ab_{e,e'})$ cannot contain paths passing through both $e$ and $e'$ unless $k \geq 2 d_{G_0}(e, e') + 2$. To see this, notice that no even length closed walk between $e$ and $e'$ can exist if the length of this walk is smaller than $2 d_{G_0}(e, e')$ plus the two edges $e$ and $e'$. This proves our claim in the case $e \neq e'$. In the special case $e = e'$, the length of the path trivially needs to be at least of length $2$ to pass through both $e$ and $e'$. 

We will now argue that for even $k \in \NN$ we have $T_1^k + T_2^k \leq 2(\|\Ab_0\|_2 + 2)^k$. In fact we will prove that $T_1^k \leq 0$ for all $k$ and $T_2^k \leq 2(\|\Ab_0\|_2 + 2)^k$ for all even $k$. To see that $T_1^k \leq 0$, note that $G(\Ab_0)$ contains less closed walks than $G(\Ab_0 + \Ab_e)$. 

Recall that for a symmetric matrix $\Ab_{d \times d}$ we denote its ordered eigenvalues with $\lambda_1(\Ab) \geq \lambda_2(\Ab) \geq \ldots \geq \lambda_d(\Ab)$. To this end we state a helpful result whose proof is deferred to the supplement.

\begin{lemma}\label{gen:lidskii} For two symmetric $m \times m$ matrices $\Ab$ and $\Bb$, and any constants $c_1 \geq c_2 \geq \ldots \geq c_m$, and a permutation $\sigma$ on $\{1,\ldots, m\}$ we have:
$$
\sum_{j = 1}^m c_{\sigma(j)} \lambda_j(\Ab + \Bb) \leq \sum_{j = 1}^m c_{\sigma(j)}\lambda_j(\Ab) + \sum_{j = 1}^m c_j \lambda_{j}(\Bb).
$$
\end{lemma}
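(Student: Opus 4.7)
The plan is to reduce the inequality to two classical ingredients: Lidskii's majorization theorem for Hermitian matrices, and the rearrangement inequality, glued together by Abel summation by parts. Write $\mu_j := \lambda_j(\Ab+\Bb) - \lambda_j(\Ab)$, so that the lemma becomes
$$\sum_{j=1}^m c_{\sigma(j)}\mu_j \leq \sum_{j=1}^m c_j \lambda_j(\Bb).$$
Let $\mu_1^{\downarrow} \geq \cdots \geq \mu_m^{\downarrow}$ denote the decreasing rearrangement of $(\mu_j)$; recall that $(c_j)$ is already sorted decreasingly by hypothesis.

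First, I would invoke the rearrangement inequality to align the sequences. Composing $\sigma$ with the permutation $\tau$ that sorts $(\mu_j)$ gives $\sum_j c_{\sigma(j)}\mu_j = \sum_j c_{\sigma(\tau(j))} \mu_j^{\downarrow}$. Since $(c_j)$ and $(\mu_j^{\downarrow})$ are both decreasing, the rearrangement inequality (applied to the permuted sequence $c_{\sigma(\tau(j))}$) yields
$$\sum_{j=1}^m c_{\sigma(j)}\mu_j \leq \sum_{j=1}^m c_j \mu_j^{\downarrow}.$$
This reduces the problem to proving $\sum_j c_j \mu_j^{\downarrow} \leq \sum_j c_j \lambda_j(\Bb)$.

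Second, by Lidskii's theorem, for any indices $1 \leq i_1 < \cdots < i_k \leq m$ one has $\sum_{r=1}^k \lambda_{i_r}(\Ab+\Bb) \leq \sum_{r=1}^k \lambda_{i_r}(\Ab) + \sum_{r=1}^k \lambda_r(\Bb)$. Choosing the $k$ indices for which $\mu_j$ is largest yields the weak majorization $\sum_{j=1}^k \mu_j^{\downarrow} \leq \sum_{j=1}^k \lambda_j(\Bb)$ for every $k \in [m]$, with equality at $k=m$ from the trace identity $\sum_j \mu_j = \Tr(\Bb)$. Setting $\epsilon_j := \mu_j^{\downarrow} - \lambda_j(\Bb)$ and applying Abel's summation by parts,
$$\sum_{j=1}^m c_j \epsilon_j = \sum_{k=1}^{m-1} (c_k - c_{k+1}) \sum_{j=1}^{k} \epsilon_j + c_m \sum_{j=1}^m \epsilon_j,$$
the boundary term vanishes by the trace equality, each increment $c_k - c_{k+1}$ is nonnegative, and every partial sum $\sum_{j \leq k} \epsilon_j$ is nonpositive by Lidskii. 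Hence $\sum_j c_j \mu_j^{\downarrow} \leq \sum_j c_j \lambda_j(\Bb)$, which chains with the first step to finish the proof.

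The central nontrivial ingredient is Lidskii's theorem itself, which is classical; everything else amounts to combining rearrangement with an Abel-summation bookkeeping. The one mild subtlety worth flagging is that the $c_j$ are only ordered, not assumed positive; we never use positivity of $c_m$ because the $k=m$ boundary term in the Abel expansion is annihilated by the trace equality, leaving only the nonnegative increments $c_k - c_{k+1}$ to pair against the nonpositive Lidskii partial sums.
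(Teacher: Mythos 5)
Your proof is correct, and it takes a genuinely different route from the paper's. The paper first reduces to nonnegative coefficients by subtracting $c_m$ from every $c_j$ (using the trace identity $\sum_j\lambda_j(\Ab+\Bb)=\sum_j\lambda_j(\Ab)+\sum_j\lambda_j(\Bb)$ to absorb the shift), and then inducts on the number of nonzero coefficients: at each step it peels off the smallest nonzero $c^*$, applies Lidskii's theorem directly with the unsorted index set $\{\sigma^{-1}(j)\}_{j\le k}$ on the $\Ab$ and $\Ab+\Bb$ side and the top $k$ eigenvalues of $\Bb$ on the other, and lets the induction hypothesis handle the residual. You instead dispose of the permutation $\sigma$ up front by the rearrangement inequality, reducing to the aligned sum $\sum_j c_j\mu_j^\downarrow$, and then close with Abel summation against the weak majorization $\sum_{j\le k}\mu_j^\downarrow\le\sum_{j\le k}\lambda_j(\Bb)$ (with equality at $k=m$). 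Both proofs hinge on the same theorem (Lidskii) and the same trace identity, and your Abel step plays exactly the role of the paper's induction — both effectively write $c_j - c_m$ as the nonnegative combination $\sum_{k}(c_k-c_{k+1})\mathds{1}(j\le k)$ of level sets and pair each level set against a Lidskii partial sum. The real organizational difference is where $\sigma$ is absorbed: the paper feeds the unsorted indices straight into Lidskii's arbitrary-index-set statement, whereas you convert to sorted sequences first via rearrangement and afterwards use only the weak-majorization consequence of Lidskii. Your framing is the more standard Hardy--Littlewood--P\'olya majorization argument; it separates the two ideas cleanly, at the small cost of invoking rearrangement as an additional (elementary) lemma.
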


Using Lemma \ref{gen:lidskii} for the matrices $\Ab = \Ab_e, \Bb = \Ab_0 + \Ab_{e'}$ with constants 
$$c_{\sigma(j)} = \sign(\lambda_j(\Ab_{e,e'}) - \lambda_j(\Ab_{e}))|\lambda_j(\Ab_{e,e'}) - \lambda_j(\Ab_{e})|^{k-1},$$
we obtain:
\begin{align*}
 \MoveEqLeft \sum_{j = 1}^d |\lambda_j(\Ab_{e,e'}) - \lambda_j(\Ab_{e})|^k  \leq \sum_{j = 1}^d c_j \lambda_j(\Ab_0  + \Ab_{e'}) \\
& \leq \Big[\sum_{j = 1}^d |c_j|^{\frac{k}{k-1}}\Big]^{\frac{k-1}{k}} \Big[ \sum_{j = 1}^d |\lambda_j(\Ab_0  + \Ab_{e'})|^{k}\Big]^{\frac{1}{k}},
\end{align*}
where the last inequality follows by H\"{o}lder's inequality. We conclude that:
\begin{equation}\label{eq:WH}
 \sum_{j = 1}^d |\lambda_j(\Ab_{e,e'}) - \lambda_j(\Ab_{e})|^k \leq \sum_{j = 1}^d|\lambda_j(\Ab_0  + \Ab_{e'})|^k.
\end{equation}

Next, observe that the negative adjacency matrix $-\Ab_{e}$ of the single edge graph $(\overline V, \{e\})$ has very simple eigenvalue structure: $1, -1$ and  $d-2$ zeros. Hence we conclude that for even $k$:
\begin{align}
\nonumber \MoveEqLeft T_2^k  = \Tr((\Ab_{e,e'})^k) - \Tr((\Ab_0 + \Ab_{e'})^k)  = \sum_{j = 1}^d |\lambda_j(\Ab_{e,e'})|^k  - \sum_{j = 1}^d |\lambda_j(\Ab_0 + \Ab_{e'})|^k \\
 & \leq |\lambda_{1}(\Ab_{e,e'})|^k - |\lambda_{1}(\Ab_{e,e'}) - 1|^k + |\lambda_{d}(\Ab_{e,e'})|^k - |\lambda_{d}(\Ab_{e,e'}) + 1|^k \nonumber \\
\nonumber & \leq |\lambda_{1}(\Ab_{e,e'})|^k + |\lambda_{d}(\Ab_{e,e'})|^k  \leq 2 \|\Ab_{e,e'}\|_2^k \leq  2(\|\Ab_0\|_2 + 2)^k. \nonumber
\end{align}

The last shows that indeed $T_1^k + T_2^k \leq 2(\|\Ab_0\|_2 + 2)^k$ as claimed. Putting everything together we obtain
\begin{align*}
\MoveEqLeft \sum_{k = 1}^{\infty} \frac{(- \theta)^k}{k} [T_1^k + T_2^k]
 \leq \sum_{2 | k, ~ k \geq2 d_{G_0}(e,e') + 2}^{\infty}\frac{\theta^k}{k}  [T_1^k + T_2^k] \\
 & \leq \sum_{2 | k, ~ k \geq 2 d_{G_0}(e,e') + 2}^{\infty} \frac{2((\|\Ab_0\|_2 + 2) \theta)^k}{k} \\
& \leq \frac{2((\|\Ab_0\|_2 + 2)  \theta)^{2 d_{G_0}(e,e') + 2}}{(2 d_{G_0}(e,e') + 2)(1 - (\theta(\|\Ab_0\|_2 + 2))^2)} \leq \frac{2((\|\Ab_0\|_2 + 2) \theta)^{2 d_{G_0}(e,e') + 2}}{d_{G_0}(e,e') + 1},
\end{align*}
where in the last inequality we used the fact that $\theta \leq \frac{1}{\sqrt{2}(\|\Ab_0\|_2 + 2)}$ which follows by the requirements on $\theta$. This completes the proof of (\ref{useful:bound}) where $R = \sqrt{2} (\|\Ab_0\|_2 + 2)$.

\vspace{.3cm}
\noindent {\large {\bf Step 4} \textit{(Rate Control).}}\\ 
The goal in this final step is to show that if (\ref{scaling:conditions}) holds, the minimax risk 
$$\liminf_{n \rightarrow \infty} \gamma(\cS_0(\theta, s), \cS_1(\theta, s)) =1.$$ 
The proof is technical, but the high-level idea is to clip the first $\log |\cC|$ degrees in (\ref{useful:bound}) and deal with two separate summations. It turns out that the scaling assumed on $\theta$ in (\ref{scaling:conditions}) is precisely enough to control both the summation of all degrees below $\log |\cC|$ and the summation of all degrees above $\log |\cC|$. Define the following quantities:
$$K_r := |\{(e,e') ~|~ e, e' \in \cC, d_{G_0} (e, e')= r\}|,$$
where $(e,e')$ are unordered edge pairs, and observe that $\sum_{r \geq 0} K_r = {|\cC| \choose 2} + |\cC|$ by definition. We will in fact, first show that if $\theta \leq \kappa \sqrt{\frac{\log |\cC|}{n}}$
for some small $\kappa$, and 
\begin{align}\label{geom:condition}
\sum_{r = 0}^{\lfloor\log |\cC|\rfloor} K_r = O(|\cC|^{2 - \gamma}),
\end{align}
for some $1/2 < \gamma \leq 1$, then $\liminf \gamma(\cS_0(\theta, s), \cS_1(\theta, s)) = 1$ provided that $|\cC| \rightarrow \infty$. We will then derive the Theorem as a corollary to this observation. 

By (\ref{useful:bound}) it suffices to control:
\begin{align}
\underbrace{\frac{2}{|\cC|^2}\sum_{(e,e'): d_{G_0}(e,e') \geq 1} \exp\bigg(n  \frac{\overline \theta^{2 d_{G_0}(e,e') + 2}}{d_{G_0}(e,e') + 1}\bigg)}_{I_1} + \underbrace{\frac{2K_0 - |\cC|}{|\cC|^2}\exp(n \overline \theta^2)}_{I_2},\label{useful:bound:prop}
\end{align}
where we will write $\overline \theta$ for $R \theta$ for brevity. 

First, observe that since $\theta < \frac{1-C^{-1}}{\sqrt{2}(D + 2)}$ then we have 
$$
\overline \theta < R \frac{(1-C^{-1}) \wedge e^{-1/2}}{\sqrt{2}(D + 2)} \le (1 - C^{-1})\wedge e^{-1/2} < e^{-1/2} < 1.
$$

We will show that when $\bar \theta$ is small, (\ref{useful:bound:prop}) is bounded by $1$ asymptotically, which in turn suffices to show that $\liminf \gamma(\cS_0(\theta, s), \cS_1(\theta, s))= 1$. Notice that $\overline \theta$ and $\theta$ are the same quantity up to the constant $R$ and hence $\theta < \kappa \sqrt{\frac{\log |\cC|}{n}}$ is equivalent to $\overline \theta < \bar \kappa\sqrt{\frac{\log |\cC|}{n}}$ for some sufficiently small $\bar \kappa$. We will require:
$$\bar \kappa < \sqrt{\gamma} \wedge \sqrt{2\gamma/c_0} \wedge (e c_0)^{-1/2}.$$
Observe that since $K_0 = O(|\cC|^{2-\gamma})$, $\bar \kappa^2 < \gamma$, and $\overline \theta < \bar \kappa \sqrt{\frac{\log |\cC|}{n}}$ we have:
$$
I_2 \leq \frac{(2K_0 - |\cC|)|\cC|^{\bar \kappa^2}}{|\cC|^2} \rightarrow 0.
$$
Next we tackle the term $I_1$ in (\ref{useful:bound:prop}). We will show that since $\overline \theta < 1$ by assumption, this term goes to $1$.
\begin{align*}
I_1 & = \frac{2}{|\cC|^2}\sum_{r \geq 1} K_r \exp(n \overline \theta^{2r + 2}/(r+1)) = \frac{2}{|\cC|^2}\sum_{r = 1}^{d-1} K_r |\cC|^{\bar \kappa^2 \overline \theta^{2r}/(r+1)}  + \frac{2K_{\infty}}{|\cC|^2}\\
& < \frac{2}{|\cC|^2}\sum_{r = 1}^{d-1} K_r |\cC|^{\overline \theta^{2r}/(r+1)}  + \frac{2K_{\infty}}{|\cC|^2},
\end{align*}
where the last inequality follows by the fact that $\bar \kappa^2 < \gamma <  1$. Splitting out the first $\lfloor \log |\cC| \rfloor$ terms out of this summation yields:
$$
I_1 < \underbrace{\frac{2}{|\cC|^2}\sum_{r = 1}^{\lfloor \log |\cC|\rfloor} K_r|\cC|^{\overline \theta^{2r}/(r + 1)}}_{I_{11}}  + \underbrace{\frac{2}{|\cC|^2}\sum_{r = \lfloor\log |\cC|\rfloor + 1}^{d-1} K_r|\cC|^{\overline \theta^{2r}/(r + 1)} + \frac{2K_{\infty}}{|\cC|^2}}_{I_{12}}.
$$
The first term is bounded by $
I_{11} \leq 2 \left(\sum_{r = 1}^{\lfloor \log |\cC|\rfloor } K_r\right) \frac{|\cC|^{\overline \theta^2/2}}{|\cC|^2} = o(1),$
where we used $\left(\sum_{r = 2}^{\lfloor \log |\cC| \rfloor} K_r\right) = O(|\cC|^{2 - \gamma})$ and the fact that $\overline \theta^2/2 < 1/2 < \gamma$. Next, we will argue that $|\cC|^{\overline \theta^{2r}/(r + 1)} \leq 1 + 3{\overline \theta^{2r}}(|\cC| - 1)/(r + 1).$
This follows by:
$$\exp(\log(|\cC|)\overline \theta^{2r}/(r + 1)) \leq 1 + 3\log(|\cC|)\overline \theta^{2r}/(r + 1) \leq 1 + 3(|\cC| - 1)\overline \theta^{2r}/(r + 1),$$ 
with the first inequality holding when $\log(|\cC|)\overline \theta^{2r}/(r + 1) < 1$, which is true since $\overline \theta < 1$, and $r \geq \lfloor\log|\cC|\rfloor + 1$. Hence we have:
\begin{align*}
I_{12} &\leq  \frac{2}{|\cC|^2}\sum_{r = \lfloor \log |\cC| \rfloor + 1}^{d-1} K_r (1 + 3(|\cC|-1){\overline \theta^{2r}}/(r + 1)) + \frac{2K_{\infty}}{|\cC|^2} \\
& \leq \left(1 - \frac{O(|\cC|^{2 - \gamma})}{|\cC|^2}\right) + \frac{6(|\cC| - 1)}{|\cC|^2} \sum_{r = \lfloor \log|\cC|\rfloor + 1}^{d - 1}\frac{K_r}{r} \overline \theta^{2r}\\
& \leq \left(1 - \frac{O(|\cC|^{2 - \gamma})}{|\cC|^2}\right) + \frac{6\overline \theta^{2 \lfloor \log |\cC|\rfloor + 2}}{1 - \overline \theta^2} |\cC|^{-1} \max_{ \lfloor \log|\cC|\rfloor + 1 \leq r} \frac{K_r}{r}.
\end{align*}

Paying closer attention to the second term we have:
\begin{align*}
\frac{6\overline \theta^{2 \lfloor \log |\cC|\rfloor + 2}}{1 - \overline \theta^2} |\cC|^{-1} \max_{ \lfloor \log|\cC|\rfloor + 1 \leq r} \frac{K_r}{r} & \leq \frac{6}{1 - e^{-1}} \overline \theta^{2 \lfloor \log |\cC| \rfloor+ 2} \frac{{|\cC| \choose 2} + |\cC|}{|\cC|}\\
& \leq \frac{6}{1 - e^{-1}} \overline \theta^{2 \lfloor \log |\cC| \rfloor+ 2} |\cC| = o(1),
\end{align*}
with the last equality holds since $
\overline \theta < \exp(-1/2),$
as we required. This combined with (\ref{useful:bound}) concludes the proof of 
$\liminf_n \gamma(\cS_0(\theta, s), \cS_1(\theta, s)) =  1,$
when $\theta \leq \kappa \sqrt{\frac{\log |\cC|}{n}}$.

Finally, notice that any subset of a divider $\cC$ is trivially a divider. Hence we can apply what we just showed to the set $N_{\log |\cC|} \subset \cC$ --- the maximal $\log |\cC|$-packing of $\cC$. Evaluating the constants $K_r$ on $N_{\log |\cC|}$ gives:
$$
K_0 = |N_{\log |\cC|}|, K_r = 0 \mbox{ for all } r \leq \lfloor \log |\cC|\rfloor,
$$
and since $\lfloor N_{\log |\cC|}\rfloor \leq \lfloor \log |\cC|\rfloor$ we conclude that $\sum_{r = 0}^{\lfloor N_{\log |\cC|}\rfloor} K_r = |N_{\log |\cC|}| = O( |N_{\log |\cC|}|^{2-\gamma})$ for any $0 < \gamma \leq 1$.

\appendix

\section*{Acknowledgements} The authors would like to thank the editor, associate editor and two referees for their suggestions, comments and remarks which led to significant improvements in the presentation of this manuscript.

\setlength{\bibsep}{1pt}
{\small
\bibliographystyle{ims}
\bibliography{sandwich}

\begin{thebibliography}{47}
\expandafter\ifx\csname natexlab\endcsname\relax\def\natexlab#1{#1}\fi
\expandafter\ifx\csname url\endcsname\relax
  \def\url#1{\texttt{#1}}\fi
\expandafter\ifx\csname urlprefix\endcsname\relax\def\urlprefix{URL }\fi

\bibitem[{Addario-Berry et~al.(2010)Addario-Berry, Broutin, Devroye and
  Lugosi}]{addario2010comb}
\textsc{Addario-Berry, L.}, \textsc{Broutin, N.}, \textsc{Devroye, L.} and
  \textsc{Lugosi, G.} (2010).
\newblock On combinatorial testing problems.
\newblock \textit{The Annals of Statistics} \textbf{38} pp. 3063--3092.

\bibitem[{Arias-Castro et~al.(2012)Arias-Castro, Bubeck and
  Lugosi}]{arias2012detection}
\textsc{Arias-Castro, E.}, \textsc{Bubeck, S.} and \textsc{Lugosi, G.} (2012).
\newblock Detection of correlations.
\newblock \textit{The Annals of Statistics} \textbf{40} pp. 412--435.

\bibitem[{Arias-Castro et~al.(2015{\natexlab{a}})Arias-Castro, Bubeck and
  Lugosi}]{arias2015detectingpositive}
\textsc{Arias-Castro, E.}, \textsc{Bubeck, S.} and \textsc{Lugosi, G.}
  (2015{\natexlab{a}}).
\newblock Detecting positive correlations in a multivariate sample.
\newblock \textit{Bernoulli} \textbf{21} 209--241.

\bibitem[{Arias-Castro et~al.(2015{\natexlab{b}})Arias-Castro, Bubeck, Lugosi
  and Verzelen}]{arias2015detecting}
\textsc{Arias-Castro, E.}, \textsc{Bubeck, S.}, \textsc{Lugosi, G.} and
  \textsc{Verzelen, N.} (2015{\natexlab{b}}).
\newblock Detecting markov random fields hidden in white noise.
\newblock \textit{arXiv preprint arXiv:1504.06984} .

\bibitem[{Arias-Castro et~al.(2011{\natexlab{a}})Arias-Castro, Cand{\`e}s and
  Durand}]{arias2011detection}
\textsc{Arias-Castro, E.}, \textsc{Cand{\`e}s, E.~J.} and \textsc{Durand, A.}
  (2011{\natexlab{a}}).
\newblock Detection of an anomalous cluster in a network.
\newblock \textit{The Annals of Statistics} \textbf{39} pp. 278--304.

\bibitem[{Arias-Castro et~al.(2011{\natexlab{b}})Arias-Castro, Cand\'{e}s and
  Plan}]{Arias-Castro2011Global}
\textsc{Arias-Castro, E.}, \textsc{Cand\'{e}s, E.~J.} and \textsc{Plan, Y.}
  (2011{\natexlab{b}}).
\newblock Global testing under sparse alternatives: Anova, multiple comparisons
  and the higher criticism.
\newblock \textit{The Annals of Statistics} \textbf{39} pp. 2533--2556.

\bibitem[{Baraud(2002)}]{baraud02non-asymptotic}
\textsc{Baraud, Y.} (2002).
\newblock Non-asymptotic minimax rates of testing in signal detection.
\newblock \textit{Bernoulli} \textbf{8} pp. 577--606.

\bibitem[{Bartzokis et~al.(2001)Bartzokis, Beckson, Lu, Nuechterlein, Edwards
  and Mintz}]{bartzokis2001age}
\textsc{Bartzokis, G.}, \textsc{Beckson, M.}, \textsc{Lu, P.~H.},
  \textsc{Nuechterlein, K.~H.}, \textsc{Edwards, N.} and \textsc{Mintz, J.}
  (2001).
\newblock Age-related changes in frontal and temporal lobe volumes in men: a
  magnetic resonance imaging study.
\newblock \textit{Archives of General Psychiatry} \textbf{58} pp. 461--465.

\bibitem[{Benjamini and Hochberg(1995)}]{benjamini1995controlling}
\textsc{Benjamini, Y.} and \textsc{Hochberg, Y.} (1995).
\newblock Controlling the false discovery rate: a practical and powerful
  approach to multiple testing.
\newblock \textit{Journal of the Royal Statistical Society. Series B
  (Methodological)} \textbf{57} pp. 289--300.

\bibitem[{Berthet and Rigollet(2013)}]{berthet2013optimal}
\textsc{Berthet, Q.} and \textsc{Rigollet, P.} (2013).
\newblock Optimal detection of sparse principal components in high dimension.
\newblock \textit{The Annals of Statistics} \textbf{41} pp. 1780--1815.

\bibitem[{Cai et~al.(2011)Cai, Liu and Luo}]{Cai2011Constrained}
\textsc{Cai, T.~T.}, \textsc{Liu, W.} and \textsc{Luo, X.} (2011).
\newblock A constrained $\ell_1$ minimization approach to sparse precision
  matrix estimation.
\newblock \textit{Journal of the American Statistical Association} \textbf{106}
  pp. 594--607.

\bibitem[{Cai et~al.(2015)Cai, Chen, Szegletes, Supekar and
  Menon}]{cai2015aberrant}
\textsc{Cai, W.}, \textsc{Chen, T.}, \textsc{Szegletes, L.}, \textsc{Supekar,
  K.} and \textsc{Menon, V.} (2015).
\newblock Aberrant cross-brain network interaction in children with
  attention-deficit/hyperactivity disorder and its relation to attention
  deficits: a multi-and cross-site replication study.
\newblock \textit{Biological Psychiatry} .

\bibitem[{Chen et~al.(2015)Chen, Ren, Zhao and Zhou}]{chen2015asymptotically}
\textsc{Chen, M.}, \textsc{Ren, Z.}, \textsc{Zhao, H.} and \textsc{Zhou, H.}
  (2015).
\newblock Asymptotically normal and efficient estimation of covariate-adjusted
  gaussian graphical model.
\newblock \textit{Journal of the American Statistical Association} \textbf{111}
  pp. 394--406.

\bibitem[{Chernozhukov et~al.(2013)Chernozhukov, Chetverikov and
  Kato}]{Chernozhukov2013Gaussian}
\textsc{Chernozhukov, V.}, \textsc{Chetverikov, D.} and \textsc{Kato, K.}
  (2013).
\newblock Gaussian approximations and multiplier bootstrap for maxima of sums
  of high-dimensional random vectors.
\newblock \textit{The Annals of Statistics} \textbf{41} pp. 2786--2819.

\bibitem[{Chernozhukov et~al.(2016)Chernozhukov, Chetverikov and
  Kato}]{chernozhukov2014central}
\textsc{Chernozhukov, V.}, \textsc{Chetverikov, D.} and \textsc{Kato, K.}
  (2016).
\newblock Central limit theorems and bootstrap in high dimensions.
\newblock \textit{Annals of Probability}  , to appear.

\bibitem[{Donoho and Jin(2004)}]{donoho2004higher}
\textsc{Donoho, D.} and \textsc{Jin, J.} (2004).
\newblock Higher criticism for detecting sparse heterogeneous mixtures.
\newblock \textit{The Annals of Statistics} \textbf{32} pp. 962--994.

\bibitem[{Dubhashi and Ranjan(1996)}]{dubhashi1996balls}
\textsc{Dubhashi, D.} and \textsc{Ranjan, D.} (1996).
\newblock Balls and bins: A study in negative dependence.
\newblock \textit{BRICS Report Series} \textbf{3}.

\bibitem[{Fang et~al.(1990)Fang, Kotz and Ng}]{fang1990symmetric}
\textsc{Fang, K.~T.}, \textsc{Kotz, S.} and \textsc{Ng, K.~W.} (1990).
\newblock \textit{Symmetric multivariate and related distributions}, vol.~36 of
  \textit{Monographs on Statistics and Applied Probability}.
\newblock Chapman and Hall, Ltd., London.

\bibitem[{Friedman et~al.(2008)Friedman, Hastie and
  Tibshirani}]{friedman2008sparse}
\textsc{Friedman, J.~H.}, \textsc{Hastie, T.~J.} and \textsc{Tibshirani, R.~J.}
  (2008).
\newblock Sparse inverse covariance estimation with the graphical lasso.
\newblock \textit{Biostatistics} \textbf{9} 432--441.

\bibitem[{Gelfand et~al.(2003)Gelfand, Kim, Sirmans and
  Banerjee}]{gelfand2003spatial}
\textsc{Gelfand, A.~E.}, \textsc{Kim, H.-J.}, \textsc{Sirmans, C.} and
  \textsc{Banerjee, S.} (2003).
\newblock Spatial modeling with spatially varying coefficient processes.
\newblock \textit{Journal of the American Statistical Association} \textbf{98}
  387--396.

\bibitem[{Gu et~al.(2015)Gu, Cao, Ning and Liu}]{gu2015local}
\textsc{Gu, Q.}, \textsc{Cao, Y.}, \textsc{Ning, Y.} and \textsc{Liu, H.}
  (2015).
\newblock Local and global inference for high dimensional gaussian copula
  graphical models.
\newblock \textit{arXiv preprint arXiv:1502.02347} .

\bibitem[{Helmke and Rosenthal(1995)}]{helmke1995eigenvalue}
\textsc{Helmke, U.} and \textsc{Rosenthal, J.} (1995).
\newblock Eigenvalue inequalities and schubert calculus.
\newblock \textit{Mathematische Nachrichten} \textbf{171} pp. 207--226.

\bibitem[{Ingster(1982)}]{ingster1982minimax}
\textsc{Ingster, Y.~I.} (1982).
\newblock On the minimax nonparametric detection of signals in white gaussian
  noise.
\newblock \textit{Problemy Peredachi Informatsii} \textbf{18} pp. 61--73.

\bibitem[{Ingster et~al.(2010)Ingster, Tsybakov and
  Verzelen}]{Ingster2010Detection}
\textsc{Ingster, Y.~I.}, \textsc{Tsybakov, A.~B.} and \textsc{Verzelen, N.}
  (2010).
\newblock Detection boundary in sparse regression.
\newblock \textit{Electronic Journal of Statistics} \textbf{4} pp. 1476--1526.

\bibitem[{Jankova and van~de Geer(2015)}]{jankova2014confidence}
\textsc{Jankova, J.} and \textsc{van~de Geer, S.~A.} (2015).
\newblock Confidence intervals for high-dimensional inverse covariance
  estimation.
\newblock \textit{Electronic Journal of Statistics} \textbf{9} pp. 1205--1229.

\bibitem[{Joag-Dev and Proschan(1983)}]{joag1983negative}
\textsc{Joag-Dev, K.} and \textsc{Proschan, F.} (1983).
\newblock Negative association of random variables with applications.
\newblock \textit{The Annals of Statistics} \textbf{11} pp. 286--295.

\bibitem[{Johnstone and Lu(2009)}]{johnstone2009consistency}
\textsc{Johnstone, I.~M.} and \textsc{Lu, A.~Y.} (2009).
\newblock On consistency and sparsity for principal components analysis in high
  dimensions.
\newblock \textit{Journal of the American Statistical Association} \textbf{104}
  pp. 682--693.

\bibitem[{Lam and Fan(2009)}]{Lam2009Sparsistency}
\textsc{Lam, C.} and \textsc{Fan, J.} (2009).
\newblock Sparsistency and rates of convergence in large covariance matrix
  estimation.
\newblock \textit{The Annals of Statistics} \textbf{37} pp. 4254--4278.

\bibitem[{Le~Cam(1973)}]{lecam1973convergence}
\textsc{Le~Cam, L.} (1973).
\newblock Convergence of estimates under dimensionality restrictions.
\newblock \textit{The Annals of Statistics} \textbf{1} pp. 38--53.

\bibitem[{Liu et~al.(2012)Liu, Han and Zhang}]{Liu2012Transelliptical}
\textsc{Liu, H.}, \textsc{Han, F.} and \textsc{Zhang, C.-H.} (2012).
\newblock Transelliptical graphical models.
\newblock \textit{Proc. of NIPS}  809--817.

\bibitem[{Liu et~al.(2009)Liu, Lafferty and Wasserman}]{Liu2009Nonparanormal:}
\textsc{Liu, H.}, \textsc{Lafferty, J.} and \textsc{Wasserman, L.} (2009).
\newblock The nonparanormal: Semiparametric estimation of high dimensional
  undirected graphs.
\newblock \textit{Journal of Machine Learning Research} \textbf{10} 2295--2328.

\bibitem[{Liu(2013)}]{liu2013}
\textsc{Liu, W.} (2013).
\newblock Gaussian graphical model estimation with false discovery rate
  control.
\newblock \textit{The Annals of Statistics} \textbf{41} 2948--2978.

\bibitem[{Meinshausen and B\"{u}hlmann(2006)}]{Meinshausen2006High}
\textsc{Meinshausen, N.} and \textsc{B\"{u}hlmann, P.} (2006).
\newblock High dimensional graphs and variable selection with the lasso.
\newblock \textit{The Annals of Statistics} \textbf{34} pp. 1436--1462.

\bibitem[{Milham et~al.(2012)Milham, Fair, Mennes, Mostofsky
  et~al.}]{milham2012adhd}
\textsc{Milham, M.~P.}, \textsc{Fair, D.}, \textsc{Mennes, M.},
  \textsc{Mostofsky, S.~H.} \textsc{et~al.} (2012).
\newblock The {ADHD}-200 consortium: a model to advance the translational
  potential of neuroimaging in clinical neuroscience.
\newblock \textit{Frontiers in systems neuroscience} \textbf{6} 62.

\bibitem[{Neykov et~al.(2015)Neykov, Ning, Liu and Liu}]{neykov2015aunified}
\textsc{Neykov, M.}, \textsc{Ning, Y.}, \textsc{Liu, J.~S.} and \textsc{Liu,
  H.} (2015).
\newblock A unified theory of confidence regions and testing for high
  dimensional estimating equations.
\newblock \textit{arXiv preprint arXiv:1510.08986} .

\bibitem[{Raskutti et~al.(2008)Raskutti, Yu, Wainwright and
  Ravikumar}]{raskutti2008model}
\textsc{Raskutti, G.}, \textsc{Yu, B.}, \textsc{Wainwright, M.~J.} and
  \textsc{Ravikumar, P.~K.} (2008).
\newblock Model {S}election in {G}aussian {G}raphical {M}odels:
  {H}igh-{D}imensional {C}onsistency of $\ell_1$-regularized {MLE}.
\newblock In \textit{Advances in Neural Information Processing Systems}.

\bibitem[{Ravikumar et~al.(2011)Ravikumar, Wainwright, Raskutti and
  Yu}]{Ravikumar2011High}
\textsc{Ravikumar, P.}, \textsc{Wainwright, M.~J.}, \textsc{Raskutti, G.} and
  \textsc{Yu, B.} (2011).
\newblock High-dimensional covariance estimation by minimizing
  $\ell_1$-penalized log-determinant divergence.
\newblock \textit{Electronic Journal of Statistics} \textbf{5} pp. 935--980.

\bibitem[{Ren et~al.(2015)Ren, Sun, Zhang and Zhou}]{ren2015asymptotic}
\textsc{Ren, Z.}, \textsc{Sun, T.}, \textsc{Zhang, C.-H.} and \textsc{Zhou,
  H.~H.} (2015).
\newblock Asymptotic normality and optimalities in estimation of large gaussian
  graphical models.
\newblock \textit{The Annals of Statistics} \textbf{43} pp. 991--1026.

\bibitem[{Romano and Wolf(2005)}]{romano2005exact}
\textsc{Romano, J.~P.} and \textsc{Wolf, M.} (2005).
\newblock Exact and approximate stepdown methods for multiple hypothesis
  testing.
\newblock \textit{Journal of the American Statistical Association} \textbf{100}
  pp. 94--108.

\bibitem[{Santhanam and Wainwright(2012)}]{santhanam2012information}
\textsc{Santhanam, N.~P.} and \textsc{Wainwright, M.~J.} (2012).
\newblock Information-{T}heoretic limits of selecting binary graphical models
  in high dimensions.
\newblock \textit{Information Theory, IEEE Transactions on} \textbf{58} pp.
  4117--4134.

\bibitem[{Vershynin(2012)}]{Vershynin2012Introduction}
\textsc{Vershynin, R.} (2012).
\newblock Introduction to the non-asymptotic analysis of random matrices.
\newblock In \textit{Compressed Sensing: Theory and Applications} (Y.~C. Eldar
  and G.~Kutyniok, eds.). Cambridge University Press.

\bibitem[{Verzelen and Villers(2010)}]{verzelen2010goodness}
\textsc{Verzelen, N.} and \textsc{Villers, F.} (2010).
\newblock Goodness-of-fit tests for high-dimensional gaussian linear models.
\newblock \textit{The Annals of Statistics} \textbf{38} pp. 704--752.

\bibitem[{Wang et~al.(2010)Wang, Wainwright and
  Ramchandran}]{wang2010information}
\textsc{Wang, W.}, \textsc{Wainwright, M.~J.} and \textsc{Ramchandran, K.}
  (2010).
\newblock Information-theoretic bounds on model selection for gaussian markov
  random fields.
\newblock In \textit{ISIT}. Citeseer.

\bibitem[{Yang and Barron(1999)}]{yang1999information}
\textsc{Yang, Y.} and \textsc{Barron, A.} (1999).
\newblock Information-theoretic determination of minimax rates of convergence.
\newblock \textit{The Annals of Statistics} \textbf{27} pp. 1564--1599.

\bibitem[{Yang et~al.(2014)Yang, Ning and Liu}]{yang2014semiparametric}
\textsc{Yang, Z.}, \textsc{Ning, Y.} and \textsc{Liu, H.} (2014).
\newblock On semiparametric exponential family graphical models.
\newblock \textit{arXiv preprint arXiv:1412.8697} .

\bibitem[{Yuan and Lin(2006)}]{Yuan2006Model}
\textsc{Yuan, M.} and \textsc{Lin, Y.} (2006).
\newblock Model selection and estimation in regression with grouped variables.
\newblock \textit{Journal of the Royal Statistical Society. Series B
  (Methodological)} \textbf{68} pp. 49--67.

\bibitem[{Zhang and Zhang(2014)}]{zhang2014confidence}
\textsc{Zhang, C.-H.} and \textsc{Zhang, S.~S.} (2014).
\newblock Confidence intervals for low dimensional parameters in high
  dimensional linear models.
\newblock \textit{Journal of the Royal Statistical Society. Series B
  (Methodological)} \textbf{76} pp. 217--242.

\end{thebibliography}
}

\newpage

\begin{supplement}[id=suppA]
  \sname{supplement}
  \stitle{Supplementary Material to ``Combinatorial Inference for Graphical Models''}
  \slink[doi]{10.1214/00-AOSXXXXSUPP}
  \sdatatype{.pdf}
  The supplementary material is organized as follows:

\begin{itemize}

\item In Appendix \ref{simulation:sec}, we present extensive numerical studies, and a real dataset example.

\item In Appendix \ref{edge:removal:NAS:extension}, we show Proposition \ref{smart:bound} and the deletion-edge divider result on the lower bound parallel to Theorem~\ref{suff:cond:lower:bound}.

\item In Appendix \ref{detailed:upperbound:algos}, we give full details on the algorithms of Section \ref{struct:test:section} to match the lower bounds in the examples.

\item In Appendix \ref{extensions:GM:OTHERS}, we extend the algorithms in the previous section to  transelliptical graphical models.

\item In Appendix \ref{supp:bounded:edges}, we attach the proofs of the examples from Section \ref{general:lower:bound} of the main text, and we also formalize several algorithms matching the lower bounds.

\item In Appendix \ref{single:edge:app:proofs}, we provide the proof of Theorem \ref{suff:cond:lower:bound}.

\item In Appendix \ref{sec:alg-proof}, we prove the asymptotic type I and II errors for the algorithms in the paper.

\item In Appendix \ref{sec:multi-nas-proof}, we give the proofs of Theorems \ref{suff:cond:lower:bound:many} and \ref{comb:NAS}.

\item In Appendix \ref{appendix:clique:detection}, we prove the upper bound for clique detection.

\item In Appendix \ref{sec:aux}, we list some auxiliary results needed for the paper.

\item In Appendix \ref{sec:boot-proof}, we prove technical results related to the bootstrap procedure.
\end{itemize}

\section{Numerical Studies and Real Data Analysis}\label{simulation:sec}

In this section we present numerical analysis of Algorithm \ref{al:conn} for the hypothesis tests on the connectivity on a synthetic dataset. We also analyze the performance of Algorithm \ref{cycle:presence} for cycle presence testing. In addition, we implement the graph connectivity test to study the brain networks for  the ADHD-200 dataset.  

\subsection{Connectivity Testing}\label{conn:section:testing}

We present numerical simulations assessing the performance of Algorithm \ref{al:conn} for testing connectivity. Recall $\cG_0 = \{G \in \cG~|~ G \mbox{ disconnected} \}$ and $\cG_1 = \{G \in \cG~|~ G \mbox{ connected}\}$, and consider testing
$$\Hb_0: G(\bTheta^*) \in \cG_0 \mbox{ vs } \Hb_1: G(\bTheta^*) \in \cG_1.$$

In order to build the connected and disconnected graphs, we consider a chain graph of length $k$ with adjacency matrix as follows 
\begin{equation}\label{eq:chain}
 \Ab_{\text{Chain}(k)} = 
  \begin{pmatrix}
0 & 1 &&&0\\
1 & 0 & 1 \\
& 1 & \ddots & \ddots \\
& & \ddots & \ddots & 1\\
0& & & 1 & 0
\end{pmatrix} 
\in \RR^{k \times k}.
\end{equation}
We construct the connected graph with adjacency matrix $\Ab = \Ab_{\text{Chain}(d)}$ and the disconnected graphs with adjacency matrices 
\[
 \Ab_m = \diag(\Ab_{\text{Chain}(m)}, \Ab_{\text{Chain}(d-m)}), \text{ for } m \in [d-1].
\]
The corresponding graphs of $\Ab$ and $\Ab_1, \ldots, \Ab_{d-1}$ are depicted in Fig \ref{fig:conn-sim}.
\vspace{-1em}
 \begin{figure}[H]
\centering
 \begin{tikzpicture}[scale=.7]
\SetVertexNormal[Shape      = circle,
                  FillColor  = cyan!50,
                  MinSize = 11pt,
                  InnerSep=0pt,
                  LineWidth = .5pt]
   \SetVertexNoLabel
   \tikzset{LabelStyle/.style = {below, fill = white, text = black, fill opacity=0, text opacity = 1}}
   \tikzset{EdgeStyle/.style= {thin,
                                double          = red!50,
                                double distance = 1pt}}
    \begin{scope}\grPath[prefix=a,RA=2]{4}\end{scope}
                                                 \node[above left] at (a0.+60) {\tiny 1};
                                                 \node[above left] at (a1.+60) {\tiny 2};
                                                 \node[above left] at (a2.+60) {\tiny 3};
                                                 \node[above left] at (a3.+60) {\tiny 4};
         \Edge(a0)(a1)
                  \Edge(a1)(a2)
                  \Edge(a2)(a3)
     \begin{scope}[shift={(8,0)}]\grPath[prefix=b,RA=2]{3}\end{scope}
                                                      \node[above left] at (b0.+60) {\tiny 5};
                                                 \node[above left] at (b1.+60) {\tiny 6};
                                                 \node[above left] at (b2.+60) {\tiny 7};
     \Edge(b0)(a3)
     \Edge(b1)(b2)
          \Edge(b0)(b1)
        \begin{scope}[shift={(0,1)}]\grPath[prefix=c,RA=2]{4}\end{scope}
                                                         \node[above left] at (c0.+60) {\tiny 1};
                                                 \node[above left] at (c1.+60) {\tiny 2};
                                                 \node[above left] at (c2.+60) {\tiny 3};
                                                 \node[above left] at (c3.+60) {\tiny 4};
         \Edge(c0)(c1)
                  \Edge(c1)(c2)
     \begin{scope}[shift={(8,1)}]\grPath[prefix=d,RA=2]{3}\end{scope}
                                                           \node[above left] at (d0.+60) {\tiny 5};
                                                 \node[above left] at (d1.+60) {\tiny 6};
                                                 \node[above left] at (d2.+60) {\tiny 7};
     \Edge(d1)(d2)
          \Edge(d0)(d1)
\end{tikzpicture}
\vspace{-1em}
\caption{$G(\Ab_4)$ and $G(\Ab)$ for $d = 7$.}\label{fig:conn-sim}
\end{figure}
\vspace{-1em}
To show that  our test is uniformly valid over various disconnected graphs under the null hypothesis, we use different precision matrices for different repetitions. For each repetition, we randomly select $M \sim \text{Uniform}([d-1])$, choose the precision matrix $ \bTheta^*_M(\theta) = \Ib_d + \theta \Ab_{M}$ and generate i.i.d. samples $\bX_i \sim N({\bf 0},( \bTheta^*_M(\theta) )^{-1})$ for $i \in [n]$. Under the alternative hypothesis, we generate $n$ i.i.d. samples $\bX_1, \ldots, \bX_n$ from $N({\bf 0},(\bTheta^*(\theta))^{-1})$, where $\bTheta^*(\theta) = \Ib_d + \theta \Ab$.

The sample size is chosen from $n = 400$ and $600$ and the dimension varies in $d = 100, 125$ and $150$. The significance  level of the tests is set to $\alpha = 0.05$. Within each test we generate $3,000$ bootstrap samples to estimate the quantiles (\ref{quantile:def}). We estimate the precision matrix by the CLIME estimator  \citep{Cai2011Constrained} as follows
\begin{align} \label{generalCLIMEopt}
\hat{\bTheta}_{\lambda}=\argmin\|\bTheta\|_1,~~\textrm{s.t.}~~ \|\hat \bSigma\bTheta-\Ib_d\|_{\max} \leq \lambda.
\end{align} 
The tuning parameter is chosen by minimizing a $K$-fold cross validation risk
\[
 \text{CV}(\lambda) = \sum_{k=1}^K \|\hat \bSigma^{(k)} \hat \bTheta_{\lambda}^{(-k)} - \Ib_d \|_{\max},
\]
where $\hat \bSigma^{(k)}$ is the sample covariance matrix estimated on the $k$\textsuperscript{th} subset of data and $\hat\bTheta_{\lambda}^{(-k)}$ is the CLIME estimator without using the $k$\textsuperscript{th} subset of data. We use the first scenario and implement a 5-fold cross validation. It selects $\lambda = 1.5\sqrt{\log d/n}$ and we keep using this value of $\lambda$ throughout all remaining settings. We vary the signal strength $
\theta \in [0.25, 0.45]$ and estimate the risk of our test $\psi$ given by Algorithm \ref{al:conn}:
$$
\PP_{G(\bTheta^*) \in \cG_0 }( \psi = 1) + \PP_{G(\bTheta^*) \in \cG_1}( \psi = 0),
$$
by averaging the type I and type II errors under null and alternative settings through $200$ repetitions.
The results are visualized  in Fig \ref{simu:figure}, where the $Y$-axis represents the risk and the $X$-axis plots the signal strength $\theta$. As predicted by our results, with the increase in $\theta$ the risk curves become well controlled about the target value $0.05$. Moreover, there is little difference between the three risk curves within each plot. The curves for $d = 125, 150$ have a slightly worse performance than the curves for smaller dimension $d = 100$. This effect is expected, as the signal strength required to separate the null and alternative depends on $d$ only logarithmically. Furthermore, smaller signal strength is required to distinguish the null from the alternative when we increase $n$ from $400$ to $600$. Table \ref{size:values} reports the type I errors of $\psi$. The type I errors remain within the desired range of $0.05$. As we increase the signal strength $\theta$, the values of the errors increase from $0$ to the target level. This phenomenon is expected and is explained below Proposition \ref{multiple:edge:testing:validity}. 

\begin{figure}[t]
\begin{subfigure}{.5\textwidth}
  \centering
  \includegraphics[width=.8\linewidth]{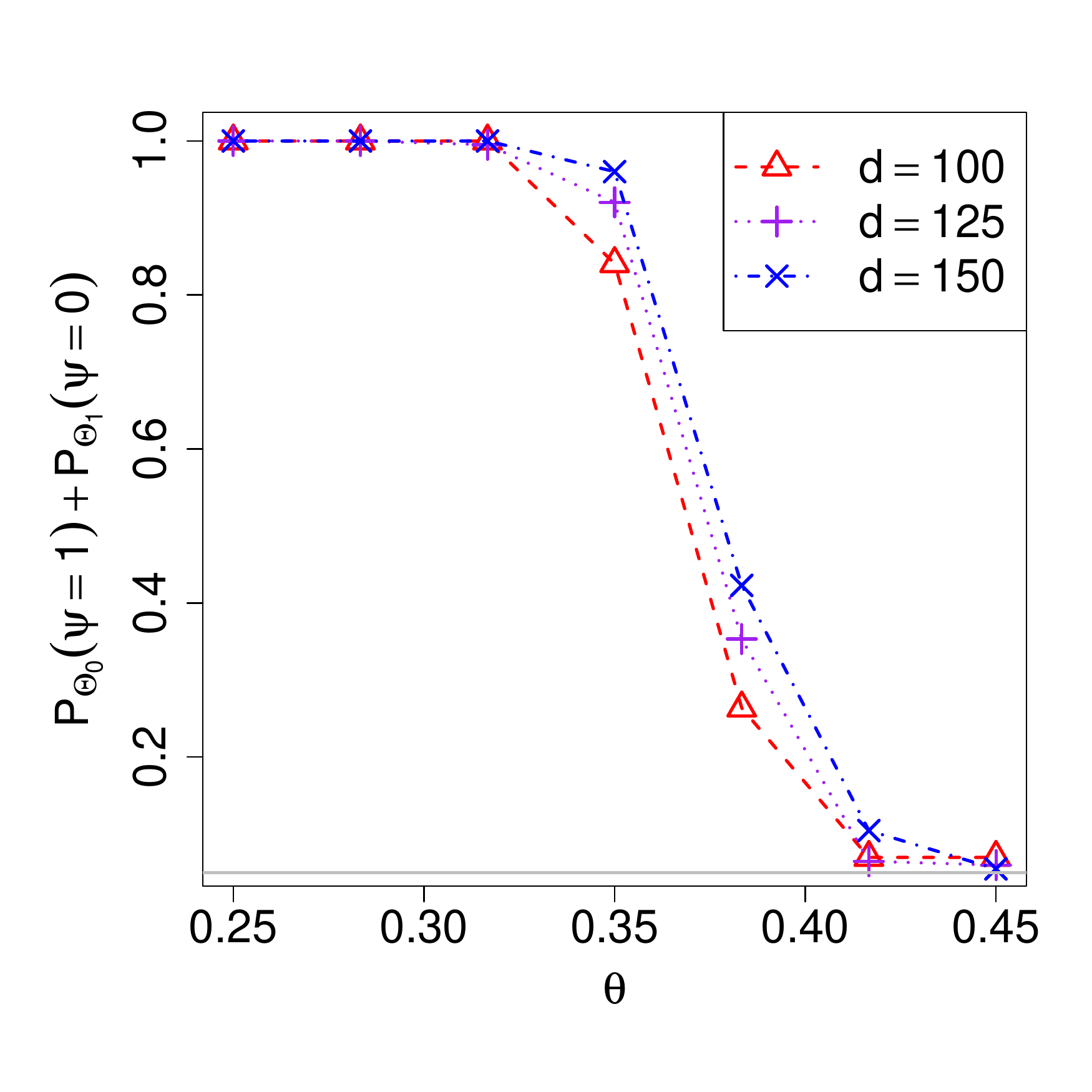}
  \vspace{-1em}
  \caption{$n = 400$}
  \label{fig:sfig1}
\end{subfigure}%
\begin{subfigure}{.5\textwidth}
  \centering
  \includegraphics[width=.8\linewidth]{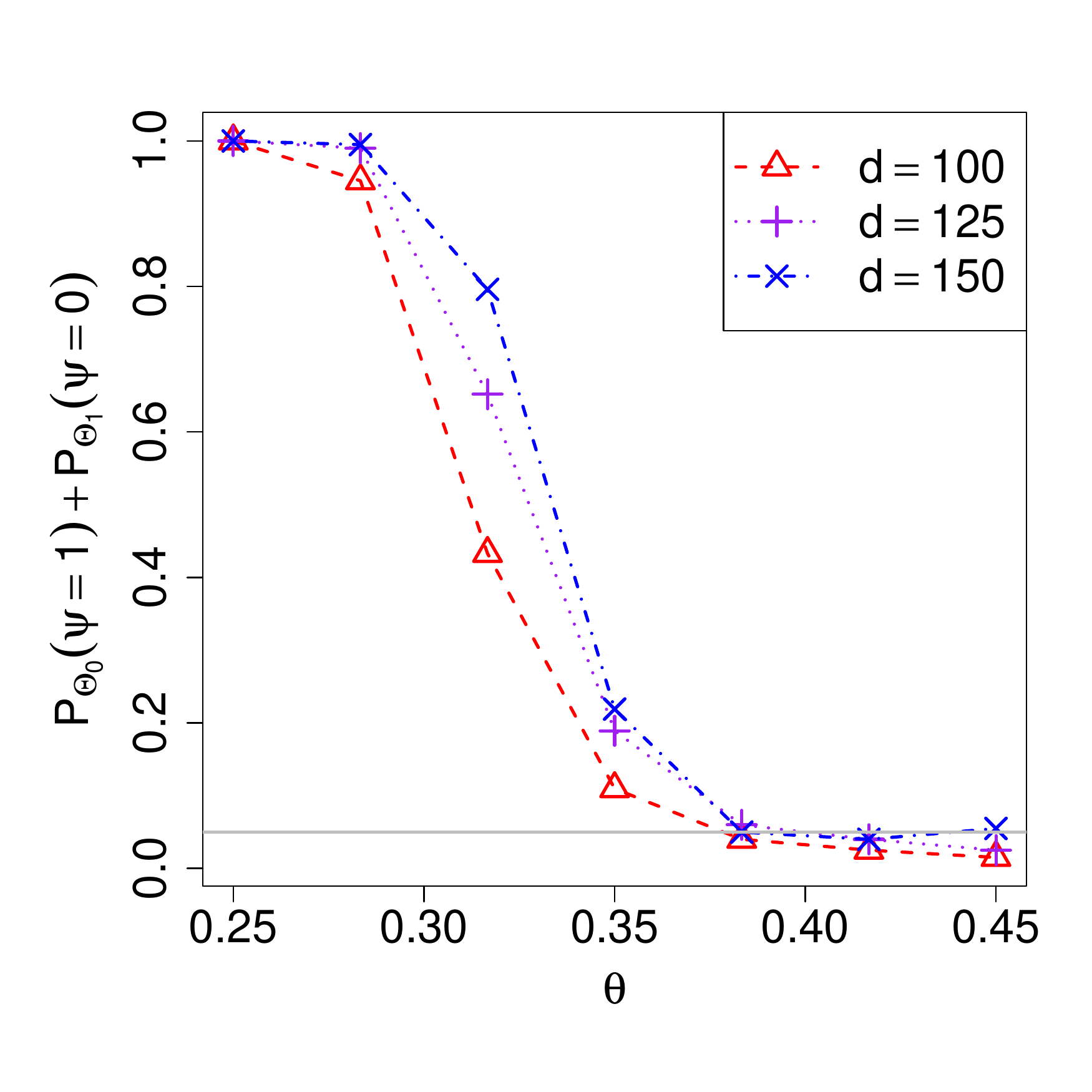}
  \vspace{-1em}
  \caption{$n = 600$}
  \label{fig:sfig3}
\end{subfigure}%
\vspace{-1em}
\caption{Type I and type II errors control for connectivity test with $\theta \in [0.25, 0.45]$. The grey horizontal line is the significance level $\alpha = 0.05$.} \label{simu:figure}\vspace{-18pt}
\end{figure}

\begin{table}[h] 
 \centering
\caption{Sizes for connectivity testing $\theta \in [0.25, 0.45]$}\label{size:values}
\begin{tabular}{ccrrrrrrr}
\hline
  \hline
  $n$ & \diagbox{$d$}{$\theta$} & 0.25 &  0.28 &  0.32 & 0.35 & 0.38 &  0.42 & 	0.45 \\ 
  \hline
  \multirow{3}{*}{$ 400$} & $100$ & 0.000 & 0.000 & 0.000 & 0.020 & 0.045 & 0.045 & 0.060 \\ 
  &$125$ & 0.000 & 0.000 & 0.000 & 0.010 & 0.045 & 0.050 & 0.060 \\ 
  &$150$ &  0.000 & 0.000 & 0.000 & 0.010 & 0.040 & 0.050 & 0.055 \\ 
  \hline
   \multirow{3}{*}{$600$} &100 & 0.000 & 0.005 & 0.020 & 0.045 & 0.030 & 0.025 & 0.030 \\ 
  &125 & 0.000 & 0.000 & 0.025 & 0.050 & 0.040 & 0.045 & 0.040 \\ 
  &150 & 0.000 & 0.000 & 0.015 & 0.055 & 0.040 & 0.045 & 0.040 \\ 
   \hline
   \hline
\end{tabular}
\end{table}

\subsection{Cycle Testing}

In this section we present numerical analysis of Algorithm \ref{cycle:presence} for cycle presence testing. Recall the sub-decomposition $\cG_0 = \{G \in \cG~|~ G \mbox{ has no cycles}\}$ and $\cG_1 = \{G \in \cG~|~ G \mbox{ contains a cycle}\}$. We aim to conduct the hypothesis
$$\Hb_0: G(\bTheta^*) \in \cG_0 \mbox{ vs } \Hb_1: G(\bTheta^*) \in \cG_1.$$

Similarly to the previous section, we also generate data under both the null and alternative hypotheses. We use the chain graph with adjacency matrix $\Ab_{\text{Chain}(d)}$ as a forest and construct the loopy graphs by adding edges to the chain graph. More specifically, we construct the graphs with adjacency matrices 
\[
   \Ab_m = \Ab_{\text{Chain}(d)} + \Eb_{m}, \text{ for } m = 3, 4, \ldots, 10,
\] 
where $\Eb_{m}$ is the adjacency matrix of the graph $(\overline V, \{(1,m)\})$.
The graphs we construct are illustrated in Fig \ref{fig:loop-sim}. Under the null hypothesis, we generate $n$ i.i.d. samples $\bX_1, \ldots, \bX_n \sim N({\bf 0}, (\bTheta^*(\theta))^{-1})$ where $\bTheta^*(\theta) = \Ib_d + \theta \Ab_{\text{Chain}(d)}$ and repeat the simulation for $N = 200$ times. Under the alternative, for each repetition, we randomly select $M \sim \text{Unif}(\{3, 4, \ldots, 10\})$, set the precision matrix as $\bTheta_M^*(\theta) = \Ib_d + \theta \Ab_{M}$ and generate i.i.d. samples $\bX_1, \ldots, \bX_n \sim N({\bf 0}, (\bTheta_M^*(\theta))^{-1})$. We also repeat this procedure for $N = 200$ times to calculate the type II error.

 \begin{figure}[H]
\centering
 \begin{tikzpicture}[scale=.7]
\SetVertexNormal[Shape      = circle,
                  FillColor  = cyan!50,
                  MinSize = 11pt,
                  InnerSep=0pt,
                  LineWidth = .5pt]
   \SetVertexNoLabel
   \tikzset{LabelStyle/.style = {below, fill = white, text = black, fill opacity=0, text opacity = 1}}
   \tikzset{EdgeStyle/.style= {thin,
                                double          = red!50,
                                double distance = 1pt}}
    \begin{scope}\grPath[prefix=a,RA=2]{4}\end{scope}
                                                     \node[above left] at (a0.+60) {\tiny 1};
                                                 \node[above left] at (a1.+60) {\tiny 2};
                                                 \node[above left] at (a2.+60) {\tiny 3};
                                                 \node[above left] at (a3.+60) {\tiny 4};
         \Edge(a0)(a1)
                  \Edge(a1)(a2)
                  \Edge(a2)(a3)
     \begin{scope}[shift={(8,0)}]\grPath[prefix=b,RA=2]{3}\end{scope}
     \Edge(b1)(b2)
          \Edge(b0)(b1)
                    \Edge(a3)(b0)
                                                                          \node[above left] at (b0.+60) {\tiny 5};
                                                 \node[above left] at (b1.+60) {\tiny 6};
                                                 \node[above left] at (b2.+60) {\tiny 7};
								             \tikzset{EdgeStyle/.style= {thin,
                                double          = red!50,
                                double distance = 1pt}}
        \begin{scope}[shift={(0,1)}]\grPath[prefix=c,RA=2]{4}\end{scope}
         \Edge(c0)(c1)
                  \Edge(c1)(c2)
                  \Edge(c2)(c3)
                                                 \node[above left] at (c0.+60) {\tiny 1};
                                                 \node[above left] at (c1.+60) {\tiny 2};
                                                 \node[above left] at (c2.+60) {\tiny 3};
                                                 \node[above left] at (c3.+60) {\tiny 4};
     \begin{scope}[shift={(8,1)}]\grPath[prefix=d,RA=2]{3}\end{scope}
          \Edge(c3)(d0)
     \Edge(d1)(d2)
          \Edge(d0)(d1)
                                                                     \node[above left] at (d0.+60) {\tiny 5};
                                                 \node[above left] at (d1.+60) {\tiny 6};
                                                 \node[above left] at (d2.+60) {\tiny 7};
                                        								\tikzset{EdgeStyle/.append style = { thin, bend right}}
								          \Edge(a0)(a3)
\end{tikzpicture}
\vspace{-1em}
\caption{ $G(\Ab)$ and $G(\Ab_4)$ for $d = 7$.} \label{fig:loop-sim}
\vspace{-18pt}
\end{figure}

\begin{figure}[t]
\begin{subfigure}{.5\textwidth}
  \centering
  \includegraphics[width=.8\linewidth]{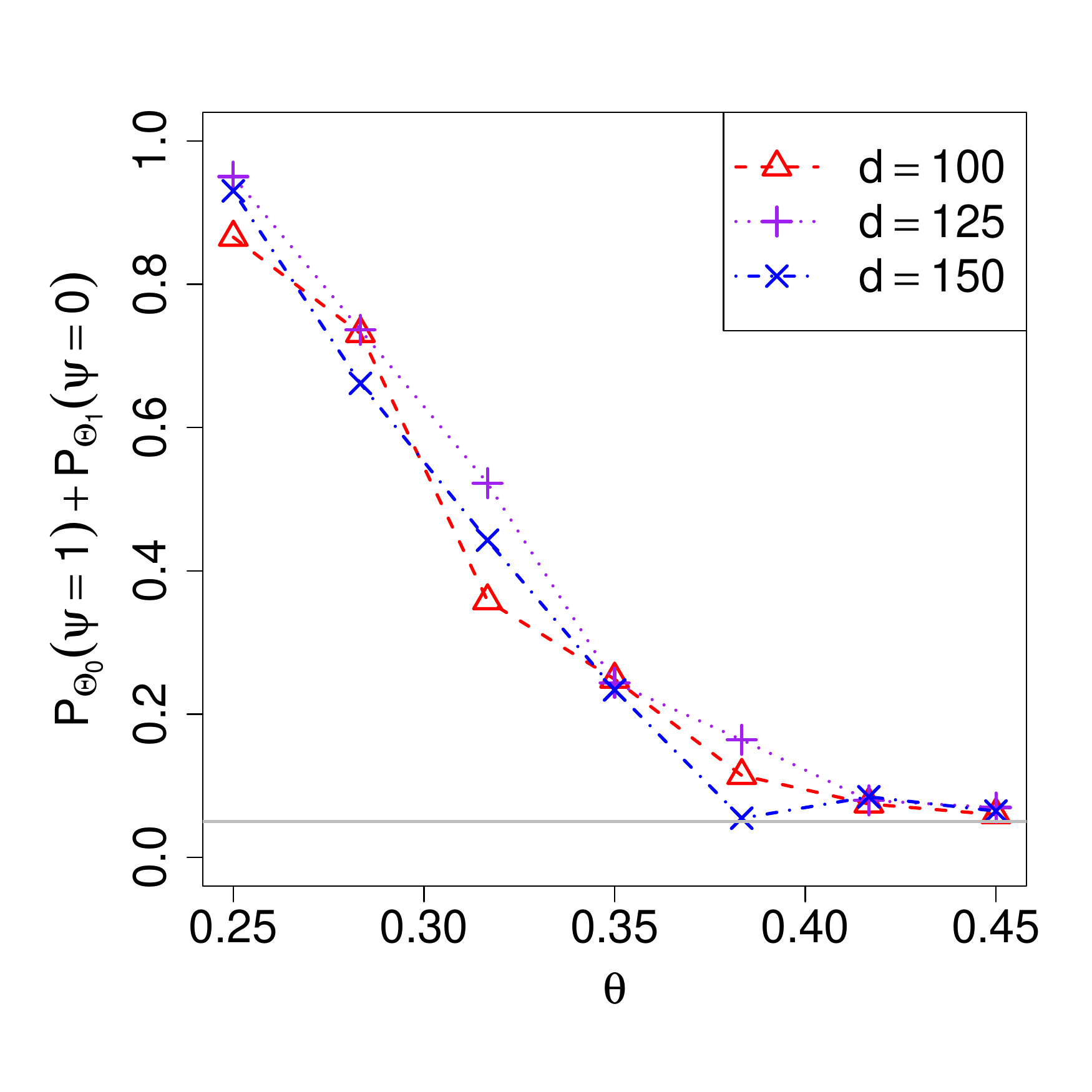}
  \vspace{-1em}
  \caption{$n = 400$}
  \label{fig:sfig1}
\end{subfigure}%
\begin{subfigure}{.5\textwidth}
  \centering
  \includegraphics[width=.8\linewidth]{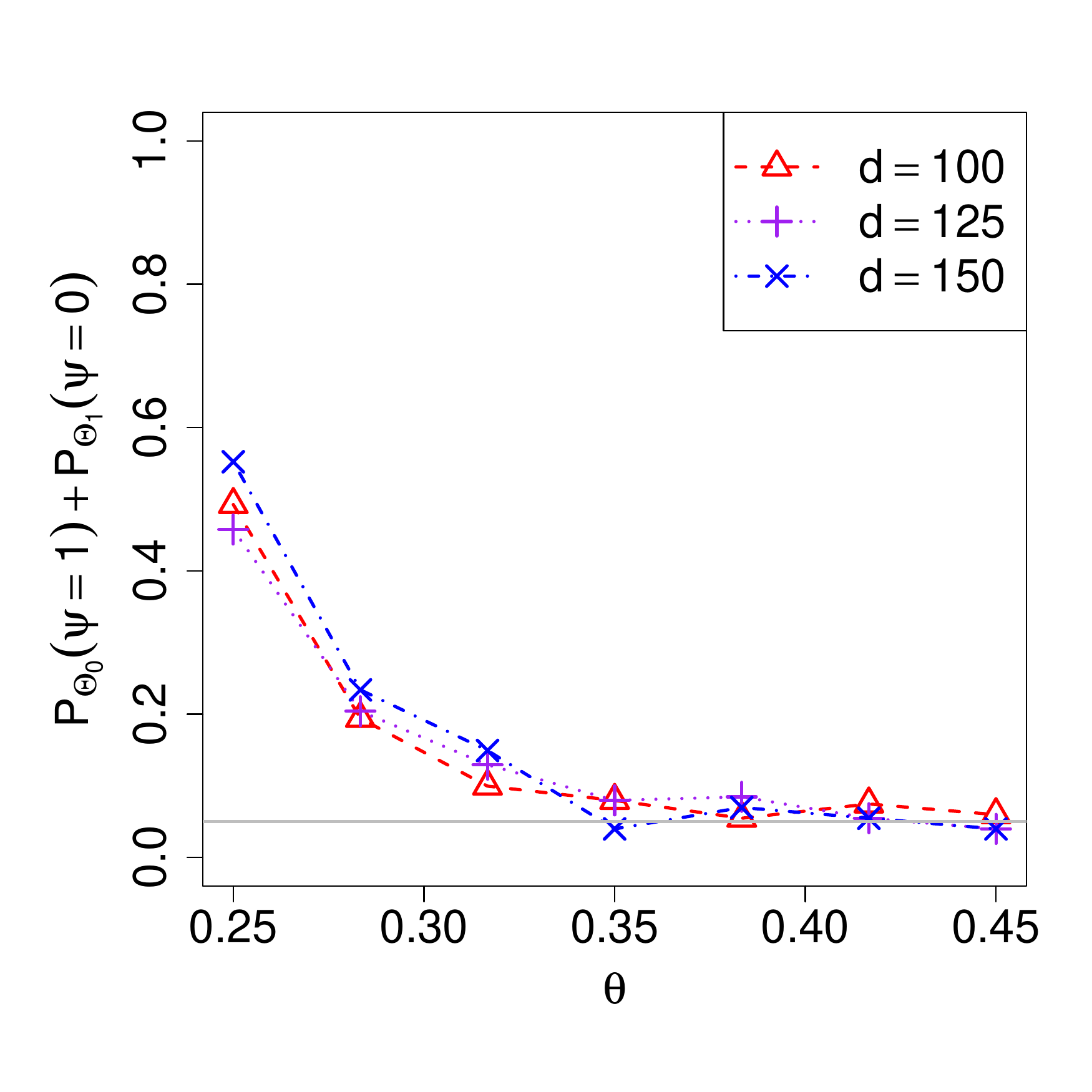}
  \vspace{-1em}
  \caption{$n = 600$}
  \label{fig:sfig3}
\end{subfigure}%
\vspace{-1em}
\caption{Type I + type II error control for cycles presence test with $\theta \in [0.25, 0.45]$.  The grey horizontal line is the significance level $\alpha = 0.05$.} \label{simu:figure:cycle}
\vspace{-18pt}
\end{figure}

As before the significance level is set to $\alpha = 0.05$, and the tuning parameter as $\lambda = 1.5\sqrt{\log d/n}$. The sample size varies in $n = \{400, 600\}$, the dimension $d = 100, 125$ and $150$ and the number of bootstrap samples is  $3,000$. 

In Fig \ref{simu:figure:cycle}, we plot the risk curves of the test $\psi$ based on Algorithm \ref{cycle:presence} for $\theta \in [0.25, 0.45]$. With the increase of signal strength or sample size, the risk diminishes to its target value which confirms the results of Section \ref{ham:cycle:det:sec}. 

\begin{table}[H] 
 \centering
\caption{Sizes for cycle presence testing $\theta \in [0.25, 0.45]$}\label{size:cycle:values}
\begin{tabular}{ccrrrrrrr}
  \hline
  \hline
  $n$ & \diagbox{$d$}{$\theta$} & 0.25 &  0.28 &  0.32 & 0.35 & 	0.38 &  0.42 & 	0.45 \\ 
  \hline
  \multirow{3}{*}{$ 400$} & $100$ &0.010 & 0.045 & 0.015 & 0.045 & 0.040 & 0.040 & 0.055 \\
  &$125$ &  0.025 & 0.005 & 0.045 & 0.030 & 0.070 & 0.070 & 0.060 \\ 
  &$150$ & 0.005 & 0.005 & 0.020 & 0.020 & 0.015 & 0.060 & 0.050 \\ 
  \hline
   \multirow{3}{*}{$600$} &100 & 0.045 & 0.040 & 0.060 & 0.065 & 0.045 & 0.060 & 0.040 \\ 
  &125 & 0.040 & 0.040 & 0.060 & 0.055 & 0.065 & 0.050 & 0.035 \\ 
  &150 &  0.030 & 0.030 & 0.050 & 0.015 & 0.060 & 0.050 & 0.040 \\ 
   \hline
   \hline
\end{tabular}
\end{table}
In Table \ref{size:cycle:values} we report the type I errors of the tests for each of $6$ values of the signal strength $\theta$. The type I error remains well controlled and is closer to its the target level of $0.05$ when the signal strength becomes larger as Corollary \ref{cycle:nosignal:str} predicts.

\subsection{ADHD-200 Data Analysis}\label{sec:ADHD}

In this section we analyze the difference in connectivity levels of the brain networks of subjects with and without attention deficit hyperactivity disorder (ADHD). We use the ADHD-200 brain-imaging dataset, which is made publicly available by the ADHD-200 consortium \citep{milham2012adhd}. The dataset  consists of $614$ subjects with $491$ controls and $195$ cases. 

For each subject a number between $76$ and $276$ fMRI scans are available. Each of the scans provides a brain imaging vector of $264$ voxels, which we use to obtain estimates of the brain network. To simplify the analysis we treat each of the repeated measurements as independent. In addition to the fMRI scans, limited clinical data such as age, gender, handedness etc is also available for each subject. The ages range from $7$ to $21$ years, with medians $11.75$ for the controls and $10.58$ for the cases. 

Using the ADHD-200 dataset, \cite{cai2015aberrant} explored the connectivity of the brain network, and concluded that the brain networks of ADHD cases are in general less connected compared to the brain networks of healthy subjects. Additionally, \cite{gelfand2003spatial, bartzokis2001age} studied how brain connectivity scales with age, and demonstrated that brain connections typically increase from junior to adult age. 

To explore potential changes in brain network connectivity in terms of age from junior to senior, we split the dataset into subjects whose age is less than $11$ and greater than $11$ years, where $11$ is chosen since it is close to the median ages of the cases and controls. Moreover, to further study the brain networks at different levels, we define the graph at level $\mu \ge 0$ for the precision matrix $\bTheta^*$ as 
\[
    G(\bTheta^*, \mu) = (\overline V, E(\bTheta^*, \mu)), \text{ where } E(\bTheta^*, \mu) = \big\{e \mid |\Theta^*_e| \ge \mu\big\}.
\]
When $\mu = 0$, $G(\bTheta^*, \mu)$ reduces to the typical graph  $G(\bTheta^*)$ defined in Section \ref{sec:notation}. We explore the connectivity of $G(\bTheta^*, \mu)$ for larger $\mu$'s, because the edges in the brain networks with stronger signal strengths are more informative to reflect the essential topological structure of brains. We therefore consider the hypothesis test 
\begin{align}\label{conn:comp:strength:test-1}
\Hb_0: G(\bTheta^*, \mu) \text{ disconnected vs }\Hb_1: G(\bTheta^*, \mu) \text{ connected}.
\end{align}
	We can easily modify Algorithm \ref{al:conn} to conduct the test above. In fact, we only need to change the output $\psi^B_{\alpha, \widehat T}(\cD_2)$ in Algorithm \ref{al:conn} to $\psi^B_{\alpha, \widehat T}(\cD_2, \mu)$  obtained from Algorithm \ref{step:down-mu}. 
\begin{algorithm}[t]
\normalsize
\caption{Multiple Edge Testing}\label{step:down-mu}
\begin{algorithmic}
\STATE Initialize $\widehat{E^{\rm n}} \leftarrow \hat T$;
\REPEAT
\STATE Reject $R \leftarrow \{e \in \widehat{E^{\rm n}}: \sqrt{n}|\tilde \Theta_{e}| \geq \sqrt{n} \mu + c_{1 - \alpha, \widehat{E^{\rm n}}}\}$
\STATE{Update $\widehat{E^{\rm n}} \leftarrow \widehat{E^{\rm n}} \setminus R$}
\UNTIL{$R = \varnothing$ or $\widehat{E^{\rm n}} = \varnothing$}
\RETURN{$\hat{E^{\rm nc}} \leftarrow \hat T \setminus \widehat{E^{\rm n}}$}
\end{algorithmic}
\end{algorithm}

Comparing Algorithm \ref{step:down-mu} with the original Algorithm \ref{step:down}, the only difference is that we change the first step in while loop from $R \leftarrow \{e \in \widehat{E^{\rm n}}~|~  \sqrt{n}|\tilde{\Theta}_{e} |\geq  c_{1 - \alpha, \widehat{E^{\rm n}}}\}$ to $R \leftarrow \{e \in \widehat{E^{\rm n}}~|~  \sqrt{n}|\tilde{\Theta}_{e} |\geq \sqrt{n} \mu + c_{1 - \alpha, \widehat{E^{\rm n}}}\}$. Following the same proof as Corollary \ref{conn:test:prop} , we can show that $\psi^B_{\alpha, \widehat T}(\cD_2, \mu) $ is a valid test for testing \eqref{conn:comp:strength:test-1}. A detailed proof can be found in Corollary \ref{conn:test:prop-mu} in the supplement. We apply the test $\psi^B_{\alpha, \widehat T}(\cD_2, \mu) $ to all $4$ groups (junior controls, senior controls, junior cases and senior cases) of subjects results 
for $\mu \in [0.16, 0.3]$. We denote $\hat T(\mu) = \hat{E^{\rm nc}}$ where $\hat{E^{\rm nc}}$ is defined in Algorithm \ref{step:down-mu}.

If $\hat T(\mu)$ is connected, since $\hat T(\mu)$ is a tree, we can see that $\psi^B_{\alpha, \widehat T}(\cD_2, \mu) = 1$ and $\Hb_0$ in \eqref{conn:comp:strength:test-1} is rejected. For robustness the data is shuffled $100$ times before splitting it into two halves and the number of connected components reported is averaged over these $100$ data shuffles. 
Fig \ref{adhd:conn:fig} demonstrates the number of connected components for $\hat T(\mu)$ when $\mu$ varies in the range $[0.16, 0.3]$. We can see that the number of connected components is always larger than 1 and thus we do not reject $\Hb_0$ in \eqref{conn:comp:strength:test-1} for all $\mu \in [0.16, 0.3]$.

\begin{figure}[t]
  \centering
  \includegraphics[width=.6\linewidth]{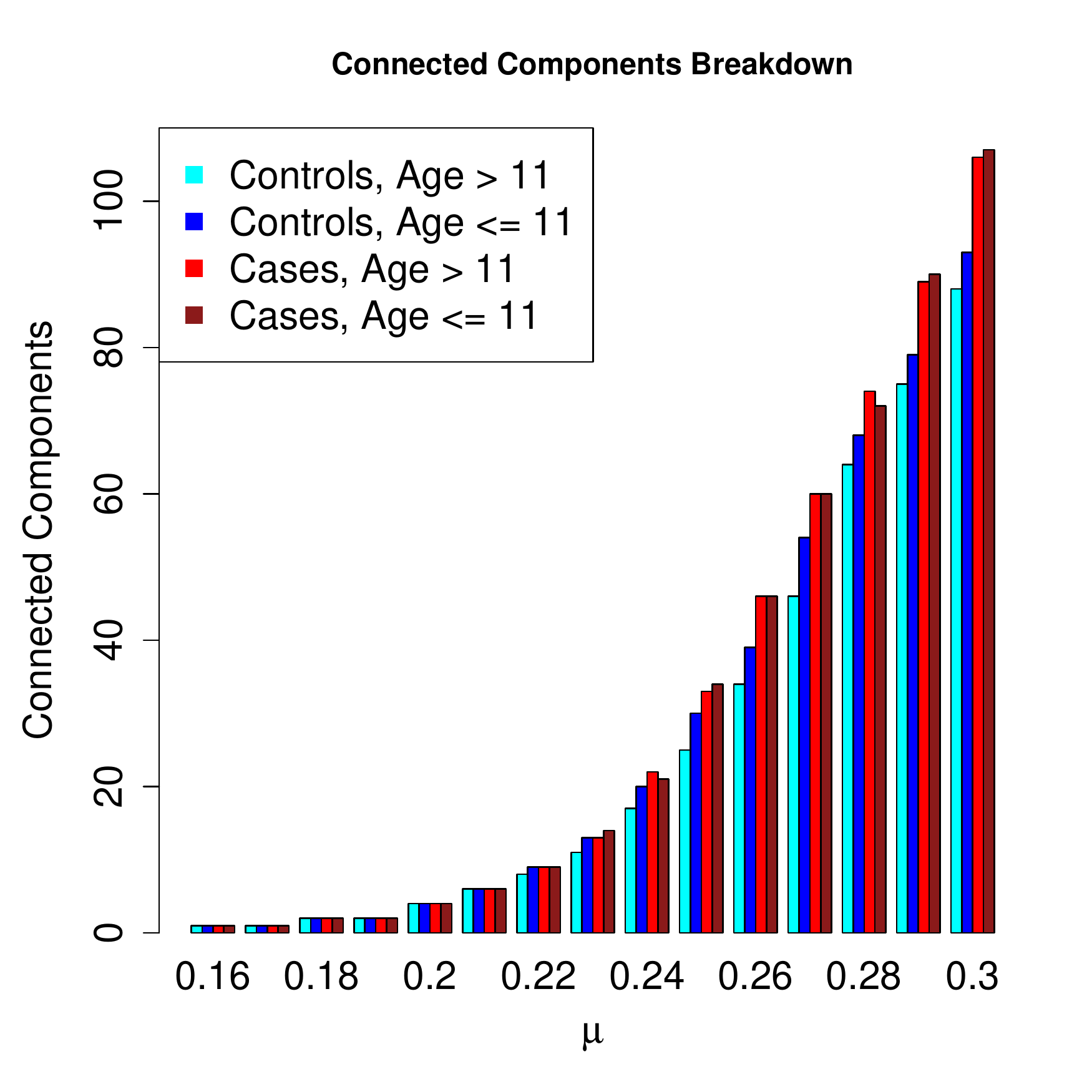}
  \vspace{-2em}
  \caption{Number of connected components of the graphs when ranging the parameter $\mu$ is in the range $[0.16, 0.3]$; We report averaged numbers over $100$ data split tests.} \label{adhd:conn:fig}\vspace{-18pt}
\end{figure}

Moreover, we observe in Fig \ref{adhd:conn:fig} that $\hat T(\mu)$ has more connected components for the controls than the cases. 
 These results confirm the findings of \cite{cai2015aberrant} that the brain network in ADHD patients are less connected compared to the brain network in control subjects at the same age. What is more, Fig \ref{adhd:conn:fig} also shows that the number of connected components tends to be less in controls whose age ranges within $11$ to $21$ years as compared to controls who are younger than $11$ years of age, which agrees with the results of \cite{gelfand2003spatial, bartzokis2001age}. Interestingly, there exists less of a difference in the number of components in the brain network of ADHD patients in the two age groups. This observation suggests that subjects with ADHD have a slower rate of forming strong brain connectivity over time. In addition to the the connected components analysis, in Fig \ref{adhd:brain:plots} we plot the largest connected subgraphs of $\hat T(\mu)$ for $\mu = 0.25$ for the controls of age at least $11$ years and the top two largest connected subgraphs for the cases. The value $\mu = 0.25$ is chosen as Fig \ref{adhd:conn:fig} reveals that at $\mu = 0.25$ there exists a clear separation in the number of connected components between cases and controls. 

\begin{figure}[t]
\begin{subfigure}{.45\textwidth}
  \centering
  \includegraphics[width=.6\linewidth]{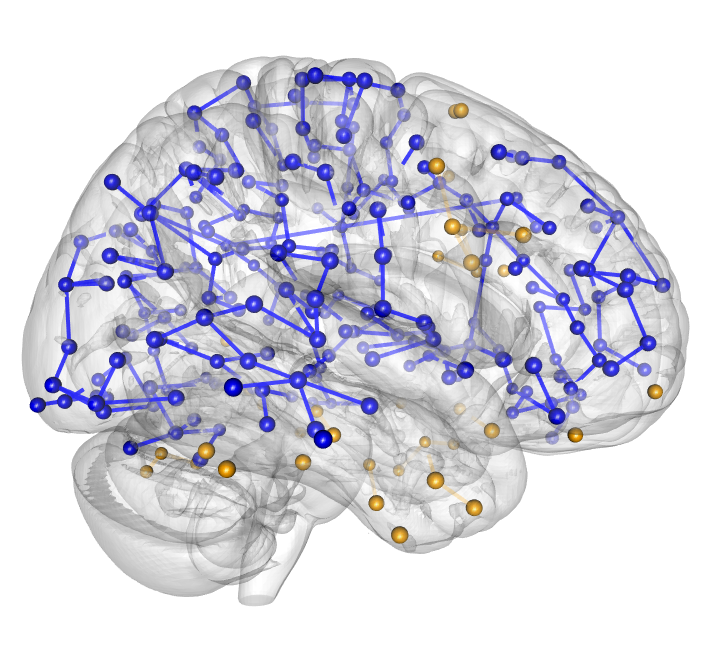}
  \vspace{-1em}
  \caption{Controls $\geq 11$ years}
  \label{fig:sfig2}
\end{subfigure}
\begin{subfigure}{.45\textwidth}
  \centering
  \includegraphics[width=.6\linewidth]{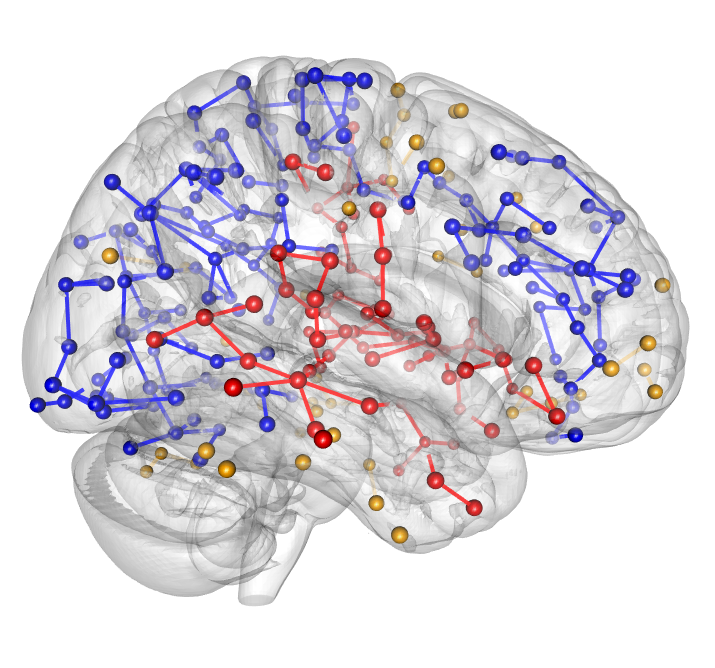}
  \vspace{-1em}
  \caption{Cases $\geq 11$ years}
  \label{fig:sfig2}
\end{subfigure}
\vspace{-1em}
\caption{Connected components for age groups $\geq$ 11 years. Components consisting of at least $6$ nodes are displayed in different colors (blue and red), while nodes falling in smaller components are grouped together and plotted in orange.} \label{adhd:brain:plots}\vspace{-18pt}
\end{figure}

Remarkably, there are two major connected components in the ADHD cases while there is only one major connected component in the controls. In addition, the second major component in the ADHD patients, which is automatically identified by our algorithm, bares close similarity to the salience network (SN) discussed in \cite{cai2015aberrant}. Indeed, \cite{cai2015aberrant} studied the connectivity of the SN to two other subgraphs in the brain networks and showed the connectivity between the SN and the other subgraphs is weaker in ADHD cases compared to the connectivity in the controls. This is also shown in Fig  \ref{adhd:brain:plots}.

\section{Edge Deletion Divider Extension, Proof of Proposition 2.1 and SAP example}\label{edge:removal:NAS:extension}

In this section, we prove Proposition \ref{smart:bound} and we provide the example of self-avoiding path. Also, we show the deletion-edge divider result on the lower bound parallel to Theorem~\ref{suff:cond:lower:bound} and give other examples. 

\begin{proof}[Proof of Proposition \ref{smart:bound}] To prove this result we construct the set $\cC$ explicitly in an iterative manner. For a vertex set $V \subset \overline V$ define
$$
\cN_l(V) := \{v \in \overline V ~|~ d_{G_0}(v, w) \leq l ~ \forall w \in V\}.
$$
Start with $\cC = \varnothing$. Take any vertex $v_1 \in \overline V$, find $w_1 \in W_v$, and set $\cC \leftarrow \cC \cup \{(v_1, w_1)\}$. We now bound the cardinality of $\cN_{l}(\{v_1, w_1\})$. Suppose that the maximum degree of $G_0$ is at most $D$. We have $|\cN_{l}(\{v_1, w_1\})| \leq 2 + 2D^{l}$. Update the graph $G_0$ by deleting all vertices and edges associated with the set $|\cN_{l}(\{v_1, w_1\})|$. The new graph has at least $d - (2 + 2D^{l})$ vertices. If $c d^\gamma - (2 + 2D^{l}) > 0$ then there exists a pair of vertices $v_2, w_2$ so that $(\overline V, E_0 \cup \{(v_2, w_2)\}) \in \cG_1$. Set $\cC \leftarrow \cC \cup \{(v_2, w_2)\}$. Iterating this procedure $k$ times while maintaining $c d^\gamma - k(2 + 2D^{l}) > 0$ gives a set $\cC$ of cardinality $k + 1$, each two edges of which are at least $l$ apart. Since $2 + 2D^{l}\leq 4D^{l}$ the latter inequality is implied if $\log (c d^\gamma) -\log 4 \geq \log k + l \log D$. Selecting $l = \lceil \log k \rceil + 1 \geq \lceil \log(k  + 1)\rceil$ yields that we can select $k$ as large as $\log k \leq \frac{\log (c d^\gamma) -\log 4 - 2\log D}{1 + \log D}$, which completes the proof.
\end{proof}

\begin{example}[Self-Avoiding Path (SAP) of Length $\fm$ vs $\fm+1$, $\fm < \sqrt{d}$] \label{self:avoid:path:example} 
In this example we test the existence of a SAP of length $\leq \fm$ vs SAP of length $\geq \fm+1$, i.e., $\cG_0 = \{G \in \cG~|~ \mbox{All SAPs have length} \leq \fm\}$  vs $\cG_1 = \{G \in \cG~|~ \exists \mbox{ SAP of length } \fm+1\}$. We assume that $\fm < \sqrt{d}$ in order to be able to construct sufficiently large divider set which yields tight bounds. The case $\fm \geq \sqrt{d}$ is treated in Example \ref{self:avoid:path:example:new}. Without loss of generality we further suppose that $\fm+2$ divides $d$ (at the sake of assigning some isolated vertices in the graph). Construct the following graph in $\cG_0$: $G_0 = (\overline V, E_0)$, where 
$$E_0 := \{(j, j + 1) ~|~ \mbox{unless }  (\fm+2)  \text{~divides~} j \mbox{ or } (\fm + 2)  \text{~divides~} (j + 1) \},$$
and hence $|E_0| = d\fm/(\fm + 2) $ (see Fig \ref{self:avoid:path:ex1}). Consider the class $\cC := \{(j (\fm + 2) - 1, j(\fm + 2))_{j \in [d/(\fm+2)]} \}.$ The set $\cC$ is a divider since adding any edge from $\cC$ to $G_0$ results in a graph with a longer self-avoiding path. Furthermore the maximum degree of $G_0$ is $2$. Notice that the set $\cC$ is its own a ($\log |\cC|$)-packing, since each two edges $e, e' \in \cC$ satisfy $d_{G_0}(e,e') = \infty$. Hence $M(|\cC|, d_{G_0}, \log|\cC|) = \log |\cC| \asymp \log d$. An example matching this restriction is provided in Section \ref{detailed:upperbound:algos}.
We conclude that if $\theta \leq \kappa \sqrt{\log d/n} \wedge \frac{1 - C^{-1}}{4\sqrt{2}}$ we cannot differentiate the null from the alternative hypothesis.

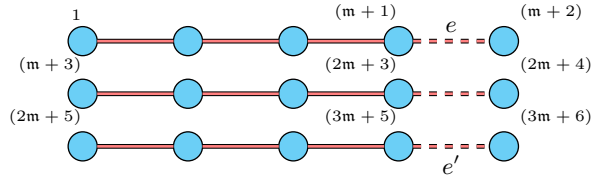
\begin{figure}[t] 
\centering
\begin{tikzpicture}[scale=.7]
 \SetVertexNormal[Shape      = circle,
                  FillColor  = cyan!50,
                  MinSize = 11pt,
                  InnerSep=0pt,
                  LineWidth  = .5pt]
  \SetVertexNoLabel
   \tikzset{EdgeStyle/.style= {thin,
                                double          = red!50,
                                double distance = 1pt}}
    \begin{scope}\grPath[prefix=a,RA=2]{4}\end{scope}
                                                               \node[above left] at (a0.+60) {\tiny $(2\fm+5)$};
  			                    \node[above left] at (a3.+60) {\tiny $(3\fm+5)$};
     \begin{scope}[shift={(8,0)}]\grEmptyPath[prefix=b,RA=2]{1}\end{scope}
                 			                    \node[above right] at (b0.+60) {\tiny $(3\fm+6)$};
         \begin{scope}[shift={(0,1)}]\grPath[prefix=d,RA=2]{4}\end{scope}
                                                      \node[above left] at (d0.+60) {\tiny $(\fm+3)$};
  			                    \node[above left] at (d3.+60) {\tiny $(2\fm+3)$};
     \begin{scope}[shift={(8,1)}]\grEmptyPath[prefix=e,RA=2]{1}\end{scope}
            			                    \node[above right] at (e0.+60) {\tiny $(2\fm+4)$};
              \begin{scope}[shift={(0,2)}]\grPath[prefix=f,RA=2]{4}\end{scope}
                                             \node[above left] at (f0.+60) {\tiny 1};
  			                    \node[above left] at (f3.+60) {\tiny $(\fm+1)$};
     \begin{scope}[shift={(8,2)}]\grEmptyPath[prefix=g,RA=2]{1}\end{scope}
       			                    \node[above right] at (g0.+60) {\tiny $(\fm+2)$};
\tikzset{EdgeStyle/.style= {dashed, thin,
                                double          = red!50,
                                double distance = 1pt}}
                                   \tikzset{LabelStyle/.style = {below, fill = white, text = black, fill opacity=0, text opacity = 1}}
    \Edge[label=$e'$](a3)(b0)
        \Edge(d3)(e0)
           \tikzset{LabelStyle/.style = {above, fill = white, text = black, fill opacity=0, text opacity = 1}}
    \Edge[label=$e\protect\vphantom{'}$](f3)(g0)
\end{tikzpicture}
\vspace{-1em}
\caption{The null base $G_0$ and the divider $\cC$ (dashed) with $d_{G_0}(e, e') = \infty$, $d = 15$, $\fm = 3$.}\label{self:avoid:path:ex1}
\end{figure}
\end{example}

Though Theorem \ref{suff:cond:lower:bound} delivers valid lower bounds, the obtained bounds may not always be tight. For example, recall the construction of Example \ref{self:avoid:path:example}. When $\fm = d - 2$ the divider contains only a single edge, and therefore (\ref{scaling:conditions}) is not sharp. Sometimes tighter bounds can be obtained via constructions which delete edges from the alternative graph instead of adding edges to the graph under null.  In this subsection we extend the divider definition to allow for deleting edges from the alternative hypothesis, which is sometimes more convenient and gives more informative lower bounds as we demonstrate below.

\begin{definition}[Single-Edge Deletion Null-Alternative Divider]\label{single:edge:del:null:alt:sep} Let $G_1 = (\overline V, E_1)$ be an alternative graph. An edge set $\cC$ is called a (single edge) deletion divider with alternative base $G_1$, if for all $e \in \cC$ the graphs $G_{\setminus e} = (\overline V, E_1 \setminus \{e\})  \in \cG_0$.
\end{definition}

In parallel to Theorem \ref{suff:cond:lower:bound} we have the following result using the deletion divider.

\begin{theorem} \label{suff:cond:lower:bound:deletion:NAS}  
Let $G_1 \in \cG_1$ be a graph whose maximum degree bounded by a fixed integer $D$, and let $\cC$ be a single-edge deletion divider with alternative base $G_1$. Suppose that $ n \geq (\log |\cC|)/c_0$ for some $c_0 > 0$ and that:
\begin{align}\label{scaling:conditions:edge:del}
\theta \leq \kappa \sqrt{\frac{M(\cC, d_{G_1}, \log |\cC|)}{n}} \wedge \frac{1 - C^{-1}}{\sqrt{2}D}.
\end{align}
Then, if $M(\cC, d_{G_1}, \log |\cC|) \rightarrow \infty$ as $n \rightarrow \infty$, for a sufficiently small $\kappa$  (depending on $D, C, L,c_0$) we have $\liminf_{n \rightarrow \infty} \gamma(\cS_0(\theta, s), \cS_1(\theta, s)) = 1.$
\end{theorem}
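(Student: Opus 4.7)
The plan is to port the four-step proof of Theorem \ref{suff:cond:lower:bound} to the deletion setting by swapping the roles of null and alternative: instead of \emph{adding} an edge from $\cC$ to a null base $G_0$, we \emph{remove} an edge of $\cC$ from the alternative base $G_1$ to land in $\cG_0$. For Step 1 (matrix construction), let $\Ab_1$ be the adjacency matrix of $G_1$ and, for $e \in \cC$, let $\Ab_e$ be the adjacency matrix of the single-edge graph $(\overline V, \{e\})$. Define $\bTheta_1 := \Ib + \theta \Ab_1$ and $\bTheta_{\setminus e} := \Ib + \theta(\Ab_1 - \Ab_e)$. Since removing edges cannot increase the maximum degree, each of $\Ab_1$, $\Ab_1 - \Ab_e$, and $\Ab_1 - \Ab_e - \Ab_{e'}$ has $\ell_2$-norm at most $D$, so Weyl's inequality together with $\theta \leq (1-C^{-1})/(\sqrt 2 D)$ places the spectra of $\bTheta_1$, $\bTheta_{\setminus e}$, and $\Ib + \theta(\Ab_1 - \Ab_e - \Ab_{e'})$ in $[C^{-1}, C]$ (the last remains positive definite and will be needed in Step 2). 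The deletion-divider property gives $G(\bTheta_{\setminus e}) = G_1\setminus\{e\} \in \cG_0$, with signal strength exactly $\theta$, so $\bTheta_{\setminus e} \in \cS_0(\theta,s)$ and $\bTheta_1 \in \cS_1(\theta,s)$.

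Steps 2 and 3 mirror the addition case. Place the uniform prior on $\{\bTheta_{\setminus e}\}_{e \in \cC}$ against the single-point alternative $\bTheta_1$, set $\overline \PP := |\cC|^{-1}\sum_e \PP_{\bTheta_{\setminus e}}$, and apply Le Cam and Cauchy--Schwarz to obtain $\gamma \geq 1 - \tfrac12 \sqrt{D_{\chi^2}(\overline \PP, \PP_{\bTheta_1})}$. Expanding the likelihood ratios and integrating via the Gaussian quadratic-form MGF yields
\[
\EE_{\bTheta_1} L_{\bTheta_1}^2 = \frac{1}{|\cC|^2}\sum_{e,e' \in \cC} \left(\frac{\det \bTheta_{\setminus e}}{\det \bTheta_1}\right)^{\! n/2} \left(\frac{\det \bTheta_{\setminus e'}}{\det(\Ib + \theta(\Ab_1 - \Ab_e - \Ab_{e'}))}\right)^{\! n/2}.
\]
Expanding each $\log\det$ into its $\Tr$-series and interpreting $\Tr(\Ab^k)$ as the number of length-$k$ closed walks, each pairwise exponent collapses into $\sum_k \tfrac{(-\theta)^k}{k}(T_1^k + T_2^k)$, where $T_1^k + T_2^k$ equals, up to sign, the number of length-$k$ closed walks of $G_1$ that traverse \emph{both} $e$ and $e'$. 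Such walks exist only when $k \geq 2 d_{G_1}(e,e') + 2$, so all lower-order terms cancel. For the tail, Lemma \ref{gen:lidskii} combined with the eigenvalue structure of $\Ab_e$ (namely $\{+1, -1, 0, \ldots, 0\}$) bounds each surviving $|T_1^k + T_2^k|$ by $2(\|\Ab_1\|_2 + 2)^k \leq 2(D+2)^k$, producing the exact analog of (\ref{useful:bound}) with $d_{G_0}$ replaced by $d_{G_1}$ and $R = \sqrt 2 (D+2)$.

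Step 4 (rate control) then proceeds almost verbatim as in the proof of Theorem \ref{suff:cond:lower:bound}: restrict the double sum to a maximal $(\log|\cC|)$-packing $N \subseteq \cC$ (any subset of a deletion divider is itself a deletion divider), split by whether $d_{G_1}(e,e')$ is $0$ or $\geq \log|\cC|$, and use $\theta \leq \kappa\sqrt{M(\cC,d_{G_1},\log|\cC|)/n}$ together with $M \to \infty$ to drive the bound to zero; the auxiliary hypothesis $n \geq (\log|\cC|)/c_0$ merely activates this asymptotic regime. The main technical obstacle I anticipate is the sign and cancellation analysis in Step 3: because $\Ab_1 - \Ab_e$ removes rather than adds an edge, the direction of each individual trace inequality flips relative to the proof of Theorem \ref{suff:cond:lower:bound}, and one must verify that the signed sum $(-\theta)^k(T_1^k+T_2^k)/k$ still admits a uniform geometric tail bound that is compatible with the packing-entropy argument. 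Once this is in place, the remaining arguments amount to a direct translation of the addition-based proof.
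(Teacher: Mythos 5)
Your proposal follows essentially the same route as the paper's sketch: exploit the symmetry $\gamma(\cS_0,\cS_1)=\gamma(\cS_1,\cS_0)$, place the uniform prior on the null-side matrices $\{\bTheta_{\setminus e}\}_{e\in\cC}$ against the point alternative $\bTheta_1$, expand the chi-square divergence into trace-of-power differences, and observe that the combinatorial and spectral roles of the two trace terms simply swap (the paper records this as $\Tr(\Ab_1^k)-\Tr((\Ab_1-\Ab_e)^k)\le 2\|\Ab_1\|_2^k$ and $\Tr((\Ab_1-\Ab_{e'})^k)-\Tr((\Ab_1-\Ab_e-\Ab_{e'})^k)\le 0$, which resolves the sign concern you raise at the end). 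One slip to correct: in Step 3 you quote the addition-case tail bound $2(\|\Ab_1\|_2+2)^k$ and $R=\sqrt2(D+2)$, but since deletion only produces \emph{subgraphs} of $G_1$, every adjacency matrix in sight has spectral norm at most $D$ (as you note yourself in Step 1), so the correct constants are $2\|\Ab_1\|_2^k\le 2D^k$ and $R=\sqrt2\,D$; this is exactly why the theorem's signal constraint reads $(1-C^{-1})/(\sqrt2 D)$ rather than $(1-C^{-1})/(\sqrt2(D+2))$.
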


To illustrate the usefulness of Theorem \ref{suff:cond:lower:bound:deletion:NAS}  for  edge deletion divider, we consider the examples below.

\begin{example}[$\fm+1$ vs $\fm$ Connected Components, $\fm < \sqrt{d}$] \label{conn:comp:new} In this example we consider a complementary setting to Example \ref{conn:comp}, and test whether the graph contains $\fm + 1$ connected components vs $\fm$ connected components for $\fm < \sqrt{d}$. Recall the decomposition $\cG_0 = \{G \in \cG~|~ G \mbox{ has } \geq \fm + 1 \mbox{ conn. components} \}$ vs $\cG_1 = \{G \in \cG~|~ G \mbox{ has} \leq \fm \mbox{ conn. components}\}$. Construct an alternative base graph $G_1 = (\overline V, E_1)$: $ E_1:= \{(j,j+1)_{j = 1}^{d - \fm}\},$ and set $\cC := E_1 = \{(j,j+1)_{j = 1}^{d - \fm}\} \mbox{ (see Fig \ref{path:graph:conn:comp:ex1} for a visualization)}.$
Since removing any edge from $\cC$ results in increasing the number of connected components by $1$, the set $\cC$ is a single edge deletion divider with an alternative base graph $G_1$. Moreover, notice that the maximum degree of $G_1$ is $2$. 

The entropy $M(\cC, d_{G_1}, \log |\cC|)$ satisfies $M(\cC, d_{G_1}, \log |\cC|) \asymp  \log \frac{|\cC|}{\log |\cC|} \asymp \log |\cC| \asymp \log d$. Hence by Theorem \ref{suff:cond:lower:bound:deletion:NAS} we conclude that testing connectivity is impossible when $\theta < \kappa \sqrt{\log d/n}\wedge \frac{1 - C^{-1}}{2\sqrt{2}}$. 

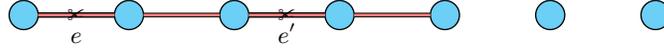
\begin{figure}[H] 
\centering
\begin{tikzpicture}[scale=.7]
\SetVertexNormal[Shape      = circle,
                  FillColor  = cyan!50,
                  MinSize = 11pt,
                  InnerSep=0pt,
                  LineWidth = .5pt]
   \SetVertexNoLabel
   \tikzset{LabelStyle/.style = {below, fill = white, text = black, fill opacity=0, text opacity = 1}}

   \tikzset{EdgeStyle/.style= {thin,
                                double          = red!50,
                                double distance = 1pt}}
    \begin{scope}\grPath[,RA=2]{5}\end{scope}
     \begin{scope}[shift={(10,0)}]\grEmptyPath[prefix=b,RA=2]{2}\end{scope}
    \Edge[label=$e\protect\vphantom{'}$](a0)(a1)
     \Edge[label=$e'$](a2)(a3)
          \tikzset{LabelStyle/.style = {fill = white, text = black, fill opacity=0, text opacity = 1}}
          \Edge[label=\Cutright](a0)(a1)
     \Edge[label=\Cutright](a2)(a3)
\end{tikzpicture}
\vspace{-1em}
\caption{The graph $G_1$ with $d - \fm$ edges and $\fm-1$ isolated nodes, d = 7.}\label{path:graph:conn:comp:ex1}
\end{figure}
\vspace{-18pt}
\end{example} 

\begin{example}[Self-Avoiding Path (SAP) of Length $\fm$ vs $\fm+1$, $\fm \geq \sqrt{d}$] \label{self:avoid:path:example:new} 
In parallel to Example \ref{self:avoid:path:example} we now consider testing that SAP $\leq \fm$ vs SAP of length $\geq \fm+1$ when $\fm \geq \sqrt{d}$. Define $\cG_0 = \{G \in \cG~|~ \forall \mbox{ SAP have length} \leq \fm\}$  vs $\cG_1 = \{G \in \cG~|~ \exists \mbox{ SAP of length } \fm+1\}$. Construct the alternative graph $G_1 = (\overline V, E_1)$, where $E_1 := \{(j, j + 1)_{j =1 }^{\fm + 1}\}).$ Define the set  $\cC := E_1 = \{(j, j + 1)_{j = 1}^{\fm+1}\},$
and note that removing any edge from $\cC$ results in a graph from the null. Hence $\cC$ is a deletion divider with base $G_1$. Moreover, the maximum degree of $G_1$ is $2$.  
Similarly to Example \ref{conn:comp:new} it follows that $M(\cC, d_{G_1}, \log |\cC|) \asymp \log |\cC| \asymp \log d$. An application of Theorem \ref{suff:cond:lower:bound:deletion:NAS} implies that for values of $\theta < \kappa\sqrt{\log d/n} \wedge \frac{1 - C^{-1} }{2 \sqrt{2}}$ testing the length of SAP is impossible.

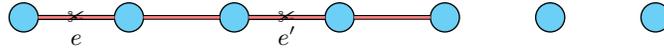
\begin{figure}[H] 
\centering
\begin{tikzpicture}[scale=.7]
\SetVertexNormal[Shape      = circle,
                  FillColor  = cyan!50,
                  MinSize = 11pt,
                  InnerSep=0pt,
                  LineWidth = .5pt]
   \SetVertexNoLabel
   \tikzset{LabelStyle/.style = {below, fill = white, text = black, fill opacity=0, text opacity = 1}}
   \tikzset{EdgeStyle/.style= {thin,
                                double          = red!50,
                                double distance = 1pt}}
    \begin{scope}\grPath[,RA=2]{5}\end{scope}
     \begin{scope}[shift={(10,0)}]\grEmptyPath[prefix=b,RA=2]{2}\end{scope}
    \Edge[label=$e\protect\vphantom{'}$](a0)(a1)
     \Edge[label=$e'$](a2)(a3)
          \tikzset{LabelStyle/.style = {fill = white, text = black, fill opacity=0, text opacity = 1}}
          \Edge[label=\Cutright](a0)(a1)
     \Edge[label=\Cutright](a2)(a3)
\end{tikzpicture}
\vspace{-1em}
\caption{The graph $G_1$ with $\fm+1$ edges and two edges $e,e' \in \cC$ with $d_{G_1}(e,e') = 1$.}\label{self:avoid:path:ex2}
\end{figure}
\vspace{-18pt}
\end{example}

\section{Details for Combinatorial Testing Algorithms} \label{detailed:upperbound:algos}

In this section we give full details on the algorithms of Section \ref{struct:test:section}.

\subsection{Connected Components Testing}

Connected component test is a more general test compared to the connectivity test. For  $\fm \in [d - 1]$ let $\Hb_0: \mbox{\# connected components} \geq \fm + 1$ vs $\Hb_1: \mbox{\# connected components} \leq \fm$. Testing connectivity is a special case when $\fm = 1$. 

Recall that a sub-graph $F$ of $G$, i.e., $E(F) \subset E(G)$ and $V(F) = V(G) = \overline V$, is called a \textit{spanning forest} of $G$, if $F$ contains no cycles, adding any edge $e \in E(G) \setminus E(F)$ to $E(F)$ creates a cycle, and $|E(F)|$ is maximal. This definition extends naturally to graphs with positive weights on their edges. For a graph $G$ let $\operatorname{SF}(G)$ denote the class of spanning forests of $G$. Define the set of precision matrices $\cS\cF(\theta) := \{\bTheta~|~ \exists~ F \in \operatorname{SF}(G(\bTheta)) \mbox{ such~that } \min_{e \in F} |\Theta_e| \geq \theta\}$. For a fixed $\fm \in [d-1]$ definitions (\ref{generic:S0:def:new}) and (\ref{generic:S1:def:new}) reduce to:
\begin{align*}
\cS_0(s) & := \{\bTheta \in \cM(s) ~|~ G(\bTheta) \in  \cG_{\fm + 1}\},\\
\cS_1(\theta,s) & := \{\bTheta \in \cS\cF(\theta) \cap \cM(s) ~|~G(\bTheta) \in \cup_{j \leq \fm}\cG_{j}\},
\end{align*}
where $\cG_{j} := \{G \in \cG~|~ G \mbox{ has exactly $j$ connected components}\}$ for all $j \in \NN$.
\begin{algorithm}[H]
\normalsize
\caption{Connected Components Test}\label{al:conn:comp}
\begin{algorithmic}
\STATE \textbf{Input:} $\cD=\{\bX_{i}\}_{i = 1}^n$, level $0 < \alpha < 1$.
\STATE Split the data into  $\cD_1 = \{\bX_i\}_{i = 1}^{\lfloor n/2 \rfloor}, \cD_2 = \{\bX_i\}_{i = \lfloor n/2 \rfloor + 1}^{n}$
\STATE Using $\cD_1$ obtain an estimate $\hat \bTheta^{(1)}$ satisfying (\ref{maxnorm:prec:ass})
\STATE Find an MSF $\hat F$ with $\fm$ connected components with weights $|\hat \Theta_e^{(1)}|$, let $\widehat E = E(\hat F)$.
\STATE \textbf{Output:} $\psi^B_{\alpha, \widehat E}(\cD_2)$.
\end{algorithmic}
\end{algorithm}
We have the following result regarding the asymptotic performance of Algorithm \ref{al:conn:comp}
\begin{corollary}\label{conn:comp:test:prop} Let $\theta = (2K + C' C^4)\sqrt{\log d/\lfloor n/2\rfloor}$ for an absolute constant $C' > 0$ and  assume that (\ref{bootstrap:rates}) holds. Then for any fixed level $\alpha$, the test $\psi^B_{\alpha, \widehat E}(\cD_2)$ from the output of Algorithm \ref{al:conn:comp} satisfies:
\begin{align*}
\limsup_{n \rightarrow \infty} \sup_{\bTheta^* \in \cS_0(s)} \PP(\mbox{reject } \Hb_0) \leq \alpha, ~~~~ \liminf_{n \rightarrow \infty} \inf_{\bTheta^* \in \cS_{1}(\theta, s)} \PP(\mbox{reject } \Hb_0) = 1.
\end{align*}
\end{corollary}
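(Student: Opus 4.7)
The plan is to closely parallel the proof of Corollary \ref{conn:test:prop}: data splitting renders $\widehat E$ measurable with respect to $\cD_1$ and hence independent of $\cD_2$, so conditionally on $\cD_1$ the certification stage $\psi^B_{\alpha,\widehat E}(\cD_2)$ is the multiple edge test of Algorithm \ref{step:down} on a fixed edge set, to which Proposition \ref{multiple:edge:testing:validity} applies. Throughout I will work on the high-probability event $\mathcal{E} := \{\|\hat\bTheta^{(1)}-\bTheta^*\|_{\max}\le K\sqrt{\log d/\lfloor n/2\rfloor}\}$ supplied by \eqref{maxnorm:prec:ass}, on which $w_e := |\hat\Theta^{(1)}_e|$ satisfies $\bigl|w_e - |\Theta^*_e|\bigr|\le K\sqrt{\log d/\lfloor n/2\rfloor}$ for every $e\in\overline E$.

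For the type I bound the central claim is that, under $\Hb_0$, the edge set $\widehat E$ must contain at least one null edge. By construction $(\overline V,\widehat E)$ is a forest with exactly $\fm$ components, while $G(\bTheta^*)$ has at least $\fm+1$ components; were $\widehat E\subseteq E(G(\bTheta^*))$, the subgraph $(\overline V,\widehat E)$ of $G(\bTheta^*)$ would inherit at least $\fm+1$ components, contradicting $\widehat E$ having only $\fm$. Setting $E^{\rm n}:=\{e\in\widehat E : \Theta^*_e=0\}\neq\varnothing$ and applying Proposition \ref{multiple:edge:testing:validity} conditionally on $\cD_1$ then yields $\PP(\widehat E = \widehat{E^{\rm nc}}\mid \cD_1)\le \PP(E^{\rm n}\cap\widehat{E^{\rm nc}}\neq\varnothing\mid \cD_1)\le \alpha+o(1)$, and marginalizing gives the type I conclusion.

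The type II analysis is where the new work lies, and the main obstacle is to show that the greedy maximum spanning forest $\hat F$ selects only edges with substantial true signal. The difficulty is that the forest $F$ witnessing $\cS_1(\theta,s)$ is only required to be \emph{some} spanning forest of $G(\bTheta^*)$ with minimum signal $\ge\theta$, not a maximum-weight one, so one cannot identify $\hat F$ with $F$ directly. On $\mathcal{E}$ every $e\in E(F)$ has $w_e\ge\theta - K\sqrt{\log d/\lfloor n/2\rfloor} = (K+C'C^4)\sqrt{\log d/\lfloor n/2\rfloor}$, while any $e\notin E(G(\bTheta^*))$ satisfies $w_e\le K\sqrt{\log d/\lfloor n/2\rfloor}$. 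Set $\beta := \min_{e\in E(F)}w_e$ and $A := \{e\in\overline E : w_e\ge\beta\}$. Since $E(F)\subseteq A$ already connects the $j\le\fm$ connected components of $G(\bTheta^*)$, the subgraph $(\overline V,A)$ has at most $j$ components, so every spanning sub-forest of $A$ contains at least $d-j\ge d-\fm$ edges. Kruskal's algorithm, run on $\overline E$ in decreasing weight order, therefore accumulates $\ge d-\fm$ non-cycle edges by the time $A$ is exhausted, triggering the $\fm$-component stopping rule within $A$; consequently $\widehat E\subseteq A$, and $|\Theta^*_e|\ge w_e - K\sqrt{\log d/\lfloor n/2\rfloor}\ge C'C^4\sqrt{\log d/\lfloor n/2\rfloor}$ for every $e\in\widehat E$.

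With this bound in hand, $\bTheta^*$ lies in $\cM_{\varnothing,\widehat E}(s,C'C^4)$ conditionally on $\cD_1$, and the power statement of Proposition \ref{multiple:edge:testing:validity} yields $\PP(\widehat E=\widehat{E^{\rm nc}}\mid\cD_1)\to 1$, hence $\psi^B_{\alpha,\widehat E}(\cD_2)=1$ with probability tending to one on $\mathcal{E}$. Combined with $\PP(\mathcal{E})\to 1$ and the rate conditions \eqref{bootstrap:rates} at sample size $\lfloor n/2\rfloor$, this completes the type II bound. Independence of $\widehat E$ and $\cD_2$, union bounding over the complement of $\mathcal{E}$, and the remaining rate bookkeeping are all routine and identical to the connectivity case.
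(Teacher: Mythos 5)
Your proof is correct and follows essentially the same route as the paper: under the null a deterministic counting argument shows $\widehat E$ must contain a null edge; under the alternative the greedy MSF construction keeps all selected edges in the high-signal set; and Proposition \ref{multiple:edge:testing:validity} finishes both directions. The paper delegates the alternative-side claim to Lemma \ref{ehat:consistency:conn:comp}, whose proof is only sketched by reference to Lemma \ref{ehat:consistency}; your explicit Kruskal/bottleneck argument (introducing $\beta = \min_{e\in E(F)} w_e$ and $A = \{e : w_e \ge \beta\}$, and observing that $(\overline V, A)$ already has at most $\fm$ components so the greedy run terminates inside $A$) is exactly the content that the paper leaves implicit, and it correctly handles the point you flag — that the witnessing forest $F$ need not be the maximum-weight one.
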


\subsection{Cycle Testing}\label{ham:cycle:det:sec}

In this section we outline an algorithm testing whether the graph is a forest. Recall the sub-decomposition $\cG_0 = \{G \in \cG: G \mbox{ contains no cycles}\}$ and $\cG_1 = \{G \in \cG: G \mbox{ contains a cycle}\}$. The parameter spaces (\ref{generic:S0:def:new}) and (\ref{generic:S1:def:new}) reduce to
\begin{align*}
\cS_0(s) & := \{\bTheta \in \cM(s): G(\bTheta) \in \cG_0  \},\\
\cS_1(\theta,s) & := \{\bTheta \in \cM(s) : G(\bTheta) \in \cG_1, \exists \underset{{\tiny\mbox{cycle}}}{E} \!\!\subset \!\!E(G(\bTheta)), \min_{e \in E} |\Theta_e| \geq \theta\}.
\end{align*} 

The cycle presence test is compactly summarized in Algorithm \ref{cycle:presence}.
\begin{algorithm}[t]
\normalsize
\caption{Cycle Presence Tests}\label{cycle:presence}
\begin{algorithmic}
\STATE \textbf{Input:} $\cD=\{\bX_{i}\}_{i = 1}^n$, level $0 < \alpha < 1$.
\STATE Split the data into  $\cD_1 = \{\bX_i\}_{i = 1}^{\lfloor n/2\rfloor }, \cD_2 = \{\bX_i\}_{i = \lfloor n/2\rfloor + 1}^{n}$
\STATE Using $\cD_1$ obtain estimate $\hat \bTheta^{(1)}$  satisfying (\ref{maxnorm:prec:ass}); Sort edges according to weights $|\hat \Theta_e^{(1)}|$
\STATE Add edges $e$ from high to low $|\hat \Theta_e^{(1)}|$ values until a cycle is present.\footnotemark
\STATE Let $\widehat E$ be set of edges forming the cycle.
\STATE \textbf{Output:} $\psi^B_{\alpha, \widehat E}(\cD_2)$.
\end{algorithmic}
\end{algorithm}
\footnotetext{This action requires no more than $d$ iterations.}
The following result justifies the validity of  Algorithm \ref{cycle:presence}.

\begin{corollary} \label{cycle:nosignal:str} Let $\theta = (2K + C' C^4)\sqrt{\log d/\lfloor n/2\rfloor}$ for an absolute constant $C' > 0$ and  assume that (\ref{bootstrap:rates}) holds. Then for any fixed level $\alpha$, the test $\psi^B_{\alpha, \widehat E}(\cD_2)$ from the output of Algorithm \ref{cycle:presence} satisfies:
\begin{align*}
\limsup_{n \rightarrow \infty} \sup_{\bTheta^* \in \cS_0(s)} \PP(\mbox{reject } \Hb_0) \leq \alpha, ~~~~ \liminf_{n \rightarrow \infty} \inf_{\bTheta^* \in \cS_{1}(\theta, s)} \PP(\mbox{reject } \Hb_0) = 1.
\end{align*}
\end{corollary}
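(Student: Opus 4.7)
My plan is to mirror the two-step structure of the alternative witness test and argue the type I and type II errors separately. The key data-splitting feature is that $\widehat E$ is measurable with respect to $\cD_1$ while $\psi^B_{\alpha,\widehat E}$ is computed on $\cD_2$, so I will condition on $\cD_1$ throughout and invoke Proposition \ref{multiple:edge:testing:validity} with $E = \widehat E$ (a fixed edge set after conditioning).

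For \emph{type I control}, observe that under $\bTheta^* \in \cS_0(s)$ the graph $G(\bTheta^*)$ is a forest, so no subset of $E(G(\bTheta^*))$ can form a cycle. Therefore any cycle $\widehat E$ produced by the greedy procedure on $\cD_1$ must contain at least one edge $e$ with $\Theta^*_e = 0$. Writing $\widehat E = E^{\rm n}\cup E^{\rm nc}$ with $E^{\rm n} := \{e \in \widehat E : \Theta^*_e = 0\}$ and $E^{\rm nc} := \widehat E \setminus E^{\rm n}$, the first (strong FWER control) half of Proposition \ref{multiple:edge:testing:validity} applied conditionally on $\cD_1$ yields $\PP(E^{\rm n}\cap \widehat{E^{\rm r}} \neq \varnothing \mid \cD_1) \leq \alpha + o(1)$, and since rejecting $\Hb_0$ requires rejecting every edge in $\widehat E$ (in particular the guaranteed null edge), marginalizing over $\cD_1$ gives the desired $\limsup \leq \alpha$.

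For \emph{type II control}, the signal-strength condition (\ref{generic:S1:def:new}) guarantees the existence of a cycle $C^* \subseteq E(G(\bTheta^*))$ with $\min_{e\in C^*}|\Theta^*_e| \geq \theta = (2K + C'C^4)\sqrt{\log d/\lfloor n/2\rfloor}$. On the event where (\ref{maxnorm:prec:ass}) holds for $\hat\bTheta^{(1)}$ (probability $\geq 1 - (\lfloor n/2\rfloor)^{-1}$), every $e \in C^*$ satisfies $|\hat\Theta^{(1)}_e| \geq (K+C'C^4)\sqrt{\log d/\lfloor n/2\rfloor}$ while every null edge satisfies $|\hat\Theta^{(1)}_e| \leq K\sqrt{\log d/\lfloor n/2\rfloor}$. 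The crucial combinatorial fact, which I will verify, is that every edge of the cycle $\widehat E$ returned by the greedy procedure has $|\hat\Theta^{(1)}_e|$ at least the value of the last edge added, which in turn must be at least $(K+C'C^4)\sqrt{\log d/\lfloor n/2\rfloor}$: otherwise all $C^*$ edges would already have been inserted before closure, producing a cycle earlier and contradicting the greedy stopping rule. This yields $\min_{e\in\widehat E}|\Theta^*_e| \geq C'C^4\sqrt{\log d/\lfloor n/2\rfloor}$ with $E^{\rm n} = \varnothing$, so the second (exact recovery) half of Proposition \ref{multiple:edge:testing:validity} applied conditionally on $\cD_1$ gives $\PP(\widehat{E^{\rm nc}} = \widehat E \mid \cD_1) \to 1$, hence $\liminf\inf_{\bTheta^*\in\cS_1(\theta,s)}\PP(\mathrm{reject}\,\Hb_0) = 1$ after marginalization.

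The main obstacle is the self-consistency argument used above to certify that $\widehat E$ is a strong-signal cycle under $\Hb_1$: the greedy routine may well close a cycle using edges different from $C^*$, possibly involving true edges with arbitrarily small $|\Theta^*_e|$, and one must rule out this possibility via the ordering argument contrasting the post-perturbation magnitudes of $C^*$ edges against the gap $C'C^4\sqrt{\log d/\lfloor n/2\rfloor}$. Once that deterministic claim is in place on the high-probability event $\{\|\hat\bTheta^{(1)} - \bTheta^*\|_{\max} \leq K\sqrt{\log d/\lfloor n/2\rfloor}\}$, the rest is a routine application of Proposition \ref{multiple:edge:testing:validity} combined with the conditioning-on-$\cD_1$ device and condition (\ref{bootstrap:rates}) to secure the bootstrap approximation on $\cD_2$.
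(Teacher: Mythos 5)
Your proposal is correct and follows essentially the same two-step argument as the paper: the null case hinges on the observation that a forest cannot contain a cycle, so $\widehat E$ must include a true-null edge, and the alternative case uses the greedy ordering argument (all edges of the alternative cycle $C^*$ have empirical value above $(K+C'C^4)\sqrt{\log d/\lfloor n/2\rfloor}$, so the greedy loop must terminate before any edge of lower empirical value is considered), each followed by the appropriate half of Proposition \ref{multiple:edge:testing:validity} on $\cD_2$. The paper packages the two preliminary deterministic/high-probability claims as Lemma \ref{ehat:consistency:cycle} and then cites them from the corollary; you work them out inline and, in addition, make the conditioning-on-$\cD_1$ device explicit, which the paper leaves implicit.
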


\subsection{Triangle-Free Graph Testing}\label{triangle:free:testing:sec}

In this section we discuss an algorithm for testing whether the graph is triangle-free. The corresponding sub-decomposition is $\cG_0 = \{G \in \cG~|~ G \mbox{ is triangle-free}\}$ and $\cG_1 = \{G \in \cG~|~ \exists \mbox{ $3$-clique subgraph of } G\}$. The parameter sets of precision matrices specified by (\ref{generic:S0:def:new}) and (\ref{generic:S1:def:new}) reduce to
\begin{align*}
\cS_0(s) & := \{\bTheta \in \cM(s)~|~ G(\bTheta) \in \cG_0\}, \\
\cS_1(\theta,s) & := \{\bTheta \in \cM(s) ~|~ G(\bTheta) \in \cG_1, \exists \underset{\tiny\mbox{triangle}}{E} \subset E(G(\bTheta)) \min_{e \in E} |\Theta_e| \geq \theta\}.
\end{align*}
We summarize the cycle test below

\begin{algorithm}[H]
\normalsize
\caption{Triangle-Free Test}\label{triangle:test}
\begin{algorithmic}
\STATE \textbf{Input:} $\cD=\{\bX_{i}\}_{i = 1}^n$, level $0 < \alpha < 1$.
\STATE Split the data into  $\cD_1 = \{\bX_i\}_{i = 1}^{\lfloor n/2\rfloor }, \cD_2 = \{\bX_i\}_{i = \lfloor n/2\rfloor + 1}^{n}$
\STATE Using $\cD_1$ obtain estimate $\hat \bTheta^{(1)}$  satisfying (\ref{maxnorm:prec:ass}); Sort edges according to weights $|\hat \Theta_e^{(1)}|$
\STATE Add edges $e$ from high to low $|\hat \Theta_e^{(1)}|$ values until a triangle emerges.
\STATE Let $\widehat E$ be the set of edges forming the triangle.
\STATE \textbf{Output:} $\psi^B_{\alpha, \widehat E}(\cD_2)$
\end{algorithmic}
\end{algorithm}

The following proposition characterizes the performance of $\psi^B_{\alpha, \widehat E}(\cD_2)$.

\begin{corollary}[Honest Triangle-Free Test] \label{triangle:test:prop} Let $\theta = (2K + C' C^4)\sqrt{\log d/\lfloor n/2\rfloor}$ for an absolute constant $C' > 0$ and  assume that (\ref{bootstrap:rates}) holds. Then for any fixed level $\alpha$, the test $\psi^B_{\alpha, \widehat E}(\cD_2)$ from the output of Algorithm \ref{sap:length:test} satisfies:
\begin{align*}
\limsup_{n \rightarrow \infty} \sup_{\bTheta^* \in \cS_0(s)} \PP(\mbox{reject } \Hb_0) \leq \alpha, ~~~~ \liminf_{n \rightarrow \infty} \inf_{\bTheta^* \in \cS_{1}(\theta, s)} \PP(\mbox{reject } \Hb_0) = 1.
\end{align*}
\end{corollary}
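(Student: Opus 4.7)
The plan is to mirror the proof of Corollary~\ref{conn:test:prop}. The two-stage structure of the algorithm yields a clean division of labor: data splitting ensures that $\widehat E$ (built from $\cD_1$) is independent of $\cD_2$, so that Proposition~\ref{multiple:edge:testing:validity} can be applied conditionally on $\cD_1$ with $\widehat E$ treated as a fixed edge set of size three. Throughout I would work on the event $\mathcal{A} := \{\|\hat \bTheta^{(1)} - \bTheta^*\|_{\max} \leq K\sqrt{\log d/\lfloor n/2 \rfloor}\}$, which holds with probability at least $1 - \lfloor n/2 \rfloor^{-1}$ uniformly over $\cM(s)$.

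For type II error, the key is a screening argument driven by a signal gap. On $\mathcal{A}$, every null edge $e$ satisfies $|\hat \Theta^{(1)}_e| \leq K\sqrt{\log d/\lfloor n/2 \rfloor}$, while the three edges $e^*_1,e^*_2,e^*_3$ of the signature triangle guaranteed by $\bTheta^* \in \cS_1(\theta,s)$ satisfy $|\hat \Theta^{(1)}_{e^*_i}| \geq (K + C'C^4)\sqrt{\log d/\lfloor n/2 \rfloor}$. As the greedy procedure processes edges in decreasing order of $|\hat \Theta^{(1)}|$, the three $e^*_i$ are all added before any null edge; by the moment the last $e^*_i$ enters, a triangle is already present, so the algorithm must have stopped at some step at which only edges with $|\hat \Theta^{(1)}| \geq (K + C'C^4)\sqrt{\log d/\lfloor n/2 \rfloor}$ had been added. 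Therefore every $e \in \widehat E$ satisfies $|\Theta^*_e| \geq C'C^4 \sqrt{\log d/\lfloor n/2 \rfloor}$, placing $\bTheta^*$ into $\cM_{\varnothing, \widehat E}(s, C'C^4)$ after conditioning on $\cD_1$. The power statement of Proposition~\ref{multiple:edge:testing:validity} then yields $\PP(\widehat E = \widehat{E^{\rm nc}} \mid \cD_1) \to 1$; integrating out $\cD_1$ and accounting for the vanishing probability of $\mathcal{A}^c$ gives the desired uniform power.

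For type I error, strong FWER control is the relevant tool. When $\bTheta^* \in \cS_0(s)$ the graph $G(\bTheta^*)$ is triangle-free, so every triangle in the complete graph --- and in particular any realization of $\widehat E$ --- must contain at least one edge $e$ with $\Theta^*_e = 0$. Hence the null sub-collection $E^{\rm n} \cap \widehat E$ is nonempty regardless of $\cD_1$. Applying the strong FWER part of Proposition~\ref{multiple:edge:testing:validity} to $\cD_2$ conditionally on $\cD_1$ yields
$$
\PP(\psi^B_{\alpha,\widehat E}(\cD_2) = 1 \mid \cD_1) = \PP(\widehat E = \widehat{E^{\rm nc}} \mid \cD_1) \leq \PP(E^{\rm n} \cap \widehat{E^{\rm nc}} \neq \varnothing \mid \cD_1) \leq \alpha + o(1),
$$
and the size bound follows after integrating over $\cD_1$. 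The main obstacle is the screening step: one must verify that the estimation gap of order $C'C^4\sqrt{\log d/\lfloor n/2 \rfloor}$ between signature-triangle edges and null edges is enough to force the greedy edge ordering to resolve a triangle before admitting any null edge. This is precisely the reason the signal strength is taken to be $\theta = (2K + C'C^4)\sqrt{\log d/\lfloor n/2 \rfloor}$, and the argument is structurally identical to the corresponding step in the connectivity proof, with the triangle playing the role of the spanning tree as the minimal witness structure.
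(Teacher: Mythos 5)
Your proposal is correct and follows essentially the same strategy as the paper: the paper factors the screening step out into Lemma~\ref{triangle:test:lemma} (proved exactly as Lemma~\ref{sap:test:lemma}, whose proof you reproduce inline), showing that $\min_{e\in\widehat E}|\Theta^*_e| = 0$ under the null and $\min_{e\in\widehat E}|\Theta^*_e| \ge (r-2)\theta/r = C'C^4\sqrt{\log d/\lfloor n/2\rfloor}$ under the alternative, and then invokes Proposition~\ref{multiple:edge:testing:validity} on $\cD_2$ conditionally on $\cD_1$, exactly as you do. The one detail worth spelling out (which you gloss over slightly) is that the greedy stopping time is bounded by the step at which the last signature-triangle edge $e^*$ is processed, so that \emph{every} edge admitted before stopping — and hence every edge of the returned triangle $\widehat E$, whether or not it coincides with the signature triangle — inherits the screened lower bound $|\hat\Theta^{(1)}_e| \ge |\hat\Theta^{(1)}_{e^*}|$; you state the conclusion correctly but could make this comparison explicit.
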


\subsection{Self-Avoiding Path Length Testing}\label{sap:testing:sec}

In this section we summarize an algorithm for testing whether the longest SAP in the graph is of length $\leq \fm$ vs the longest SAP in the graph has length $\geq \fm + 1$. The sub-decomposition is $\cG_0 = \{G \in \cG ~|~ \text{All SAPs in } G \mbox{ have length } \leq \fm\}$ and $\cG_1 = \{G \in \cG ~|~ \exists \mbox{ a SAP of length } \fm + 1\}$. The parameter sets of precision matrices specified by (\ref{generic:S0:def:new}) and (\ref{generic:S1:def:new}) reduce to:
\begin{align*}
\cS_0(s) & := \{\bTheta \in \cM(s) ~|~ G(\bTheta) \in \cG_0\}, \\
\cS_1(\theta,s) & := \{\bTheta \in \cM(s) ~|~ G(\bTheta) \in \cG_1, \exists \underset{{\tiny \mbox{SAP of length }\scriptsize \fm+1}}{E} \!\!\subset \!\!E(G(\bTheta)), \min_{e \in E} |\Theta_e| \geq \theta\}.
\end{align*}
The SAP length test is summarized below:

\begin{algorithm}[H]
\caption{SAP Length Test}\label{sap:length:test}
\begin{algorithmic}
\normalsize
\STATE \textbf{Input:} $\cD=\{\bX_{i}\}_{i = 1}^n$, level $0 < \alpha < 1$.
\STATE Split the data into  $\cD_1 = \{\bX_i\}_{i = 1}^{\lfloor n/2\rfloor }, \cD_2 = \{\bX_i\}_{i = \lfloor n/2\rfloor + 1}^{n}$
\STATE Using $\cD_1$ to obtain estimate $\hat \bTheta^{(1)}$ satisfying (\ref{maxnorm:prec:ass}); 
\STATE Add edges $e$ from high to low $|\hat \Theta_e^{(1)}|$ values until a SAP of length $\fm + 1$ emerges.
\STATE Let $\widehat E$ be the set of edges forming the SAP of length $\fm + 1$.
\STATE \textbf{Output:} $\psi^B_{\alpha, \widehat E}(\cD_2)$
\end{algorithmic}
\end{algorithm}

Below we characterize the performance of the output of Algorithm \ref{sap:length:test} --- $\psi^B_{\alpha, \widehat E}(\cD_2)$.

\begin{corollary}[Honest SAP Test] \label{sap:test:prop} Let $\theta = (2K + C' C^4)\sqrt{\log d/\lfloor n/2\rfloor}$ for an absolute constant $C' > 0$ and  assume that (\ref{bootstrap:rates}) holds. Then for any fixed level $\alpha$, the test $\psi^B_{\alpha, \widehat E}(\cD_2)$ from the output of Algorithm \ref{sap:length:test} satisfies:
\begin{align*}
\limsup_{n \rightarrow \infty} \sup_{\bTheta^* \in \cS_0(s)} \PP(\mbox{reject } \Hb_0) \leq \alpha, ~~~~ \liminf_{n \rightarrow \infty} \inf_{\bTheta^* \in \cS_{1}(\theta, s)} \PP(\mbox{reject } \Hb_0) = 1.
\end{align*}
\end{corollary}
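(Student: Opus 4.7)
The plan is to split the argument according to the two steps of the alternative witness test, invoking the max-norm concentration of $\hat\bTheta^{(1)}$ on $\cD_1$ for the witness identification step and Proposition \ref{multiple:edge:testing:validity} on $\cD_2$ for the certification step. Throughout I use the independence $\cD_1 \perp \cD_2$ and the high-probability event $\cA := \{\|\hat\bTheta^{(1)} - \bTheta^*\|_{\max} \leq K\sqrt{\log d/\lfloor n/2\rfloor}\}$, which holds with probability at least $1 - d^{-1}$ uniformly over $\cM(s)$ by (\ref{maxnorm:prec:ass}). Let $T := \{e \in \overline E : |\Theta^*_e| \geq \theta\}$. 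On $\cA$, since $\theta = (2K + C'C^4)\sqrt{\log d/\lfloor n/2\rfloor}$, one has $|\hat\Theta^{(1)}_e| \geq (K + C'C^4)\sqrt{\log d/\lfloor n/2\rfloor}$ for every $e \in T$ and $|\hat\Theta^{(1)}_e| \leq K\sqrt{\log d/\lfloor n/2\rfloor}$ for every null edge. Hence the greedy ordering of Algorithm \ref{sap:length:test} visits every edge of $T$ strictly before any null edge.

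For the Type II statement, under $\bTheta^* \in \cS_1(\theta,s)$ the set $T$ contains, by definition, an entire SAP of length $\fm+1$. On $\cA$ the greedy procedure therefore consumes edges of $T$ until a SAP of length $\fm+1$ first appears; this must occur while no null edge has been added, so $\widehat E \subseteq T$ and $\min_{e \in \widehat E}|\Theta^*_e| \geq \theta$. Since $\widehat E$ is $\cD_1$-measurable and $\cD_1 \perp \cD_2$, I condition on $\cD_1$ and apply the power portion of Proposition \ref{multiple:edge:testing:validity} with $E = \widehat E$, $E^{\rm n} = \varnothing$, $E^{\rm nc} = \widehat E$ and $\kappa = C'C^4$; the bootstrap conditions (\ref{bootstrap:rates}) transfer to the second half sample of size $\asymp n/2$. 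This yields $\PP(\widehat E = \widehat{E^{\rm nc}} \mid \cD_1) \to 1$ on $\cA$, and integrating out gives $\PP(\psi^B_{\alpha,\widehat E}(\cD_2) = 1) \to 1$ uniformly over $\cS_1(\theta,s)$.

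For the Type I statement, under $\bTheta^* \in \cS_0(s)$ the graph $G(\bTheta^*)$ contains no SAP of length $\fm+1$, so any SAP of length $\fm+1$ in $\overline G$ must use at least one null edge. Hence, on $\cA$, the set $\widehat E$ must contain some $e^{*}$ with $\Theta^*_{e^{*}} = 0$, for otherwise $\widehat E$ would be a SAP of length $\fm+1$ contained in $E(G(\bTheta^*))$. Conditioning on $\cD_1$, I invoke the strong FWER portion of Proposition \ref{multiple:edge:testing:validity} with $E = \widehat E$ and $E^{\rm n} \ni e^{*}$, using the inclusion
\[
\{\widehat E = \widehat{E^{\rm nc}}\} \ \subseteq\ \{e^{*} \in \widehat{E^{\rm nc}}\} \ \subseteq\ \{E^{\rm n} \cap \widehat{E^{\rm nc}} \neq \varnothing\},
\]
so the conditional rejection probability is at most $\alpha + o(1)$. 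Absorbing $\PP(\cA^c) \leq d^{-1}$ into $o(1)$ yields $\limsup_n \sup_{\bTheta^* \in \cS_0(s)} \PP(\text{reject}) \leq \alpha$.

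The main subtlety is the Type I analysis: the set $\widehat E$ is data-dependent, so one must be careful that the FWER guarantee of Proposition \ref{multiple:edge:testing:validity}, proved there for an arbitrary but fixed edge set $E$ with uniformity over $\bTheta^* \in \cM_{E^{\rm n},E^{\rm nc}}(s,\kappa)$, can be applied pathwise after conditioning on $\cD_1$. Independence of the two halves of the split makes this transfer automatic; no additional uniform bootstrap control over the random choice of $\widehat E$ is required, since the sample splitting decouples the witness-selection randomness from the bootstrap randomness on $\cD_2$.
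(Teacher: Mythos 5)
Your overall strategy mirrors the paper's (the paper proves a supporting Lemma~\ref{sap:test:lemma} and then invokes Proposition~\ref{multiple:edge:testing:validity}), and your Type~I argument and the sample-splitting/conditioning logic are correct. However, the Type~II half contains a genuine over-claim that needs repair.

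You assert that on $\cA$ the greedy stops with $\widehat E \subseteq T := \{e : |\Theta^*_e| \geq \theta\}$ and hence $\min_{e\in\widehat E}|\Theta^*_e| \geq \theta$. This does not follow. Membership in $\cS_1(\theta,s)$ only guarantees that \emph{some} SAP of length $\fm+1$ has all edges with signal $\geq \theta$; there can exist edges with $0 < |\Theta^*_e| < \theta$ (``intermediate'' edges). On $\cA$ such edges can have $|\hat\Theta^{(1)}_e|$ as large as $\theta + K\sqrt{\log d/\lfloor n/2\rfloor}$, so in the greedy ordering they can be interleaved with the edges of $T$ and can land in $\widehat E$. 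Your two-bucket picture (``$T$ first, then nulls'') omits this third bucket, so $\widehat E \subseteq T$ is not established, and the inequality $\min_{e\in\widehat E}|\Theta^*_e| \geq \theta$ is simply false in general.

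The argument is salvageable and the paper phrases it correctly: because the greedy stops no later than the point at which the genuine signal SAP (all edges having $|\hat\Theta^{(1)}_e| \geq (K + C'C^4)\sqrt{\log d/\lfloor n/2\rfloor}$ on $\cA$) has been fully added, and because edges are added in decreasing order, \emph{every} edge in $\widehat E$ satisfies $|\hat\Theta^{(1)}_e| \geq (K + C'C^4)\sqrt{\log d/\lfloor n/2\rfloor}$, whether or not it belongs to $T$. Hence, on $\cA$, $\min_{e \in \widehat E}|\Theta^*_e| \geq C' C^4 \sqrt{\log d/\lfloor n/2\rfloor}$ (not $\geq \theta$), which is exactly the $(r-2)\theta/r$ bound of Lemma~\ref{sap:test:lemma} and is still sufficient to invoke the power part of Proposition~\ref{multiple:edge:testing:validity} with $\kappa = C'C^4$. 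Once you replace your too-strong claim with this weaker but correct lower bound, the rest of your argument goes through and matches the paper's.
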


\subsection{Maximum Degree Test}\label{supp:max:deg:honest:test}

We propose a test matching lower bound in Example \ref{max:deg:1:edge}. Recall that the sub-decomposition is $\cG_0 = \{G~|~ d_{\max}(G) \leq s_0\}$ and $\cG_1 = \{G~|~ d_{\max}(G) \geq s_1\}$. The parameter sets (\ref{generic:S0:def:new}) and (\ref{generic:S1:def:new}) reduce to $\cS_0(s) = \{\bTheta \in \cM(s) ~|~ G(\bTheta) \in \cG_0\}$ and $\cS_1(\theta, s) = \{\bTheta \in \cM(s) ~|~ G(\bTheta) \in \cG_1, \exists \mbox{ vertex } k \in \overline V, \mbox{ and set } S \subset [d] \text{ such that } k \not\in S, |S| = s,\min_{j \in S} |\Theta_{jk}| \geq \theta\}$. Recall the definitions of $\cD_1, \cD_2, \hat \bTheta^{(1)}$ and $\hat \bTheta^{(2)}$ from Section \ref{struct:test:section}.

\begin{algorithm}[H]
\normalsize
\caption{Maximum Degree Test}\label{star:test}
\begin{algorithmic}
\STATE \textbf{Input:} $\cD=\{\bX_{i}\}_{i = 1}^n$, level $0 < \alpha < 1$.
\STATE Split the data into  $\cD_1 = \{\bX_i\}_{i = 1}^{\lfloor n/2\rfloor }, \cD_2 = \{\bX_i\}_{i = \lfloor n/2\rfloor + 1}^{n}$
\STATE Add edges $e$ from high to low $|\hat \Theta_e^{(1)}|$ values until a vertex with $(s_0 + 1)$ neighbors occurs.\STATE Let $\widehat E$ be the set of $s$ edges connecting the vertex to its $(s_0 + 1)$ neighbours.
\vspace{-1em}
\STATE \textbf{Output:} $\psi^B_{\alpha, \widehat E}(\cD_2)$.
\end{algorithmic}
\end{algorithm}

Following the formulation of Algorithm \ref{star:test} we characterize the performance of $\psi^B_{\alpha, \widehat E}(\cD_2)$.

\begin{corollary}[Honest Max Degree Test] \label{star:test:prop} Let $\theta = (2K + C' C^4)\sqrt{\log d/\lfloor n/2\rfloor}$ for an absolute constant $C' > 0$ and  assume that (\ref{bootstrap:rates}) holds. Then for any fixed level $\alpha$, the test $\psi^B_{\alpha, \widehat E}(\cD_2)$ from the output of Algorithm \ref{star:test} satisfies:
\begin{align*}
\limsup_{n \rightarrow \infty} \sup_{\bTheta^* \in \cS_0(s_0)} \PP(\mbox{reject } \Hb_0) \leq \alpha, ~~~~ \liminf_{n \rightarrow \infty}\inf_{\bTheta^* \in \cS_{1}(\theta, s_1)} \PP(\mbox{reject } \Hb_0) = 1.
\end{align*}
\end{corollary}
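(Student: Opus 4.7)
The plan is to reuse the sample-splitting template that underlies Corollaries \ref{conn:test:prop}--\ref{sap:test:prop}: because $\widehat E$ is measurable with respect to $\cD_1$ alone, it is independent of $\cD_2$, so $\psi^B_{\alpha,\widehat E}(\cD_2)$ may be analyzed by conditioning on $\widehat E$ and invoking Proposition \ref{multiple:edge:testing:validity} with $E = \widehat E$.

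For the type I statement, suppose $\bTheta^* \in \cS_0(s_0)$, so that $d_{\max}(G(\bTheta^*)) \leq s_0$. The vertex $\hat k$ returned by Algorithm \ref{star:test} has at most $s_0$ true neighbors, while $|\widehat E| = s_0 + 1$, so pigeonhole forces $E^{\rm n} := \widehat E \cap \{e : \Theta^*_e = 0\}$ to be non-empty almost surely. Since the rejection event $\{\psi^B_{\alpha,\widehat E}(\cD_2) = 1\} = \{\widehat{E^{\rm nc}} = \widehat E\}$ is contained in $\{E^{\rm n} \cap \widehat{E^{\rm r}} \neq \varnothing\}$, the strong FWER guarantee of Proposition \ref{multiple:edge:testing:validity}, applied conditionally on $\widehat E$ and integrated over $\cD_1$, produces the asymptotic $\alpha$-level bound.

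For the type II statement, fix $\bTheta^* \in \cS_1(\theta, s_1)$ and let $k^*$ and $S \subset [d] \setminus \{k^*\}$ with $|S| = s_1$ be the witness star, so that $|\Theta^*_{(j,k^*)}| \geq \theta = (2K + C' C^4)\sqrt{\log d / \lfloor n/2 \rfloor}$ for each $j \in S$. On the event $\mathcal A := \{\|\hat \bTheta^{(1)} - \bTheta^*\|_{\max} \leq K\sqrt{\log d / \lfloor n/2 \rfloor}\}$, guaranteed by (\ref{maxnorm:prec:ass}) with probability at least $1 - (\lfloor n/2 \rfloor)^{-1}$, every null edge $e$ has $|\hat \Theta^{(1)}_e| \leq K\sqrt{\log d / \lfloor n/2 \rfloor}$, while each $(j,k^*)$ with $j \in S$ satisfies $|\hat \Theta^{(1)}_{(j,k^*)}| \geq (K + C' C^4)\sqrt{\log d / \lfloor n/2 \rfloor}$. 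Let $T$ be the greedy stopping time and $e_T$ the last edge added; if $|\hat \Theta^{(1)}_{e_T}| < \tau := \min_{j \in S} |\hat \Theta^{(1)}_{(j,k^*)}|$, then all $s_1$ strong edges at $k^*$ would have preceded $e_T$ in the greedy order, so $k^*$ would already have degree $\geq s_1 \geq s_0 + 1$ before step $T$, contradicting the definition of $T$. Hence $|\hat \Theta^{(1)}_{e_T}| \geq \tau$; since every $e \in \widehat E$ was added at a step no later than $T$, this gives $|\hat \Theta^{(1)}_e| \geq \tau$, and therefore $|\Theta^*_e| \geq |\hat \Theta^{(1)}_e| - K\sqrt{\log d / \lfloor n/2 \rfloor} \geq C' C^4 \sqrt{\log d / \lfloor n/2 \rfloor}$ on $\mathcal A$. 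Conditionally on $\widehat E$ and on $\mathcal A$, this places $\bTheta^*$ inside the parameter set $\cM_{\varnothing,\widehat E}(s, C' C^4)$ of definition (\ref{def::kappa}) (taking $E^{\rm n} = \varnothing$, $E^{\rm nc} = \widehat E$ in Proposition \ref{multiple:edge:testing:validity}), so the consistency part of that proposition yields $\PP(\widehat{E^{\rm nc}} = \widehat E) \to 1$, i.e., $\psi = 1$ with probability tending to one.

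The main technical obstacle is the type II step: the parameter class $\cS_1(\theta, s_1)$ constrains the signal only on a single $s_1$-star at $k^*$, so non-null edges of arbitrary strength may coexist elsewhere in $G(\bTheta^*)$, and a priori the greedy stopping could occur at some other vertex whose $s_0+1$ selected edges carry signal too weak for the bootstrap certification. The short combinatorial contradiction above, which turns the greedy stopping rule into the uniform bound $|\hat \Theta^{(1)}_{e_T}| \geq \tau$ and hence into a uniform signal bound over $\widehat E$, is the ingredient that this corollary requires beyond the arguments used in Corollaries \ref{conn:test:prop}--\ref{sap:test:prop}.
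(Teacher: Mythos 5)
Your proposal is correct and follows essentially the same two-stage, sample-splitting template the paper uses: Lemma \ref{star:test:lemma} for the combinatorial reduction, then Proposition \ref{multiple:edge:testing:validity} applied to $E = \widehat E$ conditionally on $\cD_1$. Your pigeonhole step for type I and the greedy-stopping contradiction for type II are precisely what Lemma \ref{star:test:lemma} asserts (the paper writes both in one line, and you flesh out why the greedy order forces $|\hat\Theta^{(1)}_e| \geq \tau$ for all $e \in \widehat E$, which is indeed the substance hiding behind the paper's terse invocation of (\ref{true:signals})). The bookkeeping that $(r-2)\theta/r = C'C^4\sqrt{\log d/\lfloor n/2\rfloor}$ when $r = 2 + C'C^4/K$ and the conversion into membership of $\cM_{\varnothing,\widehat E}(s, C'C^4)$ are also as in the paper.

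One small calibration of your closing remark: the greedy-stopping contradiction is not really an ingredient unique to this corollary. The same device --- there exists a witness substructure all of whose edges have estimated weight $\geq (r-1)\theta/r$, hence the greedy addition must terminate before touching any edge of smaller weight --- is exactly what drives Lemmas \ref{ehat:consistency:cycle}, \ref{triangle:test:lemma}, and \ref{sap:test:lemma} for the cycle, triangle, and SAP tests (Corollaries \ref{cycle:nosignal:str}, \ref{triangle:test:prop}, \ref{sap:test:prop}). What is genuinely different for the maximum-degree test is only the pigeonhole step on the null side (a vertex of estimated degree $s_0+1$ must carry a null edge, as opposed to the cycle/SAP/triangle settings where the witness structure's very existence in $G(\bTheta^*)$ is ruled out under $\Hb_0$), so the emphasis in your last paragraph is slightly misplaced; everything else is fine.
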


\section{Extensions to More General Graphical Models}\label{extensions:GM:OTHERS}

The alternative witness test can be applied far beyond the class of Gaussian graphical models. Our first extension is the transelliptical graphical models proposed by \cite{Liu2012Transelliptical}. We start with some definitions.

\begin{definition}[Elliptical distribution \cite{fang1990symmetric}] Let $\bmu^* \in \RR^d$ and $\bSigma^* \in \RR^{d\times d}$. A $d$-dimensional random vector $\bX$ has an elliptical distribution, denoted with $\bX \sim EC_d(\bmu^*,\bSigma^*, \xi)$, if $\bX \stackrel{D}{=}\bmu^* + \xi \Ab \bU$, where $\bU \in \RR^q$  is a random vector uniformly distributed on the unit sphere, $\xi \geq 0$ is a scalar random variable independent of $\bU$, and $\Ab \in \RR^{d \times q}$ is a deterministic matrix such that $\Ab \Ab^T = \bSigma^*$.
\end{definition}

\begin{definition}[Transelliptical Distribution \cite{Liu2012Transelliptical}] 
A continuous random vector $\bX= (X_1,\ldots, X_d)^T$ has a transelliptical distribution, if there exist monotone univariate functions $f_1,\ldots, f_d$ and a non-negative random variable $\xi$, with $\PP(\xi = 0) = 0$, such that:
	$$
	(f_1(X_1),\ldots, f_d(X_d))^T \sim EC_d(0,\bSigma^*, \xi),
	$$
	where $\bSigma^*$ is symmetric with $\diag(\bSigma^*) = 1$ and $\bSigma^* > 0$. 
\end{definition}

A graphical model structure over the family of transelliptical distributions 
is defined through the notion of the ``latent generalized concentration matrix'' --- $\bTheta^* =  (\bSigma^*)^{-1}$. An edge is present between the two variables $X_j,X_k$ if and only if $\Theta^*_{jk} \neq 0$. To construct an estimate of $\bTheta^*$, one first estimates $\bSigma^*$ by a ranked based estimator $\hat \bSigma$. Next one obtains a consistent estimator of $\hat \bTheta$ satisfying (\ref{maxnorm:prec:ass}) and (\ref{other:norms:prec:ass}) by applying CLIME on $\hat \bSigma$ (see \cite{Liu2012Transelliptical} for details). A decorrelation method can then be applied to substitute the estimates $\hat \Theta_{jk}$ with $\tilde \Theta_{jk}$, where $\tilde \Theta_{jk}$ are guaranteed to have asymptotic normal distributions (see \cite{neykov2015aunified} e.g). The alternative witness test immediately applies to this setup. 

Our second extension is the semiparametric exponential family graphical model proposed by \cite{yang2014semiparametric}. Below by indexing a vector $\vb = (v_1, \ldots, v_d)^T$ with $\vb_{\setminus j}$ we mean omitting the $j$\textsuperscript{th} element of that vector.

\begin{definition}[Semiparametric Exponential Family Distribution]A $d$-dimensional random vector $\bX = (X_1, \ldots , X_d) \in \RR^d$ follows a semiparametric exponential family distribution if for any $j \in [d]$, the conditional density of $X_j | \bX_{\setminus j}$ satisfies
\begin{align}
p(x_j |\bx_{\setminus j}) = \exp\Big[x_j (\bbeta_j^{*T} \bx_{\setminus j}) + f_j (x_j) - b_j (\bbeta_j, f_j)\Big],\label{codnt:densities}
\end{align}
where $f_j (\cdot)$ is a base measure and $b_j (\cdot, \cdot)$ is the log-partition function.\end{definition}
The above model is \textit{semiparametric} since the base measure functions $f_j$ are unknown, and it forms a rich family of models including Gaussian, Poisson and Ising models. It is easy to see that $\beta_{jk} = 0$  if and only if $X_j$ and $X_k$ are conditionally independent. Thus the semiparametric exponential family naturally defines a graphical model.   A paiwrise rank based procedure has been proposed by \cite{yang2014semiparametric}  to obtain an estimator $\hat \bbeta_j$ of $\bbeta_j$ for all $j \in [d]$. \cite{yang2014semiparametric} also provide finite sample bounds on $\|\hat \bbeta_j - \bbeta_j\|_2$, $\|\hat \bbeta_j - \bbeta_j\|_1$ with the optimal rates of convergence. They further propose a decorrelated estimator $\tilde \beta_{jk}$ satisfying $\sqrt{n}(\tilde \beta_{jk} - \beta_{jk}) \rightsquigarrow N(0,\sigma^2_{jk})$ for some limiting  variance $\sigma^2_{jk}$.  Therefore, we can apply the alternative witness test to obtain valid combinatorial structure tests.

\section{Multi-Edge Divider Examples}\label{supp:bounded:edges}

\subsection{Bounded Edge Sets}\label{sub:supp:bounded:edges:examples}

\begin{proof}[Proof of Example \ref{max:deg:1:edge}]
In order to show the above result, we start by building a graph $G_0$ by constructing $\lfloor\frac{d}{s_1 + 1}\rfloor$ non-intersecting $s_0$-star graphs first (see Fig \ref{sparse:star:prior:one:edge}). Define the star graph centers $C_j = (s_1 + 1)j + 1$ for $j = 0,\ldots, \lfloor\frac{d}{s_1 + 1}\rfloor - 1$. Next, define $G_0$ by
$$
G_0 := \Bigr(\overline V, \bigcup_{j = 0}^{\lfloor\frac{d}{s_1 + 1}\rfloor - 1} \bigcup_{k = 1}^{s_0}\{(C_j, C_j + k)\}\Bigr).
$$
We define the devider $\bC$ as follows: $$
\bC := \Big\{\bigcup_{k = s_0 + 1}^{s_1}\big\{ (C_j, C_j + k)\big\} ~\Big | ~ j = 0, \ldots,  \lfloor\frac{d}{s_1 + 1} \rfloor- 1\Big\},
$$
where we simply connect each of the vertices $C_j$ to the remaining $s_1 - s_0$ vertices in the block.

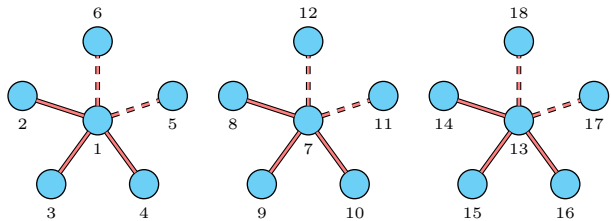
\begin{figure}[t]
\centering
\begin{tikzpicture}[scale=.7]
\tikzset{LabelStyle/.style = {above, fill = white, text = black, fill opacity=0, text opacity = 1}}
\SetVertexNormal[Shape      = circle,
                  FillColor  = cyan!50,
                  MinSize = 11pt,
                  InnerSep=0pt,
                  LineWidth = .5pt]
   \SetVertexNoLabel
      \tikzset{EdgeStyle/.style= {thin,
                                double          = red!50,
                                double distance = 1pt}}
                                \begin{scope}[rotate=90]
 \grEmptyStar[prefix=a,RA=1.5]{6}%
 \end{scope}
 \Edge(a5)(a1)
  \Edge(a5)(a2)
    \Edge(a5)(a3)
            \node[below] at (a5.-90) {\tiny $1$};
        \node[below] at (a1.-90) {\tiny $2$};
    \node[below] at (a2.-90) {\tiny $3$};
        \node[below] at (a3.-90) {\tiny $4$};
       \node[below] at (a4.-90) {\tiny $5$};
        \node[above] at (a0.+90) {\tiny $6$};
   \begin{scope}[shift={(4cm, 0cm)},rotate=90]
    \grEmptyStar[prefix=b,RA=1.5]{6}%
 \Edge(b5)(b1)
  \Edge(b5)(b2)
    \Edge(b5)(b3)
   \end{scope}
               \node[below] at (b5.-90) {\tiny $7$};
        \node[below] at (b1.-90) {\tiny $8$};
    \node[below] at (b2.-90) {\tiny $9$};
        \node[below] at (b3.-90) {\tiny $10$};
                \node[below] at (b4.-90) {\tiny $11$};
                        \node[above] at (b0.+90) {\tiny $12$};
      \begin{scope}[shift={(8cm, 0cm)},rotate=90]
    \grEmptyStar[prefix=c,RA=1.5]{6}%
 \Edge(c5)(c1)
  \Edge(c5)(c2)
    \Edge(c5)(c3)
   \end{scope}
               \node[below] at (c5.-90) {\tiny $13$};
        \node[below] at (c1.-90) {\tiny $14$};
    \node[below] at (c2.-90) {\tiny $15$};
        \node[below] at (c3.-90) {\tiny $16$};
                \node[below] at (c4.-90) {\tiny $17$};
                        \node[above] at (c0.+90) {\tiny $18$};
        \tikzset{EdgeStyle/.style= {thin,dashed,
                                double          = red!50,
                                double distance = 1pt}}
   \Edge(a5)(a0)
   \Edge(b5)(b0)
      \Edge(c5)(c0)
          \Edge(a5)(a4)
              \Edge(b5)(b4)
              \Edge(c5)(c4)
\end{tikzpicture}
\vspace{-1em}
 \caption{Test for the maximum degree $\cG_0 := \{G~|~ d_{\max}(G) \leq s_0\}$ vs $\cG_1 = \{G~|~ d_{\max}(G) \geq s_1\}$ with $s_0 = 3$, $s_1 = 5$ and $d = 18$.  The solid edges represent $G_0 \in \cG_0$ with maximum degree $s_0 = 3$. We construct the devider $\bC = \{\{(1,5), (1,6)\}, \{(7,11), (7, 12)\}, \{(13,17), (13, 18)\}\}$.}\label{sparse:star:prior:one:edge}
 \vspace{-12pt}
\end{figure}
Since the predistance between any two different edge sets $S, S'\in \bC,$ is $d_{G_0}(S,S') = \infty$, the set $\bC$ itself is a $(\log |\bC|)$-packing set. The latter implies that $M(\bC, d_{G_0}, \log |\bC|) = \log |\bC| \asymp \log (d/(s_1 + 1)) \asymp \log d$. In addition, one can easily check that $\|\Ab_0\|_2 = \sqrt{s_0}$ and $\|\Ab_0\|_1 = s_0$. By Theorem \ref{suff:cond:lower:bound:many} we have that under the required scaling $\liminf_{n \rightarrow \infty} \gamma(\cS_0(\theta, s), \cS_1(\theta, s)) = 1$, when $\theta < \kappa \sqrt{\log d/n}$ for a sufficiently small $\kappa$. 
\end{proof}
\subsection{Unbounded Edge Sets}

\begin{proof}[Proof of Example \ref{unfixed:sparse:star}]
Before we show how this example follows from Theorem \ref{comb:NAS} we would like to highlight the difference between Examples \ref{max:deg:1:edge} and \ref{unfixed:sparse:star}. First, note that Example \ref{unfixed:sparse:star} is more flexible compared to Example \ref{max:deg:1:edge} since the number of edges is allowed to scale with $n$. However  Example \ref{unfixed:sparse:star} is also more restrictive in that we require $s = O(d^{\gamma})$ for some $\gamma < 1/2$, which is not required by Example \ref{max:deg:1:edge}. 

We now first explain the construction of $(G_0, \bC)$ and then we formalize it. We start by splitting the vertices into two parts $\{1, \ldots,  \lfloor \sqrt{d} \rfloor\}$ and $\{ \lfloor \sqrt{d} \rfloor+1, \ldots, d\}$. The graph $G_0$ is constructed based only on the first part of vertices, and consists of non-intersecting $s_0$-star graphs. To build the divider $\bC$, we select any $s_1-s_0$ vertices from the second set vertices $\{ \lfloor \sqrt{d} \rfloor+1, \ldots, d\}$ and connect them to any center of the $s_0$-star graphs in $G_0$ (e.g., vertex 1). The set $\bC$ contains all such edge sets (see Fig \ref{sparse:star:prior:multi:edge} in the main text). Formally, to construct a graph $G_0$ we split the vertex set $\{1,\ldots, \lfloor \sqrt{d} \rfloor\}$ into $\lfloor\frac{\sqrt{d}}{s_0 + 1}\rfloor$ non-intersecting $s_0$-star graphs. We take centers $C_j = (s_0 + 1)j + 1$ for $j = 0, \ldots, \lfloor \frac{\sqrt{d}}{s_0 + 1}\rfloor - 1$ and connect them to the remaining vertices as follows:
$$
G_0 := \Bigr(\overline V,\bigcup_{j = 0}^{\lfloor\frac{d}{s_0 + 1}\rfloor - 1} \bigcup_{k = 1}^{s_0}\{(C_j, C_j + k)\}\Bigr).
$$

Since $s_0 < s_1 \leq s = O(d^{\gamma})$, this creates at least $\lfloor \sqrt{d} /d^{\gamma}\rfloor \asymp d^{1/2 - \gamma}$ graphs. Denote the set of centers of the $s_0$-star graphs with 
$$\cJ := \bigcup_{j = 0}^{\lfloor \frac{\sqrt{d}}{s_0 + 1}\rfloor - 1} \{C_j\}.$$
To construct the divider $\bC$ from the remaining  $d - \lfloor \sqrt{d} \rfloor$ isolated vertices we choose a set $I$ of $s_1 - s_0$ vertices and connect them with any of the vertex $C$ from the center set $\cJ$. Formally we let $\bC := \{S_{C,I}\}_{C \in \cJ, I \in \cI},$ where 
$$S_{C,I} = \{(C,i)\}_{i \in I}\mbox{ and } \cI = \Big\{I \in \{M \in 2^{[d]} ~|~ |M| =  s_1 - s_0\} ~\big|~  \min_{i \in I} i >  \lfloor \sqrt{d} \rfloor \Big\}.$$ 

Consider the collection of vertex buffers $ \cV_{S,S'} =  V(S) \cap V(S')$. A visualization of the key quantities $G_0, \bC, \cV_{S,S'}$ for two specific $S,S' \in \bC$ is provided on Fig \ref{sparse:star:prior:multi:edge} in the main text. Now we will argue that the set $\{\mathds{1}(v \in \cV_{S,S'})\}_{v \in V(S)}$ is negatively associated, as required by Assumption \ref{asmp:na}. Note that uniformly selecting a set $S' \in \bC$ is equivalent to uniformly selecting a center $C \in \cJ$ and a vertex set $I \in \cI$ and construct $S'$ by connecting $C$ to every vertex in $I$. By construction the selection of a center $C$ is independent of the selection of the vertex set $I$. By a result in Section 3.1 (c) of \cite{joag1983negative} the random variables $\{\mathds{1}(v \in \cV_{S,S'})\}_{v \in V(S)}$ are negatively associated. Next we calculate the remaining quantities from Theorem \ref{comb:NAS}.

By the triangle inequality $\|\Ab_{S,S'}\|_2 \leq \sqrt{s_1} + \sqrt{s_1 - s_0}\leq 2 \sqrt{s_1}$ and $\|\Ab_{S,S'}\|_1 \leq 2 s_1 $ and therefore  $\Lambda \leq 2 \sqrt{s_1}$, $\Gamma \leq 2 s_1$, and $\cB \leq \Lambda^4 \leq 16 s_1^2 \leq 16 s^2$. To calculate upper and lower bounds of $\cR$ we only need to consider $|S \cap S'| \neq 0$ and in this case $S$ and $S'$ must share the same center. In this situation by definition we have $|S \cap S'| = |\cV_{S,S'}| -1$ and therefore $1/2 \leq \cR \leq 1$. 
 
Finally we calculate $\max_{S \in \bC} \EE_{S'} |\cV_{S,S'}|$. Recall that $\cV_{S,S'} = V(S) \cap V(S')$ and that a set $S'$ can be constructed by first selecting a center $C$ uniformly from the set $\cJ$ and then selecting a set $I$ uniformly from $\cI$. Hence for all $S \in \bC$
$$
\EE_{S'} |\cV_{S,S'}| = \EE [\mathds{1}(C \in V(S)) | S] +  \EE [|I \cap V(S)| ~|~ S]  = \frac{1}{|\cJ|} + \frac{(s_1 - s_0)^2}{d - (s_0 + 1)|\cJ|}.
$$
The last equality implies that $M_{\text{B}}(\bC, G_0) \asymp \log d$. An application of Theorem \ref{comb:NAS}, and taking into account the required scaling completes the result.
\end{proof}

\begin{proof}[Proof of Example \ref{clique:detection:example}]
Consider $G_0 = (\overline V, \varnothing)$, and take the divider: 
$$\bC = \big\{\mbox{all } s \mbox{ cliques with vertices in }\overline V\big\}.$$ 
Take two sets $S,S' \in \bC$. Define the vertex buffer as $\cV_{S,S'} = V(S) \cap  V(S')$. Similarly to Example \ref{unfixed:sparse:star}, conditioning on $S$ the random variables $\{\mathds{1}(v \in \cV_{S,S'})\}_{ v \in V(S)}$ are negatively associated (see Section 3.1 (c) of \cite{joag1983negative}). By the definition of vertex buffer sets we have that $|\cV_{S,S'}| \leq s$, $|S \cap S'| = {\cV_{S,S'} \choose 2} < |\cV_{S,S'}|^2 \leq s |\cV_{S,S'}|$ and hence ${s-1}/{2} \leq \cR \leq s$. Furthermore $\Gamma \leq 2s$, and therefore $\Lambda \leq 2s$. Therefore $\cB \leq 4s^3$. For all $v \in V(S)$ we have $\EE_{S'}[ \mathds{1}(v \in \cV_{S,S'})] = {s}/{d}$ which implies $\EE_{S'}[|\cV_{S,S'}|] = {s^2}/{d}$ for all $S \in \bC$, and therefore $M_{\text{B}}(\bC, G_0) = \log(d/s^2)$. An application of Theorem \ref{comb:NAS} shows the statement.
\end{proof}

\begin{proof}[Proof of Example \ref{cycle:detection:example}]
Consider $G_0 = (\overline V, \varnothing)$, and take the divider: 
$$\bC = \big\{\mbox{all } \mbox{cycles joining $s$-vertices from $\overline V$ in an anti-clockwise increasing order}\big\}.$$ 
Two cycles from the set $\bC$ are visualized on Fig \ref{cycle:detection:subfig}. Take two sets $S,S' \in \bC$. Define the vertex buffer as $\cV_{S,S'} = V(S) \cap  V(S')$. Conditioning on $S$ the random variables $\{\mathds{1}(v \in \cV_{S,S'})\}_{ v \in V(S)}$ are negatively associated (see Section 3.1 (c) of \cite{joag1983negative}). By the definition of vertex buffer sets we have that $|\cV_{S,S'}| \leq s$, $|\cV_{S,S'}|  - 1 \leq |S \cap S'| \leq |\cV_{S,S'}| \leq s$ and hence $\cR = 1$. Furthermore $\Gamma \leq 2$, and therefore $\Lambda \leq 2$. Therefore $\cB \leq 16$. For all $v \in V(S)$ we have $\EE_{S'}[ \mathds{1}(v \in \cV_{S,S'})] = {s}/{d}$ which implies $\EE_{S'}[|\cV_{S,S'}|] = {s^2}/{d}$ for all $S \in \bC$, and therefore $M_{\text{B}}(\bC, G_0) = \log(d/s^2)$. An application of Theorem \ref{comb:NAS} shows the statement.
\end{proof}

\subsection{Upper Bounds}\label{upper:bounds:multi:NAS}

In this section we develop algorithms matching the bounds in Sections \ref{bounded:edges} and \ref{scaling:edges}.

\subsubsection{Maximum Degree Test}\label{max:deg:honest:test}

We propose a test matching lower bound in both Examples \ref{max:deg:1:edge} and \ref{unfixed:sparse:star}. Recall that the sub-decomposition is $\cG_0 = \{G~|~ d_{\max}(G) \leq s_0\}$ and $\cG_1 = \{G~|~ d_{\max}(G) \geq s_1\}$. The maximum degree test is presented in Algorithm \ref{star:test:main}. 

\begin{algorithm}[t]
\normalsize
\caption{Maximum Degree Test}\label{star:test:main}
\begin{algorithmic}
\STATE \textbf{Input:} $\cD=\{\bX_{i}\}_{i = 1}^n$, level $0 < \alpha < 1$.
\STATE Split the data into  $\cD_1 = \{\bX_i\}_{i = 1}^{\lfloor n/2\rfloor }, \cD_2 = \{\bX_i\}_{i = \lfloor n/2\rfloor + 1}^{n}$
\STATE Add edges $e$ from high to low $|\hat \Theta_e^{(1)}|$ values until a vertex of degree $s_0 + 1$ occurs.\STATE Let $\widehat E$ be the set of $s$ edges connecting the vertex to its $s_0 + 1$ neighbours.\vspace{-8pt}
\STATE \textbf{Output:} $\psi^B_{\alpha, \widehat E}(\cD_2)$.
\end{algorithmic}
\end{algorithm}

We provide a result on the performance of the test $\psi^B_{\alpha, \widehat E}(\cD_2)$ (see Algorithm \ref{star:test:main}), in Section \ref{supp:max:deg:honest:test} of the supplement, which matches the signal strength limitation of both Examples \ref{max:deg:1:edge} and \ref{unfixed:sparse:star}.

\subsubsection{Clique Detection Test}

Here we devise a test to match the lower bound of Example \ref{clique:detection:example}. Unlike previous tests, the clique detection test is not computationally feasible. Recall that $\hat \bSigma = n^{-1} \sum_{i = 1}^n \bX_i^{\otimes 2}$. Define $\hat \lambda_{\min} = \min_{|C| = s} \lambda_d(\hat \bSigma_{CC}),$
where $\hat \bSigma_{CC}$ is a sub-matrix of $\hat \bSigma$ with both column and row indices in $C$, and $\lambda_d(\Ab)$ is the smallest eigenvalue of the matrix $\Ab$. Consider the test $\psi = \mathds{1}(\hat \lambda_{\min} < \nu)$, where $\nu$ is
$$
\nu := \Bigr(1 - (\sqrt{2} + 1) \sqrt{({s \log (ed/s) + \log (2\alpha^{-1})})/{n}}\Bigr)^2,
$$ for some small constant $\alpha \geq 0$. We have

\begin{proposition}\label{min:eigenval:test:prop} Suppose that ${({s \log (ed/s) + \log (2\alpha^{-1})})/{n}} = o(1)$ for a constant $\alpha \in (0,1)$. Then for values of $\theta \in (0,1)$ satisfying $\theta > \kappa\sqrt{\frac{\log (e d/s)}{sn}},$
for an absolute constant $\kappa$, we have
$$
\limsup_{n \rightarrow \infty}\sup_{\bTheta \in \cS_0} \PP_{\bTheta}(\mbox{reject } \Hb_0) \leq \alpha ~~~~ \liminf_{n \rightarrow \infty} \inf_{\bTheta \in \cS_1(\theta, s)} \PP_{\bTheta}(\mbox{reject } \Hb_0) = 1.
$$
\end{proposition}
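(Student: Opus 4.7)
The proof naturally splits into two parts: controlling the Type I error under $\bTheta^* = \Ib_d$ and controlling the Type II error under $\bTheta^* = (1-\theta)\Ib_d + \theta \vb \vb^T$. The main tool throughout will be the Davidson--Szarek non-asymptotic bound on the extreme singular values of an iid Gaussian matrix $\bZ \in \RR^{n \times s}$, namely $\sigma_{\min}(\bZ) \ge \sqrt{n} - \sqrt{s} - t$ and $\sigma_{\max}(\bZ) \le \sqrt{n} + \sqrt{s} + t$, each with probability at least $1 - e^{-t^2/2}$.

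For the Type I bound, under the null $\bX_{i,C} \sim N(\mathbf{0}, \Ib_s)$ iid for every $C \subseteq [d]$ with $|C|=s$, so each $\hat\bSigma_{CC}$ is an iid Wishart matrix with identity scale. The plan is to apply the lower Davidson--Szarek bound to each such $C$ and union bound over the ${d \choose s} \le (ed/s)^s$ subsets. Choosing $t = \sqrt{2(s\log(ed/s) + \log(2\alpha^{-1}))}$ makes ${d \choose s} e^{-t^2/2} \le \alpha/2$, and since $\sqrt{s/n} \le \sqrt{(s\log(ed/s) + \log(2\alpha^{-1}))/n}$ while $t/\sqrt{n} = \sqrt{2}\sqrt{(s\log(ed/s) + \log(2\alpha^{-1}))/n}$, the inequality $\sqrt{\lambda_d(\hat\bSigma_{CC})} \ge 1 - \sqrt{s/n} - t/\sqrt{n} \ge \sqrt{\nu}$ holds simultaneously for all $C$ with probability $\ge 1 - \alpha/2$. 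Hence $\PP_{\Ib_d}(\hat\lambda_{\min} < \nu) \le \alpha/2 \le \alpha$.

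For Type II, let $S = \supp(\vb)$. A Sherman--Morrison computation gives $\bSigma^*_{SS} = \tfrac{1}{1-\theta}\Ib_s - \tfrac{\theta}{(1-\theta)(1+\theta(s-1))} \vb_S \vb_S^T$, whose smallest eigenvalue is $\mu := 1/(1+\theta(s-1))$ attained at $\ub^* := \vb_S / \sqrt{s}$. Whitening $\bX_{i,S} = (\bSigma^*_{SS})^{1/2} \bY_{i,S}$ with $\bY_{i,S} \sim N(\mathbf{0}, \Ib_s)$ yields
$$
\lambda_d(\hat\bSigma_{SS}) \;\le\; \ub^{*T} \hat\bSigma_{SS} \ub^* \;=\; \mu \cdot \ub^{*T} \hat\bSigma_{Y,S} \ub^* \;\le\; \mu\, \lambda_1(\hat\bSigma_{Y,S}),
$$
so applying the upper Davidson--Szarek bound with the same $t$ produces $\lambda_d(\hat\bSigma_{SS}) \le \mu (1+r)^2$ with probability $\ge 1 - o(1)$, where $r := (1+\sqrt{2})\sqrt{(s\log(ed/s)+\log(2\alpha^{-1}))/n}$. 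It remains to verify $\mu(1+r)^2 < (1-r)^2 = \nu$, which rearranges to $\theta(s-1) > 4r/(1-r)^2$; since $r = o(1)$ by the hypothesis, this is ensured by $\theta > \kappa \sqrt{\log(ed/s)/(sn)}$ provided the absolute constant $\kappa$ is taken sufficiently large.

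The main obstacle I anticipate is keeping the constants sharp enough that the combination of the Wishart tail bound and the union bound yields exactly the stated threshold $\nu$; in particular, the factor $(1+\sqrt{2})$ arises from absorbing the deterministic $\sqrt{s/n}$ term into the random deviation $t/\sqrt{n}$ via the inequality $\sqrt{s/n} \le \sqrt{(s\log(ed/s)+\log(2\alpha^{-1}))/n}$. A secondary subtlety is that the eigenvector argument in the Type II step crucially uses the fact that the smallest eigenvector of $\bSigma^*_{SS}$ coincides with $\vb_S/\sqrt{s}$, which is what converts the $1/\theta s$-scale small eigenvalue of $\bSigma^*_{SS}$ into a small value of $\hat\bSigma_{SS}$ along an explicitly known direction.
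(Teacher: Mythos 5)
Your proof is correct and follows essentially the same route as the paper's (Lemmas \ref{null:lemma:min:eigenval} and \ref{alt:lemma:min:eigenval}): both apply the Gordon/Davidson--Szarek singular-value deviation bound (Vershynin, Corollary~5.35) to the Wishart blocks, union bound over the $\binom{d}{s}$ supports under $\Hb_0$, use that $\vb_S/\sqrt{s}$ is the eigenvector of $\bSigma^*_{SS}$ with eigenvalue $1/(1+(s-1)\theta)$ under $\Hb_1$, and finish by comparing the two bounds. The only cosmetic differences are that you spell out the eigenvector computation where the paper invokes the abstract inequality $\lambda_{\min}(\Ab\Bb\Ab)\le\lambda_{\min}(\Ab)^2\lambda_{\max}(\Bb)$, and you keep the same $t$ (hence $\alpha$) under the alternative whereas the paper reparametrizes to $\alpha=d^{-1}$ there; both are immaterial.
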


The proof of Proposition \ref{min:eigenval:test:prop} is deferred to Appendix \ref{appendix:clique:detection} in the supplement. We  remark that the bound of Proposition \ref{min:eigenval:test:prop} matches the lower bound of Example \ref{clique:detection:example} up to a scalar in the regime $s = O(d^{\gamma})$ for $\gamma < 1/2$, and ${s \log d}/{n} = o(1)$.

\subsubsection{Cycle Detection Test}

To obtain a cycle detection test one can simply apply Algorithm \ref{cycle:presence}. Since the signal strength lower bound established by Example \ref{cycle:detection:example} is of the magnitude $\sqrt{\log d/n}$ when $s = O(d^{\gamma})$ for $\gamma < 1/2$, Algorithm \ref{cycle:presence} matches the bound up to constants.

\section{Single-Edge Divider Lower Bound Proofs} \label{single:edge:app:proofs}

\begin{proof}[Proof of Theorem \ref{suff:cond:lower:bound:deletion:NAS}] The proof of this result follows the same line of argument as the proof of Theorem \ref{suff:cond:lower:bound}. Here we just sketch the differences. The first important observation is that the risk $\gamma(\cS_0, \cS_1)$ is symmetric in the sense that $\gamma(\cS_0, \cS_1) = \gamma(\cS_1, \cS_0)$. This implies that if controlling the eigenvalues of precision matrices made in the proof of Theorem \ref{suff:cond:lower:bound} hold for edge deletion divider, the proof will continue to hold. If we denote by $\Ab_e$ the adjacency matrix of $(\overline V, \{e\})$ for any $e \in \cC$, then the adjacency matrices of the graphs under the null $G_{\setminus e}$ are given by $\Ab_1 - \Ab_e$. It is now a simple exercise to check that:
\begin{align*}
& \Tr(\Ab_1^k) - \Tr((\Ab_1 - \Ab_{e})^k)  \leq 2 \|\Ab_1\|_2^k, \\
& \Tr((\Ab_1 - \Ab_{e'})^k) - \Tr((\Ab_1 - \Ab_{e} - \Ab_{e'})^k)  \leq 0,
\end{align*}
and the proof is completed as in Theorem \ref{suff:cond:lower:bound}.
\end{proof}

\section{Structure Testing Algorithms Proofs}\label{sec:alg-proof}

\subsection{Connectivity Testing Proofs}\label{sec:proof-conn}

Define formally definitions (\ref{generic:S0:def:new}) and (\ref{generic:S1:def:new}) in the case of connectivity testing:
\begin{align}
\cS_0(s) & = \{\bTheta \in \cM(s) ~|~ G(\bTheta) \in \cG_{0}\}, \label{conn:test:param0:span:forst}\\
\cS_1(\theta, s) & = \{\cM(s) ~|~ G(\bTheta) \in \cG_{1}, \exists ~ \underset{\tiny \mbox{tree}} {E}\subset E(G(\bTheta)), \min_{e \in E} |\Theta_e| \geq \theta \} \label{conn:test:paramA:span:forst},
\end{align}

\begin{lemma}\label{ehat:consistency} Assume the estimate $\hat \bTheta^{(1)}$ satisfies (\ref{maxnorm:prec:ass}) on the parameter space $\cM(s)$ and let $\theta = r K \sqrt{\log d/\lfloor n/2\rfloor}$, where $r > 2$. Then for all $e \in E(\hat T)$, we have $|\Theta^*_{e}| \geq \frac{r - 2}{r}\theta$ when $\bTheta^* \in \cS_1(\theta,s)$ (defined in (\ref{conn:test:paramA:span:forst})) with probability at least $1 - 1/d$. Moreover, when $\bTheta^* \in \cS_0(s)$ (see definition in (\ref{conn:test:param0:span:forst})) for $\widehat E = \argmin_{e \in E(\hat T)} |\Theta^{(1)}_{e}|$ we have $\Theta^*_{\widehat E} = 0$ with probability at least $1 - 1/d$.
\end{lemma}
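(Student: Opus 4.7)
The plan is to reduce everything to the single high-probability event
$$
\cA = \Bigl\{\|\hat \bTheta^{(1)} - \bTheta^*\|_{\max} \leq K\sqrt{\log d/\lfloor n/2\rfloor}\Bigr\},
$$
which holds with probability at least $1 - 1/d$ directly from assumption (\ref{maxnorm:prec:ass}) applied to the half sample $\cD_1$. Both claims will then follow as deterministic consequences on $\cA$, using the well-known bottleneck property of maximum spanning trees: for weights $w_e = |\hat \Theta^{(1)}_e|$, the MST $\hat T$ simultaneously maximizes $\sum_{e \in T} w_e$ and $\min_{e \in T} w_e$ over all spanning trees $T$ of $\overline G$, and moreover satisfies the cut property.

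For the alternative statement, by the definition of $\cS_1(\theta,s)$ in (\ref{conn:test:paramA:span:forst}) the true graph $G(\bTheta^*)$ admits a spanning tree $T^* \subseteq E(G(\bTheta^*))$ with $\min_{e \in T^*}|\Theta^*_e| \geq \theta$. On $\cA$, the triangle inequality gives
$$
\min_{e \in T^*}|\hat \Theta^{(1)}_e| \geq \theta - K\sqrt{\log d/\lfloor n/2\rfloor} = \tfrac{r-1}{r}\theta.
$$
By the bottleneck property, $\min_{e \in \hat T}|\hat \Theta^{(1)}_e| \geq \min_{e \in T^*}|\hat \Theta^{(1)}_e|\geq \tfrac{r-1}{r}\theta$, and applying $\cA$ once more to translate back to the true parameters yields $|\Theta^*_e| \geq \tfrac{r-2}{r}\theta$ for every $e \in E(\hat T)$, as required.

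For the null statement, since $G(\bTheta^*)$ is disconnected and $\hat T$ spans the complete vertex set $\overline V$, $E(\hat T)$ must contain at least one bridging edge $b$ (connecting two distinct connected components of $G(\bTheta^*)$), which satisfies $\Theta^*_b = 0$. On $\cA$ this gives $|\hat \Theta^{(1)}_b| \leq K\sqrt{\log d/\lfloor n/2\rfloor}$, so in particular $|\hat \Theta^{(1)}_{\widehat E}| \leq K\sqrt{\log d/\lfloor n/2\rfloor}$. I would then invoke the MST cut property to identify $\widehat E$ itself as a bridging edge: removing $\widehat E$ from $\hat T$ splits $\overline V$ into $V_1', V_2'$, and $\widehat E$ carries the largest weight among all edges crossing $(V_1', V_2')$. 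Combining this with the greedy Kruskal construction on $\cA$ (adding edges in decreasing order of $|\hat \Theta^{(1)}_e|$), the plan is to show that the partition $(V_1', V_2')$ must respect the connected-component decomposition of $G(\bTheta^*)$, so that every cut edge is bridging and therefore $\Theta^*_{\widehat E} = 0$.

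The main obstacle will be precisely this last step for the null case, since a priori the cut induced by removing $\widehat E$ could split some component $V_j$ of $G(\bTheta^*)$, in which case the cut would also contain genuine within-component edges. Ruling this out requires using that any such within-component cross-cut edge $f$ satisfies $|\hat \Theta^{(1)}_f| \leq K\sqrt{\log d/\lfloor n/2\rfloor}$ as well on $\cA$, and then tracing through the Kruskal process carefully so that the bottleneck edge $\widehat E$ is forced into the bridging set; this is where the condition $r>2$ and the noise level $K\sqrt{\log d/\lfloor n/2\rfloor}$ enter in the sharpest way.
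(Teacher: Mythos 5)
Your argument for the alternative case is correct and is essentially the paper's argument: the definition of $\cS_1(\theta,s)$ supplies a spanning tree $T^*$ of $G(\bTheta^*)$ with $\min_{e\in T^*}|\Theta^*_e|\ge\theta$, the event $\cA$ transfers this to $\min_{e\in T^*}|\hat\Theta^{(1)}_e|\ge\tfrac{r-1}{r}\theta$, the bottleneck optimality of the maximum spanning tree propagates it to every $e\in E(\hat T)$, and one final application of $\cA$ gives $|\Theta^*_e|\ge\tfrac{r-2}{r}\theta$. For the null case you correctly reach $|\hat\Theta^{(1)}_{\widehat E}|\le\tfrac{\theta}{r}$ via a bridging edge, and you correctly locate the obstacle: removing $\widehat E$ from $\hat T$ may split a true component $\cC^*$, so you must rule out $\widehat E$ being a true intra-component edge. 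What the paper does at this point (and what your sketch is missing) is argue by contradiction: if $\Theta^*_{\widehat E}\neq0$, both endpoints of $\widehat E$ lie in some component $\cC^*$; assuming $\cC^*$ has a spanning tree $T_{\cC^*}$ with all $|\Theta^*_e|\ge\theta$, the cut $(V_1',V_2')$ produced by deleting $\widehat E$ separates $\widehat E$'s endpoints and hence splits $\cC^*$, so some $\tilde e\in T_{\cC^*}$ crosses the cut with $|\hat\Theta^{(1)}_{\tilde e}|\ge\tfrac{r-1}{r}\theta>\tfrac{1}{r}\theta\ge|\hat\Theta^{(1)}_{\widehat E}|$, violating the cut optimality of $\widehat E$. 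This is the cleaner replacement for your plan of ``tracing through the Kruskal process''.

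The real problem is that the step both you and the paper need --- the existence of a high-signal spanning tree $T_{\cC^*}$ inside every component --- does \emph{not} follow from $\bTheta^*\in\cS_0(s)$ as defined in (\ref{conn:test:param0:span:forst}) (equivalently (\ref{generic:S0:def:new})), since that set imposes no lower bound on the nonzero entries of $\bTheta^*$. Without such a bound the claim $\Theta^*_{\widehat E}=0$ is simply false: take $d=3$ with true components $\{1\}$, $\{2,3\}$ and $\Theta^*_{23}=\epsilon$ for some $0<\epsilon\le K\sqrt{\log d/\lfloor n/2\rfloor}$. On a positive-probability subset of $\cA$ one can have $\hat\Theta^{(1)}_{13}=0$, $\hat\Theta^{(1)}_{23}=\epsilon$, $\hat\Theta^{(1)}_{12}=K\sqrt{\log d/\lfloor n/2\rfloor}$, giving $\hat T=\{(1,2),(2,3)\}$, $\widehat E=(2,3)$, and $\Theta^*_{\widehat E}=\epsilon\neq0$. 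So your ``main obstacle'' is not just a technicality to be pushed through Kruskal; it requires adding the hypothesis that each component of $G(\bTheta^*)$ admits a spanning tree with all entries of magnitude $\ge\theta$, which is exactly the unstated assumption the paper's proof is using. (The downstream Corollary \ref{conn:test:prop} is unaffected, since it only invokes the weaker, hypothesis-free fact that $\hat T$ contains at least one edge $e$ with $\Theta^*_e=0$, automatic whenever $G(\bTheta^*)$ is disconnected and $\hat T$ spans $\overline V$.)
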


\begin{proof}[Proof of Lemma \ref{ehat:consistency}] By (\ref{maxnorm:prec:ass}) we obtain the following:
\begin{align}
|\hat \Theta_e^{(1)}| & \geq \frac{r-1}{r}\theta, \mbox{ when } |\Theta^*_e| \geq \theta, \label{true:signals} \\
|\hat \Theta_e^{(1)}| & \leq \frac{\theta}{r}, \mbox{ when } |\Theta^*_e| = 0, \label{null:signals}
\end{align}
with probability at least $1 - 1/d$ uniformly for all $e$. First consider the case when $\bTheta^* \in \cS_0(s)$. Since the graph is disconnected, we have that there exists at least one edge in $ e \in \hat T$, such that $\Theta^*_e = 0$ and hence by (\ref{null:signals}) we have $|\hat \Theta^{(1)}_e|  \leq \theta/r$. This implies that:  $|\hat \Theta^{(1)}_{\widehat E}| \leq \theta/r$. Now suppose that $\Theta^*_{\widehat E} \neq 0$. Denote the connected component of $G(\bTheta^*)$ containing $\widehat E$ with $\cC^*$. Since $\bTheta^* \in \cS_{0}(s)$ there exists a spanning tree $T_{\cC^*}$ of $\cC^*$ such that for all $e \in E(T_{\cC^*})$ we have $|\Theta^*_e| \geq \theta$. By (\ref{true:signals}) the latter implies that $|\hat \Theta_e^{(1)}| > |\hat \Theta^{(1)}_{\widehat E}|$ for all $e \in E(T_{\cC^*})$. On the other hand, note that the value $|\hat \Theta^{(1)}_{\widehat E}|$ is the largest among the values of all edges connecting the two connected components of $G(\hat \bTheta^{(1)})$ which $\widehat E$ connects. However, as we argued  there exist an edge $\tilde e \in T_{\cC^*}$ connecting these two components satisfying $|\hat \Theta^{(1)}_{\tilde e}| > |\hat \Theta^{(1)}_{\widehat E}|$. This is a contradiction with the fact that $\hat T$ is a MST.  Hence $\Theta^*_{\widehat E} = 0$.

Next let $\bTheta^* \in S_1(\theta,s)$. In this case by (\ref{true:signals}) the tree will cover only edges with $|\hat \Theta_e| \geq (r-1)\theta/r$ and hence by assumption (\ref{maxnorm:prec:ass}) and the fact that $\bTheta^* \in S_1(\theta,s)$ we have $|\Theta^*_{\widehat E}| \geq (r-2)\theta/r$. This completes the proof.
\end{proof}

\begin{proof}[Proof of Corollary \ref{conn:test:prop}] We first verify the size of the test. By Lemma \ref{ehat:consistency} we conclude that with asymptotic probability $1$ we have $\min_{e \in E(\hat T)}|\Theta^*_{e}| = 0$ when $\bTheta^* \in \cS_0(s)$. Conditioning on this event by Proposition \ref{multiple:edge:testing:validity} we conclude that:
$$\limsup_{n \rightarrow \infty} \sup_{\bTheta^* \in \cS_0(\theta)}\PP(\psi = 1) \leq \alpha.$$
For the second part, with asymptotic probability $1$ we have that $|\Theta^*_{\widehat E}| \geq \theta \frac{\epsilon}{2 + \epsilon} \geq \frac{\sqrt{2}\epsilon}{2 + \epsilon} K \sqrt{\log d/n}$. Using the second part of Proposition \ref{multiple:edge:testing:validity} we can guarantee:
$$
\liminf_{n \rightarrow \infty} \sup_{\bTheta^* \in \cS_1(\theta,s)}\PP(\psi = 0) = 0.
$$
This completes the proof.
\end{proof}

\begin{lemma}\label{ehat:consistency:conn:comp}  Assume the estimate $\hat \bTheta^{(1)}$ satisfies assumption (\ref{maxnorm:prec:ass}) on the parameter space $\cM(s)$ and $\theta \geq rK \sqrt{\log d/\lfloor n/2\rfloor}$ for some $r > 2$. If $\bTheta^* \in \cS_0(s)$ then $|\Theta^*_{\widehat E}| = 0$ where $\widehat E = \argmin_{e \in E(\hat F)} |\hat \Theta^{(1)}_{ e}|$ with probability at least $1 - d^{-1}$. Furthermore, when  $\bTheta^* \in \cS_{1}(\theta,s)$, for all $e \in  E(\hat F)$ we have $|\Theta^*_e| \geq \frac{r-2}{r}\theta$ with probability at least $1 - d^{-1}$. 
\end{lemma}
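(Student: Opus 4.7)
The plan is to closely mirror the structure of Lemma~\ref{ehat:consistency}, replacing the single spanning tree argument with one tailored to a spanning forest having $\fm$ connected components. First I would invoke assumption~(\ref{maxnorm:prec:ass}) with $\theta = rK\sqrt{\log d/\lfloor n/2\rfloor}$ to obtain, on an event of probability at least $1 - d^{-1}$, the two uniform bounds
\begin{align*}
|\hat \Theta^{(1)}_e| \geq \tfrac{r-1}{r}\theta \quad \text{whenever}\quad |\Theta^*_e| \geq \theta, \qquad |\hat \Theta^{(1)}_e| \leq \tfrac{1}{r}\theta \quad \text{whenever}\quad \Theta^*_e = 0.
\end{align*}
All of the subsequent reasoning will then be carried out on this event.

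For the null case $\bTheta^* \in \cS_0(s)$, I would exploit that the MSF $\hat F$ has exactly $\fm$ components while $G(\bTheta^*)$ has at least $\fm + 1$. Consequently $\hat F$ cannot be a subforest of $G(\bTheta^*)$, so there must exist some $e^\circ \in E(\hat F)$ with $\Theta^*_{e^\circ} = 0$. This immediately gives $|\hat \Theta^{(1)}_{\widehat E}| \leq |\hat \Theta^{(1)}_{e^\circ}| \leq \theta/r$ by definition of $\widehat E$. To upgrade this conclusion to $\Theta^*_{\widehat E} = 0$, I would mimic the cut-exchange argument used in Lemma~\ref{ehat:consistency}: assuming for contradiction that $\widehat E$ is a true edge, its endpoints would share some connected component $\cC^*$ of $G(\bTheta^*)$, and a spanning tree of high-signal true edges inside $\cC^*$ could be used to bridge the two pieces of $\hat F \setminus \{\widehat E\}$ with strictly larger weight, violating the maximality of $\hat F$.

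For the alternative case $\bTheta^* \in \cS_1(\theta,s)$, I would leverage the witness forest $F^* \in \operatorname{SF}(G(\bTheta^*))$ guaranteed by the definition of the alternative, which has at most $\fm$ components and minimum signal $\geq \theta$. By the first uniform bound, every edge of $F^*$ lies in the thresholded graph $G_{\mathrm{hi}} := (\overline V, \{e : |\hat \Theta^{(1)}_e| \geq \tfrac{r-1}{r}\theta\})$, which therefore has at most $\fm$ connected components. Running the Kruskal-style greedy algorithm in decreasing order of $|\hat \Theta^{(1)}_e|$, the forest produced after exhausting the weights $\geq \tfrac{r-1}{r}\theta$ is a spanning forest of $G_{\mathrm{hi}}$ and hence has at most $\fm$ components; since the algorithm stops the moment the component count hits $\fm$, it must terminate before any weight below $\tfrac{r-1}{r}\theta$ is ever considered. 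Consequently every $e \in E(\hat F)$ satisfies $|\hat \Theta^{(1)}_e| \geq \tfrac{r-1}{r}\theta$, and one more application of~(\ref{maxnorm:prec:ass}) yields $|\Theta^*_e| \geq \tfrac{r-2}{r}\theta$.

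The hard part will be the cut-exchange step in the null case: even though the existence of some $e \in E(\hat F)$ with $\Theta^*_e = 0$ is transparent from a component count, pinning the zero signal onto the specific minimum-weight edge $\widehat E$ requires a careful cycle/cut argument within the true component $\cC^*$, analogous to but more delicate than the argument in Lemma~\ref{ehat:consistency}, since one must track how the multi-component structure of $\hat F$ interacts with the component structure of $G(\bTheta^*)$.
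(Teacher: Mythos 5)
Your alternative-case argument — embedding the witness forest $F^*$ into the threshold graph $G_{\mathrm{hi}}$ and concluding via Kruskal that every edge admitted into $\hat F$ has estimated weight $\geq\frac{r-1}{r}\theta$ — is correct, and in fact spells out what the paper leaves implicit. For the null case, your component-count observation cleanly yields the existence of some $e^\circ\in E(\hat F)$ with $\Theta^*_{e^\circ}=0$, hence $\min_{e\in E(\hat F)}|\Theta^*_e|=0$, which is exactly what the proof of Corollary \ref{conn:comp:test:prop} consumes. The paper's proof of Lemma \ref{ehat:consistency:conn:comp} instead notes that the maximum spanning forest with $\fm$ components consists of the $d-\fm$ highest-weight edges of the MST $\hat T$ and then defers to Lemma \ref{ehat:consistency}; your direct Kruskal-on-the-forest argument is an equivalent route to the same conclusions.

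Where your sketch runs into trouble — and, to be fair, where the paper's own Lemma \ref{ehat:consistency}, to which Lemma \ref{ehat:consistency:conn:comp} defers, runs into the very same trouble — is the ``upgrade'' to the literal claim $\Theta^*_{\widehat E}=0$. Your exchange argument invokes ``a spanning tree of high-signal true edges inside $\cC^*$,'' mirroring the paper's line ``Since $\bTheta^*\in\cS_0(s)$ there exists a spanning tree $T_{\cC^*}$ of $\cC^*$ with $|\Theta^*_e|\geq\theta$.'' But $\cS_0(s)$ as defined in (\ref{conn:test:param0:span:forst}) carries no minimum-signal constraint, so a null $\bTheta^*$ may have true edges of magnitude far below $\theta$ inside $\cC^*$, and neither argument produces the $\theta$-strong spanning tree needed to force a contradiction. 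The literal claim can in fact fail: a true edge with $|\Theta^*_e|$ of order the estimation noise can be selected into $\hat F$ and end up as its minimum-weight edge, while a null edge with slightly higher estimated weight sits alongside it. You correctly flagged this exchange step as the delicate part; as written it is a genuine gap, but it is one you inherit from Lemma \ref{ehat:consistency} itself, and it is harmless downstream since only the weaker $\min_{e\in E(\hat F)}|\Theta^*_e|=0$ — which your argument establishes directly — is used in Corollary \ref{conn:comp:test:prop}.
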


\begin{proof}[Proof of Lemma \ref{ehat:consistency:conn:comp}] First observe that we have that $\hat F$ with $I$ connected components has an edge set $E(\hat F) = \{\widehat E_i\}_{i \in [d - I]}$, where $\{\widehat E_{i}\}_{i \in [d-1]}$ is an ordering of $E(\hat T)$ such that $|\hat \Theta^{(1)}_{\widehat E_i}| \geq |\hat \Theta^{(1)}_{\widehat E_{i + 1}}|$ for all $i \in [d-2]$. The remainder of the proof is similar to that of Lemma \ref{ehat:consistency} so we omit the details. 
\end{proof}

\begin{proof}[Proof of Corollary \ref{conn:comp:test:prop}] Setting $r = 2 + \frac{C' C^4}{K}$, we get that 
$$\theta = \max(r C'C^4/(r-2), r K)\sqrt{\log d/\lfloor n/2\rfloor}.$$ 
Hence $\theta \geq r K \sqrt{\log d/\lfloor n/2\rfloor}$, and a direct application of Lemma \ref{ehat:consistency:conn:comp} ensures that with high probability the edge set $E(\hat F)$ satisfies $\min_{e \in E(\hat F)}|\Theta^*_e| \geq (r - 2)\theta/r$ when $\bTheta^* \in \cS_{1}(\theta,s)$ and $\min_{e \in E(\hat F)}|\Theta^*_e|  = 0$ when $\bTheta^* \in \cS_{0}(s)$. The remaining part of the proposition follows directly by Proposition \ref{multiple:edge:testing:validity}.
\end{proof}

\subsection{Cycle Testing Proofs}

\begin{lemma}\label{ehat:consistency:cycle} Assume the estimate $\hat \bTheta^{(1)}$ satisfies (\ref{maxnorm:prec:ass}) on the parameter space $\cM(s)$ and $\theta = r K \sqrt{\log d/\lfloor n/2\rfloor}$ for some $r \geq 2$. Let $\widehat E$ be the cycle obtained by Algorithm \ref{cycle:presence}. We have $\min_{e \in \widehat E}|\Theta^*_{e}| \geq (r - 2)\theta/r$ when $\bTheta^* \in \cS_1(\theta,s)$ and $\min_{e \in \widehat E} |\Theta^*_e| = 0$ when $\bTheta^* \in \c1_0(s)$ with probability at least $1 - 1/d$.
\end{lemma}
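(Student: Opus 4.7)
The plan mirrors the strategies used in Lemmas \ref{ehat:consistency} and \ref{ehat:consistency:conn:comp}, adapted to the cycle-detection greedy procedure. First I would condition on the event that $\|\hat{\bTheta}^{(1)} - \bTheta^*\|_{\max} \leq K\sqrt{\log d/\lfloor n/2\rfloor}$, which by assumption (\ref{maxnorm:prec:ass}) holds with probability at least $1 - 1/d$. On this event, the two deterministic consequences analogous to \eqref{true:signals}--\eqref{null:signals} hold uniformly in $e$: whenever $|\Theta^*_e| \geq \theta$ we have $|\hat{\Theta}^{(1)}_e| \geq (r-1)\theta/r$, and whenever $\Theta^*_e = 0$ we have $|\hat{\Theta}^{(1)}_e| \leq \theta/r$. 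The rest of the proof is purely deterministic on this event.

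For the null case $\bTheta^* \in \cS_0(s)$, the argument is immediate: since $G(\bTheta^*)$ is cycle-free (a forest) and $\widehat E$ is a cycle in the full vertex set $\overline V$, $\widehat E$ cannot be entirely contained in $E(G(\bTheta^*))$. Therefore there exists at least one edge $e \in \widehat E$ with $\Theta^*_e = 0$, which gives $\min_{e \in \widehat E}|\Theta^*_e| = 0$.

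For the alternative case $\bTheta^* \in \cS_1(\theta, s)$, by definition there exists a cycle $C^* \subseteq E(G(\bTheta^*))$ with $\min_{e \in C^*}|\Theta^*_e| \geq \theta$. By the signal inequality, every edge in $C^*$ satisfies $|\hat{\Theta}^{(1)}_e| \geq (r-1)\theta/r$. Now consider the greedy procedure of Algorithm \ref{cycle:presence}, which sweeps edges in decreasing order of $|\hat{\Theta}^{(1)}_e|$ and halts the first time a cycle appears. By the time the sweep processes all edges with $|\hat{\Theta}^{(1)}_e| \geq (r-1)\theta/r$, the entire cycle $C^*$ has been added, so a cycle must have already been formed. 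This means the last edge $\hat e$ appended when the algorithm stops satisfies $|\hat{\Theta}^{(1)}_{\hat e}| \geq (r-1)\theta/r$, and hence every edge $e \in \widehat E$ also satisfies $|\hat{\Theta}^{(1)}_e| \geq (r-1)\theta/r$. Applying the max-norm bound once more in the reverse direction yields
\[
|\Theta^*_e| \;\geq\; |\hat{\Theta}^{(1)}_e| - K\sqrt{\log d/\lfloor n/2\rfloor} \;\geq\; \frac{r-1}{r}\theta - \frac{\theta}{r} \;=\; \frac{r-2}{r}\theta
\]
uniformly over $e \in \widehat E$, which is the desired bound.

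The only subtle point, and the step I would take care to write carefully, is the stopping-time argument in the third paragraph: one must justify that although the algorithm may halt strictly before exhausting $C^*$, the cycle $\widehat E$ it reports consists entirely of edges whose $|\hat{\Theta}^{(1)}_e|$ values weakly dominate the minimum value in $C^*$. This is immediate from the monotone ordering of the sweep together with the observation that adding all edges in $C^*$ guarantees a cycle; nothing deeper is required.
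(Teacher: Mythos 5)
Your proof is correct and follows essentially the same route as the paper: condition on the max-norm concentration event, handle the null case by observing that a forest cannot contain a cycle, and handle the alternative case by noting that the greedy sweep must halt at or before the point where the alternative-witness cycle $C^*$ is fully included, forcing every edge in $\widehat E$ to have estimated signal at least $(r-1)\theta/r$. The paper states this very tersely; you spell out the stopping-time/ordering argument and the reverse triangle inequality explicitly, but no new idea is introduced and none is missing.
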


\begin{proof}[Proof of Lemma \ref{ehat:consistency:cycle}] When $\bTheta^* \in \cS^1_1(\theta,s)$, there exists a cycle $E \subset E(G(\bTheta^*))$. Hence  by (\ref{true:signals}) when the algorithm terminates we will have covered only edges such that $|\hat \Theta_e| > (r-1) \theta/r$. The latter, combined with (\ref{true:signals})  implies that until a cycle is reached, Algorithm \ref{cycle:presence} will have included only edges of true magnitude $|\Theta^*_e| \geq (r-2)\theta/r$, which completes the proof of the first claim. 

On the other hand, when $\bTheta^* \in \cS_0(s)$, we have that there exists an edge $e \in \widehat E$ such that $\Theta^*_e = 0$, which completes the proof. \end{proof}

\begin{proof}[Proof of Corollary \ref{cycle:nosignal:str}] To handle the null hypothesis observe that by Lemma \ref{ehat:consistency:cycle} we have $\min_{e \in \widehat E} |\Theta^*_e| = 0$. Invoking Proposition \ref{multiple:edge:testing:validity} gives the desired result. To show that the test is asymptotically powerful we use Proposition \ref{multiple:edge:testing:validity} in conjunction with the other result of Lemma \ref{ehat:consistency:cycle}.
\end{proof}

\subsection{SAP Length Test Proofs}

\begin{lemma}\label{sap:test:lemma} Assume the estimate $\hat \bTheta^{(1)}$ satisfies (\ref{maxnorm:prec:ass}) on the parameter space $\cM(s)$ and $\theta = r K \sqrt{\log d/\lfloor n/2\rfloor}$ for some $r \geq 2$. If $\bTheta^* \in \cS_0(s)$, we have that $\min_{e \in \widehat E} |\Theta^*_e| = 0$. Conversely when $\bTheta^* \in \cS_1(\theta, s)$ we have $\min_{e \in \widehat E} |\Theta^*_e| \geq (r-2)\theta/r$ with probability at least $1 - 1/d$.
\end{lemma}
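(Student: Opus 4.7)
The plan is to mimic the arguments for Lemmas \ref{ehat:consistency} and \ref{ehat:consistency:cycle}, exploiting the greedy structure of Algorithm \ref{sap:length:test}. First I would extract from assumption (\ref{maxnorm:prec:ass}) the uniform one-step bound $\|\hat{\bTheta}^{(1)} - \bTheta^*\|_{\max} \leq K\sqrt{\log d/\lfloor n/2\rfloor} = \theta/r$ on an event of probability at least $1 - 1/d$. Restricting to this event yields the direct analogues of \eqref{true:signals}--\eqref{null:signals}: namely, $|\hat{\Theta}^{(1)}_e| \geq (r-1)\theta/r$ whenever $|\Theta^*_e| \geq \theta$, and $|\hat{\Theta}^{(1)}_e| \leq \theta/r$ whenever $\Theta^*_e = 0$, uniformly over edges $e$.

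For the null direction $\bTheta^* \in \cS_0(s)$, by definition no SAP of length $\fm+1$ lies inside $E(G(\bTheta^*))$, while $\widehat E$ is by construction a SAP of length $\fm+1$. Thus $\widehat E \not\subseteq E(G(\bTheta^*))$, so at least one edge $e \in \widehat E$ has $\Theta^*_e = 0$, giving $\min_{e \in \widehat E}|\Theta^*_e| = 0$. The only thing to check is that the algorithm actually terminates in this regime, which is automatic: the complete graph (obtained after exhausting the edge list) contains SAPs of every length up to $d-1$.

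For the alternative direction $\bTheta^* \in \cS_1(\theta, s)$, the definition of $\cS_1(\theta, s)$ supplies a witness SAP $E^\star \subseteq E(G(\bTheta^*))$ of length $\fm+1$ with $\min_{e \in E^\star}|\Theta^*_e| \geq \theta$. By the concentration above, every $e \in E^\star$ satisfies $|\hat{\Theta}^{(1)}_e| \geq (r-1)\theta/r$. Hence, by the time the greedy procedure has inserted all edges whose estimated magnitudes are $\geq (r-1)\theta/r$, a SAP of length $\fm+1$ is already present; therefore the threshold $|\hat{\Theta}^{(1)}_{e_{\mathrm{last}}}|$ at which the algorithm stops must itself be $\geq (r-1)\theta/r$. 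Consequently every $e \in \widehat E$ has $|\hat{\Theta}^{(1)}_e| \geq (r-1)\theta/r$, and a second application of the two-sided concentration gives $|\Theta^*_e| \geq (r-1)\theta/r - \theta/r = (r-2)\theta/r$.

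The only mild subtlety will be verifying in the alternative case that the greedy stopping rule cannot terminate ``too early'' on weak edges; this is handled precisely by the witness $E^\star$ sitting entirely on large-signal edges, which forces the stopping threshold to be at least $(r-1)\theta/r$. Everything else is essentially identical to the bookkeeping in the proofs of Lemmas \ref{ehat:consistency} and \ref{ehat:consistency:cycle}, so I expect no genuine obstacle.
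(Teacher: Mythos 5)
Your proof is correct and follows essentially the same approach as the paper's (much terser) proof of Lemma~\ref{sap:test:lemma}: the null direction uses the fact that $G(\bTheta^*)$ contains no SAP of length $\fm+1$, and the alternative direction uses the witness SAP $E^\star$ to force the greedy stopping threshold above $(r-1)\theta/r$, followed by the max-norm concentration. You spell out more explicitly the contrapositive argument for why the stopping threshold cannot drop below $(r-1)\theta/r$, which the paper leaves implicit in its citation of \eqref{true:signals}.
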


\begin{proof}[Proof of Lemma \ref{sap:test:lemma}] First, we consider the case when $\bTheta^* \in \cS_0(s)$. Since $\widehat E$ is a SAP with $I+1$ edges and $G(\bTheta^*)$ is free of such SAPs when $\bTheta^* \in \cS_0(s)$ we must have $\min_{e \in \widehat E} |\Theta^*_e| = 0$. Next, when $\bTheta^* \in \cS_1(\theta, s)$, by (\ref{true:signals})  $\min_{e \in \widehat E} |\hat \Theta_e| > (r-1) \theta/r$. Thus by \ref{maxnorm:prec:ass} $|\Theta^*_e| \geq (r-2)\theta/r$ with probability at least $1 - 1/d$.
\end{proof}

\begin{proof}[Proof of Corollary \ref{sap:test:prop}] To handle the null hypothesis observe that by Lemma \ref{sap:test:lemma} we are guaranteed to have $\min_{e \in \widehat E} \Theta^*_e = 0$. Invoking Proposition \ref{multiple:edge:testing:validity} gives the desired result. To show that the test is asymptotically powerful we use Proposition \ref{multiple:edge:testing:validity} in conjunction with Lemma \ref{sap:test:lemma}.
\end{proof}

\subsection{Triangle-Free Tests Proofs}

\begin{lemma}\label{triangle:test:lemma} Assume the estimate $\hat \bTheta^{(1)}$ satisfies (\ref{maxnorm:prec:ass}) on the parameter space $\cM(s)$ and $\theta = r K \sqrt{\log d/\lfloor n/2\rfloor}$ for some $r \geq 2$. If $\bTheta^* \in \cS_0(s)$, we have that $\min_{e \in \widehat E} |\Theta^*_e| = 0$. Conversely when $\bTheta^* \in \cS_1(\theta, s)$ we have $\min_{e \in \widehat E} |\Theta^*_e| \geq (r-2)\theta/r$ with probability at least $1 - 1/d$.
\end{lemma}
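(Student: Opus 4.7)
The plan is to mirror the argument of Lemma \ref{sap:test:lemma} almost verbatim, since the structure is identical: Algorithm \ref{triangle:test} greedily adds edges in decreasing order of $|\hat \Theta_e^{(1)}|$ until a triangle forms, and the returned set $\widehat E$ consists of exactly the three edges of that triangle. I will work on the event $\mathcal{E}$ on which the uniform max-norm control $\|\hat \bTheta^{(1)} - \bTheta^*\|_{\max} \leq K\sqrt{\log d / \lfloor n/2\rfloor}$ holds; by (\ref{maxnorm:prec:ass}) applied to the half-sample estimator, $\PP(\mathcal{E}) \geq 1 - 1/d$. Throughout I will reuse the bookkeeping inequalities (\ref{true:signals}) and (\ref{null:signals}) developed in Lemma \ref{ehat:consistency}, which on $\mathcal{E}$ give $|\hat \Theta_e^{(1)}| \geq \frac{r-1}{r}\theta$ whenever $|\Theta^*_e| \geq \theta$, and $|\hat \Theta_e^{(1)}| \leq \frac{\theta}{r}$ whenever $\Theta^*_e = 0$.

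For the null case $\bTheta^* \in \cS_0(s)$, the true graph $G(\bTheta^*)$ is triangle-free, so the three edges in $\widehat E$ cannot all belong to $E(G(\bTheta^*))$. Hence at least one edge $e \in \widehat E$ satisfies $\Theta^*_e = 0$, which directly gives $\min_{e \in \widehat E} |\Theta^*_e| = 0$. Note this conclusion is deterministic given the triangle-free assumption and requires no probabilistic argument at all.

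For the alternative case $\bTheta^* \in \cS_1(\theta, s)$, by definition there is a triangle $E^\star \subset E(G(\bTheta^*))$ with $\min_{e \in E^\star} |\Theta^*_e| \geq \theta$. On $\mathcal{E}$, every edge of $E^\star$ satisfies $|\hat \Theta_e^{(1)}| \geq \frac{r-1}{r}\theta$. Since the greedy algorithm adds edges from largest to smallest $|\hat \Theta_e^{(1)}|$ and a triangle is already available once the three edges of $E^\star$ are included, the algorithm must halt no later than the moment it has processed edges of magnitude $\frac{r-1}{r}\theta$, which yields $\min_{e \in \widehat E} |\hat \Theta_e^{(1)}| \geq \frac{r-1}{r}\theta$. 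Applying (\ref{maxnorm:prec:ass}) once more converts this into $\min_{e \in \widehat E} |\Theta^*_e| \geq \frac{r-2}{r}\theta$.

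The only subtlety — and the one worth double-checking — is the monotonicity argument in the alternative case: one must verify that the greedy procedure never stops \emph{before} adding an edge that would later be needed, producing a triangle whose minimum weight is smaller than $\frac{r-1}{r}\theta$. This cannot occur, because any triangle returned by the algorithm is completed by the most recently added edge, whose $|\hat \Theta_e^{(1)}|$-value is the smallest among the three; thus whatever triangle is returned, its minimum weight is at least the threshold at which $E^\star$ would have itself been completed, namely $\frac{r-1}{r}\theta$. This observation makes the chain of inequalities go through and completes the plan.
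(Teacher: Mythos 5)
Your proof is correct and follows essentially the same approach as the paper's. The paper's own proof is simply "Proof is the same as Lemma \ref{sap:test:lemma}, we omit the details"; your argument is exactly that argument instantiated for triangles, with the greedy-stopping step (that the algorithm halts no later than the moment all edges of $E^\star$ have been processed, so every edge in $\widehat E$ inherits $|\hat \Theta^{(1)}_e| \geq \tfrac{r-1}{r}\theta$) spelled out rather than left implicit as in Lemma \ref{ehat:consistency:cycle} and Lemma \ref{sap:test:lemma}.
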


\begin{proof}[Proof of Lemma \ref{triangle:test:lemma}]  Proof is the same as Lemma \ref{sap:test:lemma}, we omit the details.
\end{proof}

\begin{proof}[Proof of Corollary \ref{triangle:test:prop}] Proof is the same as Corollary \ref{sap:test:prop}, we omit the details.
\end{proof}

\subsection{Results for Thresholded Graphs}

In this section, we discuss the hypothesis in \eqref{conn:comp:strength:test-1}
\[
 \Hb_0: G(\bTheta^*, \mu) \text{ disconnected vs }\Hb_1: G(\bTheta^*, \mu) \text{ connected}
\]
and prove the result on the test $\psi^B_{\alpha, \widehat T}(\cD_2)$ proposed in Section \ref{sec:ADHD}. Denote
\begin{align*}
\cS_0(s) &:= \Big \{\bTheta\in  \cM(s) ~|~ G(\bTheta^*, \mu) \text{ disconnected} \Big \} \mbox{ and, } \\
\cS_1(\theta, s) &:= \Big \{\bTheta\in  \cM(s) ~|~ G(\bTheta^*, \mu+ \theta) \text{ connected} \Big \}.
\end{align*}
 We have the following result parallel to Corollary \ref{conn:test:prop}.

\begin{corollary}\label{conn:test:prop-mu} Let $\theta = (2K + C' C^4){\log d/\lfloor n/2\rfloor}$ for an absolute constant $C' > 0$.  Assume that $\mu \ge \theta$ and (\ref{bootstrap:rates}) holds.Then for any fixed level $\alpha$, the test $\psi^B_{\alpha, \widehat E}(\cD_2)$ from the output of Algorithm \ref{al:conn:comp} satisfies:
\begin{align*}
\limsup_{n \rightarrow \infty} \sup_{\bTheta^* \in \cS_0(s)} \PP(\mbox{reject } \Hb_0) \leq \alpha, ~~~~ \liminf_{n \rightarrow \infty} \inf_{\bTheta^* \in \cS_{1}(\theta, s)} \PP(\mbox{reject } \Hb_0) = 1.
\end{align*}
\end{corollary}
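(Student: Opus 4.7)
The plan is to mimic the argument of Corollary \ref{conn:test:prop} with $\mu$-shifted versions of Lemma \ref{ehat:consistency} and Proposition \ref{multiple:edge:testing:validity}. Throughout I would condition on the high-probability event that $\|\hat\bTheta^{(1)} - \bTheta^*\|_{\max} \leq K\sqrt{\log d/\lfloor n/2\rfloor}$, which has probability at least $1-1/d$ by \eqref{maxnorm:prec:ass}.

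First I would prove a thresholded analog of Lemma \ref{ehat:consistency}. Under $\bTheta^* \in \cS_0(s)$, since $G(\bTheta^*,\mu)$ is disconnected, no set of $d-1$ edges of $\overline G$ spanning $\overline V$ can lie entirely in $G(\bTheta^*, \mu)$, so the MST $\hat T$ must contain at least one edge $e$ with $|\Theta^*_e| < \mu$. Under $\bTheta^* \in \cS_1(\theta,s)$, let $T^*$ be a spanning tree of $G(\bTheta^*, \mu+\theta)$, so that $\min_{e\in T^*}|\Theta^*_e|\ge \mu+\theta$. If some $e\in \hat T$ had $|\Theta^*_e|<\mu+\theta-2K\sqrt{\log d/\lfloor n/2\rfloor}$, then on the concentration event $|\hat\Theta^{(1)}_e|<\mu+\theta-K\sqrt{\log d/\lfloor n/2\rfloor}$. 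Deleting $e$ from $\hat T$ splits $\overline V$ into components $A,B$, and $T^*$ must contain some edge $e^*$ crossing this cut with $|\hat\Theta^{(1)}_{e^*}|\ge \mu+\theta-K\sqrt{\log d/\lfloor n/2\rfloor}>|\hat\Theta^{(1)}_e|$, contradicting the cut optimality of the maximum spanning tree $\hat T$. Hence every $e\in\hat T$ satisfies $|\Theta^*_e|\ge \mu+\theta-2K\sqrt{\log d/\lfloor n/2\rfloor}$.

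Next I would lift Proposition \ref{multiple:edge:testing:validity} to the shifted critical value $\sqrt{n}\mu + c_{1-\alpha,\widehat{E^{\rm n}}}$ used in Algorithm \ref{step:down-mu}. The key inequality
\[
\sqrt{n}|\tilde\Theta_e|-\sqrt{n}\mu \;\le\; \sqrt{n}(|\Theta^*_e|-\mu)+\sqrt{n}\,|\tilde\Theta_e-\Theta^*_e|
\]
shows that for any edge $e$ with $|\Theta^*_e|\le \mu$ the centered statistic $\sqrt{n}(|\tilde\Theta_e|-\mu)$ is bounded above by $\sqrt{n}|\tilde\Theta_e-\Theta^*_e|$, whose maximum over any subset has $(1-\alpha)$-quantile asymptotically matched by the Gaussian multiplier bootstrap under \eqref{bootstrap:rates}. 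The strong FWER step-down argument used in Proposition \ref{multiple:edge:testing:validity} then carries over verbatim with $\sqrt{n}\mu$ added to every rejection threshold. Combined with the thresholded lemma, this yields $\limsup_n \sup_{\bTheta^*\in\cS_0(s)}\PP(\text{reject }\Hb_0) \le \alpha$, since the $\cS_0$ branch supplies an edge of $\hat T$ with $|\Theta^*_e|<\mu$ whose non-rejection prevents $\hat{E^{\rm nc}}=\hat T$. For the power statement, the $\cS_1$ branch gives $|\Theta^*_e|-\mu\ge C'C^4\sqrt{\log d/\lfloor n/2\rfloor}$ (reading the stated $\theta$ as $(2K+C'C^4)\sqrt{\log d/\lfloor n/2\rfloor}$), so the same argument used for the $\kappa$ condition in Proposition \ref{multiple:edge:testing:validity} forces every edge of $\hat T$ to be rejected and $\hat{E^{\rm nc}}=\hat T$ with probability tending to one.

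The main obstacle is the thresholded MST consistency step, since it requires combining the entrywise concentration of $\hat\bTheta^{(1)}$ with the cut property of maximum spanning trees to rule out low-weight edges of $\hat T$. Once this is established, the remainder is book-keeping around the deterministic $\sqrt{n}\mu$ shift, which cancels cleanly with the nuisance $|\Theta^*_e|\le\mu$ in the centered statistic and leaves the bootstrap analysis of Proposition \ref{multiple:edge:testing:validity} intact.
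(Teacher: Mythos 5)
Your proof is correct and follows essentially the same route as the paper: shift the rejection threshold by $\sqrt{n}\mu$, note that the centering $\sqrt{n}(|\tilde\Theta_e|-\mu) \le \sqrt{n}|\tilde\Theta_e-\Theta^*_e|$ for edges with $|\Theta^*_e|\le\mu$ preserves the bootstrap quantile comparison, and then run the step-down argument of Proposition~\ref{multiple:edge:testing:validity} unchanged. You actually spell out the thresholded analogue of Lemma~\ref{ehat:consistency} (via the cut property of maximum spanning trees) in more detail than the paper, which only says ``same as Corollary~\ref{conn:test:prop}'' and modifies \eqref{eq:tjk}; filling that gap is a genuine improvement in completeness, though not a different method. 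You also correctly identified the typo in the stated $\theta$ (the missing square root on $\log d/\lfloor n/2\rfloor$).
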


\begin{proof}
 The proof is same as the one of Corollary \ref{conn:test:prop}. We only need to change the proof of Proposition \ref{multiple:edge:testing:validity} in Section \ref{sec:supp-1} to the thresholded version. In fact, we only need to change \eqref{eq:tjk}. When $|\Theta_{jk}^*| < \mu$, we have
 \[
  \sqrt{n} |\tilde \Theta_{jk}| \geq \sqrt{n}|\kappa \sqrt{\log d/n}| - \sqrt{n} \max_{(j,k) \in E} |\tilde \Theta_{jk} - \Theta^*_{jk}| + \sqrt{n} \mu \geq \sqrt{n} \mu +  c_{1 - \alpha, E}.
 \]
 All the remaining part of the argument is the same.
\end{proof}

\section{Multi-Edge Divider Proofs}\label{sec:multi-nas-proof}

Before proceeding with the proofs we remind the reader definition (\ref{matrix:Ass}) of $\Ab_{S,S'} = \Ab_0 + \Ab_S + \Ab_{S'}$. In what follows we will use the shorthand notation:
$$
\cV_{S,S' | S} = \cV_{S', S} \cap V(S).
$$

\begin{proof}[Proof of Theorem \ref{suff:cond:lower:bound:many}] 
We denote by $\cL := \exp(M(\bC, d_{G_0}, \log|\bC|))$ the cardinality of the $(\log|\bC|)$-packing. Using Proposition \ref{refined:const:thm} with the constants from Setting 1, and the fact that we have a $(\log|\bC|)$-packing it suffices to control:
\begin{align*}
\MoveEqLeft \frac{2}{\cL^2}\sum_{d_{G_0}(S,S') \geq \log |\bC|} \exp\bigg(\frac{n |\cV_{S,S' | S}|\|\Ab_S\|_2\|\Ab_{S'}\|_2 (2\|\Ab_{S,S'}\|_2\theta)^{2 d_{G_0}(S,S')}\theta^2}{2d_{G_0}(S,S') + 2}\bigg) \\
\MoveEqLeft + \frac{2}{\cL^2}\sum_{d_{G_0}(S,S') = 0}\exp\Big(n |S \cap S'|\theta^2 + \frac{n |\cV_{S,S' | S}| \|\Ab_S\|_2\|\Ab_{S'}\|_2 (2\|\Ab_{S,S'}\|_2\theta)^{2}\theta^2}{4}\Big),
\end{align*}
When the cardinality of each $|S| \leq U$ the above expression can further be controlled by:
\begin{align*}
\MoveEqLeft\underbrace{\frac{2}{\cL^2}\sum_{(S,S'): d_{G_0}(S,S') \geq \log |\bC|} \exp\bigg(\frac{2 n U^3 (2\Lambda_0 \theta)^{2 d_{G_0}(S,S')}\theta^2}{2d_{G_0}(S,S') + 2}\bigg)}_{I_1} \\
& + \underbrace{\frac{2}{\cL}\exp\Big(n U\theta^2 + 2 n U^3 (\Lambda_0\theta)^{2}\theta^2\Big)}_{I_2},
\end{align*}
where $\Lambda_0 := \|\Ab_0\|_2 + 2U$, and we used the facts that $|S \cap S'|, \|\Ab_S\|_2, \|\Ab_{S'}\|_2 \leq U$, $ |\cV_{S', S|S}| \leq |V(S)| \leq 2 U$ and that $d_{G_0}(S,S') = 0$ is only possible when $S \equiv S'$. 
We first deal with the term $I_2$. For values of $\theta < \frac{1}{2 U \Lambda_0},$
we have $I_2 \leq \frac{2}{\cL}\exp(2 nU \theta^2)$. Hence when $\theta \leq \kappa \sqrt{\frac{\log \cL}{nU}}$ for some sufficiently small $0 < \kappa$ we have $I_2 = o(1)$. Next, we proceed similarly to the proof of Step 4 in Theorem \ref{suff:cond:lower:bound}. Observe the identity
$$
I_1 \leq \frac{2}{\cL^2}\sum_{(S,S'): d_{G_0}(S,S') \geq \log |\bC|} \exp\bigg(\log \cL \frac{ \kappa^2 U (2\Lambda_0 \theta)^{2 d_{G_0}(S,S')}}{d_{G_0}(S,S') + 1}\bigg),
$$
which follows since $\theta \leq \kappa \sqrt{\frac{\log \cL}{nU}}$. Using $\theta < \frac{1}{2 U \Lambda_0}$, $\cL < |\bC|$ for $r \geq \log |\bC|$ we have
$$
\log \cL \frac{ \kappa^2 U (2\Lambda_0 \theta)^{2 r}}{r + 1} < \log \cL \frac{ \kappa^2 U^{-2 r + 1}}{r + 1} < 1,
$$
provided that $\kappa$ is small enough (e.g. $\kappa < 1$). By the inequities $e^x \leq 1 + 3 x$ for $x \leq 1$ and $\log x \leq x - 1$ we have
$$
I_1 \leq \frac{2}{\cL^2}\sum_{(S,S'): d_{G_0}(S,S') \geq \log |\bC|} 1 + \frac{3(\cL - 1) (2 \Lambda_0 \theta)^{2 d_{G_0}(S,S') - 1}}{d_{G_0}(S,S') + 1}
$$
Let $K_r := |\{(S,S') ~|~ S, S' \in \bC,  d_{G_0} (S, S')= r\}|$. We have
\begin{align*}
I_{1} < \left(1 - \frac{1}{\cL}\right) + \frac{12 (2 \Lambda_0 \theta)^{2 \lfloor \log \cL\rfloor - 1}}{1 -  (2\Lambda_0\theta)^2} \cL^{-1} \max_{ \lfloor \log\cL\rfloor + 1 \leq r} \frac{K_r}{r}.
\end{align*}
Paying closer attention to the second term we have:
\begin{align*}
\frac{12 (2 \Lambda_0 \theta)^{2 \lfloor \log \cL\rfloor - 1}}{1 -  (2\Lambda_0\theta)^2} \cL^{-1} \max_{ \lfloor \log\cL\rfloor + 1 \leq r} \frac{K_r}{r} & \leq 16 (2 \Lambda_0\theta)^{2 \lfloor \log \cL \rfloor - 1} \frac{{\cL \choose 2} + \cL}{\cL}\\
& \leq 16 (2 \Lambda_0\theta)^{2 \lfloor \log \cL \rfloor - 1} \cL = o(1),
\end{align*}
with the last equalities hold for $2\Lambda_0 \theta \leq \frac{1}{2} < \exp(-1/2)$. This completes the proof.
\end{proof}

\begin{proof}[Proof of Theorem \ref{comb:NAS}] Note that by the definition of $\cB$, the fact that $ |\cV_{S, S' | S}| \vee |\cV_{S, S' | S'}| \leq |\cV_{S, S'}|$, we have:
$$
\max_{S,S' \in \bC} \Big((\|\Ab_S\|_2\|\Ab_{S'}\|_2 \|\Ab_{S,S'}\|_2^2) \wedge (|\cV_{S, S' | S'}|\|\Ab_{S,S'}\|_1^2)\Big) \leq \cB.
$$
Therefore using Proposition \ref{refined:const:thm} it suffices to control:
$$
\overline D_{\chi^2} := \frac{1}{|\bC|^2}\sum_{S,S' \in \bC}\exp\bigg[|\cV_{S, S' | S}| n\theta^{2}\bigg(\cR +  \cB \theta^{2}\bigg)\bigg].
$$
First note that by $|\cV_{S, S' | S}| = \sum_{v \in V(S)} \mathds{1}(v \in \cV_{S,S'}),$ we have:
$$
\overline D_{\chi^2} \leq \frac{1}{|\bC|^2}\sum_{S,S' \in \bC}\exp\bigg[n\theta^{2}\bigg(\cR +  \cB \theta^{2}\bigg) \sum_{v \in V(S)} \mathds{1}(v \in \cV_{S,S'})\bigg].
$$

Denote by $\PP_{S'}$ the measure induced by drawing $S'$ uniformly from $\bC$. Under the assumption: $\theta < \sqrt{{\cR}/{\cB}}$, and using the fact that the random variables $\{\mathds{1}(v \in \cV_{S,S'}) ~|~ v \in V(S)\}$ are negatively associated for every fixed $S \in \bC$, we obtain:
\begin{align*}
\log \overline D_{\chi^2} & \leq \max_{S \in \bC}\Bigr[ \sum_{v \in V(S)} \log \Big[\exp(2 \cR n\theta^2) \PP_{S'}(v \in \cV_{S,S'}) + (1 - \PP_{S'}(v \in \cV_{S,S'}))\Big] \Bigr]\\
& \leq  \max_{S \in \bC} \Bigr[ (\exp(2 \cR n\theta^2) - 1)  \sum_{v \in V(S)} \PP_{S'}(v \in \cV_{S,S'})\Bigr]\\
&  \leq \exp(2 \cR n\theta^2)  \max_{S \in \bC} \EE_{S'} |\cV_{S,S'}|,
\end{align*}
where the expectation $\EE_{S'}$ is taken with respect to a uniform draw of $S' \in \bC$.  The first inequality above is due to negative association, the second inequality is due to $\log (1+x) \le x$. 
Hence for values of $\theta$
$$
\theta \leq \sqrt{ \frac{\log\Big([\max_{S \in \bC} \EE_{S'} |\cV_{S,S'}|]^{-1}\Big)}{4 n \cR}},
$$
we have $\overline D_{\chi^2} \leq \exp([\max_{S \in \bC} \EE_{S'} |\cV_{S,S'}|]^{1/2})$, and therefore:
$$
\liminf_{n} \gamma(\cS_0(\theta, s), \cS_1(\theta, s)) \geq 1 - \frac{1}{2}\sqrt{ \exp([\max_{S \in \bC} \EE_{S'} |\cV_{S,S'}|]^{1/2}) - 1} = 1,
$$
where the last equality holds since $M_{\text{B}}(\bC, G_0) \rightarrow \infty$ implies 
$$\max_{S \in \bC} \EE_{S'} |\cV_{S,S'}| = o(1).$$
\end{proof}

\begin{proposition}\label{refined:const:thm} Let $G_0 \in \cG_0$ and $\bC$ be a divider with null base $G_0$. Then for any collection of vertex buffers $\bV=\{\cV_{S,S'}\}_{S,S' \in \bC}$ and any of the following two settings:
\begin{align*} 
\text{\bf S1:} & ~\cH_{S, S'} := \frac{(|\cV_{S,S' | S}|\wedge |\cV_{S,S' | S'}|) \|\Ab_S\|_2\|\Ab_{S'}\|_2}{\|\Ab_{S,S'}\|^2_2} \text{ and  } \cK_{S, S'} := 
2\|\Ab_{S,S'}\|_2;\\
\text{\bf S2:} & ~ \cH_{S, S'} := \frac{|\cV_{S,S' | S}| |\cV_{S,S' | S'}|}{\|\Ab_{S,S'}\|_1^2}  ~~~~~~~~~~\text{ and }~~~~~~~~~~~~~~  \cK_{S, S'} := 2\|\Ab_{S,S'}\|_1.
\end{align*}
when the signal strength satisfies 
\begin{align}\label{multi:edge:NAS:theta}
\theta < \min_{S,S' \in \bC}\frac{1 - C^{-1}}{2 \sqrt{2} \|\Ab_{S,S'}\|_1},
\end{align} 
we have:
\begin{align*}
\MoveEqLeft \gamma(\cS_0(\theta, s), \cS_1(\theta, s)) \geq \\
\MoveEqLeft \geq 1 - \frac{1}{2}\sqrt{\frac{1}{|\bC|^2}\sum_{S,S' \in \bC}\exp\bigg[n\bigg(|S \cap S'|  \theta^2 + \frac{\cH_{S, S'}(\cK_{S, S'} \theta)^{2(d_{G_0}(S,S')\vee 1 + 1)}}{2(d_{G_0}(S,S')\vee 1 + 1)} \bigg)\bigg] - 1}, 
\end{align*}
\end{proposition}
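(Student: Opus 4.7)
The plan is to follow the four-step strategy of Theorem~\ref{suff:cond:lower:bound}, adapted to the multi-edge setting. First, for each $S \in \bC$ I will construct the precision matrices $\bTheta_0 = \Ib + \theta \Ab_0$, $\bTheta_S = \Ib + \theta(\Ab_0 + \Ab_S)$, and the auxiliary $\bTheta_{S,S'} = \Ib + \theta \Ab_{S,S'}$. The scaling requirement \eqref{multi:edge:NAS:theta} together with $\|\Ab_{S,S'}\|_2 \leq \|\Ab_{S,S'}\|_1$ and Weyl's inequality will place all such matrices inside $\cM(s)$ with spectrum in $[C^{-1},C]$, so that $\bTheta_0 \in \cS_0(\theta,s)$ and each $\bTheta_S \in \cS_1(\theta,s)$, and moreover $\bTheta_{S,S'} \succ 0$.

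Second, I will put the uniform prior on $\{\bTheta_S\}_{S \in \bC}$ and apply Le Cam's two-point / mixture bound $\gamma \geq 1 - \tfrac{1}{2}\sqrt{D_{\chi^2}(\overline{\PP},\PP_{\bTheta_0})}$. Writing the mixture likelihood ratio and squaring exactly as in Step~2 of Theorem~\ref{suff:cond:lower:bound} reduces $\EE_{\bTheta_0}L^2_{\bTheta_0}$ to determinants of $\bTheta_0, \bTheta_S, \bTheta_{S'}, \bTheta_{S,S'}$. Expanding $\log\det(\Ib + \theta \Ab)$ as the series $\sum_{k\geq 1}(-1)^{k+1}\theta^k \Tr(\Ab^k)/k$ (which is valid since we will have arranged $\theta \|\Ab_{S,S'}\|_2 < 1$) yields
\[
\EE_{\bTheta_0}L^2_{\bTheta_0} = \tfrac{1}{|\bC|^2}\sum_{S,S'\in\bC}\exp\!\Big(\tfrac{n}{2}\sum_{k\geq 1}\tfrac{(-\theta)^k}{k}\big[T_1^k + T_2^k\big]\Big),
\]
with $T_1^k = \Tr(\Ab_0^k) - \Tr((\Ab_0+\Ab_S)^k)$ and $T_2^k = \Tr(\Ab_{S,S'}^k) - \Tr((\Ab_0+\Ab_{S'})^k)$.

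The third step, which I expect to be the main obstacle, is the combinatorial/trace control of $T_1^k + T_2^k$. Via the closed-walk interpretation of traces of adjacency matrix powers, the sum $T_1^k + T_2^k$ equals the number of closed walks of length $k$ in $G(\Ab_{S,S'})$ that use at least one edge of $S$ \emph{and} at least one edge of $S'$; in particular it is nonnegative, vanishes for odd $k$ and for $k=0$, equals $2|S\cap S'|$ when $k=2$ (the round-trip walks on shared edges), and vanishes for $4 \leq k < 2(d_{G_0}(S,S')+1)$ when $d_{G_0}(S,S')\geq 1$. For the residual large-$k$ terms, the idea is that every such walk must cross the buffer $\cV_{S,S'|S}$ (resp.\ $\cV_{S,S'|S'}$), which factors the count as a product of an entry/exit term bounded by $|\cV_{S,S'|S}|\wedge|\cV_{S,S'|S'}|$ times $\|\Ab_S\|_2\|\Ab_{S'}\|_2$ (Setting~1) or by $|\cV_{S,S'|S}||\cV_{S,S'|S'}|$ in $\ell_1$ (Setting~2), together with a bulk-walk factor bounded by $\|\Ab_{S,S'}\|_2^{k-2}$ or $\|\Ab_{S,S'}\|_1^{k-2}$. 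The spectral estimates will be derived, as in the single-edge case, from the generalized Lidskii inequality (Lemma~\ref{gen:lidskii}) and H\"older, while the $\ell_1$ version will come from the identity $\Tr(\Ab^k)\leq d\|\Ab\|_1^k$ localized to walks anchored at buffer vertices. This yields, for even $k\geq 2(d_{G_0}(S,S')\vee 1 + 1)$, the bound $T_1^k + T_2^k \leq \cH_{S,S'}\cK_{S,S'}^{\,k}$ in both settings.

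Finally, in Step~4 I will collect terms. Only the even indices contribute nonnegatively (odd indices come with sign $(-)^k<0$, so can be dropped in the upper bound), giving
\[
\tfrac{1}{2}\sum_{k\geq 1}\tfrac{(-\theta)^k}{k}(T_1^k+T_2^k) \leq |S\cap S'|\theta^2 + \sum_{\substack{k\,\text{even}\\ k\geq 2(d_{G_0}(S,S')\vee 1+1)}} \tfrac{\cH_{S,S'}(\cK_{S,S'}\theta)^k}{k}.
\]
The geometric tail, under $\cK_{S,S'}\theta = 2\|\Ab_{S,S'}\|_p \theta \leq 1/\sqrt{2}$ (guaranteed by \eqref{multi:edge:NAS:theta}), sums to at most $\cH_{S,S'}(\cK_{S,S'}\theta)^{2(d_{G_0}(S,S')\vee 1 + 1)}/[2(d_{G_0}(S,S')\vee 1 + 1)]$, matching the exponent in the claimed bound. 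Substituting back into the Le Cam inequality completes the argument.
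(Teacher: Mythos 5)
Your overall plan tracks the paper's argument step by step (matrix construction, Le Cam via $\chi^2$, trace/closed-walk expansion, tail summation), and you correctly identify the buffer-localized walk count as the crux of Step 3. The final reduction in Step 4 and the use of the $\theta$-scaling to control the geometric tail match the paper as well. However, there is a real gap in how you propose to prove the key trace bound $T_1^k+T_2^k\leq\cH_{S,S'}\cK_{S,S'}^k$ for Setting S1: you say these ``spectral estimates will be derived, as in the single-edge case, from the generalized Lidskii inequality (Lemma~\ref{gen:lidskii}) and H\"older.'' Lidskii is the right tool in the proof of Theorem~\ref{suff:cond:lower:bound}, where the goal is a \emph{global} spectral perturbation bound $\Tr(\Ab_{e,e'}^k)-\Tr((\Ab_0+\Ab_{e'})^k)\leq 2\|\Ab_{e,e'}\|_2^k$, but it cannot manufacture the factor $|\cV_{S,S'|S}|\wedge|\cV_{S,S'|S'}|$, which is a \emph{local} quantity. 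The paper's Step~3 for this proposition abandons the Lidskii route entirely: it expands $T_1^k+T_2^k$ by inclusion--exclusion over words in $\{\Ab_0,\Ab_S,\Ab_{S'}\}^k$, observes that every surviving word must use both $\Ab_S$ and $\Ab_{S'}$, fixes the positions $(i,j)$ of an $S$-edge and an $S'$-edge (this is where the $\binom{k}{2}\lesssim 2^k$ absorbed into $\cK_{S,S'}=2\|\Ab_{S,S'}\|_p$ comes from, which your sketch also leaves unaccounted), shows the walk must pass through a buffer vertex, and then anchors the count at such a vertex via the elementary estimate $[\prod_\ell \Ab_\ell]_{ii}\leq\|\prod_\ell\Ab_\ell\|_2\leq\prod_\ell\|\Ab_\ell\|_2$ (Cauchy--Schwarz plus submultiplicativity, not Lidskii).

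A second, smaller issue: you assert that $T_1^k+T_2^k$ ``vanishes for odd $k$.'' That is false in general --- e.g.\ $T^3 \geq 6\Delta_{S,S'}>0$ whenever a triangle in $G(\Ab_{S,S'})$ uses edges from both $S$ and $S'$. What is true, and what the paper establishes via the word expansion, is $T^k\geq0$ for all $k$, so that the odd-$k$ terms, carrying the sign $(-\theta)^k<0$, are nonpositive and can be \emph{discarded} in the upper bound; you actually state this correctly a few lines later, so the two claims contradict each other. Finally, a constant to fix: since $\Ab_{S,S'}=\Ab_0+\Ab_S+\Ab_{S'}$ has entries equal to $2$ on $S\cap S'$, one computes $T^2=2\Tr(\Ab_S\Ab_{S'})=4|S\cap S'|$, not $2|S\cap S'|$; with this value your displayed Step~4 inequality yields exactly the $|S\cap S'|\theta^2$ of the statement once the $n/2$ prefactor is applied.
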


\begin{remark}\label{thm:extension:rem} Proposition \ref{refined:const:thm} continues to hold for parameter classes $\cM(s)$ not imposing bounds on the $\ell_1$ norm of the precision matrix $\bTheta$, if we substitute $\|\Ab_{S,S'}\|_1$ with $\|\Ab_{S,S'}\|_2$ in (\ref{multi:edge:NAS:theta}). In fact tracking the proof, it is easy to see that the theorem also remains valid for parameter spaces such that $\Ib + \theta \Ab_0 \in \cS_0(\theta, s)$ and $\Ib + \theta (\Ab_0 + \Ab_S) \in \cS_1(\theta, s)$ for all $S \in \bC$, with (\ref{multi:edge:NAS:theta}) replaced by $\theta < \min_{S,S' \in \bC}\frac{1 - C^{-1}}{\|\Ab_{S, S'}\|_2 \vee \sqrt{2} \cK_{S, S'}}$. 
\end{remark}

\begin{definition}\label{ass:tr:abstr:ineq} Let $\bC$ be a divider with null base $G_0 \in \cG_0$ whose adjacency matrix is $\Ab_0$. We call a set of constants $\{\cH_{S,S'}, \cK_{S,S'} ~|~ S, S' \in \bC\}$ \textit{admissible} with respect to the pair $(G_0, \bC)$ if for all even integers $k \geq 4$ the following holds:
\begin{align}\label{tr:abstr:ineq}
\Tr(\Ab_{S,S'}^k + \Ab_0^k - (\Ab_0 + \Ab_S)^k  - (\Ab_0 + \Ab_{S'})^k) \leq \cH_{S,S'} \cK_{S,S'}^k,
\end{align}
for all $S,S' \in \bC$.
\end{definition}
We will see that any admissible set of constants $\{\cH_{S,S'}, \cK_{S,S'} ~|~ S, S' \in \bC\}$ yields the lower bound on the  minimax risk claimed by Proposition \ref{refined:const:thm}.

\begin{proof}[Proof of Proposition \ref{refined:const:thm}]
We will in fact show a slightly stronger result than presented, involving the following additional combinatorial quantity:
$$\Delta_{S, S'} := |\{\mbox{triangles in $G(\Ab_{S,S'})$ with $\geq 1$ edges in $S$ and $\geq 1$ edges in $S'$}\}|.$$

\noindent {\large{\bf Step 1}\textit{(Matrix Construction).}}\\
In this step we construct a set of precision matrices and argue that they fall into the parameter set $\cM(s)$. Take $\bTheta_0 = \Ib + \theta \Ab_0$, $\bTheta_S = \Ib + \theta (\Ab_0 + \Ab_S)$, $\bTheta_{S,S'} = \Ib + \theta \Ab_{S,S'}$, for $S,S' \in \bC$ and some $\theta > 0$. For any $S, S' \in \bC$ we have:
\begin{align*}
\max(\|\Ab_0\|_2, \|\Ab_0 + \Ab_S\|_2, \|\Ab_{S,S'}\|_2) &\leq \|\Ab_{S,S'}\|_2 \leq \|\Ab_{S,S'}\|_1\\\max(\|\Ab_0\|_1, \|\Ab_0 + \Ab_S\|_1, \|\Ab_{S,S'}\|_1) & \leq \|\Ab_{S,S'}\|_1,
\end{align*}
where these inequalities hold since all matrices $\Ab_0$, $\Ab_0 + \Ab_S$ and $\Ab_{S,S'}$ consist only of non-negative entries.

Similarly to the proof of Theorem \ref{suff:cond:lower:bound} we can make sure that the matrices $\bTheta_0$ and $\bTheta_S$ fall into the set $\cM(s)$ and in addition the matrix $\bTheta_{S,S'}$ is strictly positive definite if $\theta < \frac{1 - C^{-1}}{\Gamma}$. Thus by assumption the graphs $G(\bTheta_0) \in \cG_0$ and $G(\bTheta_S) \in \cG_1$ for all $S \in \bC$, and hence $\bTheta_0 \in \cS_0(\theta,s)$ and $\bTheta_S \in \cS_1(\theta,s)$ for all $S \in \bC$. In addition we also have that matrices $\bTheta_0$, $\bTheta_S$, $\bTheta_{S,S'}$ are strictly positive definite for any $S, S' \in \bC$.

\vspace{.3cm}
\noindent {\large{\bf Step 2}\textit{(Risk and Trace Bounds).}}\\
In this step we will lower bound the risk, and will further derive some combinatorial bounds on the traces of powers of adjacency matrices. These bounds are more detail tracking compared to bounds discussed in Theorem \ref{suff:cond:lower:bound}. Similarly to Step 2 of the proof of Theorem \ref{suff:cond:lower:bound} it suffices to bound:
\begin{align*}
& \bigg( \frac{\det(\Ib + \theta(\Ab_0 + \Ab_S))}{\det(\Ib + \theta \Ab_0)}\bigg)^{n/2}\bigg(\frac{\det(\Ib + \theta(\Ab_0 + \Ab_{S'}))}{\det(\Ib + \theta (\Ab_{S,S'}))}\bigg)^{n/2} \\
& = \exp\bigg(\frac{n}{2}\sum_{k = 1}^{\infty} \frac{(-\theta)^{k}}{k} \Tr\Big(\Ab_{S,S'}^k + \Ab_0^k- (\Ab_0 + \Ab_{S})^k - (\Ab_0 + \Ab_{S'})^k  \Big) \bigg),
\end{align*}
Similarly to Step 3 of Theorem \ref{suff:cond:lower:bound} it is easy to argue that for any $k \in \NN$ we have:
$$\Tr(  \Ab_{S,S'}^k + \Ab_0^k - (\Ab_0 + \Ab_{S})^k -  (\Ab_0 + \Ab_{S'})^k ) \ge 0.$$
We will consider three cases: (1) $k < 2(d_{G_0}(S,S') + 1)$, (2) $k < 4$ and $k \ge 2(d_{G_0}(S,S') + 1)$ and (3) $k \ge  4$ and $k \ge 2(d_{G_0}(S,S') + 1)$.
For  $k < 2(d_{G_0}(S,S') + 1)$, similarly to the argument in the Step 3 of the proof of  Theorem \ref{suff:cond:lower:bound}, the above is in fact an equality. 
However, for the case $k < 4$ and $k \ge 2(d_{G_0}(S,S') + 1)$, we will instead show the following two more precise bounds:
\begin{align}
\Tr(\Ab_0^2+ \Ab_{S,S'}^2 - (\Ab_0 + \Ab_{S})^2 - (\Ab_0 + \Ab_{S'})^2) & \leq 4|S \cap S'|, \label{sq:deg:edges}\\
\Tr(\Ab_0^3 + \Ab_{S,S'}^3 - (\Ab_0 + \Ab_{S})^3 - (\Ab_0 + \Ab_{S'})^3) & \geq 6 \Delta_{S, S'}.\label{triangle:deg:edges}
\end{align}
The left hand side of (\ref{sq:deg:edges}) contains edges lying only in the intersection $S \cap S'$ since all closed walks containing at least one edge in $G_0$ cancel out. By definition the number of such edges is $|S \cap S'|$. In addition, each closed walk of length $2$ in (\ref{sq:deg:edges}), has precisely one edge in $S$ and one edge in $S'$. Fixing the first edge to be in $S$ and the second edge to be in $S'$ we notice that each edge appears twice in the total count --- once for each of its two vertices. Further multiplying by $2$ to adjust for the ordering of the edges we obtain $4 |S \cap S'|$. Next, notice that expression (\ref{triangle:deg:edges}) contains only closed walks which are triangles. In addition, similarly to the logic above, only walks containing one edge from $S$ and $S'$ survive in (\ref{triangle:deg:edges}). Further, each triangle is contained at least $6$ times --- once per each of its vertices and once per its $2$ orientations. This completes the proof of (\ref{triangle:deg:edges}). 

We will check the third case in the next step by checking the admissibility in Definition~\ref{ass:tr:abstr:ineq}. 

\vspace{.3cm}
\noindent {\large {\bf Step 3} \textit{(Verifying Admissibility).}}\\
In this step we show that both settings below are admissible for any pair $(G_0, \bC)$:
\begin{align*} 
\text{\bf S1:} & ~\cH_{S, S'} := \frac{(|\cV_{S,S' | S}|\wedge |\cV_{S,S' | S'}|) \|\Ab_S\|_2\|\Ab_{S'}\|_2}{\|\Ab_{S,S'}\|^2_2} \text{ and  } \cK_{S, S'} := 
2\|\Ab_{S,S'}\|_2;\\
\text{\bf S2:} & ~ \cH_{S, S'} := \frac{|\cV_{S,S' | S}| |\cV_{S,S' | S'}|}{\|\Ab_{S,S'}\|_1^2}  ~~~~~~~~~~\text{ and }~~~~~~~~~~~~~~  \cK_{S, S'} := 2\|\Ab_{S,S'}\|_1.
\end{align*}
Before we prove that the constants in Settings 1 and 2 are admissible, we will show a simple and general bound on closed walks over a sequence of graphs. Let $E_1, \ldots, E_j \subset \overline E$ be fixed edge sets and $G_1 = (\overline V, E_1), G_2 = (\overline V, E_2), \ldots G_j = (\overline V, E_j)$ be graphs with vertex set $\overline V$, adjacency matrices $\Ab_1, \ldots \Ab_j$. Denote $w_{ii}$ as the number of closed walks of length $j$ starting and ending at vertex $i$ such that its $\ell$\textsuperscript{th} edge locates on $G_\ell$ for all $\ell \in [j]$. Note that $w_{ii}$, precisely equals to the $(i,i)$-{th} entry of the matrix $\Ab$, where $\Ab = \prod_{\ell \in [j]} \Ab_\ell$, i.e.
\begin{equation}\label{eq:wii}
 w_{ii} = A_{ii} = \Big[\prod_{\ell \in [j]} \Ab_k\Big]_{ii} \le \|\Ab\|_2 \leq \prod_{\ell \in [j]} \|\Ab_\ell\|_2. 
\end{equation}
 We conclude that for any fixed vertex $i \in \overline V$, we have that the number of closed walks starting and ending at vertex $i$ walking on the edges of $G_\ell$ for $\ell \in [j]$ is at most: $\prod_{\ell \in [j]} \|\Ab_\ell\|_2$. 

Following the above argument, we will prove below the admissibility of the constants in Setting 1.  More precisely we will prove the following
\begin{align} \label{tr:ineq:comb:red}
\MoveEqLeft \Tr(\Ab_0^k + \Ab_{S,S'}^k- (\Ab_0 + \Ab_{S})^k - (\Ab_0 + \Ab_{S'})^k) \nonumber \\
& \leq \Big[\frac{2{k \choose 2} (|\cV_{S, S'|S}|\wedge |\cV_{S, S'|S'}|) \|\Ab_S\|_2 \|\Ab_{S'}\|_2}{\|\Ab_{S, S'}\|_2^{2}}\Big] \|\Ab_{S, S'}\|_2^{k}, \footnotemark
\end{align}
which implies the admissibility of Setting 1, by the trivial bound $2{k \choose 2} \leq 2^{k}$. 

To prove \eqref{tr:ineq:comb:red}, we remind the reader that the trace of an adjacency matrix, counts the number of closed walks of length $k$ in the graph. A walk will only be counted in the LHS of (\ref{tr:ineq:comb:red}), if it contains an edge from the set $S$ and another edge from the set $S'$.
In the remainder of this step of the proof, we will bound the number of closed walks containing edges from both $S$ and $S'$.
                                 
Denote $\cC^{(k)}_{S,S'} = \{\text{closed walks $\cC$ of length $k$ on }$  $G(\Ab_{S,S'}), \text{with edges } e, e' \in \cC, e \neq e', e \in S, e' \in S'\}.$
We will denote closed walks of length $k$ by $\cC = v^{\cC}_0 \rightarrow v^{\cC}_1 \rightarrow \ldots \rightarrow v^{\cC}_{k-1} \rightarrow v^{\cC}_0$, where $v^{\cC}_j$ is the $j$\textsuperscript{th} vertex of $\cC$ and $(v^{\cC}_j, v^{\cC}_{j+1})$ is its $j$\textsuperscript{th} edge, and indexation is taken modulo $k$. All indexation below will also be taken modulo $k$ where applicable.
 
 \begin{figure}[H] 
\centering
\begin{tikzpicture}[scale=.7]
\SetVertexNormal[Shape      = circle,
                  FillColor  = cyan!50,
                  MinSize = 11pt,
                  InnerSep=0pt,
                  LineWidth = .5pt]
   \SetVertexNoLabel
   \tikzset{EdgeStyle/.style= {thin, above, font=\tiny, sloped,anchor=south, fill opacity=0, text opacity = 1,
                                double          = red!50,
                                double distance = 1pt}}
                                                                \begin{scope}[rotate=90]\grEmptyCycle[prefix=a,RA=2]{8}{1}\end{scope}
                                                                    \Edge[label = $\in S$](a1)(a2)
                                                                    \Edge[label = $\in S'$](a6)(a7)
                                                                    \tikzset{LabelStyle/.style = {right, fill = white, text = black, fill opacity=0, text opacity = 1}}
								\tikzset{EdgeStyle/.append style = {dashed, thin, bend right}}
                                                 \node[above left] at (a1.+90) {\tiny $v = v^{\cC}_{i + 1}$};
                                                 \node[below left] at (a2.-90) {\tiny $v^{\cC}_{i}$};
                                                 \node[above right] at (a7.+5) {\tiny $v^{\cC}_{j}$};
                                                                                                  \node[below right] at (a6.-90) {\tiny $v^{\cC}_{j + 1}$};
                                                                                                     \tikzset{EdgeStyle/.style= {dotted, thin, above, font=\tiny, sloped,anchor=south, fill opacity=0, text opacity = 1,                                double          = red!50,
                                double distance = 1pt}}
                                                                    \Edge(a0)(a1)
                                                                    \Edge(a2)(a3)
                                                                    \Edge(a3)(a4)
                                                                    \Edge(a4)(a5)                                                                                                                                              								\Edge(a5)(a6)                                                                                                                                                                                                                                                                 								\Edge(a7)(a0)
\end{tikzpicture}
\caption{A closed walk $\cC$ from the set $\cC \in \cC_k(v,o,i,j)$.}\label{graph:from:the:compl:set}\vspace{-8pt}
\end{figure}
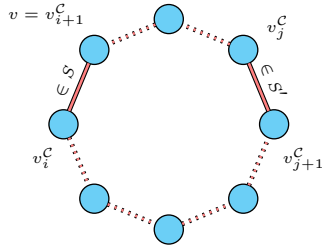

For any $0 \le i,j < k$, $i \neq j$, we first count the number of closed walks in the set 
$\cC_k(v, i,j) = \big\{\cC \in \cC^{(k)}_{S,S'}  \big| v^{\cC}_{i + 1} = v, (v^{\cC}_{i}, v^{\cC}_{i + 1}) \in S, (v^{\cC}_{j}, v^{\cC}_{j+1}) \in S', |i + 1 - j| \wedge (k -  (|i + 1 - j| \mod k)) \mbox{ is min from any edge in } S \mbox{ to any edge in } S' \mbox{ on } \cC\big\}$. 
Following \eqref{eq:wii}, we have
\begin{equation}
 |\cC_k(v, i,j)| \le  \|\Ab_S\|_2 \|\Ab_{S'}\|_2  \|\Ab_{S, S'}\|_2^{k-2}.
\end{equation}
Note that by the definition of $\cV_{S,S'|S}$ we have $v_{i + 1}^\cC \in \cV_{S,S'|S}$. To see this first consider the path segment subset of $\cC$, connecting $v_{i + 1}^\cC$ with $v_{j}^{\cC}$ (see Fig \ref{graph:from:the:compl:set} for a vizualization). All edges on the path between $v_{i + 1}^\cC$ and $v_{j}^{\cC}$ belong to the set $E_0$, or else it will not hold that it is the shortest possible path from an edge in $S$ to and edge in $S'$ on $\cC$. Hence vertex $v_{i + 1}^\cC \in V(E_0 \cup S')$ (the $S'$ comes since possibly $i + 1 = j$). On the other hand since $(v_i^{\cC}, v_{i + 1}^{\cC}) \in S$ we also have $v_{i + 1}^\cC \in V(E_0 \cup S') \cap V(S)$ and thus $v_{i + 1}^\cC \in \cV_{S,S'|S}$. 
\begin{align*}
\cC_k(i,j) := \bigcup_{v \in \overline V}\cC_k(v,i,j) \subseteq \bigcup_{v \in \cV_{S,S'|S}}\cC_k(v,i,j).
\end{align*}
Hence:
$$
|\cC_k(i,j)| \leq |\cV_{S,S'|S}|\|\Ab_S\|_2 \|\Ab_{S'}\|_2  \|\Ab_{S, S'}\|_2^{k-2},
$$
and therefore by symmetry:
\begin{align}
|\cC_k(i,j)| \leq (|\cV_{S,S'|S}| \wedge|\cV_{S,S'|S'}|) \|\Ab_S\|_2 \|\Ab_{S'}\|_2  \|\Ab_{S, S'}\|_2^{k-2},\label{useful:ineq:thm42}
\end{align}
We now observe that:
$$
\cC^{(k)}_{S,S'} \subseteq \bigcup_{0 \le i, j < k}  \cC_k(i,j)
$$
Hence applying (\ref{useful:ineq:thm42}) we obtain:
\begin{align*}
 |\cC^{(k)}_{S,S'}|  & \leq 2 {k \choose 2} (|\cV_{S,S'|S}| \wedge|\cV_{S,S'|S'}|) \|\Ab_S\|_2 \|\Ab_{S'}\|_2  \|\Ab_{S, S'}\|_2^{k-2} 
\end{align*}
which completes the proof of (\ref{tr:ineq:comb:red}).

To check the admissibility for Setting 2, just as before we must have two edges in $S$ and $S'$ respectively within a closed walk of length $k$ which is counted in the LHS of (\ref{tr:abstr:ineq}). Then, by the definition of vertex buffer set $\cV_{S,S'}$, we certainly have two vertices in the set $\cV_{S,S'}$ (one in $\cV_{S,S'|S}$ and one in  $\cV_{S,S'|S'}$). Notice that each vertex on the path is of degree at most $\|\Ab_{S,S'}\|_1$, and hence can give rise to at most $\|\Ab_{S,S'}\|_1$ continuations of the path. Therefore, having fixed two vertices from the sets $\cV_{S,S'|S}$ and $\cV_{S,S'|S'}$, we are left with at most $\|\Ab_{S,S'}\|_1^{k-2}$ paths. Taking into account that we can position the two vertices on at most $k(k-1) \leq 2^k$ spots completes the proof.

\vspace{.3cm}
\noindent {\large {\bf Step 4}\textit{ (Proof Completion).}}\\
In this step we complete the proof by arguing that if we are given any set of admissible constants $\{\cH_{S,S'}, \cK_{S,S'}\}_{S,S' \in \bC}$ the bound on the minimax risk from the Theorem statement follows.
\begin{align*}
\MoveEqLeft \sum_{k = 1}^{\infty}  \theta^k \Tr\Big(\Ab_{S,S'}^k + \Ab_0^k - (\Ab_0 + \Ab_{S})^k - (\Ab_0 + \Ab_{S'})^k \Big)/k \\
& \leq 2|S \cap S'| \theta^2 - 2 \Delta_{S,S'} \theta^3 + \cH_{S, S'} \sum_{2|k, ~ k\geq 2(d_{G_0}(S,S')\vee 1 + 1)} \frac{(\cK_{S, S'} \theta)^k}{k} \\
& \leq  2|S \cap S'| \theta^2 - 2 \Delta_{S,S'} \theta^3 + \frac{2\cH_{S, S'}(\cK_{S, S'} \theta)^{2(d_{G_0}(S,S')\vee 1 + 1)}}{2(d_{G_0}(S,S')\vee 1 + 1)},
\end{align*}
where in the last inequality we used that $(\cK_{S, S'} \theta)^2 \leq \frac{1}{2}$ by (\ref{multi:edge:NAS:theta}). In step 3 we verified that the constants $\{\cH_{S,S'}, \cK_{S,S'}\}$ given in the Theorem statement are admissible for $(G_0, \bC)$ in the sense of Definition \ref{ass:tr:abstr:ineq}. This completes the proof. 
\end{proof}
\subsection{Star Graph/Maximum Degree Proofs}\label{star:graph:max:deg:proofs}

\begin{lemma}\label{star:test:lemma} Assume the estimate $\hat \bTheta^{(1)}$ satisfies (\ref{maxnorm:prec:ass}) on the parameter space $\cM(s)$ and $\theta = r K \sqrt{\log d/\lfloor n/2\rfloor}$ for some $r \geq 2$. If $\bTheta^* \in \cS_0(s)$, we have that $\min_{e \in \widehat E} |\Theta^*_e| = 0$. Conversely when $\bTheta^* \in \cS_1(\theta, s)$ we have $\min_{e \in \widehat E} |\Theta^*_e| \geq (r-2)\theta/r$ with probability at least $1 - 1/d$.
\end{lemma}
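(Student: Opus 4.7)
My plan is to follow closely the template established by Lemmas~\ref{ehat:consistency}, \ref{ehat:consistency:cycle}, \ref{sap:test:lemma}, and \ref{triangle:test:lemma}. The first step is to invoke the uniform max-norm bound \eqref{maxnorm:prec:ass}, which with probability at least $1-1/d$ yields $\|\hat\bTheta^{(1)}-\bTheta^*\|_{\max}\le K\sqrt{\log d/\lfloor n/2\rfloor}=\theta/r$. Conditioning on this event, I would then derive the two one-sided estimates (identical in spirit to \eqref{true:signals}--\eqref{null:signals}): every strong edge $e$ (with $|\Theta^*_e|\ge \theta$) satisfies $|\hat\Theta^{(1)}_e|\ge (r-1)\theta/r$, and every null edge ($\Theta^*_e=0$) satisfies $|\hat\Theta^{(1)}_e|\le \theta/r$.

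Next, for the null case $\bTheta^*\in\cS_0(s)$, I would argue deterministically. Since $d_{\max}(G(\bTheta^*))\le s_0$, every vertex in the true graph has at most $s_0$ genuine neighbors; yet Algorithm~\ref{star:test:main} terminates only after selecting $s_0+1$ edges incident to some vertex $v$. By the pigeonhole principle, at least one of these edges must correspond to a non-edge of $G(\bTheta^*)$, forcing $\min_{e\in\widehat E}|\Theta^*_e|=0$.

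For the alternative case $\bTheta^*\in\cS_1(\theta,s)$, I would exploit the structure of $\cS_1(\theta,s)$: there exist $k\in\overline V$ and a neighbor set $S$ of size $s\ge s_1\ge s_0+1$ with $|\Theta^*_{kj}|\ge\theta$ for all $j\in S$. By the good event, $|\hat\Theta^{(1)}_{kj}|\ge (r-1)\theta/r>\theta/r$, the latter being a strict upper bound on $|\hat\Theta^{(1)}_e|$ at any null edge. Because the algorithm processes edges in decreasing order of $|\hat\Theta^{(1)}_e|$, every strong edge is added before any null edge. In particular, by the time any null edge could be considered, vertex $k$ would already have all $s\ge s_0+1$ of its strong incident edges attached, and the algorithm would have terminated strictly earlier. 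Hence $\widehat E$ contains only strong edges, and \eqref{maxnorm:prec:ass} gives $|\Theta^*_e|\ge |\hat\Theta^{(1)}_e|-\theta/r\ge (r-2)\theta/r$ for each $e\in\widehat E$.

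The argument is mostly routine once the ``strong-edge phase precedes the null-edge phase'' dichotomy is set up; the only mildly delicate point is the termination ordering in the alternative case. The subtlety is that the algorithm may terminate at some vertex $v\neq k$ (if another vertex accumulates $s_0+1$ strong incident edges first), so one must check that such premature termination still occurs \emph{within} the strong-edge phase, not after it. This is resolved by noting that the termination threshold $s_0+1$ is at most $s$ and is attained at vertex $k$ by the end of the strong-edge phase, so in either case the algorithm halts before touching a null edge.
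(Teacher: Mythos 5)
Your proof is correct and follows exactly the same strategy as the paper's: a pigeonhole argument against the maximum-degree bound $s_0$ for the null case, and the ``strong edges sorted first'' observation (using the existence of vertex $k$ with $s\ge s_0+1$ incident edges of signal $\ge\theta$) combined with \eqref{maxnorm:prec:ass} for the alternative case. The paper's own proof is only a sentence per case (it refers to the common template of Lemmas~\ref{ehat:consistency} and \ref{sap:test:lemma}), whereas you explicitly fill in the termination-ordering step — that the algorithm must halt within the phase where all processed edges satisfy $|\hat\Theta^{(1)}_e|\ge(r-1)\theta/r$, even if the halting vertex is not $k$ — which is exactly the subtlety the paper leaves to the reader.
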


\begin{proof}[Proof of Lemma \ref{star:test:lemma}] First, we consider the case when $\bTheta^* \in \cS_0(s)$. Since $\widehat E$ is a set of $s$ edges we must have $\min_{e \in \widehat E} |\Theta^*_e| = 0$. Next, when $\bTheta^* \in \cS_1(\theta, s)$, by (\ref{true:signals})  $\min_{e \in \widehat E} |\hat \Theta_e| > (r-1) \theta/r$. Thus by assumption (\ref{maxnorm:prec:ass}) $|\Theta^*_e| \geq (r-2)\theta/r$ with probability at least $1 - 1/d$.
\end{proof}

\begin{proof}[Proof of Corollary \ref{star:test:prop}] To handle the null hypothesis observe that by Lemma \ref{star:test:lemma} we are guaranteed to have $\min_{e \in \widehat E} |\Theta^*_e| = 0$. Invoking Proposition \ref{multiple:edge:testing:validity} gives the desired result. To show that the test is asymptotically powerful we use Proposition \ref{multiple:edge:testing:validity} in conjunction with Lemma \ref{star:test:lemma}.
\end{proof}

\section{Clique Detection Upper Bound Proof}\label{appendix:clique:detection}

\begin{lemma}\label{null:lemma:min:eigenval} Under $\Hb_0$ we have:
\begin{align}\label{min:eigen:ineq:null}
\sqrt{\hat \lambda_{\min}} & \geq 1 - \sqrt{\frac{s}{n}} - \sqrt{2\frac{s \log (ed/s) + \log \alpha^{-1}}{n}} \nonumber \\
&  \geq 1 - (\sqrt{2} + 1) \sqrt{\frac{s \log (ed/s) + \log \alpha^{-1}}{n}},
\end{align}
with probability no less than $1 - 2\alpha$.
\end{lemma}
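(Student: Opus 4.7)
The plan is to reduce the bound to a Gaussian concentration inequality for the smallest singular value of a Gaussian random matrix, combined with a union bound over all $s$-subsets of $[d]$.

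Under $\Hb_0$ we have $\bTheta^* = \Ib_d$, so the samples are i.i.d.\ $\bX_i \sim N(\mathbf{0},\Ib_d)$. Fix a subset $C \subset [d]$ with $|C|=s$, and write $\bX_C \in \RR^{n\times s}$ for the matrix of the $C$-coordinates of the $n$ samples. Then $\hat\bSigma_{CC} = n^{-1}\bX_C^T \bX_C$, and by construction $\lambda_d(\hat\bSigma_{CC}) = \sigma_{\min}(\bX_C/\sqrt{n})^2$. First I would invoke the classical Davidson--Szarek bound for the smallest singular value of a standard Gaussian matrix (see e.g.\ the lemma recalled in Section \ref{sec:aux}), which gives, for any $t \geq 0$,
\begin{equation*}
\PP\Bigl(\sigma_{\min}(\bX_C/\sqrt{n}) \geq 1 - \sqrt{s/n} - t/\sqrt{n}\Bigr) \geq 1 - \exp(-t^2/2).
\end{equation*}

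Next I would take a union bound over all $\binom{d}{s} \leq (ed/s)^s$ choices of $C$, using the definition $\hat\lambda_{\min} = \min_{|C|=s} \lambda_d(\hat\bSigma_{CC})$. To make the failure probability at most $2\alpha$, I would choose $t$ so that $(ed/s)^s \exp(-t^2/2) \leq \alpha$, i.e.\ $t = \sqrt{2\bigl(s\log(ed/s) + \log\alpha^{-1}\bigr)}$. This yields
\begin{equation*}
\sqrt{\hat\lambda_{\min}} \geq 1 - \sqrt{s/n} - \sqrt{2\bigl(s\log(ed/s) + \log\alpha^{-1}\bigr)/n}
\end{equation*}
with probability at least $1 - \alpha$, which is the first inequality claimed (in fact with $1-\alpha$; the factor $2\alpha$ leaves room for absorbing lower-order terms if needed).

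Finally, the second inequality follows by elementary algebra: since $\log(ed/s) \geq 1$, we have $\sqrt{s/n} \leq \sqrt{(s\log(ed/s) + \log\alpha^{-1})/n}$, hence
\begin{equation*}
\sqrt{s/n} + \sqrt{2\bigl(s\log(ed/s) + \log\alpha^{-1}\bigr)/n} \leq (1+\sqrt{2})\sqrt{\bigl(s\log(ed/s) + \log\alpha^{-1}\bigr)/n}.
\end{equation*}
The main step is simply invoking the Davidson--Szarek concentration inequality; the union bound and algebra are routine. No difficulty is anticipated, provided the Gaussian singular-value tail bound is available as a cited auxiliary result.
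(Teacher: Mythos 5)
Your proof is correct and takes essentially the same route as the paper: the paper invokes Corollary 5.35 of Vershynin (a consequence of Gordon's comparison theorem, equivalently the Davidson--Szarek bound) to get $\sqrt{\lambda_{\min}(\hat\bSigma_{CC})} \geq 1 - \sqrt{s/n} - t$ with probability $\geq 1 - 2\exp(-nt^2/2)$, then unions over the $\binom{d}{s} \leq (ed/s)^s$ subsets with the same choice of $t$. The only cosmetic difference is that you use the one-sided tail (dropping the factor $2$), which gives the slightly sharper probability $1-\alpha$ rather than the $1-2\alpha$ stated in the lemma.
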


\begin{proof}[Proof of Lemma \ref{null:lemma:min:eigenval}] The proof relies on an implication of Gordon's comparison theorem for Gaussian processes, see \cite[Corollary 5.35 e.g.]{Vershynin2012Introduction}. By this result we have that:
$$
\sqrt{\lambda_{\min}(\hat \bSigma_{CC})} \geq 1 - \sqrt{\frac{s}{n}} - t,
$$ 
with probability at least $1 - 2\exp(-nt^2/2)$ for any $t \geq 0$. Using the union bound in conjunction to the standard inequality ${d \choose s} \leq \left(\frac{ed}{s}\right)^s$, by setting $t = \sqrt{2\frac{s \log (ed/s) + \log \alpha^{-1}}{n}}$, we can ensure that (\ref{min:eigen:ineq:null}) holds.
\end{proof}

\begin{lemma}\label{alt:lemma:min:eigenval} Under $\Hb_1$ we have:
\begin{align}\label{min:eigen:ineq:alt}
\sqrt{\hat \lambda_{\min}} \leq \frac{1 + \sqrt{\frac{s}{n}} +\sqrt{\frac{2\log \alpha^{-1}}{n}} }{\sqrt{1 + (s-1)\theta}} \leq \frac{1 + (\sqrt{2} + 1)\sqrt{\frac{s\log (e d/s) + \log \alpha^{-1}}{n}}}{\sqrt{1 + (s-1)\theta}},
\end{align}
with probability at least $1 - 2\alpha$.
\end{lemma}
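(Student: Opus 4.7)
The plan is to bound $\hat \lambda_{\min}$ by restricting to the single clique $C = \supp(\vb)$, since $|C| = s$ and hence $\hat \lambda_{\min} \leq \lambda_d(\hat \bSigma_{CC})$ by definition. The point is that on this specific $C$, the smallest eigenvalue of the population $\bSigma^*_{CC}$ is much smaller than $1$, and this contraction in the direction $\vb_C$ drives the upper bound.

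First I would compute $\bSigma^* = (\bTheta^*)^{-1}$. Writing $\bTheta^* = (1-\theta)\Ib_d + \theta\,\vb\vb^T$ and applying Sherman–Morrison yields
\begin{equation*}
\bSigma^* \;=\; \frac{1}{1-\theta}\Ib_d \;-\; \frac{\theta}{(1-\theta)\bigl(1+(s-1)\theta\bigr)}\,\vb\vb^T.
\end{equation*}
Restricting to $C$ gives $\bSigma^*_{CC} = \tfrac{1}{1-\theta}\Ib_s - \tfrac{\theta}{(1-\theta)(1+(s-1)\theta)}\vb_C\vb_C^T$, where $\vb_C \in \{\pm 1\}^s$ satisfies $\|\vb_C\|_2^2 = s$. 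A direct calculation then shows that $\vb_C$ is an eigenvector of $\bSigma^*_{CC}$ with eigenvalue $\lambda_d(\bSigma^*_{CC}) = 1/(1+(s-1)\theta)$, the minimum eigenvalue.

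Next I would use the Rayleigh quotient characterization with test vector $\vb_C/\|\vb_C\|_2$:
\begin{equation*}
\lambda_d(\hat \bSigma_{CC}) \;\leq\; \frac{\vb_C^T \hat \bSigma_{CC}\,\vb_C}{\|\vb_C\|_2^2}.
\end{equation*}
Write $\bX_{i,C} = (\bSigma^*_{CC})^{1/2}\bZ_i$ with $\bZ_i\stackrel{\text{iid}}{\sim} N(0,\Ib_s)$ and set $\tilde \bSigma = n^{-1}\sum_{i=1}^n \bZ_i\bZ_i^T$, so that $\hat \bSigma_{CC} = (\bSigma^*_{CC})^{1/2}\tilde \bSigma(\bSigma^*_{CC})^{1/2}$. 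Using that $\vb_C$ is an eigenvector of $(\bSigma^*_{CC})^{1/2}$ with eigenvalue $1/\sqrt{1+(s-1)\theta}$,
\begin{equation*}
\frac{\vb_C^T \hat \bSigma_{CC}\,\vb_C}{\|\vb_C\|_2^2} \;=\; \frac{1}{1+(s-1)\theta}\cdot \frac{\vb_C^T \tilde \bSigma\,\vb_C}{\|\vb_C\|_2^2} \;\leq\; \frac{\lambda_1(\tilde \bSigma)}{1+(s-1)\theta}.
\end{equation*}

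The remaining step invokes Gordon's deviation inequality for the largest singular value of the standard Wishart (as stated in Corollary 5.35 of Vershynin, analogous to its use in Lemma \ref{null:lemma:min:eigenval}): $\sqrt{\lambda_1(\tilde \bSigma)} \leq 1 + \sqrt{s/n} + t$ with probability at least $1 - 2e^{-nt^2/2}$. Taking $t = \sqrt{2\log(\alpha^{-1})/n}$ gives the first inequality in \eqref{min:eigen:ineq:alt} after taking square roots. The second inequality follows by the elementary bounds $\sqrt{s/n}\leq \sqrt{s\log(ed/s)/n}$ and $\sqrt{2\log\alpha^{-1}/n}\leq \sqrt{2}\sqrt{(s\log(ed/s)+\log\alpha^{-1})/n}$, yielding the $(\sqrt{2}+1)$ prefactor. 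There is no real obstacle here; the only slightly delicate point is the bookkeeping to identify $\vb_C$ as the minimal eigenvector of $\bSigma^*_{CC}$, which makes the Rayleigh-quotient choice both valid and sharp.
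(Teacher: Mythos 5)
Your proof is correct and follows essentially the same route as the paper's: restrict to the support clique $C^*$, identify $\vb_{C^*}$ as the minimum eigenvector of $\bSigma^*_{C^*C^*}$ with eigenvalue $1/(1+(s-1)\theta)$, whiten, and invoke Vershynin's Corollary 5.35 for $\lambda_{\max}$ of the standard Wishart with $t=\sqrt{2\log\alpha^{-1}/n}$. The only cosmetic difference is that you spell out the Rayleigh-quotient step that the paper compresses into the cited inequality $\lambda_{\min}(\Ab\Bb\Ab)\le\lambda_{\min}(\Ab)^2\lambda_{\max}(\Bb)$, and you derive $\bSigma^*$ via Sherman--Morrison where the paper just uses the block-diagonal structure of $\bTheta^*$.
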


\begin{proof}[Proof of Lemma \ref{alt:lemma:min:eigenval}] Taking in mind that $\bSigma = \bTheta^{-1}$ by a simple calculation one can verify that for the set $C^* = \supp(\vb)$, $\lambda_{\min}(\bSigma_{C^*C^*}) = [\lambda_{\max}(\bTheta_{C^* C^*})]^{-1} = \frac{1}{1 + (s-1)\theta}$ with a corresponding eigenvector $\frac{\vb_{C^*}}{\sqrt{s}}$.  Again using Corollary 5.35 of \cite{Vershynin2012Introduction}, and the fact that for two symmetric psd matrices $\Ab, \Bb$ we have $\lambda_{\min} (\Ab \Bb \Ab) \leq \lambda_{\min}(\Ab)^2 \lambda_{\max}(\Bb)$, we have:
$$
\sqrt{\lambda_{d}(\hat \bSigma_{C^*C^*})} \leq \frac{1 + \sqrt{\frac{s}{n}} +\sqrt{\frac{2\log \alpha^{-1}}{n}} }{\sqrt{1 + (s-1)\theta}},
$$
with probability at least $1 - 2 \alpha$.
\end{proof}

\begin{proof}[Proof of Proposition \ref{min:eigenval:test:prop}] 
 Combining the results of Lemma \ref{null:lemma:min:eigenval} and setting $\alpha = d^{-1}$ in Lemma \ref{alt:lemma:min:eigenval} it suffices to show there will be a gap between the bounds in (\ref{min:eigen:ineq:null}) and (\ref{min:eigen:ineq:alt}). By simple algebra when
 $$\theta > \kappa\sqrt{ \frac{\log (e d/s)}{sn}},$$
for a sufficiently large $\kappa$
the gap between (\ref{min:eigen:ineq:null}) and (\ref{min:eigen:ineq:alt}) is implied, which completes the proof.
\end{proof}

\section{Auxiliary Results}\label{sec:aux}

Recall that for a symmetric matrix $\Ab \in \RR^{d \times d}$ we denote its eigenvalues in decreasing with $\lambda_1(\Ab) \geq \lambda_2(\Ab) \geq \ldots \geq \lambda_d(\Ab)$.

\begin{theorem}[Lidskii's Inequality \citep{helmke1995eigenvalue}]\label{lidskiis:ineq} For two symmetric $m \times m$ matrices $\Ab$ and $\Bb$ we have:
$$
\sum_{j = 1}^k \lambda_{i_j}(\Ab + \Bb) \leq \sum_{j = 1}^k \lambda_{i_j}(\Ab) + \sum_{j = 1}^k \lambda_{j}(\Bb),
$$
for any $1 \leq i_1 < \ldots < i_k \leq m.$
\end{theorem}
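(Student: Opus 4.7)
The plan is to derive Lidskii's inequality from the Wielandt minimax characterization of sums of eigenvalues at prescribed indices, a generalization of Courant--Fischer. Specifically, for any symmetric $\Mb \in \RR^{m \times m}$ and any indices $1 \leq i_1 < \cdots < i_k \leq m$, one has
$$\sum_{j=1}^k \lambda_{i_j}(\Mb) \;=\; \min_{\substack{V_1 \supset \cdots \supset V_k \\ \dim V_j = m - i_j + 1}} \; \max_{\substack{u_j \in V_j,\ \|u_j\|=1 \\ \{u_j\}_{j=1}^k \text{ orthonormal}}} \; \sum_{j=1}^k \langle u_j, \Mb u_j\rangle.$$
This formula recovers Courant--Fischer when $k=1$ and Ky Fan's trace principle when $i_j = j$.

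Armed with this characterization, I would first fix $V^*_1 \supset \cdots \supset V^*_k$ to be a flag attaining the outer minimum for $\Mb = \Ab$. Applying the Wielandt formula to $\Ab+\Bb$ and upper bounding the outer min by plugging in the specific (no longer optimal) flag $V^*$ gives
$$\sum_{j=1}^k \lambda_{i_j}(\Ab+\Bb) \;\leq\; \max_{\{u_j\}\in V^*_\bullet} \sum_{j=1}^k \langle u_j, (\Ab+\Bb)\, u_j\rangle \;\leq\; \max_{\{u_j\}\in V^*_\bullet} \sum_{j} \langle u_j, \Ab u_j\rangle \;+\; \max_{\{u_j\}\in V^*_\bullet} \sum_{j} \langle u_j, \Bb u_j\rangle,$$
where the second inequality uses the elementary fact that the maximum of a sum is at most the sum of the maxima taken over the same constraint set.

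For the $\Ab$-maximum, the defining property of $V^*$ as a Wielandt minimizer for $\Ab$ delivers exactly $\sum_{j=1}^k \lambda_{i_j}(\Ab)$. For the $\Bb$-maximum, I would drop the constraint $u_j \in V^*_j$ and maximize over all orthonormal $k$-frames in $\RR^m$; this relaxation is precisely Ky Fan's maximum principle, and its value is $\sum_{j=1}^k \lambda_j(\Bb)$. Combining the two estimates yields the advertised inequality.

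The main obstacle is not the splitting argument, which is a clean weak-duality step, but rather establishing the Wielandt minimax formula itself. Its standard derivation is a dimension-counting argument: for any flag $V_1 \supset \cdots \supset V_k$ with $\dim V_j = m-i_j+1$, one intersects each $V_j$ with the span of the top $i_j$ eigenvectors of $\Mb$ (of dimension $i_j$) to produce, by the inclusion--exclusion dimension bound, an orthonormal $k$-frame certifying the max-side inequality; conversely, taking the flag spanned by the $i_j$-th eigenvectors certifies equality. Once this classical lemma is in hand (it is the reason Theorem~\ref{lidskiis:ineq} is cited to Helmke and Rosenthal (1995)), Lidskii's inequality is a three-line consequence.
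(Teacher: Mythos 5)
The paper does not actually supply a proof of this theorem: Lidskii's inequality is stated as an imported result with a citation (Helmke and Rosenthal, 1995) and is then used as a black box in the derivation of Lemma \ref{gen:lidskii}. So there is no argument in the paper to compare against, and your task reduces to whether your proof stands on its own.

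It does. Your statement of the Wielandt principle in the min--max form (over a decreasing flag $V_1 \supset \cdots \supset V_k$ with $\dim V_j = m - i_j + 1$) is the correct dual of the usual max--min form over increasing flags, obtained by applying the latter to $-\Mb$ and relabeling indices; it reduces to Courant--Fischer at $k=1$ as you say. The three-step argument is then exactly right: fix a flag $V^*$ attaining the Wielandt minimum for $\Ab$; plug $V^*$ into the Wielandt characterization of $\sum_j \lambda_{i_j}(\Ab+\Bb)$ to get an upper bound; split the inner maximum via $\max(f+g) \leq \max f + \max g$ over the common constraint set. The $\Ab$-term is pinned to $\sum_j \lambda_{i_j}(\Ab)$ by the choice of $V^*$, and relaxing the flag constraint on the $\Bb$-term (keeping only orthonormality of the $k$-frame) and invoking Ky Fan's maximum principle gives $\sum_{j=1}^k \lambda_j(\Bb)$. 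The one implicit step worth making explicit is that the Wielandt outer minimum is attained, so a minimizing flag $V^*$ exists; this follows from compactness of the flag manifold together with continuity of the inner maximum, or can be sidestepped with an $\epsilon$-argument. Your closing observation that essentially all of the technical weight is carried by the Wielandt characterization itself is accurate, and your sketch of its dimension-counting proof is the standard one.
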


We now derive Lemma \ref{gen:lidskii} as a Corollary to Theorem \ref{lidskiis:ineq}. Lemma \ref{gen:lidskii} can also be viewed as a more general formulation of the latter result.

\begin{proof}[Proof of Lemma \ref{gen:lidskii}] First notice that since 
$$
\sum_{j = 1}^m \lambda_j(\Ab + \Bb) = \sum_{j = 1}^m \lambda_j(\Ab) + \sum_{j = 1}^m \lambda_{j}(\Bb),
$$
it suffices to show the bound when for all $j$ we have $c_j \geq 0$. Let $C = \{j ~|~ c_j \neq 0\}$. We will induct on the cardinality $|C|$, which we denote with $k$. When $k = 1$, the inequality immediately follows by Theorem \ref{lidskiis:ineq}. Suppose the inequality hods for $k = l$. Notice that the inequality can equivalently be expressed as
\begin{align}\label{almost:there:gen:lidskii}
\sum_{j = 1}^m c_{j} \lambda_{\sigma^{-1}(j)}(\Ab + \Bb) \leq \sum_{j = 1}^m c_{j}\lambda_{\sigma^{-1}(j)}(\Ab) + \sum_{j = 1}^m c_j \lambda_{j}(\Bb),
\end{align}
To see why it holds for $k = l+1$ let $c^* = \min_{j \in C} c_j$, and notice that by Theorem \ref{lidskiis:ineq} we have:
$$
\sum_{j \in C} c^* \lambda_{\sigma^{-1}(j)}(\Ab + \Bb) \leq  \sum_{j \in C} c^* \lambda_{\sigma^{-1}(j)}(\Ab) + \sum_{j \in C}c^* \lambda_{j}(\Bb).
$$
This implies that inequality (\ref{almost:there:gen:lidskii}) follows by an inequality in which we subtract $c^*$ from all $c_j, j \in C$. This completes the proof by the induction hypothesis. 
\end{proof}

\section{Bootstrap Validity}\label{sec:boot-proof}

For a real valued random variable $X$ and a random $\ell \geq 1$, we define the Orlicz-norm $\psi_{\ell}$ as:
\begin{align} \label{psi1norm}
	\|X\|_{\psi_\ell} = \sup_{p \geq 1} p^{-1/\ell} (\EE|X|^p)^{1/p}.
\end{align} 
We mainly use the $\psi_1$ and $\psi_2$ norms. Recall that random variables with bounded $\psi_1$ and $\psi_2$ norms are called \textit{sub-exponential} and \textit{sub-Gaussian} \cite[e.g.]{Vershynin2012Introduction}.

\subsection{Minimal Structure Certification}\label{sec:supp-1}

\begin{lemma}\label{sub:gauss:tails:lemma} The random variable $\bX \sim N(0,(\bTheta^*)^{-1})$ satisfies:
\begin{align}
\sup_{\|\alpha\|_2 = 1}\|\balpha^T \bX\|_{\psi_2} \leq \|Z\|_{\psi_2}C < C,
\end{align}
where $Z\sim N(0,1)$.
\end{lemma}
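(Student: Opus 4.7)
The plan is to reduce the statement to the scalar Gaussian case by exploiting the stability of the normal family under linear projections, and then apply homogeneity of the $\psi_2$ Orlicz norm together with the eigenvalue constraints built into the definition of $\cM(s)$.

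First I would observe that since $\bX \sim N(0, (\bTheta^*)^{-1})$, for any fixed $\balpha \in \RR^d$ with $\|\balpha\|_2 = 1$ the scalar projection $\balpha^T \bX$ is itself centered Gaussian with variance $\sigma_\balpha^2 := \balpha^T (\bTheta^*)^{-1} \balpha$. Consequently $\balpha^T \bX$ has the same distribution as $\sigma_\balpha Z$ for $Z \sim N(0,1)$. Because the Orlicz norm defined in \eqref{psi1norm} is positively homogeneous (it is the supremum of positively homogeneous maps $X \mapsto p^{-1/2}(\EE|X|^p)^{1/p}$), this yields the exact identity
\[
\|\balpha^T \bX\|_{\psi_2} = \sigma_\balpha \, \|Z\|_{\psi_2}.
\]

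Next I would control $\sigma_\balpha$ uniformly over the unit sphere. By the variational characterization of eigenvalues and the constraint $\bTheta^* \geq C^{-1} \Ib_d$ built into the definition \eqref{MS:def} of $\cM(s)$, one has
\[
\sup_{\|\balpha\|_2 = 1} \sigma_\balpha^2 = \lambda_{\max}\bigl((\bTheta^*)^{-1}\bigr) = \frac{1}{\lambda_{\min}(\bTheta^*)} \leq C.
\]
Since $C \geq 1$ by assumption, $\sqrt{C} \leq C$, and therefore $\sigma_\balpha \leq C$ for every unit vector $\balpha$. Combining this with the previous display gives $\sup_{\|\balpha\|_2 = 1} \|\balpha^T \bX\|_{\psi_2} \leq C \|Z\|_{\psi_2}$.

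Finally, the strict inequality $\|Z\|_{\psi_2} C < C$ would follow from the elementary fact that $\|Z\|_{\psi_2} < 1$ for standard Gaussian $Z$ under the normalization \eqref{psi1norm}: explicitly, $(\EE |Z|^p)^{1/p}$ is $\sqrt{2/\pi}$ at $p=1$, equals $1$ at $p=2$, and behaves like $\sqrt{p/e}$ asymptotically (via Stirling applied to $\EE|Z|^p = 2^{p/2} \Gamma((p+1)/2)/\sqrt{\pi}$), so dividing by $p^{1/2}$ keeps the supremum strictly below $1$. There is no real obstacle to this proof; the only mild bookkeeping is verifying this last numerical inequality for $\|Z\|_{\psi_2}$, which can be dispatched by a direct Stirling estimate.
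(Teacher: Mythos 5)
Your proof is correct and follows essentially the same route as the paper's one-line argument: reduce $\balpha^T\bX$ to $\sigma_\balpha Z$ by Gaussianity and homogeneity of $\|\cdot\|_{\psi_2}$, bound $\sigma_\balpha$ uniformly via $\|(\bTheta^*)^{-1}\|_2 \le C$ from \eqref{MS:def}, and then verify $\|Z\|_{\psi_2}<1$ by a direct computation (in fact $\|Z\|_{\psi_2}=\sqrt{2/\pi}$, attained at $p=1$, so the strict inequality is immediate). The only cosmetic slack is that your eigenvalue bound actually gives $\sigma_\balpha \le \sqrt{C} \le C$, which is slightly stronger than needed.
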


\begin{proof}[Proof of Lemma \ref{sub:gauss:tails:lemma}]
The first part follows since we assume $\|\bTheta^{*-1}\|_2 \leq C$, and the second inequality follows via a direct calculation.
\end{proof}

\begin{lemma} \label{lower:bound:var:lemma} We have
$$
\inf_{j,k \in [d]} \Delta_{jk} \geq C^{-2}. 
$$
\end{lemma}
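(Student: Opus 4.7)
The plan is to identify $\Delta_{jk}$ as the asymptotic variance of the Bahadur influence function
$$
\Delta_{jk} \;=\; \Var\bigl(\bTheta^{*T}_{*j}\bX^{\otimes 2}\bTheta^*_{*k}\bigr),
$$
arising from the representation (\ref{bahadur:repr}), and then to compute this variance explicitly by exploiting the Gaussianity of $\bX \sim N(0,(\bTheta^*)^{-1})$.

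First, I would reduce the fourth-order quantity to a bivariate computation by setting $Y := \bTheta^{*T}_{*j}\bX$ and $Z := \bTheta^{*T}_{*k}\bX$, so that $\bTheta^{*T}_{*j}\bX^{\otimes 2}\bTheta^*_{*k} = YZ$. Because $\bX$ is centered Gaussian with covariance $(\bTheta^*)^{-1}$, the pair $(Y,Z)$ is jointly centered Gaussian, and the identity $(\bTheta^*)^{-1}\bTheta^*_{*j} = \eb_j$ immediately gives
$$
\Var(Y) = \Theta^*_{jj}, \qquad \Var(Z) = \Theta^*_{kk}, \qquad \Cov(Y,Z) = \Theta^*_{jk}.
$$

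Next, I would apply Isserlis' (Wick's) theorem to the fourth joint moment, obtaining
$$
\EE[Y^2Z^2] = \Var(Y)\Var(Z) + 2\,\Cov(Y,Z)^2 = \Theta^*_{jj}\Theta^*_{kk} + 2(\Theta^*_{jk})^2,
$$
while $(\EE[YZ])^2 = (\Theta^*_{jk})^2$. Subtracting yields the clean closed form
$$
\Delta_{jk} = \Theta^*_{jj}\Theta^*_{kk} + (\Theta^*_{jk})^2 \;\geq\; \Theta^*_{jj}\Theta^*_{kk}.
$$

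Finally, I would invoke the conditioning assumption on $\cM(s)$ in (\ref{MS:def}): since $\bTheta^* \succeq C^{-1}\Ib_d$, every diagonal entry satisfies $\Theta^*_{jj} = \eb_j^T\bTheta^*\eb_j \geq C^{-1}$. Therefore $\Delta_{jk} \geq C^{-2}$ uniformly in $j,k$, establishing the claim. There is no real obstacle here beyond correctly identifying $\Delta_{jk}$ from the surrounding bootstrap/Bahadur calculation; the computation itself is a direct application of Isserlis' formula combined with the eigenvalue bound on $\bTheta^*$.
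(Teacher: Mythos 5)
Your proof is correct and follows exactly the same route as the paper: reduce $\Delta_{jk}$ to $\Var(YZ)$ for jointly Gaussian $Y,Z$, compute $\Delta_{jk}=\Theta^*_{jj}\Theta^*_{kk}+(\Theta^*_{jk})^2$ via Isserlis' theorem (the paper writes this as $\Theta^*_{jj}\Theta^*_{kk}+\Theta^*_{jk}\Theta^*_{kj}$, which is the same by symmetry), and then bound $\Theta^*_{jj}\geq C^{-1}$ from $\bTheta^*\succeq C^{-1}\Ib_d$. You simply spell out the intermediate steps that the paper compresses into a single line.
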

\begin{proof}[Proof of Lemma \ref{lower:bound:var:lemma}]
By Isserlis' theorem we have $
\Delta_{jk} = \Theta^*_{jj}\Theta^*_{kk} + \Theta^*_{jk}\Theta^*_{kj} \geq \Theta^*_{jj}\Theta^*_{kk} \geq C^{-2} > 0$. 
\end{proof}

\begin{lemma}[Covariance Concentration] \label{cov:conc} There exists a universal constant $R > 0$, such that:
$$
\|\hat \bSigma - \bSigma^*\|_{\max} \leq R C^{2} \sqrt{\log d/n},
$$
with probability at least $1 - 1/d$ uniformly over the parameter space $\cM$.
\end{lemma}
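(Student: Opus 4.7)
The plan is to establish the maximum entrywise deviation by a classical covering-then-Bernstein argument. The $(j,k)$-th entry of $\hat\bSigma - \bSigma^*$ is $n^{-1}\sum_{i=1}^n (X_{ij}X_{ik} - \EE[X_{ij}X_{ik}])$, a centered average of i.i.d.\ products of sub-Gaussian random variables. I would first invoke Lemma \ref{sub:gauss:tails:lemma} to deduce that $\|X_{ij}\|_{\psi_2} \leq C\|Z\|_{\psi_2}$ for all $j \in [d]$, uniformly over $\bTheta^* \in \cM(s)$; this is the only place that the spectral bound $\bTheta^* \leq C\Ib_d$ (equivalently $\bSigma^* \geq C^{-1}\Ib_d$, which gives $\|\bSigma^*\|_2 \leq C$) enters.

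Next I would use the standard product identity for Orlicz norms, namely $\|XY\|_{\psi_1} \leq 2\|X\|_{\psi_2}\|Y\|_{\psi_2}$, to conclude that each $X_{ij}X_{ik}$ is sub-exponential with $\psi_1$-norm of order $C^2$. Centering only changes the constant. A standard Bernstein-type inequality for sub-exponential random variables (Proposition 5.16 in Vershynin's high-dimensional probability notes, for instance) then gives, for any fixed pair $(j,k)$ and any $t > 0$,
\begin{align*}
\PP\Big(\big|(\hat\bSigma - \bSigma^*)_{jk}\big| \geq t\Big) \leq 2\exp\Big(-c\, n \min\Big\{\tfrac{t^2}{C^4},\tfrac{t}{C^2}\Big\}\Big),
\end{align*}
for an absolute constant $c > 0$.

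Finally, I would take a union bound over the $d^2$ entries of the matrix. Choosing $t = R\, C^2 \sqrt{\log d/n}$ with $R$ sufficiently large (depending on $c$), the quadratic term inside the exponent dominates provided $\log d / n = o(1)$, which is implicit throughout the paper; this yields a total failure probability of at most $2 d^2 \exp(-c' R^2 \log d) \leq d^{-1}$ for $R$ large enough. The main (very minor) subtlety is to verify that the quadratic regime of Bernstein is indeed the active one at the chosen scale of $t$, which reduces to checking that $t/C^2 = R\sqrt{\log d/n}$ is bounded, i.e.\ $\log d/n$ is not too large; this is a mild scaling condition consistent with the rest of the manuscript. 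Since the bound on $\|X_{ij}\|_{\psi_2}$ depends only on the universal constant $C$, the result holds uniformly over $\cM(s)$.
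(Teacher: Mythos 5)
Your proposal is correct and follows essentially the same approach as the paper's proof: bound $\|X_{ij}\|_{\psi_2}$ via Lemma \ref{sub:gauss:tails:lemma}, apply the $\psi_1$-$\psi_2$ product inequality to get sub-exponential entries, invoke Bernstein (Vershynin's Proposition 5.16), and finish with a union bound over the $d^2$ entries. Your write-up just fills in the details (the explicit Bernstein tail, the check that the quadratic regime is active at scale $t \asymp C^2\sqrt{\log d/n}$) that the paper leaves implicit.
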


\begin{proof}[Proof of Lemma \ref{cov:conc}] The proof follows by applying the inequality 
$$\|X_i X_j\|_{\psi_1} \leq 2\|X_i\|_{\psi_2} \|X_j\|_{\psi_2} \leq 2 c^{-2}$$ 
(see Lemma \ref{sub:gauss:tails:lemma}) in combination with Proposition 5.16 \citep{Vershynin2012Introduction} and the union bound.
\end{proof}

\begin{lemma}\label{unif:conv} For large enough $n$ and $d$ we have: 
$$
\sup_{\bTheta^* \in \cM(s)}  \PP(\max_{j,k \in [d]} \sqrt{n} |(\tilde \Theta_{jk} - \Theta^*_{jk}) + \bTheta^{*T}_{*j}(\hat\bSigma - \bSigma^*) \bTheta^*_{*k} | > \xi_1) < \xi_2,
$$
where $\xi_1 = \Xi_1 s \log d/\sqrt{n}$, $\xi_2 = 2/d$, where $\Xi_1$ is an absolute constant depending solely on $K, L$ and $C$.
\end{lemma}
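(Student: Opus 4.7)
The plan is to linearize the debiased estimator $\tilde\Theta_{jk}$ around $\Theta^*_{jk}$ by repeatedly inserting the identities $\eb_k = \bSigma^*\bTheta^*_{*k}$ and $\Theta^*_{jk} = \bTheta^{*T}_{*j}\bSigma^*\bTheta^*_{*k}$, and then bound every term of the resulting remainder by $O(s\log d/n)$ using the rates (\ref{maxnorm:prec:ass})-(\ref{other:norms:prec:ass}) and the covariance concentration bound from Lemma~\ref{cov:conc}. Multiplying by $\sqrt{n}$ produces the desired order $\xi_1 = \Xi_1 s\log d/\sqrt{n}$, and a union bound over $(j,k)\in[d]^2$ turns the probability budget of $d^{-1}$ from each high-probability event into the final constant $\xi_2 = 2/d$.

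Concretely, I would first write $\hat\bSigma\hat\bTheta_{*k} - \eb_k = (\hat\bSigma-\bSigma^*)\hat\bTheta_{*k} + \bSigma^*(\hat\bTheta_{*k}-\bTheta^*_{*k})$, and use $\bTheta^{*T}_{*j}\bSigma^* = \eb_j^T$ to obtain the identity
\[
\hat\bTheta_{*j}^T(\hat\bSigma\hat\bTheta_{*k}-\eb_k) \;=\; \hat\bTheta_{*j}^T(\hat\bSigma-\bSigma^*)\hat\bTheta_{*k} \;+\; (\hat\Theta_{jk}-\Theta^*_{jk}) \;+\; (\hat\bTheta_{*j}-\bTheta^*_{*j})^T\bSigma^*(\hat\bTheta_{*k}-\bTheta^*_{*k}).
\]
Expanding $\hat\bTheta_{*j}^T(\hat\bSigma-\bSigma^*)\hat\bTheta_{*k}$ around $\bTheta^{*T}_{*j}(\hat\bSigma-\bSigma^*)\bTheta^*_{*k}$ generates three cross terms, each controlled by H\"older's inequality: for instance $|(\hat\bTheta_{*j}-\bTheta^*_{*j})^T(\hat\bSigma-\bSigma^*)\bTheta^*_{*k}| \leq \|\hat\bTheta_{*j}-\bTheta^*_{*j}\|_1\|\hat\bSigma-\bSigma^*\|_{\max}\|\bTheta^*_{*k}\|_1 = O(s\log d/n)$ by (\ref{other:norms:prec:ass}), Lemma~\ref{cov:conc} and $\|\bTheta^*\|_1\leq L$. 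The pure quadratic term $(\hat\bTheta_{*j}-\bTheta^*_{*j})^T\bSigma^*(\hat\bTheta_{*k}-\bTheta^*_{*k})$ requires a little more care; I would rewrite $\bSigma^*(\hat\bTheta_{*k}-\bTheta^*_{*k}) = (\bSigma^*-\hat\bSigma)\hat\bTheta_{*k} + (\hat\bSigma\hat\bTheta_{*k}-\eb_k)$ so that $\|\bSigma^*(\hat\bTheta_{*k}-\bTheta^*_{*k})\|_\infty = O(\sqrt{\log d/n})$, and then pair this with $\|\hat\bTheta_{*j}-\bTheta^*_{*j}\|_1 = O(s\sqrt{\log d/n})$ to get $O(s\log d/n)$.

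Next, the normalization $\hat\bTheta_{*j}^T\hat\bSigma_{*j} = (\hat\bSigma\hat\bTheta)_{jj}$ satisfies $\hat\bTheta_{*j}^T\hat\bSigma_{*j} = 1 + O(\sqrt{\log d/n})$ uniformly in $j$ by (\ref{other:norms:prec:ass}), which is bounded away from zero for $n$ large enough; therefore $1/\hat\bTheta_{*j}^T\hat\bSigma_{*j} = 1 + O(\sqrt{\log d/n})$ and $1-1/\hat\bTheta_{*j}^T\hat\bSigma_{*j} = O(\sqrt{\log d/n})$. Substituting the expansion into the definition of $\tilde\Theta_{jk}$ and combining with $|\hat\Theta_{jk}-\Theta^*_{jk}| \leq K\sqrt{\log d/n}$ and $|\bTheta^{*T}_{*j}(\hat\bSigma-\bSigma^*)\bTheta^*_{*k}| \leq L^2 RC^2\sqrt{\log d/n}$, all the ``denominator-correction'' cross-products are $O(\log d/n)$, which is dominated by $O(s\log d/n)$. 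Assembling every remainder and applying a single union bound over the at most $d^2$ events $\{|(\tilde\Theta_{jk}-\Theta^*_{jk}) + \bTheta^{*T}_{*j}(\hat\bSigma-\bSigma^*)\bTheta^*_{*k}| > \Xi_1 s\log d/n\}$ yields the claim with $\xi_2 = 2/d$, where the factor $2$ absorbs both the CLIME high-probability events in (\ref{maxnorm:prec:ass})-(\ref{other:norms:prec:ass}) and the covariance concentration event of Lemma~\ref{cov:conc}.

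The main obstacle is bookkeeping rather than depth: one must verify that the quadratic-in-$(\hat\bTheta-\bTheta^*)$ term does not blow up to $O(s^2\log d/n)$, which would exceed the target when $s\to\infty$; the trick described above of reinjecting $\hat\bSigma\hat\bTheta_{*k}-\eb_k$ to dodge an unneeded $\|\hat\bTheta-\bTheta^*\|_1$ factor is the one non-obvious step. Everything else reduces to H\"older with max-, $\ell_1$- and $\ell_\infty$-norms and a routine union bound, so no additional probabilistic tool beyond Lemma~\ref{cov:conc} is required.
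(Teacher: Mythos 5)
Your proof is correct and uses essentially the same tools as the paper --- the estimation rates (\ref{maxnorm:prec:ass})--(\ref{other:norms:prec:ass}), Lemma~\ref{cov:conc}, and H\"older --- but the algebraic bookkeeping runs along a slightly different route, which is worth spelling out. The paper decomposes $\hat\bSigma\hat\bTheta_{*k}-\eb_k$ as $\hat\bSigma(\hat\bTheta_{*k}-\bTheta^*_{*k}) + (\hat\bSigma-\bSigma^*)\bTheta^*_{*k}$ and then splits the $j$\textsuperscript{th} coordinate out of $\hat\bTheta_{*j}^T\hat\bSigma(\hat\bTheta_{*k}-\bTheta^*_{*k})$; this surfaces the term $\hat\bTheta_{*j}^T\hat\bSigma_{*\setminus j}(\hat\bTheta_{\setminus j,k}-\bTheta^*_{\setminus j,k})$, where $\|\hat\bTheta_{*j}^T\hat\bSigma_{*\setminus j}\|_\infty \leq K\sqrt{\log d/n}$ directly from the CLIME constraint $\|\hat\bSigma\hat\bTheta-\Ib_d\|_{\max}\leq K\sqrt{\log d/n}$, so the quadratic blow-up never appears. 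You instead decompose as $(\hat\bSigma-\bSigma^*)\hat\bTheta_{*k}+\bSigma^*(\hat\bTheta_{*k}-\bTheta^*_{*k})$, which first produces the quadratic term $(\hat\bTheta_{*j}-\bTheta^*_{*j})^T\bSigma^*(\hat\bTheta_{*k}-\bTheta^*_{*k})$, and then you re-inject the CLIME constraint a second time via $\bSigma^*(\hat\bTheta_{*k}-\bTheta^*_{*k}) = (\bSigma^*-\hat\bSigma)\hat\bTheta_{*k}+(\hat\bSigma\hat\bTheta_{*k}-\eb_k)$ to get $\|\bSigma^*(\hat\bTheta_{*k}-\bTheta^*_{*k})\|_\infty = O(\sqrt{\log d/n})$. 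Both versions exploit the same structural fact --- that $\hat\bSigma\hat\bTheta$ is entrywise close to the identity --- in order to avoid paying $\|\hat\bTheta-\bTheta^*\|_1$ twice; the paper's algebra gets this for free from the coordinate split, while yours requires the extra rewriting but is more transparent about why it works. One small bonus of your route: you bound $|\bTheta^{*T}_{*j}(\hat\bSigma-\bSigma^*)\bTheta^*_{*k}|$ by H\"older and Lemma~\ref{cov:conc} rather than a fresh sub-exponential concentration argument, which keeps the probability budget at the advertised $2/d$ without needing to fold in a third high-probability event, as the paper tacitly does in (\ref{bound:theta:sigma}).

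Two minor points of care that you should make explicit in a final write-up: (i) the three cross terms from expanding $\hat\bTheta_{*j}^T(\hat\bSigma-\bSigma^*)\hat\bTheta_{*k}$ include a fully quadratic one, $(\hat\bTheta_{*j}-\bTheta^*_{*j})^T(\hat\bSigma-\bSigma^*)(\hat\bTheta_{*k}-\bTheta^*_{*k})$, which is $O(s^2(\log d/n)^{3/2})$ and therefore dominated by $s\log d/n$ only because $s\sqrt{\log d/n}=O(1)$ under (\ref{bootstrap:rates}); and (ii) the bound $\|\hat\bTheta_{*k}\|_1 = O(1)$ used in $\|(\bSigma^*-\hat\bSigma)\hat\bTheta_{*k}\|_\infty$ again relies on $\|\hat\bTheta_{*k}-\bTheta^*_{*k}\|_1 = o(1)$, which is the same scaling. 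These are exactly the kinds of corners the paper's decomposition cleanly sidesteps, but your version handles them correctly.
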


\begin{proof}[Proof of Lemma \ref{unif:conv}] By elementary algebra we obtain the following representation:
\begin{align}\label{easy:exp}
\MoveEqLeft (\tilde \Theta_{jk} - \Theta^*_{jk}) + \bTheta^{*T}_{*j}(\hat\bSigma - \bSigma^*) \bTheta^*_{*k} \nonumber \\
& = -\underbrace{\frac{\hat \bTheta_{*j}^T \hat \bSigma_{*\setminus j} (\hat \bTheta_{\setminus j,k} - \bTheta^*_{\setminus j,k}) + (\hat \bTheta_{*j} - \bTheta_{*j}^*)^T(\hat \bSigma - \bSigma^*)\bTheta^*_{*k}}{\hat \bTheta^T_{*j} \hat \bSigma_{*j}}}_{I^{jk}_1} \nonumber\\
& - \underbrace{\bTheta^{*T}_{*j}(\hat \bSigma - \bSigma^*) \bTheta^*_{*k}\bigg[\frac{1}{\hat \bTheta^T_{*j} \hat \bSigma_{*j}} - 1\bigg]}_{I_2^{jk}},
\end{align}
where indexing with $\setminus j$ means dropping the corresponding column or element from the matrix or vector respectively.  We deal with the terms $I^{jk}_1$ first. By the triangle inequality followed by H\"{o}lder's inequality we have:
\begin{align*}
\max_{j,k \in [d]} |I^{jk}_1|&  \leq  \max_{j,k \in [d]} \{|\hat \bTheta^T_{*j} \hat \bSigma_{*j}|^{-1} (\|\hat \bTheta^T_{*j} \hat \bSigma_{*\setminus j}\|_{\infty} \|\hat \bTheta_{\setminus j,k} - \bTheta^*_{\setminus j,k}\|_1 \\
& + \|\hat \bTheta^{}_{*j} - \bTheta^{*}_{*j}\|_1\|\hat \bSigma - \bSigma^*\|_{\max} \|\bTheta^*_{*k}\|_1)\}
\end{align*}
Since it is assumed that $\log d/n = o(1)$, take $d$ and $n$ large enough so that $\log d/n \leq 1/(4K^2)$. Let $E$ be the event where (\ref{maxnorm:prec:ass}) and (\ref{other:norms:prec:ass}) hold. Then on $E$ we have:
$$
\max_{j \in [d]} |\hat \bTheta^T_{*j} \hat \bSigma_{*j} - 1| \leq K \sqrt{\log d/n}.
$$
This implies that on the same event $\sup_{j \in [d]}|\hat \bTheta^T_{*j} \hat \bSigma_{*j}|^{-1} \leq 2$. Continuing our bounds on the event $E$ we have:
$$
\|\hat \bTheta^T_{*j} \hat \bSigma_{*\setminus j}\|_{\infty} \|\hat \bTheta_{\setminus j,k} - \bTheta^*_{\setminus j,k}\|_1 \leq K^2 s \log d/n = o(n^{-1/2}).
$$
Furthermore on the intersection of $E$ and the event from Lemma \ref{cov:conc}, which holds with probability no less than $1 - 2/d$, we have:
$$
\|\hat \bTheta^{}_{*j} - \bTheta^{*}_{*j}\|_1\|\hat \bSigma - \bSigma^*\|_{\max} \|\bTheta^*_{*k}\|_1 \leq R C^{2} K L s \log d/n = o(n^{-1/2}).
$$
Combining the last four bounds we conclude that $\max_{j,k \in [d]} |I^{jk}_1| = o_p(n^{-1/2})$. Next we handle the terms $|I_2^{jk}|$. We have:
$$
\max_{j,k \in [d]} |I^{jk}_2| \leq \max_{j, k \in [d]} \left\{|\bTheta^{*T}_{*j}(\hat \bSigma - \bSigma^*) \bTheta^*_{*k}|\bigg|\frac{1}{\hat \bTheta^T_{*j} \hat \bSigma_{*j}} - 1\bigg|\right\}.
$$
Clearly on the event $E$ under the assumption $\log d/n \leq 1/(4K^2)$, we have that:
\begin{align}\label{var:bound}
\max_{j \in [d]}\bigg|\frac{1}{\hat \bTheta^T_{*j} \hat \bSigma_{*j}} - 1\bigg| \leq 2 K \sqrt{\log d/n}. 
\end{align}
Next we consider the random variables $\bTheta^{*T}_{*j} \bX^{\otimes 2}\bTheta^{*}_{*k}$ for all $j,k \in [d]$. By Lemma \ref{sub:gauss:tails:lemma} we have:
$$
\|\bTheta^{*T}_{*j} \bX^{\otimes 2}\bTheta^{*}_{*k}\|_{\psi_1} \leq 2 \|\bTheta^{*T}_{*j} \bX\|_{\psi_2} \|\bX^T\bTheta^{*}_{*k}\|_{\psi_2} \leq 2\sup_{j \in [d]}\|\bTheta^{*}_{*j}\|^2_2 C^{2} \leq 2 C^4,
$$
with the last inequality following by the fact that $\sup_{j \in [d]}\|\bTheta^{*}_{*j}\|^2_2 \leq \|\bTheta^*\|^2_2 \leq C^2$ since $\bTheta^* \in \cM(s)$. Clearly then $\|\bTheta^{*T}_{*j} \bX^{\otimes 2}\bTheta^{*}_{*k} - \Theta^*_{jk}\|_{\psi_1} \leq  4 C^4$. Finally by the union bound and Proposition 5.16 \citep{Vershynin2012Introduction}, one concludes that there exists an absolute constant $\tilde C$, such that when $\sqrt{\log d/n} \leq 4 C^4$, we have:
\begin{align}\label{bound:theta:sigma}
\max_{j, k \in [d]} |\bTheta^{*T}_{*j}(\hat \bSigma - \bSigma^*) \bTheta^*_{*k}| \leq 4 \tilde C  C^4 \sqrt{\log d/n},
\end{align}
with probability at least $1 - 2/d$. Combining this bound with (\ref{var:bound}) and our conditions we conclude that $\max_{j,k\in[d]} |I^{jk}_2| = o_p(n^{-1/2})$. This completes the proof.
\end{proof}

\begin{lemma} \label{main:Mbound} Let $\{\bX_i\}_{i = 1}^n$ be identical (not necessarily independent), $d$-dimensional sub-Gaussian vectors with $\max_{i \in [n], j \in[d]} \|X_{ij}\|_{\psi_2} = C$. Then there exists an absolute constant $U > 0$ depending solely on $C$ such that :
\begin{align*}
\max_{i \in [n]} \|\bX_i^{\otimes 2}\|_{\max} < U\log(nd),
\end{align*}
with probability at least $1 - 1/d$.
\end{lemma}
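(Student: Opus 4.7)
The plan is to bound $\max_{i,j,k}|X_{ij}X_{ik}|$ by combining the standard Orlicz-norm product inequality with a sub-exponential tail bound and a union bound, exactly mirroring the style of arguments already used in the proofs of Lemmas~\ref{cov:conc} and \ref{unif:conv}.

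First I would reduce the problem to pointwise tail control. By definition of the $\psi_2$-norm and the elementary product inequality $\|YZ\|_{\psi_1}\le 2\|Y\|_{\psi_2}\|Z\|_{\psi_2}$, every entry satisfies
\[
\|X_{ij}X_{ik}\|_{\psi_1}\;\le\; 2\,\|X_{ij}\|_{\psi_2}\,\|X_{ik}\|_{\psi_2}\;\le\; 2C^{2}.
\]
Thus each coordinate of each outer product is sub-exponential with Orlicz constant depending solely on $C$.

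Next I would invoke the standard sub-exponential tail bound (e.g.\ Proposition~5.16 of Vershynin, as used elsewhere in the paper): there is an absolute constant $c>0$ such that for every $t>0$,
\[
\PP\bigl(|X_{ij}X_{ik}|>t\bigr)\;\le\; 2\exp\!\Bigl(-\,\tfrac{ct}{2C^{2}}\Bigr).
\]
Crucially, identical distribution (not independence) is enough here, because each single-coordinate tail bound only uses the marginal $\psi_1$-norm of $X_{ij}X_{ik}$. Applying the union bound over the $nd^{2}$ triples $(i,j,k)$ yields
\[
\PP\!\Bigl(\max_{i\in[n]}\|\bX_i^{\otimes 2}\|_{\max}>t\Bigr)
\;\le\; 2\,n\,d^{2}\,\exp\!\Bigl(-\tfrac{ct}{2C^{2}}\Bigr).
\]

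Finally, I would choose the threshold. Setting $t = U\log(nd)$ with $U = 8C^{2}/c$ (say) makes the right-hand side equal to $2\,n\,d^{2}\,(nd)^{-4}\le 2/d^{2}$, which is at most $1/d$ for all $d\ge 2$. This $U$ depends only on $C$, as required. No step here is technically hard; the main thing to be careful about is that the paper uses the specific $\psi_\ell$-norm convention \eqref{psi1norm}, so I would double-check the constant in the product inequality and the precise form of the tail bound quoted from Vershynin to make sure the absolute constant $c$ is legitimately a universal constant and that $U$ ends up depending only on $C$ and not on $n$ or $d$.
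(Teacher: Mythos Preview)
Your argument is correct, but the paper takes a slightly more direct route. The key simplification you miss is the identity
\[
\max_{i\in[n]}\|\bX_i^{\otimes 2}\|_{\max}
\;=\;\max_{i\in[n]}\max_{j,k}|X_{ij}X_{ik}|
\;=\;\max_{i\in[n]}\|\bX_i\|_\infty^{2},
\]
since the maximum of $|X_{ij}X_{ik}|$ over $j,k$ is attained on the diagonal. With this, the paper applies the sub-Gaussian tail bound (5.10) in Vershynin directly to each coordinate $X_{ij}$ and union-bounds over only $nd$ terms, then squares. Your route instead treats each product $X_{ij}X_{ik}$ as sub-exponential via the $\psi_1$--$\psi_2$ product inequality and union-bounds over $nd^2$ triples. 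Both land on a threshold of order $\log(nd)$ with a constant depending only on $C$, so the end result is the same; the paper's version just avoids the extra factor of $d$ in the union bound and stays with sub-Gaussian tails throughout. One minor citation point: Proposition~5.16 in Vershynin is Bernstein's inequality for \emph{sums} of sub-exponentials; for a single sub-exponential variable the relevant tail bound is (5.14) there, though this does not affect the substance of your argument.
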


\begin{proof}[Proof of Lemma \ref{main:Mbound}] The proof is trivial upon noting that,

$\max_{i \in [n]} \|\bX_i^{\otimes 2}\|_{\max} = \max_{i \in [n]} \|\bX_i\|^2_{\infty}$ and combining (5.10) in \cite{Vershynin2012Introduction} with the union bound. We omit the details.
\end{proof}

\begin{lemma}\label{delta:consist} Assuming that $s\sqrt{\log d/n} \sqrt{\log(nd)} = o(1)$, we have that for any fixed $j,k \in [d]$:
$$
\lim_{n} \inf_{\bTheta^* \in \cM(s)} \PP(|\hat \Delta_{jk} - \Delta_{jk}| \leq \tau(n)) = 1,
$$
where $\tau(n) = o(1)$.
\end{lemma}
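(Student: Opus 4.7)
The natural estimator $\hat\Delta_{jk}$ is the conditional variance of the bootstrap weight $\hat W_{jk}$ given the data, namely
$$\hat\Delta_{jk} = n^{-1}\sum_{i=1}^n\bigl(\hat\bTheta_{*j}^T \bX_i^{\otimes 2}\hat\bTheta_{*k}-\hat\Theta_{jk}\bigr)^2,$$
which targets $\Delta_{jk} = \Var(\bTheta^{*T}_{*j}\bX^{\otimes 2}\bTheta^*_{*k})$ appearing in the Bahadur expansion \eqref{bahadur:repr}. The plan is to compare $\hat\Delta_{jk}$ to the oracle quantity $\tilde\Delta_{jk} := n^{-1}\sum_{i=1}^n(\bTheta^{*T}_{*j}\bX_i^{\otimes 2}\bTheta^*_{*k}-\Theta^*_{jk})^2$ and then to control $\tilde\Delta_{jk}-\Delta_{jk}$ by concentration.

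For the oracle piece, the summands $Y_i:=\bTheta^{*T}_{*j}\bX_i^{\otimes 2}\bTheta^*_{*k}-\Theta^*_{jk}$ are sub-exponential with $\|Y_i\|_{\psi_1}\leq 4C^4$ uniformly over $\cM(s)$ by the bound used in the proof of Lemma \ref{unif:conv}. Hence $Y_i^2$ has $\psi_{1/2}$-type tails with bounded moments, and Bernstein's inequality (applied after a standard truncation or directly to the $\psi_{1/2}$ summands) gives $|\tilde\Delta_{jk}-\Delta_{jk}| = O_p(n^{-1/2})$ uniformly over $\cM(s)$; this is $o(1)$.

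For the plug-in piece, expand the square:
$$\hat\Delta_{jk}-\tilde\Delta_{jk} = n^{-1}\sum_{i=1}^n\bigl(\hat Z_i - Z_i\bigr)\bigl(\hat Z_i+Z_i\bigr),$$
where $Z_i := \bTheta^{*T}_{*j}\bX_i^{\otimes 2}\bTheta^*_{*k}-\Theta^*_{jk}$ and $\hat Z_i$ is the plug-in analogue. The key estimate is
$$|\hat Z_i - Z_i|\leq \bigl|(\hat\bTheta_{*j}-\bTheta^*_{*j})^T \bX_i^{\otimes 2}\hat\bTheta_{*k}\bigr| + \bigl|\bTheta^{*T}_{*j}\bX_i^{\otimes 2}(\hat\bTheta_{*k}-\bTheta^*_{*k})\bigr| + |\hat\Theta_{jk}-\Theta^*_{jk}|,$$
which by H\"older is bounded by $\|\bX_i^{\otimes 2}\|_{\max}\bigl(\|\hat\bTheta_{*j}-\bTheta^*_{*j}\|_1\|\hat\bTheta_{*k}\|_1 + \|\bTheta^*_{*j}\|_1\|\hat\bTheta_{*k}-\bTheta^*_{*k}\|_1\bigr) + \|\hat\bTheta-\bTheta^*\|_{\max}$. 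Using \eqref{maxnorm:prec:ass}--\eqref{other:norms:prec:ass}, the bound $\|\hat\bTheta\|_1\leq L+o(1)$, Lemma \ref{main:Mbound} to get $\max_i\|\bX_i^{\otimes 2}\|_{\max}\leq U\log(nd)$, one obtains $\max_i|\hat Z_i - Z_i|\leq K' s\sqrt{\log d/n}\,\log(nd) = o(1)$ under the hypothesis $s\sqrt{\log d/n}\sqrt{\log(nd)}=o(1)$. A parallel argument gives $\max_i|\hat Z_i+Z_i|\leq C'\log(nd)$ (uniformly), so the product $n^{-1}\sum_i|\hat Z_i-Z_i||\hat Z_i+Z_i|$ is $o(1)$.

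Combining the oracle bound with the plug-in bound yields $|\hat\Delta_{jk}-\Delta_{jk}|\leq \tau(n) = o(1)$ with probability tending to one, uniformly over $\cM(s)$. The main technical obstacle is the plug-in step, where the log factor coming from $\|\bX_i^{\otimes 2}\|_{\max}$ must be absorbed by the scaling assumption $s\sqrt{\log d/n}\sqrt{\log(nd)}=o(1)$; this is precisely why the sharper $\sqrt{\log(nd)}$ term (rather than $\log(nd)$) appears, as one factor is saved by Cauchy--Schwarz against the oracle term's $\psi_{1/2}$ moments.
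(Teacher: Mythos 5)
Your split into a plug-in piece and an oracle piece mirrors the paper's proof, and the oracle piece (by Bernstein or Chebyshev) is fine. But the plug-in bound has a real gap: the log-factor accounting does not close.

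You bound $\hat Z_i - Z_i$ pointwise via $\|\bX_i^{\otimes 2}\|_{\max}$ and Lemma~\ref{main:Mbound}, obtaining $\max_i|\hat Z_i - Z_i| \le K' s\sqrt{\log d/n}\,\log(nd)$, and then assert this is $o(1)$ under the stated hypothesis. It is not: the hypothesis controls $s\sqrt{\log d/n}\,\sqrt{\log(nd)}$, so your bound is only $o(1)\cdot\sqrt{\log(nd)}$, which diverges as $d\to\infty$. The subsequent product bound $\max_i|\hat Z_i - Z_i|\cdot\max_i|\hat Z_i + Z_i| = O\bigl(s\sqrt{\log d/n}\,(\log(nd))^{2}\bigr)$ is even further off, at $o(1)\cdot(\log(nd))^{3/2}$. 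The closing remark about saving a log factor by Cauchy--Schwarz is not executed and, more importantly, does not rescue the argument: Cauchy--Schwarz replaces $\max_i|\hat Z_i+Z_i|$ by $\bigl(n^{-1}\sum_i(\hat Z_i+Z_i)^2\bigr)^{1/2} = O_p(1)$ and lowers the exponent on $\log(nd)$ by one, but you still need to control $\bigl(n^{-1}\sum_i(\hat Z_i-Z_i)^2\bigr)^{1/2}$, and bounding that by $\max_i|\hat Z_i-Z_i|$ leaves a surplus $\sqrt{\log(nd)}$.

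The ingredient your proposal is missing is the paper's direct control of the empirical $L^2$-average $n^{-1}\sum_i(\hat Z_i-Z_i)^2$ \emph{without} passing through a pointwise $\max$. The paper exposes a representative term as a quadratic form $(\hat\bTheta_{*k}-\bTheta^*_{*k})^T\Mb(\hat\bTheta_{*k}-\bTheta^*_{*k})$ with $\Mb = n^{-1}\sum_i \bX_i^{\otimes 2}\hat\bTheta_{*j}^{\otimes 2}\bX_i^{\otimes 2}$ and bounds $\|\Mb\|_{\max}$. The crucial observation (inequality (\ref{main:MboundpropCLIME}) in the paper) is that only \emph{one} of the two $\bX_i^{\otimes 2}$ factors contributes $\log(nd)$ through $\max_i\|\bX_i^{\otimes 2}\|_{\max}$; the other factor is averaged over $i$, collapses to $\hat\bSigma$, and is then tamed by the CLIME-type constraint $\|\hat\bSigma\hat\bTheta_{*j}\|_\infty \le 1 + K\sqrt{\log d/n} = O(1)$ from \eqref{other:norms:prec:ass}. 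This yields $n^{-1}\sum_i(\hat Z_i-Z_i)^2 = O_p\bigl(s^2 \log d\,\log(nd)/n\bigr)$, whose square root is exactly the quantity in the hypothesis and hence $o_p(1)$. Without exploiting the $\hat\bSigma\hat\bTheta$ constraint in this way the logarithms do not balance, and the scaling you state---which is the paper's scaling---does not suffice for your pointwise bound.
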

\begin{proof}[Proof of Lemma \ref{delta:consist}] 
We start by showing that:
$$
I := \max_{j,k\in[d]} n^{-1}\sum_{i = 1}^n (\hat \bTheta_{*j}^T \bX_i^{\otimes 2}\hat \bTheta_{*k} - \hat \Theta_{jk} -  \bTheta^{*T}_{*j} \bX_i^{\otimes 2} \bTheta^*_{*k} +  \Theta^*_{jk})^2,
$$
is asymptotically negligible. Since $(a-b)^2 \leq 2(a^2 + b^2)$, we have that:
$$
|I| \leq 2 \underbrace{\max_{j,k\in[d]} n^{-1} \bigg[\sum_{i = 1}^n (\hat \bTheta_{*j}^T \bX_i^{\otimes 2}\hat \bTheta_{*k}  -  \bTheta^{*T}_{*j} \bX_i^{\otimes 2} \bTheta^*_{*k})^2\bigg]}_{I_{1}}  + 2 \underbrace{\max_{j,k\in[d]} (\hat \Theta_{jk} -  \Theta^*_{jk})^2}_{I_2}.
$$
Clearly by assumption (\ref{maxnorm:prec:ass})
$$
|I_2| \leq K^2 \log d/n,
$$
with probability at least $1 - 1/d$. Next we handle $I_1$. By the triangle inequality the following holds:
\begin{align*}
I_1^{1/2} & \leq \underbrace{\max_{j,k\in[d]}  \bigg[n^{-1}\sum_{i = 1}^n (\hat \bTheta_{*j}^T \bX_i^{\otimes 2}(\hat \bTheta_{*k}  -  \bTheta^*_{*k}))^2\bigg]^{1/2}}_{I_{11}} \\
& + \underbrace{\max_{j,k\in[d]}  \bigg[n^{-1}\sum_{i = 1}^n ((\hat \bTheta_{*j} - \bTheta^*_{*j})^T \bX_i^{\otimes 2} \bTheta^*_{*k})^2\bigg]^{1/2}}_{I_{12}}.
\end{align*}
We first handle $I^2_{11}$:
$$
I^2_{11} = \max_{j,k\in[d]}  (\hat \bTheta_{*k} - \bTheta^*_{*k})^T \underbrace{\frac{1}{n}\sum_{i = 1}^n \bX_i^{\otimes 2} \hat \bTheta_{*j}^{\otimes 2}  \bX_i^{\otimes 2}}_{\Mb}  (\hat \bTheta_{*k} - \bTheta^*_{*k}).
$$
Using Lemma \ref{main:Mbound}, we can handle $\Mb$ in the following way:
\begin{align}
\|\Mb\|_{\max} & \leq \max_{i \in [d]} \|\bX_i^{\otimes 2}\|_{\max} \max_{j\in[d]}  \frac{1}{n} \sum_{i = 1}^n \hat \bTheta_{*j}^T \bX_i^{\otimes 2} \hat \bTheta_{*j} \nonumber \\
& \leq O_p(\log(nd)) \max_{j\in[d]} \|\hat \bTheta_{*j}\|_1 \|\hat \bSigma \hat \bTheta_{*j}\|_{\infty}. \label{main:MboundpropCLIME}
\end{align}
By the definition of $\hat \bTheta_{*j}$ we have:  $\max_{j\in[d]}  \|\hat \bSigma \hat \bTheta_{*j}\|_{\infty} \leq (1 + K\sqrt{\log d/n})$. Hence:
\begin{align*}
\|\Mb\|_{\max} & \leq O_p(\log(nd)) \max_{j\in[d]}  (\|\bTheta^*_{*j}\|_1 + \|\hat \bTheta_{*j} - \bTheta^*_{*j}\|_1) (1 + K\sqrt{\log d/n}) \\
& = O_p(\log(nd))L,
\end{align*}
where we used assumption (\ref{other:norms:prec:ass}). Thus:
$$
|I_{11}| \leq \max_{k\in[d]}  \|\hat \bTheta_{*k} - \bTheta^*_{*k} \|_1^2 L O_p(\log(nd)) =  O_p\left(s^2\log d/n \log(nd) \right) = o_p(1),
$$
with probability at least $1 - 2/d$ uniformly over $\cM(s)$. By a similar argument we can show that $I_{12}$ is of the same order. Putting everything together we conclude: 
\begin{align}\label{square:rate}
I = O_p\left(s^2\log d \log(nd)/n \right) = o_p(1).
\end{align}
Next, we argue that for any fixed $j,k$ we have that the difference 
$$n^{-1}\sum_{i = 1}^n (\bTheta^{*T}_{*j}\bX_i^{\otimes 2}\bTheta^{*}_{*j} - \Theta^*_{jk})^2 - \EE( \bTheta_{*j}^T \bX^{\otimes 2} \bTheta_{*k} -  \Theta^*_{jk})^2$$ 
is small. Since as in Lemma \ref{unif:conv} we have $\|\bTheta^{*T}_{*j} \bX^{\otimes 2} \bTheta^*_{*k}\|_{\psi_1} \leq 2 C^4$, then certainly $\Var((\bTheta^{*T}_{*j} \bX^{\otimes 2} \bTheta^*_{*k} - \Theta^*_{jk})^2)$ is finite for any fixed $j,k\in[d]$. A usage of Chebyshev's inequality gives us that:
$$n^{-1}\sum_{i = 1}^n (\bTheta^{*T}_{*j}\bX_i^{\otimes 2}\bTheta^{*}_{*j} - \Theta^*_{jk})^2- \EE( \bTheta_{*j}^T \bX^{\otimes 2} \bTheta_{*k} -  \Theta^*_{jk})^2 = o_p(1).$$ 
Finally note that by the triangle inequality the following two inequalities hold for any fixed $j,k \in [d]$:
\begin{align*}
\bigg[n^{-1} \sum_{i = 1}^n (\hat \bTheta_{*j}^T \bX_i^{\otimes 2}\hat \bTheta_{*k} - \hat \Theta_{jk} )^2 \bigg]^{1/2}& \leq \bigg[n^{-1} \sum_{i = 1}^n(\bTheta^{*T}_{*j} \bX_i^{\otimes 2} \bTheta^*_{*k} -  \Theta^*_{jk})^2\bigg]^{1/2} + I^{1/2},\\
\bigg[n^{-1} \sum_{i = 1}^n(\bTheta^{*T}_{*j} \bX_i^{\otimes 2} \bTheta^*_{*k} -  \Theta^*_{jk})^2\bigg]^{1/2}& \leq \bigg[n^{-1} \sum_{i = 1}^n (\hat \bTheta_{*j}^T \bX_i^{\otimes 2}\hat \bTheta_{*k} - \hat \Theta_{jk} )^2 \bigg]^{1/2}+ I^{1/2}.
\end{align*}
Observe that $n^{-1}\sum_{i = 1}^n (\bTheta^{*T}_{*j} \bX_i^{\otimes 2} \bTheta^*_{*k} - \Theta^*_{jk})^2 = \EE( \bTheta_{*j}^T \bX^{\otimes 2} \bTheta_{*k} -  \Theta^*_{jk})^2 + o_p(1) = O_p(1)$. This completes the proof.
\end{proof}

\begin{lemma} \label{normallemma} Let $X_n(\br)$ and $\xi_n(\br)$ be two sequences of random variables, depending on a parameter $\br \in \bR$. Suppose that $\lim_{n} \sup_{\br \in \bR} \sup_{t } |\PP(X_n(\br) \leq t) - F(t)| = 0$ where $F$ is a continuous cdf, and $\lim_{n} \inf_{\br \in \bR} \PP(|1 - \xi_n(\br)| \leq \tau(n)) = 1$ for $\tau(n) = o(1)$. Assume in addition that $F$ is Lipschitz, i.e., there exist $\kappa > 0$, such that for any $t, s \in \RR: |F(t) - F(s)| \leq \kappa |t - s|$. Then we have:
$$
\lim_{n}  \sup_{\br \in \bR} \sup_{t} \bigg|\PP\bigg(\frac{X_n(\br)}{\xi_n(\br)} \leq t\bigg) - F(t)\bigg| = 0.
$$
\end{lemma}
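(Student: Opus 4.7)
The plan is to run a uniform Slutsky-type argument. Let $E_n(\br) := \{|1 - \xi_n(\br)| \leq \tau(n)\}$. By assumption, $\PP(E_n(\br)^c)$ tends to $0$ uniformly in $\br$, so for any $t \in \RR$,
\[
\Big| \PP\big(X_n(\br)/\xi_n(\br) \leq t\big) - \PP\big(X_n(\br)/\xi_n(\br) \leq t, E_n(\br)\big)\Big| \leq \PP(E_n(\br)^c),
\]
and this error is $o(1)$ uniformly in $(t, \br)$. For $n$ large enough that $\tau(n) < 1/2$, on the event $E_n(\br)$ we have $\xi_n(\br) > 0$, hence $X_n/\xi_n \leq t$ is equivalent to $X_n \leq t \xi_n$. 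A case split on the sign of $t$ then yields the sandwich
\[
\PP\bigl(X_n \leq t_-\bigr) - \PP(E_n^c) \;\leq\; \PP\bigl(X_n/\xi_n \leq t, E_n\bigr) \;\leq\; \PP\bigl(X_n \leq t_+\bigr),
\]
where $t_\pm := t(1 \pm \sgn(t)\tau(n))$ so that $t_- \leq t\xi_n \leq t_+$ on $E_n$.

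Next, I would apply the uniform convergence hypothesis $\sup_{\br, s} |\PP(X_n(\br) \leq s) - F(s)| \to 0$ at $s = t_\pm$, which reduces the problem to showing
\[
\Delta_n \;:=\; \sup_{t \in \RR} \max\bigl(|F(t_+) - F(t)|,\; |F(t_-) - F(t)|\bigr) \;\longrightarrow\; 0.
\]
This is the step where I have to use both Lipschitz continuity and the tail behavior of the cdf $F$. Fix $\eps > 0$. Since $F$ is a cdf, choose $M$ so large that $F(-M/2) < \eps$ and $1 - F(M/2) < \eps$. For $|t| > M$ and $\tau(n) < 1/2$ the endpoints $t, t_+, t_-$ all lie in the same tail (either all $\leq -M/2$ or all $\geq M/2$), so $F$ differs by at most $\eps$ on them. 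For $|t| \leq M$, Lipschitz gives $|F(t_\pm) - F(t)| \leq \kappa |t| \tau(n) \leq \kappa M \tau(n)$, which is $\leq \eps$ once $n$ is large. Hence $\Delta_n \to 0$.

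Combining the three ingredients, $\sup_{\br, t} |\PP(X_n/\xi_n \leq t) - F(t)|$ is bounded by $2 \sup_{\br} \PP(E_n(\br)^c) + 2\sup_{\br, s}|\PP(X_n(\br) \leq s) - F(s)| + \Delta_n = o(1)$, which is the desired conclusion. The only mildly subtle point is the case split needed to produce the sandwich and the tail-vs-bounded dichotomy for $\Delta_n$; everything else is a routine triangle inequality argument.
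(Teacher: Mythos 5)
Your proof is correct and supplies the ``direct argument'' that the paper itself declines to spell out (the paper's stated proof is simply ``The proof goes through by a direct argument. We omit the details.''). The decomposition is the standard one for a uniform Slutsky lemma, and you handle the two genuine subtleties properly: (i) the sign-dependent choice $t_{\pm} = t(1 \pm \sgn(t)\tau(n))$ so that $t_- \leq t\xi_n \leq t_+$ on $E_n(\br)$ regardless of the sign of $t$, and (ii) the observation that Lipschitz continuity alone does not control $|F(t_\pm) - F(t)| \leq \kappa |t|\tau(n)$ uniformly in $t$, so the argument genuinely needs the tail dichotomy (with $F$ near $0$ or $1$ outside a compact set, and the Lipschitz bound on $[-M,M]$). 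The final triangle inequality combining $\sup_{\br}\PP(E_n(\br)^c)$, $\sup_{\br,s}|\PP(X_n(\br)\leq s)-F(s)|$, and $\Delta_n$ is exactly what is needed.
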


\begin{proof} The proof goes through by a direct argument. We omit the details.
\end{proof}

\begin{proof}[Proof of Proposition \ref{multiple:edge:testing:validity}] The proof of the strong error rate control is based upon verifying conditions from Theorem 5.1 of \cite{Chernozhukov2013Gaussian}. The rate $(\log(dn))^6/n = o(1)$ in (\ref{bootstrap:rates}) holds due to recent advances in a Berry-Esseen bound due to \cite{chernozhukov2014central}. Define:
\begin{align*}
\Upsilon_1 & := \sqrt{n} \max_{(j,k) \in E} |(\tilde \Theta_{jk} - \Theta^*_{jk}) + \bTheta^{*T}_{*j}(\hat\bSigma - \bSigma^*) \bTheta^*_{*k} |, \\
\Upsilon_2 & := \max_{(j,k) \in E} n^{-1}\sum_{i = 1}^n (\hat \bTheta^{T}_{*j}\bX_i^{\otimes 2}\hat \bTheta^{}_{*k} - \hat \Theta_{jk} - \bTheta^{*T}_{*j}\bX_i^{\otimes 2}\bTheta^{*}_{*k} + \Theta^*_{jk})^2.
\end{align*}
By Lemma \ref{unif:conv}, we are guaranteed that $\sup_{\bTheta^* \in \cM_{E^{\rm n}, E^{\rm nc}}} \PP(\sqrt{\log |E|}|\Upsilon_1| \geq \sqrt{\log |E|}\xi_1) < \xi_2$, where $\xi_1 = \Xi_1 s \log d/\sqrt{n}$, $\xi_2 = 2/d$ and since under our assumptions $\sqrt{\log |E|}\xi_1 = o(1)$ and $\xi_2 = o(1)$, this satisfies\footnote{In fact assumption (M) requires a stronger control but it is not necessary for our purporse.} the first requirement in condition (M) (i) of Theorem 5.1.  \citep{Chernozhukov2013Gaussian}. 

Next we turn to bounding $|\Upsilon_2|$. Using (\ref{square:rate}) from Lemma \ref{delta:consist} we conclude that there exists a constant $C > 0$ such that $\underset{\bTheta^* \in \cM_{E^{\rm n}, E^{\rm nc}}}{\sup}\PP((\log(|E|n))^2 \Upsilon_2 \geq C s^2 (\log(|E|n))^2 \log(nd)\log d /n ) \leq 2/d$. Clearly since $|E| \leq {d \choose 2} < d^2$ by our assumption we have $s^2(\log(|E|n))^2 \log(nd)\log d/n = o(1)$. Finally, recall that $\EE(\bTheta^{*T}_{*j}\bX_i^{\otimes 2}\bTheta^{*}_{*k} - \Theta^*_{jk})^2 \geq \delta$ and $\|\bTheta^{*T}_{*j}\bX_i^{\otimes 2}\bTheta^{*}_{*k} \|_{\psi_1} \leq 2 C^4$ as we saw in the proof of Lemma \ref{unif:conv}, and in addition by assumption $(\log(dn))^6/n = o(1)$. This completes the verification of conditions needed by Theorem 5.1 of \cite{Chernozhukov2013Gaussian} and shows that (\ref{strong:FWER:control}) holds.

To see the second part it is sufficient to show that on the first iteration of Algorithm \ref{step:down}, all edges will be rejected. We first obtain a high probability bound on the quantile of $\sqrt{n} \max_{(j,k) \in E} |\tilde \Theta_{jk} - \Theta^*_{jk}|$. By Lemma \ref{unif:conv} we get that:
$$
\sqrt{n} \max_{(j,k) \in E} |\tilde \Theta_{jk} - \Theta^*_{jk}| \leq \sqrt{n} \max_{(j,k) \in E} |\bTheta^{*T}_{*j}(\hat\bSigma - \bSigma^*) \bTheta^*_{*k}| + \Xi_1 s\log d/\sqrt{n},
$$
with probability at least $1 - 2/d$, where $\Xi_1$ is an absolute constant. Next recall that by (\ref{bound:theta:sigma}) when $\sqrt{\log d/n} \leq 4 C^4$ on the same event as above we have:
\begin{align*}
\max_{j, k \in [d]} |\bTheta^{*T}_{*j}(\hat \bSigma - \bSigma^*) \bTheta^*_{*k}| \leq 4 \tilde C  C^4 \sqrt{\log d/n}.
\end{align*} 
Putting last two inequalities together we conclude:
\begin{align}\label{max:theta:tilde}
\sqrt{n} \max_{(j,k) \in E} |\tilde \Theta_{jk} - \Theta^*_{jk}| \leq 4 \tilde C  C^4 \sqrt{\log d} + \Xi_1 s\log d/\sqrt{n},
\end{align}
with probability no less than $1 - 2/d$. The last of course implies that any quantile of $\sqrt{n} \max_{(j,k) \in E} |\tilde \Theta_{jk} - \Theta^*_{jk}|$ will be smaller than the value on the RHS with high probability. Next, by Corollary 3.1\footnote{Formally we verified conditions of Theorem 5.1, but in fact they imply Corollary 3.1 holds.} of \cite{Chernozhukov2013Gaussian} we have:
\begin{align}
\sup_{\alpha \in (0,1)} |\PP(\sqrt{n} \max_{(j,k) \in E} |\tilde \Theta_{jk} - \Theta^*_{jk}| \leq c_{1 - \alpha, E}) - (1 - \alpha)| = o(1). \label{quantile}
\end{align}
Combining (\ref{max:theta:tilde}) and (\ref{quantile}) gives us that for large enough $d,n$, for any fixed $\alpha > 0$ we have:
$$
c_{1 - \alpha, E} \leq 4 \tilde C C^4 \sqrt{\log d} + \Xi_1 s\log d/\sqrt{n}.
$$
Now, clearly 
\begin{equation}\label{eq:tjk}
 \sqrt{n} |\tilde \Theta_{jk}| \geq \sqrt{n}|\kappa \sqrt{\log d/n}| - \sqrt{n} \max_{(j,k) \in E} |\tilde \Theta_{jk} - \Theta^*_{jk}| \geq c_{1 - \alpha, E},
\end{equation}
provided that $|\kappa| \geq 9\tilde C C^4$, and hence the power goes to $1$ uniformly over the parameter space $\cM_{E^{\rm n},E^{\rm nc}}$. 
\end{proof} 
\end{supplement}

%
%
%


\end{document}